\documentclass{article}
\usepackage[a4paper, margin=3cm, bottom=4cm]{geometry}
\usepackage[utf8]{inputenc}
\usepackage[T1]{fontenc}
\usepackage{lmodern}

\usepackage[
    backend=biber,
    style=alphabetic, maxalphanames=5, 
    maxbibnames=100, maxsortnames=100, 
    sorting=nyt,
    natbib=true,
    url=false,doi=false,isbn=false,eprint=false
]{biblatex}
\usepackage[pdftex]{graphicx} 
\usepackage{subcaption}
\graphicspath{ {./images/} }
\usepackage[linesnumbered,ruled,vlined]{algorithm2e}
\DontPrintSemicolon
\usepackage{enumitem}
\usepackage[normalem]{ulem}
\usepackage[toc,page]{appendix}

\usepackage[bookmarks, bookmarksnumbered,
			colorlinks=true,
            linkcolor = blue,
            urlcolor  = blue,
            citecolor = teal,
            hypertexnames=false  
]{hyperref}

\newcommand\fnsurl[1]{{\footnotesize\url{#1}}}

\usepackage{myquicksetup}
\numberwithin{definition}{section}
\numberwithin{theorem}{section}
\numberwithin{corollary}{section}
\numberwithin{proposition}{section}
\numberwithin{lemma}{section}
\numberwithin{claim}{section}
\numberwithin{fact}{section}
\numberwithin{remark}{section}
\numberwithin{example}{section}
\numberwithin{equation}{section}
\mathtoolsset{showonlyrefs} 

\usepackage{mylivemacros}

\usepackage{sinkhorn_small} 

\newif\ifextended  
\extendedtrue

\title{Effective dynamics of the Sinkhorn algorithm in the regime of low entropy regularization}
\date{\today}

\author{Guillaume Wang\footnote{Courant Institute School, New York
University~ \texttt{guillaume.wang@nyu.edu}}}

\hypersetup{
    pdftitle={Effective dynamics of the Sinkhorn algorithm in the regime of low entropy regularization},
    pdfauthor={Guillaume Wang},
    pdfkeywords={Sinkhorn algorithm, optimal transport, matrix scaling, limiting dynamics}
}

\begin{document}
\maketitle


\begin{abstract}
    The Sinkhorn algorithm is the de facto standard method for numerically solving entropy-regularized optimal transport problems over finite sets.
    In this work, we investigate a phenomenon arising when Sinkhorn is applied with a small regularization parameter $\tau$: the evolution of the dual variables (the logarithm of the scaling factors) is approximately piecewise-linear, while the primal variables (the approximate transport plans) exhibit a saddle-to-saddle type behavior.
    We prove that as $\tau \to 0$, the Sinkhorn iterates indeed converge to a continuous-time curve consistent with these observations, when time is rescaled as $t = \tau k$, and we characterize the limiting ``cold Sinkhorn'' dynamics explicitly.
    In particular, we show that it acts as a dual optimization dynamics for the unregularized problem with properties analogous to the simplex algorithm.
    Notably, this dynamics converges in finite time to an unregularized solution,
    implying a novel guarantee for the Sinkhorn algorithm itself: it achieves
    $\widetilde{O}(\tau)$ dual suboptimality in $k = O(\tau^{-1})$ iterations, instead of $k = O(\tau^{-2})$ as existing analyses would suggest.
\end{abstract}

\section{Introduction} \label{sec:intro}

Consider the discrete optimal transport (OT) problem with target marginals $\mu \in \Delta_m$ and $\nu \in \Delta_n$---where $\Delta_m$ denotes the probability simplex in dimension $m$---and with transport cost matrix $C \in \RR^{m \times n}$:
\begin{equation} \label{eq:intro:OT}
    \min_{\pi \in \Delta_{m \times n}} \sum_{ij} C_{ij} \pi_{ij}
    ~~~~\text{subject to}~~~~
    \begin{cases}
        \forall i \leq m,~ \sum_j \pi_{ij} = \mu_i \\
        \forall j \leq n,~ \sum_i \pi_{ij} = \nu_j.
    \end{cases}
\end{equation}
Optimization problems of this form arise throughout machine learning and data science \cite{peyre2019computational,chewi2025statistical}, computer vision \cite{rubner2000earth}, computational biology \cite{schiebinger2019optimal}, and economics \cite{galichon2016optimal}.
Closely related is the problem of entropic optimal transport (EOT) with an entropy regularization parameter, or temperature, $\tau>0$:
\begin{equation} \label{eq:intro:EOT}
    \min_{\pi \in \Delta_{m \times n}} \sum_{ij} C_{ij} \pi_{ij} + \tau \Hdiv{\pi}{\mu \otimes \nu}
    ~~~~\text{subject to}~~~~
    \begin{cases}
        \forall i \leq m,~ \sum_j \pi_{ij} = \mu_i \\
        \forall j \leq n,~ \sum_i \pi_{ij} = \nu_j.
    \end{cases}
\end{equation}
Here $\Hdiv{\pi}{\mu \otimes \nu} = \sum_{ij} \pi_{ij} \log \frac{\pi_{ij}}{\mu_i \nu_j} \geq 0$ denotes the relative entropy between discrete probability distributions.
EOT can be used as a proxy for unregularized OT for computational purposes \cite{cuturi2013sinkhorn}, but it is also an important problem in its own right due to its connection to the Schr\"odinger bridge problem \cite{leonard2013survey}, with applications in generative modeling \cite{debortoli2021diffusion,pooladian2025plug} and trajectory inference \cite{lavenant2021towards,chizat2022trajectory}.

Over the past decades, there has been much research activity around designing and analyzing optimization algorithms for these problems.
Algorithms for OT \eqref{eq:intro:OT} can be roughly divided into two categories: those using linear programming or combinatorial approaches, and those leveraging methods for EOT with a well-chosen $\tau$.
Indeed for any prescribed tolerance $\eps$, an $\eps$-minimizer for \eqref{eq:intro:OT} can be obtained by exactly solving \eqref{eq:intro:EOT} with
$
    \tau 
    = \frac{\eps}{\max_{\pi} \Hdiv{\pi}{\mu \otimes \nu})} 
    = \frac{\eps}{-\log (\mu_{\min} \nu_{\min})}
$
where $\mu_{\min} = \min_i \mu_i$ and $\nu_{\min} = \min_j \nu_j$.%
\footnote{
    One can also use an approximate solution of \eqref{eq:intro:EOT}, naturally, or even an approximate solution of its dual thanks to the rounding procedure of \cite{altschuler2017near}; see Related works for details.
    The fact that $\max_{\pi \in \Delta_{m \times n}} \Hdiv{\pi}{\mu \otimes \nu} = -\log (\mu_{\min} \nu_{\min})$ can be seen by noting that by convexity of relative entropy, the $\max$ must be attained at an extremal point of $\Delta_{m \times n}$, i.e., at some $\pi = \ind_{(i_0,j_0)}$.
    As also shown in \cite{altschuler2017near}, in fact it is sufficient to use $\tau = \frac{\eps}{\log(mn)}$, because changing $\Hdiv{\pi}{\mu \otimes \nu}$ to $\Hdiv{\pi}{\frac{\bmone_m}{m} \otimes \frac{\bmone_n}{n}}$ in \eqref{eq:intro:EOT} does not change the problem's optimal solution.
}
While the theoretical state-of-the-art computational complexity is at present only attained by algorithms from the first category \cite{vandenbrand2020bipartite}, 
methods from the second category are equally prevalent in practice \cite{peyre2019computational}.

For the EOT problem \eqref{eq:intro:EOT}, one method stands out: the Sinkhorn algorithm, recalled in \autoref{sec:prelim}.
On the methodology side, this simple yet remarkably efficient algorithm is currently the de facto standard for this problem, as witnessed by its use in all mainstream computational optimal transport libraries \cite{feydy2019interpolating,flamary2021pot,cuturi2022optimal}.
It also proves highly adaptable, with numerous variants tailored to large-scale computations \cite{altschuler2019massively,scetbon2020linear,scetbon2021low}, streaming data \cite{mensch2020online,wang2023compressed}, more than two marginals \cite{benamou2015iterative,carlier2022linear}, or unbalanced optimal transport settings \cite{chizat2016scaling,sejourne2022faster}.
On the theoretical side, much effort has been devoted to finely understanding this algorithm's convergence properties, both in the general-cost discrete setting considered here \cite{knight2008sinkhorn,altschuler2017near,dvurechensky2018computational,leger2021gradient,aubin2022mirror,chizat2024annealed,ghosal2025convergence} and in structured continuous settings (EOT between continuous probability measures $\mu, \nu$ with a smooth transport cost function) \cite{conforti2023quantitative,chizat2026sharper,chiarini2024semiconcavity}.
Since the Sinkhorn algorithm is our main object of study in this work, we defer a brief discussion of alternative algorithms for EOT computation to Related works below.

In this work, we study the behavior of the Sinkhorn algorithm in the regime of small entropy regularization $\tau$.
As a motivating example, we display in \autoref{fig:intro:illustrative} the results of a simple numerical experiment where $m=5$, $n=6$, and the Sinkhorn algorithm is run with $\tau=0.005$.
Strikingly, we observe that%
\footnote{Python code to reproduce the experiments is publicly available online at \url{https://github.com/guillaumew16/cold-sinkhorn}.}
\begin{itemize}
    \item (\autoref{fig:intro:illustrative_a}) 
    The suboptimality of the Sinkhorn iterates evolves approximately piecewise linearly when measured by the dual EOT objective $\Psi(f^k, g^k)$.
    It also evolves in an approximately piecewise constant manner when measured by the distance between the primal variable's marginals and the target marginals $\mu, \nu$, consistent with \cite[Lemma~2]{altschuler2017near}.
    \item (\autoref{fig:intro:illustrative_b})
    The dual Sinkhorn iterates $(f^k, g^k)$ themselves evolve approximately piecewise linearly, i.e., their one-iteration increments are approximately piecewise constant.
    Moreover, at any iteration $k$ outside of the ``phase transitions'', the $m$ components of the increment $(f^k_i-f^{k-2}_i)_{i \leq m}$ collapse onto a number of values smaller than $m$, and likewise for the increment of $g^k$.
    Furthermore, the set of values taken by $(f^k_i-f^{k-2}_i)_{i \leq m}$ then approximately coincides with that taken by $(-g^k_j+g^{k-2}_j)_{j \leq n}$.
    \item Experimenting with even smaller values of $\tau$ reveals that, when time is rescaled as $t = k \cdot \tau$, the iterates $(f^{\floor{t/\tau}}, g^{\floor{t/\tau}})$ appear to converge to a truly piecewise linear curve in $t$.
    That is, the ``jumps'' in \autoref{fig:intro:illustrative_b} become sharper as $\tau$ decreases.
    \item Experimenting with other cost matrices $C$, the marginals $\mu, \nu$ being kept fixed, reveals that the set of values that can be taken by the one-iteration increments $f^k_i-f^{k-2}_i$, $-(g^k_j-g^{k-2}_j)$ is finite and independent of $C$.
    \item The corresponding primal variables $\pi^k_{ij} = e^{[-C_{ij}+f^k_i+g^k_j]/\tau} \mu_i \nu_j$, which are known to converge to an optimal solution of \eqref{eq:intro:EOT}, evolve in an approximately piecewise constant manner.
    They are not represented in \autoref{fig:intro:illustrative}, but we refer to \autoref{fig:CS:CS} for a plot of their evolution in a rescaled log-domain, on a smaller example.
\end{itemize}
Two other similar experiments are also presented in \autoref{sec:apx_addtl_exps}, with $m=n=50$ and $m=n=400$ respectively.

These phenomena have not been reported previously in the literature, to our knowledge, despite their generality across all of our numerical experiments. This may be due to the fact that the piecewise-linear phases tend to be shorter, and can become indiscernible compared to the algorithm's total runtime until approximate convergence, when $m$ and $n$ are large.

\begin{figure}[t]
    \centering
    \begin{subfigure}[t]{0.349 \textwidth}
        \centering
        \includegraphics[width=0.992\linewidth]{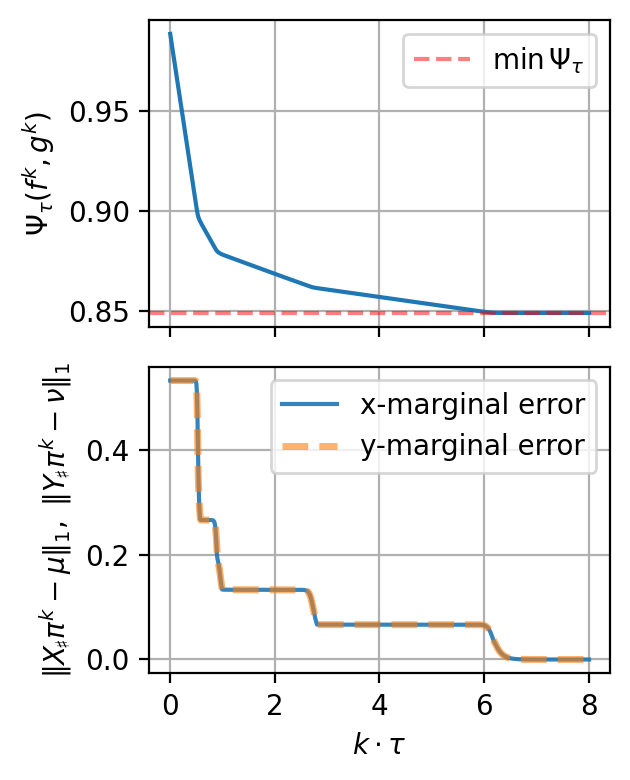} 
        \caption{Suboptimality in dual objective~$\Psi_\tau$ \eqref{eq:prelim:dualobj} and 
        in $\ell_1$-norm error of the marginals.}
        \label{fig:intro:illustrative_a}
    \end{subfigure}
    \hfill
    \begin{subfigure}[t]{0.622\textwidth}
        \centering
        \includegraphics[width=1.01\linewidth]{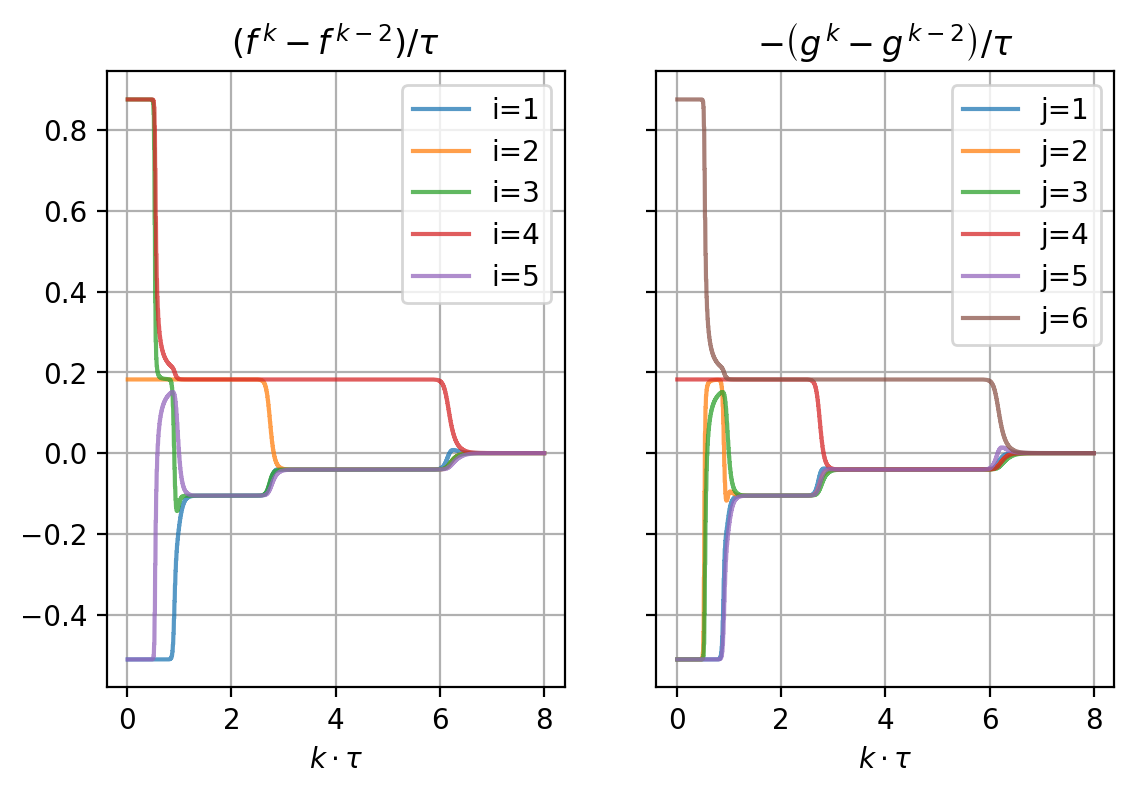}
        \caption{Evolution of the Sinkhorn iterates $f^k, g^k$ represented by their one-iteration increments, rescaled by $\tau$. Note that the increments of $g^k$ are represented with a negative sign.}
        \label{fig:intro:illustrative_b}
    \end{subfigure}
    \caption{Behavior of the Sinkhorn algorithm on \eqref{eq:intro:EOT} for $m=5, n=6$, uniform target marginals, and a generic cost matrix (drawn randomly with i.i.d.\ Gaussian entries), with $\tau = 0.005$.
    The x-axes represent iteration count scaled by $\tau$. If a smaller $\tau$ is used, the figures look almost identical, only with sharper transitions between the piecewise-linear phases.}
    \label{fig:intro:illustrative}
\end{figure}

The purpose of this work is to explain these phenomena and investigate their implications. Specifically, \textbf{our contributions are as follows.}
\begin{itemize}
    \item We prove that as $\tau \to 0$, the time-rescaled Sinkhorn iterates $(f^{\floor{2 t/\tau}}, g^{\floor{2 t/\tau}})$ converge to a continuous curve $(f(t), g(t))$ which we describe explicitly (\autoref{def:CS:formaldef}, \autoref{thm:cv:cv}).
    In particular, all of the phenomena observed above are explained.
    This limiting curve acts as a continuous-time dual optimization dynamics for the unregularized OT problem, termed the \emph{cold Sinkhorn} dynamics, with properties analogous to the simplex algorithm (\autoref{lm:CS:ftgt_partialFF}--\autoref{lm:CS:t_L=O(1)}).
    \item We show that the cold Sinkhorn dynamics converges in finite time, after a finite number of piecewise linear phases, to a solution of the dual OT problem (\autoref{lm:CS:finalphase_optimal}).
    This implies a novel guarantee for the Sinkhorn algorithm itself: it achieves $\tO(\tau)$ dual suboptimality and $\tO(\sqrt{\tau})$ $\ell_1$-norm marginal error in $O(1/\tau)$ iterations (\autoref{thm:cv:improved_guarantee}, \autoref{coroll:cv:improved_guarantee_Ek}), where $\tO(\cdot)$ hides constants dependent on $\mu, \nu$, and $C$ as well as logarithmic factors.
    This generalizes \cite[Corollary~1.3]{berman2020sinkhorn} to the general-cost, finite-domain case and shows that geometric structure is actually not needed for such an estimate to hold.
    \item 
    We also describe the corresponding evolution of the primal variables explicitly. Namely, we show that as $\tau \to 0$, $(\pi^{2\floor{t/\tau}}, \pi^{2\floor{t/\tau}+1})$ converges to a piecewise-constant curve in $t$, and we characterize the ``saddle points'' as the minimizers of a certain sequence of EOT problems (\autoref{thm:pik:even_odd}).
    \item As an ingredient for the proof of our main results, we generalize the best-known convergence guarantees for the Sinkhorn algorithm to the non-scalable case, which may be of independent interest (\autoref{prop:prelim:qtve_BV24}, \autoref{prop:prelim:qtve_BV24_exp}).
\end{itemize}

\subsection{Related work}\label{subsec:intro:relwork}

\paragraph{Convergence analyses for the Sinkhorn algorithm.}
The convergence behavior of the Sinkhorn algorithm has been the topic of much research interest since its introduction for the purpose of matrix scaling by \cite{sinkhorn1967concerning},
and even more so since its advantageous properties for computational optimal transport were put forward by \cite{cuturi2013sinkhorn}.
For the standard finite-domain EOT problem as presented in \eqref{eq:intro:EOT}, local and global exponential convergence bounds were established by \cite{sinkhorn1967diagonal,franklin1989scaling,soules1991rate,knight2008sinkhorn,qu2025sinkhorn}; the rates in the last reference are sharp. 
This line of work reveals that the exponential rate (one minus the contraction coefficient) scales as $e^{-\Theta(1/\tau)}$ in the worst case \cite[Remark~4.15]{peyre2019computational}.
In parallel, polynomial convergence bounds were established by \cite{kalantari2008complexity,altschuler2017near,dvurechensky2018computational,lin2022efficiency,ghosal2025convergence}, leading to bounds that scale more gracefully when $\tau$ is small, in $O(1/\sqrt{\tau k})$ or $O(1/(\tau k))$.
For EOT in ``structured'' continuous-domain cases, the exponential rate's scaling can be much more favorable: $\Theta(\tau)$ instead of $e^{-\Theta(1/\tau)}$, as shown by~\cite{chizat2026sharper}.

Besides computational optimal transport, analyzing the Sinkhorn algorithm is also of interest for matrix scaling.
Indeed, solving the dual EOT problem (\eqref{eq:prelim:dualobj} below) is equivalent to finding diagonal matrices $D, D'$ such that $D A D'$ has row-sums $\mu_1, ..., \mu_m$ and column-sums $\nu_1, ..., \nu_n$, where $A = \left( e^{-C_{ij}/\tau} \right)_{ij}$ \cite{idel2016review}.
In this context, it is desirable to also treat the case where $A$ can have zero entries, corresponding to $C$ having infinite entries---which is rarely of interest in computational optimal transport, but will turn out to be relevant for our investigation in this paper.
Our analysis will rely on results in this direction by \cite{baradat2024convergence,qu2025sinkhorn}, as well as \cite{kalantari2008complexity,allen2017much} via~\cite{wang2026almost}.

\paragraph{Saddle-to-saddle behavior in gradient-based optimization.}
While most convergence analyses of the Sinkhorn algorithm are relatively ad hoc, let us highlight a line of work based on interpreting Sinkhorn as an instance of mirror descent, allowing for a connection to the broader optimization literature \cite{leger2021gradient,aubin2022mirror,karimi2024sinkhorn}.
Two variants of this interpretation exist: in the first, the variables of mirror descent are the primal variables' marginals $(\sum_j \pi^k_{ij})_i \in \Delta_m$ \cite{leger2021gradient}; in the second, they are the primal variables $\pi^k$ themselves \cite{aubin2022mirror}.
In terms of both of these variants, the phenomenon investigated in our work corresponds to a \emph{saddle-to-saddle behavior} of the algorithm, as explained in \autoref{sec:pik}.
Saddle-to-saddle behaviors in first-order optimization dynamics were studied in special cases by \cite{jacot2021saddle,berthier2023incremental,pesme2023saddle,berthier2025diagonal}.
Very recently,
a general analysis for mirror flow on quadratic objectives was performed by \cite{berthier2026incremental}, with essentially identical phenomenology as ours,
indicating that their results may extend beyond quadratics.

\paragraph{Other algorithms for EOT computation.}
As mentioned in the introduction, many variants of the Sinkhorn algorithm have been proposed. From the point of view of convergence guarantees, some are more amenable to analysis than others, notably the Greenkhorn algorithm 
\cite{altschuler2017near,lin2022efficiency,abid2018stochastic,kostic2022batch}
and the overrelaxed and damped Sinkhorn algorithms \cite{thibault2021overrelaxed,lehmann2022note,vaskevicius2023computational}.

Besides the Sinkhorn algorithm and its variants, another remarkably simple and effective approach to solving the EOT problem \eqref{eq:intro:EOT} is to apply gradient-based algorithms to its semi-dual formulation \cite{cuturi2018semidual}, i.e.,
to $\min_{g \in \RR^n} \tPsi_\tau(g)$ where 
$\tPsi_\tau(g) 
= \min \Psi_\tau(\cdot, g) 
= \tau \sum_i \mu_i \log\big( \sum_j e^{[-C_{ij}+g_j]/\tau} \nu_j \big) - \nu^\top g$.
This idea goes back at least to \cite{kosowsky1994invisible}, who propose using (various time-discretizations of) gradient flow for $\tPsi_\tau$, under the name ``invisible hand algorithm''.
This approach has recently received renewed attention in the context of semi-discrete optimal transport, where the marginal $\mu$ is actually a probability density function over a continuous set, because the semi-dual objective can then be put in the form of an expectation over $i \sim \mu$ and is thus amenable to stochastic gradient descent \cite{genevay2016stochastic,mousavi2025flow,genans2026decreasing}.
This approach, being closer to the standard framework of gradient methods for optimization compared to Sinkhorn, has also recently inspired momentum-based algorithms with provably fast polynomial rates of convergence \cite{xie2022accelerated,luo2023improved}.

We note that experimentally, Greenkhorn,%
\footnote{However we also note that there does not seem to be a numerically stable way to implement Greenkhorn for small $\tau$ while preserving the $O(m \vee n)$ per-iteration cost.}
damped Sinkhorn, and gradient flow on the semi-dual all exhibit the same piecewise-linear evolution phenomenon for small $\tau$ as vanilla Sinkhorn.
Explaining these observations theoretically 
is left for future work.

\paragraph{Reducing unregularized OT computation to dual EOT computation.}
The authors of \cite{altschuler2017near} proposed an approach to unregularized OT computation which proved quite influential (see the aforecited \cite{dvurechensky2018computational,lin2022efficiency,xie2022accelerated,luo2023improved,li2025fast}, among others),
and which we now briefly review.
As explained in the introduction, approximately solving the OT problem \eqref{eq:intro:OT} can be reduced to approximately solving the EOT problem \eqref{eq:intro:EOT} with a small enough choice of $\tau$.
On the other hand, the Sinkhorn algorithm yields approximate solutions not for \eqref{eq:intro:EOT} but only for the \emph{dual} EOT problem, \eqref{eq:prelim:dualobj} below.
Fortunately, this is still sufficient thanks to the rounding algorithm of \cite[Alg.~2]{altschuler2017near}, which incurs negligible computational overhead and comes with the following guarantee:
if dual EOT with $\tau = \frac{\eps}{\log(m n)}$ is solved up to a $\ell_1$-norm marginal error of $\frac{\eps}{\norm{C}_\infty}$, then the rounding algorithm yields a feasible and $O(\eps)$-optimal solution for primal unregularized OT.

The approach to OT computation proposed by \cite{altschuler2017near} is thus to approximately solve dual EOT with a small $\tau$, and to apply their rounding algorithm as a post-processing step.
When the Sinkhorn algorithm is used for the first step, our results suggest that this effectively amounts to running the cold Sinkhorn dynamics for the unregularized OT problem.

\paragraph{Effective dynamics of Sinkhorn for squared-Euclidean transport costs.}
Consider the case where $\mu, \nu$ arise from the space-discretization of some smooth and compactly-supported probability density functions on $\RR^d$ onto some fixed grid $\{x_1, ..., x_n\}$, and where the transport costs are given by $C_{ij} = \norm{x_i-x_j}^2$.
Call $h$ the edge length of the grid.
Then it was shown by \cite[Theorem~1.2]{berman2020sinkhorn} that as $\tau, h \to 0$ jointly with $h \lesssim \tau^{1/2+\eps}$ for any $\eps>0$, the Sinkhorn iterates $f^{\floor{2t/\tau}}, g^{\floor{2t/\tau}}$ converge to smooth functions $f(t, x), g(t, x)$ over $\RR_+ \times \RR^d$.
The proved convergence is pointwise in time and uniform in space (recall the marginals are compactly supported), and the limiting function $f(t,x)$ is characterized as the solution of a PDE called the parabolic Monge-Amp\`ere equation (and symmetrically for $g(t,x)$).
Combined with the fact that the parabolic Monge-Amp\`ere equation converges in time to an optimal Kantorovich potential $f^*(x)$---i.e., $(f^*, g^*)$ is an optimal solution of the continuous-space dual OT problem for some $g^*$---this implies that
$\max_i \abs{f^k_i - f^*(x_i)} \lesssim \tau \log(1/\tau)$ 
after $k \gtrsim \tau^{-1} \log(1/\tau)$ iterations \cite[Corollary~1.3]{berman2020sinkhorn}.

These results were refined by \cite{deb2023wasserstein} in the fully continuous-domain setting, i.e., assuming $\mu, \nu$ themselves are probability densities, corresponding to $h=0$.
Namely, it was shown there that the corresponding primal variables' marginals also converge to smooth curves, that~is,
$\int_y e^{\frac1\tau\left[ -\norm{x-y}^2+f^{2\floor{t/\tau}}(x)+g^{2\floor{t/\tau}}(y) \right]} \mu(\d x) \nu(\d y)$
converges to an absolutely continuous probability measure $\mu_t(\d x)$ for all $t$ (and symmetrically for the second marginal).
This is equivalent to convergence of the rescaled increments of $f^{\floor{2t/\tau}}(x)$ to the time-derivatives of $f(t,x)$, as can be seen from~\eqref{eq:prelim:rel_vk_Xpik} below.
Additionally, this reference proposes an interpretation of the parabolic Monge-Amp\`ere equation as a mirror flow for the $\mu_t$ in probability space, by leveraging the mirror descent interpretation of Sinkhorn from \cite{leger2021gradient}.

On the one hand, our results can be viewed as analogs of those of \cite{berman2020sinkhorn,deb2023wasserstein} for the finite-domain case without geometric structure---though in our case, interpreting the cold Sinkhorn dynamics as a mirror flow seems to be impossible (\autoref{rk:pik:noclosedform}).
On the other hand, our results apply in the fixed-grid setting described above whenever $\tau \leq \tau_0$, for some constant $\tau_0 = \tau_0(\mu, \nu, h)$ that could be determined from our analysis, to be contrasted with the regime $h \lesssim \tau^{1/2+\eps}$ studied by \cite{berman2020sinkhorn}.
So it remains to determine quantitatively the regime $\tau_0(\mu, \nu, h)$ where our results apply,
and to study the limiting behavior of the Sinkhorn algorithm in the intermediary regime
$h^{-1/2-\eps} \gg \tau > \tau_0(\mu, \nu, h)$;
we leave these questions open for future research.


\vspace{1em}
The paper is organized as follows.
In \autoref{sec:prelim}, we present preliminary definitions and facts around the Sinkhorn algorithm---some of which are new, presented in \autoref{subsec:prelim:auxlemmas}, \autoref{subsec:prelim:qtve_BV24}.
In \autoref{sec:CS}, we introduce and analyze the convergence of the cold Sinkhorn dynamics.
In \autoref{sec:cv}, we prove that the Sinkhorn iterates converge to the cold Sinkhorn dynamics at a rate $\tO(\tau)$, and deduce a new convergence guarantee for the Sinkhorn algorithm.
In \autoref{sec:pik}, we spell out the corresponding limiting dynamics for the primal variables $\pi^k$.
We conclude in \autoref{sec:ccl} with perspectives and possible directions for future work.

\section{Preliminaries} \label{sec:prelim}

\subsection{Background on (entropic) optimal transport and Sinkhorn algorithm} \label{subsec:prelim:bg}

All of the notions reviewed in this section are standard in computational optimal transport \cite{peyre2019computational}.

\paragraph{The dual OT problem.}
The dual of the unregularized OT problem \eqref{eq:intro:OT}, viewed as a linear program, is
\begin{equation} \label{eq:prelim:dualOT}
    \max_{f \in \RR^m,\, g \in \RR^n}
    f^\top \mu + g^\top \nu
    ~~~~\text{subject to}~~~~
    \forall i, j,~
    f_i + g_j \leq C_{ij}.
\end{equation}
For ease of presentation, we define
the minimization objective
and the feasible set
\begin{equation} \label{eq:prelim:def_FF}
    \Psi_0(f, g) = -f^\top \mu - g^\top \nu,
    \qquad\qquad
    \FF = \left\{ (f, g) \in \RR^m \times \RR^n ;~~ \forall i,j,~ f_i + g_j \leq C_{ij} \right\},
\end{equation}
so that the dual OT problem rewrites
$\min_{f,g} \Psi_0(f, g)$ subject to $(f,g) \in \FF$.
We also introduce the operators, called $C$-transform resp.\ $\ol C$-transform in the optimal transport literature,
\begin{equation}
    \forall g \in \RR^n,~
    f_0[g]_i = \min_j \, C_{ij} - g_j
    \qquad \text{and} \qquad
    \forall f \in \RR^m,~
    g_0[f]_j = \min_i \, C_{ij} - f_i.
\end{equation}
We will repeatedly use the following characterization in the sequel.

\begin{lemma} \label{lm:prelim:partialFF}
    Define the Pareto frontier of $\FF$ as the subset $\partial \FF$ such that
    \begin{equation}
        \forall (f,g) \in \FF,~~~~
        (f,g) \in \partial \FF \iff
        \left\{ (f',g') \in \FF \text{ s.t. } \forall i, f'_i \geq f_i, \forall j, g'_j \geq g_j \right\} = \{ (f,g) \}.
    \end{equation}
    For any $f \in \RR^m, g \in \RR^n$, the following conditions are equivalent: 
    \begin{itemize}[itemsep=1pt]
        \item $(f, g) \in \partial \FF$.
        \item $f = f_0[g]$ and $g = g_0[f]$.
        \item $f_i + g_j \leq C_{ij}$ for all $(i, j)$, and the bipartite graph $(\{1\dots m\} \sqcup \{1 \dots n\}, \EEE)$ with edge set $\EEE = \left\{ (i,j);~ f_i+g_j = C_{ij} \right\}$ has no isolated vertex.
    \end{itemize}
\end{lemma}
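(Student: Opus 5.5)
I will prove the three equivalences by a cycle, showing that conditions two and three are equivalent directly and that each is equivalent to the Pareto condition.

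The key observation is the following. For $(f,g) \in \FF$ we have $f_i \le C_{ij} - g_j$ for all $j$, hence $f_i \le f_0[g]_i$. Equality $f_i = f_0[g]_i$ holds if and only if some $j$ attains $f_i + g_j = C_{ij}$, that is, if and only if $i$ is not isolated in the graph $(\{1\dots m\} \sqcup \{1 \dots n\}, \EEE)$. Symmetrically, $g_j \le g_0[f]_j$, with equality if and only if $j$ is not isolated. Conversely, if $f = f_0[g]$ and $g = g_0[f]$, then $f_i = \min_j C_{ij} - g_j \le C_{ij} - g_j$ for every $j$, so $(f,g) \in \FF$ automatically. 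This gives the equivalence of the second and third conditions.

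For the Pareto characterization, I first show that the third condition implies the first. Take $(f,g)$ with no isolated vertex, and let $(f',g') \in \FF$ dominate it coordinatewise. Fix $i$ and pick $j$ with $f_i + g_j = C_{ij}$. Then
\[
f'_i \le C_{ij} - g'_j \le C_{ij} - g_j = f_i,
\]
so $f'_i = f_i$. The symmetric argument applied to each $j$ gives $g'_j = g_j$. Hence the dominating set is the singleton $\{(f,g)\}$, as required.

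For the converse, I argue by contraposition. Suppose $(f,g) \in \FF$ and some vertex is isolated, say row $i$. Then $f_0[g]_i > f_i$, and since $\min_j$ over a finite set of strict inequalities is strict, the slack $\delta = \min_j (C_{ij} - g_j - f_i)$ is positive. Raising $f_i$ by $\delta$ (or by any amount in $(0,\delta]$) keeps all constraints satisfied. This produces a strictly dominating feasible point, so $(f,g) \notin \partial\FF$. The case of an isolated column $j$ is symmetric.

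There is no real obstacle in this argument. The only care needed is that the Pareto definition is quantified over $(f,g) \in \FF$, while the second condition is stated for arbitrary $(f,g)$. Feasibility in the second condition is supplied by the key observation above, and feasibility in the third condition is explicit in its statement.
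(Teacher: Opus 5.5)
Your proof is correct and complete. The tight-slack observation ($f_i \le f_0[g]_i$, with equality iff row $i$ is non-isolated) gives the equivalence of the second and third conditions, the chain $f'_i \le C_{ij} - g'_j \le C_{ij} - g_j = f_i$ gives the third $\Rightarrow$ first, and raising an isolated coordinate by its positive slack gives the contrapositive of the first $\Rightarrow$ third. The paper states this lemma as a standard characterization without a proof, so there is no competing argument to compare against, and your cycle is the routine verification one would supply.
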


\paragraph{The dual EOT problem.}
The dual of the EOT problem \eqref{eq:intro:EOT}, viewed as a convex optimization problem, is
\begin{equation}
    \max_{f \in \RR^m,\, g \in \RR^n}
    -\Psi_\tau(f,g)
    ~~\equiv~~
    \min_{f \in \RR^m,\, g \in \RR^n} \Psi_\tau(f,g)
\end{equation}
where, for ease of presentation, we define
\begin{equation} \label{eq:prelim:dualobj}
    \Psi_\tau(f, g) =
    \tau \bigg( \sum_{ij} e^{\left[ -C_{ij} + f_i + g_j \right]/\tau} \mu_i \nu_j - 1 \bigg)
    - f^\top \mu - g^\top \nu.
\end{equation}
The KKT stationarity condition linking primal variables $\pi \in \Delta_{m \times n}$ and dual variables $(f, g) \in \RR^m \times \RR^n$ is:
$\forall i,j,~ 
f_i + g_j = \tau \log \frac{\pi_{ij}}{\mu_i \nu_j} + C_{ij} + c$
for some normalizing constant $c \in \RR$,
or equivalently,
$\pi = \pi_\tau[f,g]$ where we define
\begin{equation}
    \pi_\tau[f,g]_{ij}
    = \frac{1}{Z_\tau(f,g)} e^{\left[ -C_{ij} + f_i + g_j \right]/\tau} \mu_i \nu_j
    \quad \text{where} \quad
    Z_\tau(f,g) = \sum_{i'j'} e^{\left[ -C_{i'j'} + f_{i'} + g_{j'} \right]/\tau} \mu_{i'} \nu_{j'}.
\end{equation}

\paragraph{The Sinkhorn algorithm.}
For a given initial pair $(f^0, g^0)$, typically $(0,0)$, the Sinkhorn iterates $(f^k, g^k)_{k \geq 0}$ are defined by the update rule
\begin{align}
    \text{for $k \geq 0$ even,}~~~
    &f^{k+1} = f_\tau[g^k]
    \quad \text{and} \quad
    g^{k+1} = g^k \\
    \text{for $k$ odd,}~~~~
    &f^{k+1} = f^k
    \quad\quad~\, \text{and} \quad
    g^{k+1} = g_\tau[f^k]
\end{align}
where
$f_\tau[g] = \argmin \Psi_\tau(\cdot, g)$
and
$g_\tau[f] = \argmin \Psi_\tau(f, \cdot)$.
More explicitly,
\begin{equation} \label{eq:prelim:def_f[g]_g[f]}
    \forall g \in \RR^n, \,
    f_\tau[g]_i = -\tau \log \sum_j e^{\left[ -C_{ij} + g_j \right]/\tau} \nu_j
    ~~~\quad \text{and} \quad~~~
    \forall f \in \RR^m, \,
    g_\tau[f]_j = -\tau \log \sum_i e^{\left[ -C_{ij} + f_i \right]/\tau} \mu_i.
\end{equation}
\pagebreak
These operators are sometimes called the soft-$C$-transform resp.\ soft-$\ol C$-transform in the literature.
We also set $\pi^k = \pi_\tau[f^k, g^k]$ for all $k \geq 0$.
Note that by explicit computations,
\begin{equation}
    \forall g,~ Z_\tau(f_\tau[g], g) = 1
    \qquad \text{and} \qquad
    \forall f,~ Z_\tau(f, g_\tau[f]) = 1,
\end{equation}
so that $Z_\tau(f^k, g^k) = 1$ and $\pi^k_{ij} = e^{\left[ -C_{ij} + f^k_i + g^k_j \right]/\tau} \mu_i \nu_j$ for all $k \geq 1$ (but not for $k=0$ in general).

Note that the iterates $(f^k, g^k)$ for $k \geq 1$ are determined solely by $g^0$, since the initial value $f^0$ gets ``overwritten'' at the very first iteration.
So in this description of the algorithm, $f^0$ doesn't play any role, i.e., its value could be chosen arbitrarily without affecting any of the subsequent iterates. 
Nonetheless, introducing notation for $f^0$ and $\pi^0$ is useful for consistency with the following alternative description of the algorithm (where, correspondingly, $w^1$ doesn't play any role).

\paragraph{Reformulation of the algorithm in terms of the increments.}
Let us introduce notation for the one-iteration increments rescaled by $\tau$:
\begin{equation} \label{eq:prelim:def_vk_wk}
    \forall k \geq 2,~
    v^k = (f^k - f^{k-2})/\tau
    \qquad \text{and} \qquad
    w^k = (g^k - g^{k-2})/\tau.
\end{equation}
Also set $v^1 = (f^1 - f^0)/\tau$ and let $w^1$ be any arbitrary vector; for convenience, take $w^1=0$.
Further denote the ``logits'' of $\pi^k$ w.r.t.\ $\mu \otimes \nu$ by
\begin{equation} \label{eq:prelim:def_Uk}
    \forall k \geq 0,~
    U^k_{ij} = (f^k_i + g^k_j - C_{ij})/\tau.
\end{equation}
Then the Sinkhorn algorithm can be fully re-expressed in terms of the variables $v^k, w^k, U^k$ instead of $(f^k, g^k)$, as they follow the self-contained update rule
\begin{align} \label{eq:prelim:upd_vwU}
    \text{for $k \geq 0$ even,}~~~
    &v^{k+1} = v[U^k]
    ~\quad \text{and} \quad~
    w^{k+1} = w^k
    ~\quad\quad~ \text{and} \quad~
    U^{k+1}_{ij} = U^k_{ij} + v^{k+1}_i \\
    \text{for $k$ odd,}~~~~
    &v^{k+1} = v^k
    ~\quad\quad~ \text{and} \quad~
    w^{k+1} = w[U^k]
    ~\quad \text{and} \quad~
    U^{k+1}_{ij} = U^k_{ij} + w^{k+1}_j
\end{align}
where
\begin{equation} \label{eq:prelim:def_v[U]_w[U]}
    \forall U \in \RR^{m \times n},~~
    v[U]_i = -\log \sum_j \, e^{U_{ij}} \nu_j
    \qquad\text{and}\qquad
    w[U]_j = -\log \sum_i \, e^{U_{ij}} \mu_i.
\end{equation}
In this formulation---which is the standard one in the matrix scaling literature, up to a component-wise logarithm---the cost matrix $C$ and the temperature $\tau$ only come into play via the initialization $U_{ij}^0 = (f^0_i+g^0_j-C_{ij})/\tau$.

Also note that the $v^k, w^k$ are related to the primal variables $\pi^k$ via
\begin{equation} \label{eq:prelim:rel_vk_Xpik}
    \text{for $k \geq 1$ even,}~~~
    v^{k+1}_i = v[U^k]_i
    = -\log \bigg( \sum_j e^{U^k_{ij}} \mu_i \nu_j \,/\, \mu_i \bigg)
    = \log \left( \mu_i \,/\, {\textstyle \sum_j} \pi^k_{ij} \right)
\end{equation}
and likewise for the $w^k$.
The case $k=0$ is slightly different, as $Z_\tau(f^0, g^0) \neq 1$ in general: we have
$v^1 = \log \left( \mu_i \,/\, {\textstyle \sum_j} \pi^0_{ij} \right) - \log Z_\tau(f^0, g^0)$.

\subsection{The Sinkhorn algorithm in the case with infinite costs} \label{subsec:prelim:infinite_costs}

Our work relies on previous results by \citet{baradat2024convergence} on the behavior of the Sinkhorn algorithm when applied to cost matrices $C$ with entries in $\RR \cup \{+\infty\}$.
In this section, we briefly review their relevant results restated in our notations.

Fix $\mu \in \Delta_m, \nu \in \Delta_n$, with $\mu_{\min}, \nu_{\min}>0$ without loss of generality, and $C \in (\RR \cup \{\infty\})^{m \times n}$.
The Sinkhorn algorithm for target marginals $\mu, \nu$ and transport cost matrix $C$ is defined by the same update rules as in the previous section, provided that one uses the convention $\exp(-\infty) = 0$.
Equivalently, all summations in the previous section should be restricted to neighboring vertices in the bipartite graph 
$G = (\{1\dots m\} \sqcup \{1\dots n\}, \EEE)$ where $\EEE = \left\{ (i,j) ;~ C_{ij}<\infty \right\}$.
For example, in the definitions of $f_\tau[g]_i$ and $v[U]_i$, $\sum_j$ should be replaced by $\sum_{j: (i,j) \in \EEE}$.
If there is a vertex with no neighbors in $G$, the Sinkhorn algorithm is undefined.

For ease of notation, let us introduce the shorthands
\begin{equation}
    \forall \pi \in \Delta_\EEE,~~
    (X_\sharp \pi)_i = \sum_{j: (i,j) \in \EEE} \pi_{ij}
    \qquad \text{and} \qquad
    (Y_\sharp \pi)_j = \sum_{i: (i,j) \in \EEE} \pi_{ij}.
\end{equation}
Naturally, $X_\sharp \pi \in \Delta_m$ and $Y_\sharp \pi \in \Delta_n$.
We will use the same shorthands $X_\sharp, Y_\sharp$ regardless of the set $\EEE \subset \{1 \dots m \} \times \{1 \dots n\}$ under consideration.

With these notations, a fundamental result of \cite{baradat2024convergence} is as follows.

\begin{theorem}[{\cite[Theorem~3.2, Proposition~4.5]{baradat2024convergence}}] \label{thm:prelim:baradat_ventre}
    Let $\mu \in \Delta_m$, $\nu \in \Delta_n$, $C \in (\RR \cup \{\infty\})^{m \times n}$, and denote $\EEE = \left\{ (i,j) ;~ C_{ij}<\infty \right\}$.
    Suppose $\mu_{\min}, \nu_{\min}>0$ and that the bipartite graph $(\{1\dots m\} \sqcup \{1\dots n\}, \EEE)$ has no isolated vertex.
    Consider $v^k, w^k$ the rescaled one-iteration increments and $U^k \in (\RR \cup \{-\infty\})^{m \times n}$ the ``logits'' of the Sinkhorn algorithm defined as in \eqref{eq:prelim:def_vk_wk}, \eqref{eq:prelim:def_Uk}.
    Then for any initialization $U^0_{ij} = (f^0_i + g^0_j - C_{ij})/\tau$ of the algorithm:
    \begin{itemize}
        \item Let $v^*_i = \log (\mu_i/\mu^*_i)$,
        $w^*_j = \log (\nu_j/\nu^*_j)$ where
        \begin{align}
            \mu^* &= \argmin_{\ol\mu}~ \Hdiv{\ol\mu}{\mu}
            ~~~~\text{subject to}~~~~
            \exists Q \in \Delta_\EEE;~
            X_\sharp Q = \ol\mu ~~\text{and}~~ Y_\sharp Q = \nu \\
            \nu^* &= \argmin_{\ol\nu}~ \Hdiv{\ol\nu}{\nu}
            ~~~~\text{subject to}~~~~
            \exists P \in \Delta_\EEE;~
            X_\sharp P = \mu ~~\text{and}~~ Y_\sharp P = \ol\nu.
        \end{align}
        Then $(v^k, w^k) \to (v^*, w^*)$ as $k \to \infty$.
        Moreover, these vectors satisfy
        \begin{equation}
            \forall (i, j) \in \EEE,~
            v^*_i + w^*_j \leq 0.
        \end{equation}
        \item $(U^{2k})_{k \geq 0}$ and $(U^{2k+1})_{k \geq 0}$ converge in $(\RR \cup \{-\infty\})^{m \times n}$ to two possibly distinct matrices $U^\infty_{\mathrm{even}}$, $U^\infty_{\mathrm{odd}}$ respectively.
        These limit matrices depend on $U^0$ and are characterized explicitly by \cite[Eq.~(3.2)]{baradat2024convergence}.%
        \footnote{Convergence of matrices in $(\RR \cup \{-\infty\})^{m \times n}$ can be understood as convergence of the component-wise exponentials. 
        The $P^*, Q^*, R$ appearing in \cite[Eq.~(3.2)]{baradat2024convergence} correspond in our notations to $(e^{U_{ij}} \mu_i \nu_j)_{ij}$ for $U=U^\infty_{\mathrm{odd}}, U^\infty_{\mathrm{even}}, U^0$ respectively.
        See also the first item of \autoref{lm:pik:def_Q*P*} for a restatement of this result.}
    \end{itemize}
\end{theorem}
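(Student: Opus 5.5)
The natural approach is to reformulate Sinkhorn on the primal side as alternating I-projections (KL projections) in $\Delta_\EEE$, and to use the Pythagorean identities of relative entropy. Write $\pi^k_{ij} = e^{U^k_{ij}}\mu_i\nu_j$ on $\EEE$. By \eqref{eq:prelim:upd_vwU}, for $k$ even $\pi^{k+1}$ is the I-projection of $\pi^k$ onto $\mathcal A = \{P \in \Delta_\EEE;~ X_\sharp P = \mu\}$. For $k$ odd, $\pi^{k+1}$ is the I-projection onto $\mathcal B = \{Q \in \Delta_\EEE;~ Y_\sharp Q = \nu\}$. Both sets are nonempty, since $G$ has no isolated vertex, but they may be disjoint: this is the non-scalable case.

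\textbf{Step 1: Pythagorean identities and decrease.} For each I-projection onto a linear family we have the exact identity $\Hdiv{R}{\pi^k} = \Hdiv{R}{\pi^{k+1}} + \Hdiv{\pi^{k+1}}{\pi^k}$ for every $R$ in the target set. We also have $\Hdiv{\pi^{k+1}}{\pi^k} = \Hdiv{\mu}{X_\sharp \pi^k}$, or its $Y$-analogue. From these identities the quantities $\Hdiv{\pi^{k+1}}{\pi^k}$ are nonincreasing, and they are bounded below by $D^* := \inf_{P\in\mathcal A, Q\in\mathcal B}\Hdiv{P}{Q}$. Fix a minimizing pair $(P^*,Q^*)$, obtained by a compactness argument on supports. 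Telescoping the Pythagorean identities with $R = P^*$ or $R = Q^*$ then gives $\sum_k \big(\Hdiv{\pi^{k+1}}{\pi^k} - D^*\big) < \infty$. Hence $\Hdiv{\pi^{k+1}}{\pi^k} \to D^*$. This is the Csiszár--Tusnády argument for alternating minimization.

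\textbf{Step 2: identify the limits.} By the data-processing inequality, $\Hdiv{P}{Q} \geq \Hdiv{Y_\sharp P}{Y_\sharp Q} = \Hdiv{Y_\sharp P}{\nu}$ for $P \in \mathcal A$ and $Q \in \mathcal B$. Equality holds when $Q$ is the $Y$-rescaling of $P$. This shows $D^* = \min \Hdiv{\ol\nu}{\nu}$ over $Y$-marginals of $\mathcal A$, and by strict convexity the minimizer is the unique $\nu^*$. The symmetric argument gives $\mu^*$. Next, use the compactness of $\Delta_\EEE$ and the fact that any limit point $(P,Q)$ of $(\pi^{2k+1},\pi^{2k+2})$ is optimal, which follows from Step 1 and lower semicontinuity. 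Combined with the strict convexity of $\ol\nu \mapsto \Hdiv{\ol\nu}{\nu}$, this gives $Y_\sharp \pi^{2k+1} \to \nu^*$ and $X_\sharp \pi^{2k} \to \mu^*$. Then \eqref{eq:prelim:rel_vk_Xpik} yields $w^k \to \log(\nu/\nu^*) = w^*$ and $v^k \to v^*$.

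\textbf{Step 3: the inequality and the convergence of $U^{2k}$.} Since $\pi^k_{ij} \le 1$, we have $U^k_{ij} \le -\log(\mu_i\nu_j)$ for all $k$. On the other hand $U^{k+2}_{ij} = U^k_{ij} + v^{k+1}_i + w^{k+2}_j$ for $k$ even. If $v^*_i + w^*_j > 0$ on some edge, then $U^k_{ij}$ would grow linearly and contradict this bound. So $v^*_i + w^*_j \le 0$ on $\EEE$. The convergence of $U^{2k}$ and $U^{2k+1}$ themselves, rather than only of their increments, is where I expect the main obstacle. Along the edges with $v^*_i + w^*_j < 0$, $U_{ij} \to -\infty$. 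On the tight edges one has to show that the affine drift cancels exactly and that the bounded remainders converge. For this I would use the stronger Pythagorean identity with $R = P^*$, which shows that $\Hdiv{P^*}{\pi^{2k+1}}$ is nonincreasing and therefore forces the full sequence to converge to a particular minimizer. That minimizer should depend on $U^0$ through a projection of $\pi^0$ onto the face of optimal pairs, matching \cite[Eq.~(3.2)]{baradat2024convergence}.
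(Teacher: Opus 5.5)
\textbf{The gap is in Step 1.} The sequence $\Hdiv{\pi^{k+1}}{\pi^k}$ interleaves two different problems. For $k$ even, $\pi^{k+1}\in\mathcal A$ and $\pi^k\in\mathcal B$, so $\Hdiv{\pi^{k+1}}{\pi^k}=\Hdiv{\mu}{X_\sharp\pi^k}\geq D^*$. For $k$ odd the roles are swapped: $\Hdiv{\pi^{k+1}}{\pi^k}=\Hdiv{\nu}{Y_\sharp\pi^k}$ has the form $\Hdiv{Q}{P}$ with $Q\in\mathcal B$, $P\in\mathcal A$. It converges to $\Hdiv{\nu}{\nu^*}=\Hdiv{\mu^*}{\mu}$, whereas the even terms converge to $\Hdiv{\mu}{\mu^*}=\Hdiv{\nu^*}{\nu}=D^*$, and these differ in general. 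Take $\EEE=\{(1,1),(2,1),(2,2)\}$, $\mu=(0.8,0.2)$, $\nu=(0.5,0.5)$. Then $\mu^*=(0.5,0.5)$, $\nu^*=(0.8,0.2)$, and the two limits are $\approx 0.193$ and $\approx 0.223$. So your sequence oscillates: it is not nonincreasing, and it does not converge to $D^*$.

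The forward Pythagorean identities alone never give monotonicity. A Csisz\'ar--Tusn\'ady chain needs the extra observation, via the chain rule, that each rescaling is also a \emph{reverse} I-projection: for instance, $\pi^{k+2}$ minimizes $\Hdiv{\pi^{k+1}}{Q}$ over $Q\in\mathcal B$. With that, the monotone chain is $\Hdiv{\pi^{2k+1}}{\pi^{2k}}\geq\Hdiv{\pi^{2k+1}}{\pi^{2k+2}}\geq\Hdiv{\pi^{2k+3}}{\pi^{2k+2}}$, with the $\mathcal A$-iterate always in the first slot.

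Step 2 inherits the error in two ways. Optimality of limit points of $(\pi^{2k+1},\pi^{2k+2})$ needs $\Hdiv{\pi^{2k+1}}{\pi^{2k+2}}=\Hdiv{Y_\sharp\pi^{2k+1}}{\nu}\to D^*$, which is the opposite ordering to what Step 1 asserts. And your ``symmetric argument'' for $\mu^*$ concerns the reversed problem, whose optimal value is $\Hdiv{\mu^*}{\mu}\neq D^*$.

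\textbf{How to repair it.} Keep the two orderings separate. This is how \cite{baradat2024convergence} argues; the paper imports the theorem, but reproduces that argument in the proof of \autoref{lm:apx_pf_qtve_BV24:slow_rate}.

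Take $R=P^*$, where $P^*$ is any element of $\mathcal A$ with $Y_\sharp P^*=\nu^*$. The column step is then not Pythagorean and contributes a correction term, giving for $k\geq 2$ even
\[
\Hdiv{P^*}{\pi^k}-\Hdiv{P^*}{\pi^{k+2}}=\big[\Hdiv{\mu}{X_\sharp\pi^k}-\Hdiv{\nu^*}{\nu}\big]+\Hdiv{\nu^*}{Y_\sharp\pi^{k+1}} .
\]
Both terms are nonnegative, so $\Hdiv{\nu^*}{Y_\sharp\pi^{2k+1}}\to 0$ directly, with no compactness argument needed.

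Symmetrically, telescope $\Hdiv{Q}{\pi^k}$ for $Q\in\Delta_\EEE$ with $X_\sharp Q=\mu^*$ and $Y_\sharp Q=\nu$. Using the data-processing bound $\Hdiv{\nu}{Y_\sharp\pi^{k+1}}\geq\Hdiv{X_\sharp\pi^{k+2}}{\mu}\geq\Hdiv{\mu^*}{\mu}$, this yields summability of $\Hdiv{\mu^*}{X_\sharp\pi^{2k}}$, hence $v^k\to v^*$.

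The rest of your proposal holds up. Your derivation of $v^*_i+w^*_j\leq 0$ from the bound $\pi^k_{ij}\leq 1$ is correct. Your Step 3 sketch also works once the identity above is available:
\begin{itemize}
\item The same computation shifted by one index shows that $\Hdiv{P^*}{\pi^{2k+1}}$ is nonincreasing for every such $P^*$, because $\Hdiv{\mu}{X_\sharp\pi^{2k+2}}\geq D^*$.
\item Choose $P^*$ to be a cluster point of $(\pi^{2k+1})$. Then $\Hdiv{P^*}{\pi^{2k+1}}\to 0$ along a subsequence, and by monotonicity along the whole sequence.
\item This gives convergence of $\pi^{2k+1}$, and of $\pi^{2k+2}$ by continuity of the column rescaling.
\end{itemize}
Your identification of the limit with \cite[Eq.~(3.2)]{baradat2024convergence} remains only a guess.
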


Note that the $\mu^*, \nu^*$ and $v^*, w^*$ defined in the first item of \autoref{thm:prelim:baradat_ventre} depend only on $\mu, \nu$, and $\EEE$. They do not depend on the specific coefficients of~$C$, nor on $\tau$, nor on the algorithm's initialization.
For ease of future reference, we denote them by
\begin{align}
    \mu^*(\EEE) 
    &\coloneqq \mu^*
    &
    \nu^*(\EEE) 
    &\coloneqq \nu^* \\
    v^*(\EEE) 
    &\coloneqq v^* 
    = \left( \log (\mu_i/\mu_i^*) \right)_i
    &
    w^*(\EEE) 
    &\coloneqq w^*
    = \left( \log (\nu_j/\nu_j^*) \right)_j,
\end{align}
leaving their dependency on $\mu$ and $\nu$ implicit.

Let us furthermore record the following facts, also from \cite{baradat2024convergence}.

\begin{theorem} \label{thm:prelim:baradat_ventre_sets}
    In the setting of the previous theorem, denote
    \begin{equation}
        \ol\SSS = \left\{ (i,j) \in \EEE;~ v^*_i + w^*_j = 0 \right\}
    \end{equation}
    and
    \begin{equation}
        \SSS_{\mathrm{even}}(U^0) = \left\{ (i,j);~ (U^\infty_{\mathrm{even}})_{ij} > -\infty \right\},
        \qquad
        \SSS_{\mathrm{odd}}(U^0) = \left\{ (i,j);~ (U^\infty_{\mathrm{odd}})_{ij} > -\infty \right\}.
    \end{equation}
    Then the sets $\SSS_{\mathrm{even}}(U^0)$ and $\SSS_{\mathrm{odd}}(U^0)$ are equal, and moreover they are in fact independent of $U^0$.
    That is, $\SSS \coloneqq \SSS_{\mathrm{even}}(U^0) = \SSS_{\mathrm{odd}}(U^0)$ depends only on $\mu, \nu$, and $\EEE$.%
    \footnote{Specifically, it follows from \cite[Eq.~(3.2)]{baradat2024convergence} that ${(i_0,j_0) \in \SSS \iff \exists Q \in \Delta_{\EEE}; X_\sharp Q = \mu^*, Y_\sharp Q = \nu, \text{ and } Q_{i_0 j_0}>0}$.}
    
    Moreover, $\SSS \subset \ol\SSS \subset \EEE$.
    (Both inclusions can be strict simultaneously in general.)
    
    Furthermore, the bipartite graph on $\{1 \dots m \} \sqcup \{1 \dots n\}$ with edge set $\SSS$ has no isolated vertex. A fortiori, the same is true for $\ol\SSS$.
\end{theorem}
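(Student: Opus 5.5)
The plan is to derive every claim from the explicit description of the limits in \autoref{thm:prelim:baradat_ventre}, and specifically from the structure of the two $I$-projections defining $\mu^*$ and $\nu^*$. In matrix-scaling language, the limits $U^\infty_{\mathrm{odd}}$ and $U^\infty_{\mathrm{even}}$ correspond to matrices $P^*$ and $Q^*$ (see the footnote to \autoref{thm:prelim:baradat_ventre}). These satisfy $X_\sharp P^* = \mu$, $Y_\sharp P^* = \nu^*$, $X_\sharp Q^* = \mu^*$ and $Y_\sharp Q^* = \nu$. Moreover, $Q^*$ is the $I$-projection of $R=(e^{U^0_{ij}}\mu_i\nu_j)$ onto the set of couplings supported in $\EEE$ with marginals $(\mu^*,\nu)$, and similarly for $P^*$. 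Then $\SSS_{\mathrm{even}}(U^0) = \operatorname{supp} Q^*$ and $\SSS_{\mathrm{odd}}(U^0) = \operatorname{supp} P^*$. This holds because the even and odd iterates differ only by the finite shifts $v^*_i$ and $w^*_j$ in the limit.

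\textbf{Equality and independence of $U^0$.} The first step is to use the classical fact (Csisz\'ar) about $I$-projections onto a linear family $\Pi$ of matrices supported in $\EEE$. When $R>0$ on all of $\EEE$, which holds here because $U^0$ is finite on $\EEE$, the projection has support equal to the maximal support $\{(i,j)\in\EEE;~\exists Q\in\Pi,\ Q_{ij}>0\}$. That maximal support depends only on $\Pi$, hence only on $(\mu^*,\nu,\EEE)$, which gives independence from $U^0$. Next I would show $\operatorname{supp}P^* = \operatorname{supp}Q^*$. The Sinkhorn structure makes $P^*$ a row rescaling of $Q^*$: $P^*_{ij} = Q^*_{ij} e^{v^*_i}$ up to the alternation, or equivalently $Q^*_{ij} = P^*_{ij}e^{w^*_j}$. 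A scaling by finite positive factors does not change supports, so this gives the equality. Alternatively, one can note that the limiting even and odd logits differ by the finite vectors $v^*$ and $w^*$. I expect this identification to be the delicate point: one needs that the limit of $U^{2k+1}-U^{2k}$, namely $v^{2k+1}$, is finite on the surviving entries. This follows since $v^k\to v^*\in\RR^m$.

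\textbf{Inclusions.} $\SSS\subset\EEE$ is immediate, because entries outside $\EEE$ are $-\infty$ from the start. For $\SSS\subset\ol\SSS$, take $(i,j)\in\SSS$. The entry $U^k_{ij}$ evolves by adding $v^{k}_i$ or $w^{k}_j$ alternately, so over two steps it changes by roughly $v^*_i+w^*_j\le 0$. If this quantity were strictly negative, then $U^{2k}_{ij}\to-\infty$ linearly in $k$, which contradicts finiteness of $(U^\infty_{\mathrm{even}})_{ij}$. Hence $v^*_i+w^*_j=0$. To make this rigorous I would use summability, or at least Ces\`aro convergence, of $v^k-v^*$. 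Convergence $v^k\to v^*$ already suffices to get $U^{2k}_{ij}-U^0_{ij} = k(v^*_i+w^*_j)+o(k)$.

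\textbf{No isolated vertex.} Row $i$ of $Q^*$ sums to $\mu^*_i$, and $\mu^*_i>0$: otherwise $\Hdiv{\mu^*}{\mu}$ would still be finite, but positivity follows from the feasibility of $\mu$-compatible structure and from $v^*_i=\log(\mu_i/\mu^*_i)$ being finite, as in \autoref{thm:prelim:baradat_ventre}. Column $j$ of $Q^*$ sums to $\nu_j>0$. Therefore every vertex has an edge in $\operatorname{supp}Q^*=\SSS$. Since $\SSS\subset\ol\SSS$, the same holds for $\ol\SSS$.
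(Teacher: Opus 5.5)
Your proof is correct and follows the route the paper indicates. The paper takes these facts from \cite{baradat2024convergence}, and its footnote identifies $\SSS$, via \cite[Eq.~(3.2)]{baradat2024convergence}, as the maximal support of couplings on $\EEE$ with marginals $(\mu^*,\nu)$, which is exactly your Csisz\'ar step. As a shortcut, the two rescalings $P^*=\mathrm{diag}(e^{v^*})Q^*$ and $Q^*=P^*\mathrm{diag}(e^{w^*})$ give $Q^*_{ij}=e^{v^*_i+w^*_j}Q^*_{ij}$, hence $\SSS\subset\ol\SSS$ without the Ces\`aro argument.

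Two small repairs. First, justify $\mu^*_i>0$ by $v^*_i\le-\log\nu_{\min}$ (\autoref{coroll:prelim:bound_v*_w*}); finiteness of $\Hdiv{\mu^*}{\mu}$ alone does not exclude $\mu^*_i=0$. Second, the parenthetical strictness claim needs an example, e.g.\ $\mu=(r/2,r/2,1-r)$, $\nu=(s/2,s/2,1-s)$ with $0<s<r<1$ and $\EEE=\{(1,1),(1,2),(2,2),(3,3),(3,1)\}$. There $\mu^*=\nu$ and $\nu^*=\mu$, so $\ol\SSS=\EEE\setminus\{(3,1)\}$ while $\SSS=\{(1,1),(2,2),(3,3)\}$.
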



\subsection{Three auxiliary lemmas} \label{subsec:prelim:auxlemmas}

We now prove three new lemmas related to the Sinkhorn algorithm in the case with infinite costs.
The first one clarifies the relation between the sets $\SSS, \ol\SSS, \EEE$ appearing in \autoref{thm:prelim:baradat_ventre_sets}.

\begin{lemma} \label{lm:prelim:equal_mu*}
    Let $\mu, \nu, C, \EEE$ be as in \autoref{thm:prelim:baradat_ventre} and let
    $\SSS, \ol\SSS$ be as in \autoref{thm:prelim:baradat_ventre_sets}.
    Then
    \begin{equation}
        \mu^*(\SSS) = \mu^*(\ol \SSS) = \mu^*(\EEE)
        \qquad \text{and} \qquad
        \nu^*(\SSS) = \nu^*(\ol \SSS) = \nu^*(\EEE).
    \end{equation}
\end{lemma}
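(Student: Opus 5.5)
The plan is a monotonicity (sandwich) argument on the feasible sets of the projection problems defining $\mu^*(\cdot)$ and $\nu^*(\cdot)$. The key observation is that these problems are monotone in the edge set. For edge sets $\AAA \subset \BBB$, any $Q \in \Delta_\AAA$ extends by zero to an element of $\Delta_\BBB$ with the same marginals. Hence the feasible set $\{X_\sharp Q;\ Q \in \Delta_\AAA,\ Y_\sharp Q = \nu\}$ for $\AAA$ is contained in the one for $\BBB$. Both sets are convex and compact, and $\Hdiv{\cdot}{\mu}$ is strictly convex, so both minimizers exist and are unique. Since $\SSS \subset \ol\SSS \subset \EEE$ by \autoref{thm:prelim:baradat_ventre_sets}, it suffices to show that $\mu^*(\EEE)$ is feasible for the problem defining $\mu^*(\SSS)$.

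\textbf{Feasibility for $\mu$.} By the footnote of \autoref{thm:prelim:baradat_ventre_sets} (that is, \cite[Eq.~(3.2)]{baradat2024convergence}), there exists $Q \in \Delta_\EEE$ with $X_\sharp Q = \mu^*(\EEE)$ and $Y_\sharp Q = \nu$. One could define $\SSS$ as the union of the supports of all such $Q$. Taking a convex combination of the $Q$'s witnessing each $(i_0,j_0) \in \SSS$ then gives a single $Q^\circ$ with the same marginals and $\mathrm{supp}(Q^\circ) = \SSS$. Alternatively, the limit $e^{U^\infty_{\mathrm{even}}}\mu\nu$ itself serves: by the restatement in \autoref{thm:prelim:baradat_ventre}, it has marginals $(\mu^*,\nu)$ and support exactly $\SSS_{\mathrm{even}} = \SSS$. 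Either way, $Q^\circ \in \Delta_\SSS$ shows that $\mu^*(\EEE)$ is feasible for the $\SSS$-problem. Writing $\mathcal{C}(\AAA)$ for the feasible set attached to edge set $\AAA$, we get
\begin{equation}
    \Hdiv{\mu^*(\EEE)}{\mu} = \min_{\mathcal{C}(\EEE)} \Hdiv{\cdot}{\mu} \leq \min_{\mathcal{C}(\ol\SSS)} \Hdiv{\cdot}{\mu} \leq \min_{\mathcal{C}(\SSS)} \Hdiv{\cdot}{\mu} \leq \Hdiv{\mu^*(\EEE)}{\mu},
\end{equation}
so all these quantities are equal. Since $\mu^*(\EEE)$ lies in $\mathcal{C}(\SSS) \subset \mathcal{C}(\ol\SSS)$ and attains each minimum, uniqueness of the minimizers gives $\mu^*(\SSS) = \mu^*(\ol\SSS) = \mu^*(\EEE)$.

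\textbf{Feasibility for $\nu$.} Here I use the odd limit: $e^{U^\infty_{\mathrm{odd}}}\mu\nu$ has marginals $(\mu, \nu^*(\EEE))$ and, by \autoref{thm:prelim:baradat_ventre_sets}, support $\SSS_{\mathrm{odd}} = \SSS$. The same sandwich argument then applies verbatim. This step is where the equality $\SSS_{\mathrm{even}} = \SSS_{\mathrm{odd}}$ from \autoref{thm:prelim:baradat_ventre_sets} is essential.

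\textbf{Main obstacle.} The delicate point is justifying that the limiting plans have the claimed marginals. Odd iterates have exact $Y$-marginal $\nu$, and their $X$-marginals converge to $\mu e^{-v^*} = \mu^*$ by \eqref{eq:prelim:rel_vk_Xpik} and $v^k \to v^*$. Marginals pass to the limit because the component-wise exponentials converge in the finite-dimensional simplex. Which parity (even or odd) corresponds to $(\mu^*,\nu)$ versus $(\mu,\nu^*)$ must be matched with the conventions; the argument is symmetric, so this matching does not affect the proof.
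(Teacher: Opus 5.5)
Your proof is correct, but it reaches the key equality $\mu^*(\SSS)=\mu^*(\EEE)$ by a different route from the paper.

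\textbf{The paper's route.} The paper argues dynamically. It restarts Sinkhorn at $U^\infty_{\mathrm{even}}$ and observes that this is a fixed point of the two-step update, with increments $(v^*(\EEE),w^*(\EEE))$. Since the finite pattern of $U^\infty_{\mathrm{even}}$ is $\SSS$, it applies \autoref{thm:prelim:baradat_ventre} a second time, using that the limits do not depend on the initialization, to conclude $(v^*(\SSS),w^*(\SSS))=(v^*(\EEE),w^*(\EEE))$. Only then does it use the same monotonicity sandwich you use, to fit $\ol\SSS$ in between.

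\textbf{Your route.} You stay with the static projection problems. You exhibit a plan in $\Delta_\SSS$ with marginals $(\mu^*(\EEE),\nu)$, so $\mu^*(\EEE)$ is feasible for the $\SSS$-problem, and uniqueness makes the whole chain collapse at once. This is more elementary and makes the structure visible: any edge set lying between the support of such a coupling and $\EEE$ gives the same projection. The cost is that you rely on the static characterization of $\SSS$ in the footnote of \autoref{thm:prelim:baradat_ventre_sets} (or on the marginals of the limit plans). The paper needs only the fixed-point property.

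\textbf{Smaller remarks.}
\begin{itemize}
\item The convex combination is unnecessary. By the ``iff'' in that footnote, any single $Q\in\Delta_\EEE$ with marginals $(\mu^*(\EEE),\nu)$ already lies in $\Delta_\SSS$.
\item For $\nu$, the paper gets $w^*(\SSS)=w^*(\EEE)$ for free from the same fixed point. You instead use the odd limit and the equality $\SSS_{\mathrm{even}}=\SSS_{\mathrm{odd}}$. You could avoid both by rescaling your even witness: set $P_{ij}=Q^\circ_{ij}e^{v^*_i}=Q^\circ_{ij}e^{-w^*_j}$ on $\SSS\subset\ol\SSS$. This plan has marginals $(\mu,\nu^*(\EEE))$.
\item Your parity statement in the last paragraph is reversed relative to the paper's convention. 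In the paper it is the even iterates that have exact $Y$-marginal $\nu$, which matches your own earlier use of $U^\infty_{\mathrm{even}}$. As you note, this does not affect the argument.
\end{itemize}
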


\begin{proof}
    Recall from \autoref{subsec:prelim:bg} that the Sinkhorn algorithm can be expressed purely in terms of the increments $v^k, w^k$ and the logits $U^k$ via the update rule \eqref{eq:prelim:upd_vwU}.
    Further recall from the introduction of \autoref{subsec:prelim:infinite_costs} that this formulation of the algorithm is valid both for $C \in \RR^{m \times n}$ and for $C \in (\RR \cup \{\infty\})^{m \times n}$, provided that one sets $\exp(-\infty) = 0$.

    By \autoref{thm:prelim:baradat_ventre}, $v^*(\EEE)$ and $w^*(\EEE)$ are characterized as the limits of $v^k, w^k$ when the algorithm is initialized at any $U^0$ such that $\left\{ (i,j);~ U^0_{ij} > -\infty \right\} = \EEE$.
    Now,
    \begin{itemize}
        \item Consider the iterates $(v^k, w^k, U^{2k}, U^{2k+1})_k$ of the algorithm initialized at $U_0 = (f^0_i + g^0_j - C_{ij})/\tau$ for some $f^0 \in \RR^m, g^0 \in \RR^n$. Then $(v^k, w^k, U^{2k}, U^{2k+1}) \to (v^*(\EEE), w^*(\EEE), U^\infty_{\mathrm{even}}, U^\infty_{\mathrm{odd}})$.
        \item Consider the iterates $(\tv^k, \tw^k, \tU^{2k}, \tU^{2k+1})_k$ of the algorithm initialized at $\tU^0 = U^\infty_{\mathrm{even}}$. Then by definition of $U^\infty_{\mathrm{even}}$, the iterates stay constant: $(\tv^k, \tw^k, \tU^{2k}, \tU^{2k+1}) = (v^*(\EEE), w^*(\EEE), U^\infty_{\mathrm{even}}, U^\infty_{\mathrm{odd}})$ for all $k$. On the other hand, since $\left\{ (i,j);~ \tU^0_{ij} > -\infty \right\} = \left\{ (i,j);~ (U^\infty_{\mathrm{even}})_{ij} > -\infty \right\} = \SSS$, we have $\lim_{k \to \infty} (\tv^k, \tw^k) = (v^*(\SSS), w^*(\SSS))$. Thus, $v^*(\EEE) = v^*(\SSS)$ and $w^*(\EEE) = w^*(\SSS)$.
    \end{itemize}
    This shows that $\mu^*(\SSS) = \mu^*(\EEE)$ and $\nu^*(\SSS) = \nu^*(\EEE)$.

    By definition of
    $\mu^*(\EEE) = \argmin_{\ol\mu} \Hdiv{\ol\mu}{\mu}$
    subject to
    $\exists Q \in \Delta_\EEE;~ X_\sharp Q = \ol\mu$ and $Y_\sharp Q = \nu$,
    since $\SSS \subset \ol\SSS \subset \EEE$, then
    \begin{equation}
        \Hdiv{\mu^*(\SSS)}{\mu}
        \geq \Hdiv{\mu^*(\ol\SSS)}{\mu}
        \geq \Hdiv{\mu^*(\EEE)}{\mu}.
    \end{equation}
    Now since $\mu^*(\SSS) = \mu^*(\EEE)$, the above inequalities are actually equalities.
    In particular, $\mu^*(\ol\SSS)$ is (feasible and) optimal for the optimization problem defining $\mu^*(\EEE)$, and so $\mu^*(\ol\SSS) = \mu^*(\EEE)$.
    A similar reasoning shows that $\nu^*(\ol\SSS) = \nu^*(\EEE)$.
\end{proof}

Our second result shows that the rescaled one-iteration increments of the Sinkhorn algorithm are bounded uniformly after
the first two updates.
Importantly, the bound depends only on $\mu_{\min}$ and $\nu_{\min}$, and is independent of $C$ and $\tau$.
As an immediate consequence, we also get a bound on the limiting increments.
Even in the case of finite transport costs, this lemma appears to be new.

\begin{lemma} \label{lm:prelim:unifbound_vk_wk}
    Let $\mu, \nu, C, \EEE$ be as in \autoref{thm:prelim:baradat_ventre}.
    Let the rescaled one-iteration increments $v^k, w^k$ and the logits $U^k$ of the Sinkhorn algorithm be defined as in \eqref{eq:prelim:def_vk_wk}, \eqref{eq:prelim:def_Uk}.
    Then we have, uniformly over the initialization $U^0_{ij} = (f^0_i+g^0_j-C_{ij})/\tau$ of the algorithm, 
    \begin{equation}
        \forall i,
        \forall k \geq 3,~~
        \log \mu_{\min} 
        \leq v^k_i 
        \leq -\log \nu_{\min} 
        \qquad \text{and} \qquad
        \forall j,
        \forall k \geq 4,~~
        \log \nu_{\min} 
        \leq w^k_j
        \leq -\log \mu_{\min}.
    \end{equation}
    Meanwhile, for the first two iterations, denoting $\ol\delta = \max_{i'j'} U^0_{i'j'}$ and $\ul\delta = -\min_{(i',j') \in \EEE} U^0_{i'j'}$,
    \begin{equation}
        \forall i,~~
        -\ol\delta
        \leq v^2_i = v^1_i
        \leq -\log \nu_{\min} + \ul\delta
        \qquad \text{and}\qquad
        \forall j,~~
        \log \nu_{\min} 
        \leq w^3_j = w^2_j
        \leq -\log \mu_{\min} + \ul\delta + \ol\delta.
    \end{equation}
\end{lemma}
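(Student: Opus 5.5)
The plan is to work directly with the update rule \eqref{eq:prelim:upd_vwU} in primal terms. I would express each increment as a log-ratio of a target marginal to the current marginal, as in \eqref{eq:prelim:rel_vk_Xpik}, and then use two normalization facts. First, for $k\geq 1$ we have $Z_\tau(f^k,g^k)=1$, so $\pi^k$ is a probability matrix on $\EEE$; this gives every marginal entry the trivial upper bound $1$. Second, after an $f$-update (odd $k$) $\pi^k$ has row marginal exactly $\mu$, and after a $g$-update (even $k\geq 2$) it has column marginal exactly $\nu$. Throughout, all sums are restricted to $\EEE$ with the convention $\exp(-\infty)=0$, so the argument is insensitive to infinite costs.

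\textbf{Bounds on $v^{k+1}$ for $k\geq 2$ even.} Write $r_i=(X_\sharp \pi^k)_i$, so that $v^{k+1}_i=\log(\mu_i/r_i)$.
\begin{itemize}
\item Lower bound: since $r_i\leq 1$, we get $v^{k+1}_i\geq \log\mu_i\geq\log\mu_{\min}$.
\item Upper bound: the key step is to look one iteration back. We have $\pi^k_{ij}=\pi^{k-1}_{ij}e^{w^k_j}$ with $e^{w^k_j}=\nu_j/c_j$, where $c_j=(Y_\sharp\pi^{k-1})_j\leq 1$. Hence $e^{w^k_j}\geq\nu_{\min}$.
\item Since $k-1\geq 1$ is odd, $\pi^{k-1}$ has row marginal $\mu$. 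Therefore $r_i\geq \nu_{\min}\sum_j\pi^{k-1}_{ij}=\nu_{\min}\mu_i$, which gives $v^{k+1}_i\leq-\log\nu_{\min}$.
\end{itemize}
This covers all odd indices $k+1\geq 3$. For even indices the update rule gives $v^{k+2}=v^{k+1}$, so the bound holds for all $k\geq 3$.

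\textbf{Bounds on $w^{k+1}$ for $k$ odd.} The argument is symmetric with rows and columns exchanged.
\begin{itemize}
\item Lower bound: this needs only that $\pi^k$ is a probability matrix, which holds for $k\geq1$, and gives $w^{k+1}_j\geq\log\nu_{\min}$.
\item Upper bound: this needs $\pi^{k-1}$ to have column marginal $\nu$, which holds only when $k-1\geq 2$.
\end{itemize}
This is exactly why the $w$-statement starts at $k\geq 4$ rather than $k\geq 3$. I expect this index bookkeeping to be the only delicate point: one must check which normalizations are genuinely available at each step, especially because $\pi^0$ is not normalized.

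\textbf{First two iterations.} Here I would argue directly from \eqref{eq:prelim:def_v[U]_w[U]}, using $v^2=v^1$ and $w^3=w^2$ from the update rule.
\begin{itemize}
\item Lower bound on $v^1$: since $U^0\leq\ol\delta$, we have $\sum_j e^{U^0_{ij}}\nu_j\leq e^{\ol\delta}$, hence $v^1_i\geq-\ol\delta$.
\item Upper bound on $v^1$: row $i$ has at least one neighbor $j$ in $\EEE$, where $U^0_{ij}\geq-\ul\delta$. So the sum is at least $\nu_{\min}e^{-\ul\delta}$, giving $v^1_i\leq-\log\nu_{\min}+\ul\delta$.
\item Lower bound on $w^2$: $\pi^1$ is normalized, so $e^{U^1_{ij}}\mu_i\nu_j$ summed over $i$ is at most $1$. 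Thus $\sum_i e^{U^1_{ij}}\mu_i\leq 1/\nu_j$, and $w^2_j\geq\log\nu_j\geq\log\nu_{\min}$.
\item Upper bound on $w^2$: on $\EEE$ we have $U^1_{ij}=U^0_{ij}+v^1_i\geq-\ul\delta-\ol\delta$, using the bound just obtained. Column $j$ has a neighbor in $\EEE$, so the column sum is at least $\mu_{\min}e^{-\ul\delta-\ol\delta}$, which gives the stated bound.
\end{itemize}
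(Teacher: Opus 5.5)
Your proof is correct and follows the paper's argument essentially step for step. The paper writes $-v^{k+1}_i=\log\sum_j e^{U^{k-1}_{ij}+w^k_j}\nu_j\geq\min_{j'}w^k_{j'}\geq\log\nu_{\min}$, using the row normalization after the preceding $f$-update; this is exactly your primal-marginal bound $r_i\geq\nu_{\min}\mu_i$, with the same index bookkeeping, and your treatment of the first two iterations matches the paper's.
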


\begin{proof}
    For the lower bounds, as we remarked in \eqref{eq:prelim:rel_vk_Xpik}, we have
    \begin{equation*}
        \forall k \geq 1 ~\text{even},~
        v^{k+1}_i = \log \left( \mu_i / (X_\sharp \pi^k)_i \right)
        \geq \log \mu_i
        \geq \log \mu_{\min}
    \end{equation*}
    and symmetrically for $w^{k+1}$ for $k \geq 1$ odd
    ($k=0$ is excluded because $Z_\tau(f^0, g^0) \neq 1$ in general).
    Hence the lower bound on $v^k$ for $k \geq 3$, resp.\ on $w^k$ for $k \geq 2$.

    For the upper bounds, by definition of the update \eqref{eq:prelim:upd_vwU},
    \begin{align*}
        \forall k \geq 0 ~\text{even},~~
        \forall i,~
        & \sum_j e^{U^{k+1}_{ij}} \nu_j 
        = \sum_j e^{U^k_{ij} + v^{k+1}_i} \nu_j
        = 1 \\
        \forall k \geq 0 ~\text{odd},~~
        \forall j,~
        & \sum_i e^{U^{k+1}_{ij}} \mu_i 
        = \sum_i e^{U^k_{ij} + w^{k+1}_j} \mu_i 
        = 1
    \end{align*}
    (still with the convention $\exp(-\infty) = 0$ so that the terms with $(i,j) \not\in \EEE$ equal $0$).
    So for any $i$,
    \begin{align*}
        \forall k \geq 2 ~\text{even},~~
        -v^{k+1}_i
        = \log \sum_j e^{U^k_{ij}}\, \nu_j
        &= \log \sum_j e^{U^{k-1}_{ij} + w^k_j}\, \nu_j \\
        &\geq \log \bigg( e^{\min_{j'} w^k_{j'}} \sum_j e^{U^{k-1}_{ij}} \nu_j \bigg)
        = \min_{j'} w^k_{j'}
        \geq \log \nu_{\min}.
    \end{align*}
    Symmetrically,
    $\forall k \geq 3$ odd,
    $-w^{k+1}_j 
    \geq \min_{i'} v^k_{i'}
    \geq \log \mu_{\min}$.
    Hence the upper bound on $v^k$ for $k \geq 3$, resp.\ on $w^k$ for $k \geq 4$.

    It only remains to show the bounds for $v^2=v^1$ and the upper bound for $w^3=w^2$.
    For $v^1$,
    \begin{equation}
        -\max_{i'j'} U^0_{i'j'}
        \leq v^1_i 
        = -\log \sum_j e^{U^0_{ij}} \nu_j
        \leq -\log \left(
            \min_{(i',j') \in \EEE} e^{U^0_{i'j'}} ~
            \nu_{\min}
        \right)
        = -\log \nu_{\min}
        - \min_{(i',j') \in \EEE} U^0_{i'j'}.
    \end{equation}
    For $w^2$, similarly, since
    $\min_{(i,j) \in \EEE} U^1_{ij} 
    \geq \min_{(i,j) \in \EEE} U^0_{ij} + \min_i v^1_i$,
    \begin{equation}
        w^2_j 
        \leq -\log \mu_{\min}
        - \min_{\EEE} U^1
        \leq -\log \mu_{\min} 
        - \min_{\EEE} U^0
        + \max U^0,
    \end{equation}
    as announced.
\end{proof}

\begin{corollary} \label{coroll:prelim:bound_v*_w*}
    For any $\mu \in \Delta_m, \nu \in \Delta_n$ such that $\mu_{\min}, \nu_{\min}>0$, for any $\EEE \subset \{1 \dots m \} \times \{1 \dots n\}$ such that the bipartite graph with edge set $\EEE$ has no isolated vertex, we have
    \begin{equation}
        \forall i,~~ 
        \log \mu_{\min} \leq v^*_i(\EEE) \leq -\log \nu_{\min} 
        \qquad \text{and} \qquad
        \forall j,~~ 
        \log \nu_{\min} \leq w^*_j(\EEE) \leq -\log \mu_{\min}.
    \end{equation}
    In particular,~
    $
        \min_i \mu^*_i(\EEE),~
        \min_j \nu^*_j(\EEE)
        \geq \mu_{\min} \nu_{\min}
    $.
\end{corollary}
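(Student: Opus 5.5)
The corollary follows by passing to the limit $k \to \infty$ in \autoref{lm:prelim:unifbound_vk_wk}. Given $\mu, \nu$ and $\EEE$ with no isolated vertex, I will choose a cost matrix realizing $\EEE$: set $C_{ij} = 0$ for $(i,j) \in \EEE$ and $C_{ij} = +\infty$ otherwise. I will run the Sinkhorn algorithm from any initialization, say $f^0 = 0$, $g^0 = 0$, and any $\tau > 0$. The assumptions of \autoref{thm:prelim:baradat_ventre} then hold, so $(v^k, w^k) \to (v^*(\EEE), w^*(\EEE))$. Since $v^*(\EEE)$ and $w^*(\EEE)$ depend only on $\mu$, $\nu$ and $\EEE$, this particular choice of $C$ and initialization loses no generality.

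Next I apply \autoref{lm:prelim:unifbound_vk_wk}. For all $k \geq 4$ it gives $\log \mu_{\min} \leq v^k_i \leq -\log \nu_{\min}$ and $\log \nu_{\min} \leq w^k_j \leq -\log \mu_{\min}$. These bounds are closed conditions and hold for every large $k$, so they pass to the limit and give the two displayed inequalities.

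For the last claim I use $v^*_i = \log(\mu_i/\mu^*_i)$, so that $\mu^*_i = \mu_i e^{-v^*_i}$. The upper bound $v^*_i \leq -\log \nu_{\min}$ gives $e^{-v^*_i} \geq \nu_{\min}$, hence $\mu^*_i \geq \mu_i \nu_{\min} \geq \mu_{\min}\nu_{\min}$. Symmetrically, $w^*_j \leq -\log \mu_{\min}$ gives $\nu^*_j = \nu_j e^{-w^*_j} \geq \nu_{\min}\mu_{\min}$.

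There is no real obstacle. The only point to check is that a cost matrix with support exactly $\EEE$ always exists, so that \autoref{thm:prelim:baradat_ventre} and \autoref{lm:prelim:unifbound_vk_wk} apply. This is the case by construction, and the lemma's bounds are uniform in $C$ and $\tau$ anyway.
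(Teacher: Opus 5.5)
Your proposal is correct and follows the paper's proof: pass to the limit $k \to \infty$ in the uniform bounds of \autoref{lm:prelim:unifbound_vk_wk}, then rewrite $\mu^*_i = \mu_i e^{-v^*_i} \geq \mu_i \nu_{\min}$ (and symmetrically for $\nu^*_j$). Your explicit choice of a cost matrix with finite-cost pattern exactly $\EEE$ fills in a step the paper leaves implicit.
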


\begin{proof}
    Take the limit $k \to \infty$ in the inequalities of \autoref{lm:prelim:unifbound_vk_wk}.
    %
    %
    The lower bound on the $\mu^*_i(\EEE)$ follows by noting that
    $
        \forall i,\,
        \log(\mu_i / \mu^*_i(\EEE)) 
        = v^*_i(\EEE)
        \leq -\log \nu_{\min}
        \implies
        \log \mu^*_i(\EEE) 
        \geq \log (\mu_i \nu_{\min})
        \geq \log (\mu_{\min} \nu_{\min})
    $,
    and likewise for the $\nu^*_j(\EEE)$.
\end{proof}

Our third lemma controls the discrepancy between the Sinkhorn iterates for an EOT problem with high but finite costs, and those for the problem with infinite costs.

\begin{lemma} \label{lm:prelim:control_finite_infinite_costs}
    Let $(f^k, g^k)_k$ be the iterates of the Sinkhorn algorithm applied to the EOT problem~\eqref{eq:intro:EOT}.
    Let $\EEE \subset \{1 \dots m\} \times \{1 \dots n\}$ such that the bipartite graph with edge set $\EEE$ has no isolated vertex,
    and let $(\tf^k, \tg^k)_k$ be the iterates of the Sinkhorn algorithm applied to the EOT problem with the same target marginals $\mu, \nu$ but with the cost matrix
    \begin{equation*}
        \tC \in (\RR \cup \{\infty\})^{m \times n},
        \qquad
        \tC_{ij} = \begin{cases}
            C_{ij} ~~\text{if}~ (i,j) \in \EEE \\
            +\infty ~~\text{otherwise},
        \end{cases}
    \end{equation*}
    and with an initialization $(\tf^0, \tg^0)$ possibly different from $(f^0, g^0)$.
    Let $\delta$ and $(M^k)_{k \geq 0}$ be such that
    \begin{equation}
        \forall (i,j) \in \EEE,~
        -\delta \leq \left( \tf^0_i + \tg^0_j - C_{ij} \right) / \tau \leq \delta
        \quad~ \text{and} \quad~
        \forall (i,j) \not\in \EEE, \forall k,~~
        \left( f^k_i + g^k_j - C_{ij} \right) / \tau \leq -M^k.
    \end{equation}
    Further denote 
    $\tv^1 = (\tf^1-\tf^0)/\tau$,\,
    $\tw^2 = (\tg^2-\tg^0)/\tau$.
    Then $\Delta^k = \norm{f^k-\tf^k}_\infty \vee \norm{g^k-\tg^k}_\infty$ satisfies
    \begin{align}
        \forall k \geq 2,~~~
        \Delta^k 
        &\leq \Delta^0
        + \tau \left[
            e^{(\max \tv^1) - M^0}
            + e^{(\max \tw^2) - M^1}
            + (\mu_{\min} \wedge \nu_{\min})^{-1}
            \sum_{l=3}^{k-1} e^{-M^l}
        \right] \\
        &\leq \Delta^0 
        + \tau\, (\mu_{\min} \wedge \nu_{\min})^{-1} \left[
            e^{2\delta} (e^{-M^0} + e^{-M^1})
            + \sum_{l=3}^{k-1} e^{-M^l}
        \right].
    \end{align}
\end{lemma}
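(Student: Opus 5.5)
The plan is to compare the two Sinkhorn runs one half-step at a time. Each soft-$C$-transform is $1$-Lipschitz in sup-norm, which propagates the previous error $\Delta^k$ without amplification. The only new error comes from the entries $(i,j) \notin \EEE$, which exist for the finite-cost run but are absent for the infinite-cost run. Summing these per-step contributions and telescoping from $\Delta^0$ gives the claimed bound. By symmetry between the $f$- and $g$-updates, it suffices to treat an even step $k \to k+1$ (an $f$-update); the odd steps are identical with the roles of $\mu, \nu$ exchanged.

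For $k$ even, write
$e^{-f^{k+1}_i/\tau} = A_i + B_i$, where
\[
A_i = \sum_{j:(i,j)\in\EEE} e^{[-C_{ij}+g^k_j]/\tau}\nu_j
\quad\text{and}\quad
B_i = \sum_{j:(i,j)\notin\EEE} e^{[-C_{ij}+g^k_j]/\tau}\nu_j,
\]
while $e^{-\tf^{k+1}_i/\tau} = \tA_i$ is the analogous sum over $\EEE$ built from $\tg^k$.
\begin{itemize}
\item Since $B_i \ge 0$, log-sum-exp monotonicity and $1$-Lipschitzness give the upper bound $f^{k+1}_i \le -\tau\log A_i \le \tf^{k+1}_i + \norm{g^k-\tg^k}_\infty$.
\item For the lower bound, $f^{k+1}_i = -\tau\log A_i - \tau\log(1+B_i/A_i) \ge \tf^{k+1}_i - \Delta^k - \tau B_i/A_i$, using $\log(1+x)\le x$.
\end{itemize}
So the one-step estimate is $\Delta^{k+1} \le \Delta^k + \tau\max_i B_i/A_i$. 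One then sums over steps, using $f^{k+1}=f^k$ on alternate steps.

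The key step is bounding the leakage ratio $B_i/A_i$ by $e^{-M^l}$ times a constant independent of $C$ and $\tau$.
\begin{itemize}
\item \emph{Numerator.} Using the hypothesis $U^k_{ij}\le -M^k$ off $\EEE$, we get $B_i = e^{-f^k_i/\tau}\sum_{j\notin} e^{U^k_{ij}}\nu_j \le e^{-f^k_i/\tau}e^{-M^k}$.
\item \emph{Denominator.} To keep $\Delta^k$ out of the exponent, I would avoid comparing $A_i$ with $\tA_i$. Instead I would lower-bound $A_i$ by the $\EEE$-part of the previous normalization. After the previous $g$-update, the column normalizations $\sum_{i'} e^{U^{k}_{i'j}}\mu_{i'}=1$ together with $\mu_{\min}$ give $A_i \gtrsim \mu_{\min}e^{-f^k_i/\tau}$ up to the same exponentially small leakage. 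Alternatively, one can write $B_i/A_i \le \frac{B_i/(A_i+B_i)}{1-B_i/(A_i+B_i)}$ and use $B_i/(A_i+B_i)=\sum_{j\notin}e^{U^{k+1}_{ij}}\nu_j$.
\end{itemize}
Either way the target is $B_i/A_i \le (\mu_{\min}\wedge\nu_{\min})^{-1}e^{-M^{l}}$ for the appropriate index $l$, which produces the sum $\sum_{l=3}^{k-1}$. This is where the uniform increment bounds of \autoref{lm:prelim:unifbound_vk_wk} enter: they convert the exponential factors $e^{(f^{k+1}_i-f^k_i)/\tau}=e^{v^{k+1}_i}$ into $\nu_{\min}^{-1}$, and symmetrically into $\mu_{\min}^{-1}$.

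The first two half-steps must be handled separately, since the increment bounds of \autoref{lm:prelim:unifbound_vk_wk} only hold from $k\ge3$. There I would express the ratio through the tilde run, giving the factors $e^{\max\tv^1-M^0}$ and $e^{\max\tw^2-M^1}$. The second inequality of the statement then follows from the first-iteration bounds of \autoref{lm:prelim:unifbound_vk_wk} applied to the infinite-cost run. These give $\max\tv^1\le-\log\nu_{\min}+\delta$ and $\max\tw^2\le-\log\mu_{\min}+2\delta$, since $\ol\delta,\ul\delta\le\delta$ on $\EEE$.

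The main obstacle is the denominator in the key step: obtaining a lower bound on $A_i$ (or an upper bound on $B_i/A_i$) without letting $\Delta^k/\tau$ appear in an exponent. The index bookkeeping, namely which $M^l$ pairs with which step, also has to match the stated range $3\le l\le k-1$. I would first try the normalization-identity route, since it uses only quantities from the finite-cost run.
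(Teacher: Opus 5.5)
\textbf{There is a gap in the key step.} Your one-step inequality $\Delta^{k+1}\le\Delta^k+\tau\max_i B_i/A_i$ is correct, but the way you reach it is what makes the rest fail. You split $\log\frac{A_i+B_i}{\widetilde A_i}=\log\frac{A_i}{\widetilde A_i}+\log(1+B_i/A_i)$ and spend $\Delta^k/\tau$ on the first term. This forces you to control the leakage relative to the finite-cost run's own on-$\EEE$ mass, $B_i/A_i=\sum_{j:(i,j)\notin\EEE}e^{U^k_{ij}}\nu_j\big/\sum_{j:(i,j)\in\EEE}e^{U^k_{ij}}\nu_j$. That ratio is not bounded by a constant times $e^{-M^k}$.

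The lemma imposes no smallness on $M^k$ and no relation between $C$ and $\EEE$, so the on-$\EEE$ mass can be arbitrarily small while $e^{-M^k}=O(1)$. For example, take $m=n=2$, $\mu=\nu$ uniform, $\EEE$ the diagonal, $C_{ii}=L$, $C_{ij}=0$ for $i\neq j$, and a symmetric initialization. Then $g^k_1=g^k_2$ for all $k$, so $B_i/A_i=e^{L/\tau}$, whereas $e^{-M^k}\le 4$ for $k\ge1$. Hence your target $B_i/A_i\le(\mu_{\min}\wedge\nu_{\min})^{-1}e^{-M^k}$ is false in general.

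Both of your denominator routes show this. The normalization argument only gives $B_i/A_i\le e^{-M^k}/(\nu_{\min}-e^{-M^k})$. (The lower bound $\sum_j e^{U^k_{ij}}\nu_j\ge\nu_{\min}$ comes from the previous row normalization together with $w^k\ge\log\nu_{\min}$, not from the column one.) The $p/(1-p)$ route needs $p$ bounded away from $1$. So at best you get a conditional version, valid when $e^{-M^k}\le\nu_{\min}/2$, with an extra factor $2$. For the first two half-steps, bounding $B_i/A_i$ ``through the tilde run'' inside your split costs $A_i\ge e^{-\Delta^0/\tau}\widetilde A_i$ on top of $e^{(\tf^0_i-f^0_i)/\tau}$. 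That is a factor $e^{2\Delta^0/\tau}$, exactly what you wanted to avoid.

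\textbf{The fix.} Having $\Delta^k/\tau$ in an exponent is harmless when it multiplies both terms inside the logarithm. Do not split off $\log(A_i/\widetilde A_i)$; instead normalize the leakage by the infinite-cost run's normalizer. Since $1/\widetilde A_i=e^{\tf^{k+1}_i/\tau}=e^{\tf^k_i/\tau}e^{\tv^{k+1}_i}$,
\[
\frac{A_i+B_i}{\widetilde A_i}\le e^{\Delta^k/\tau}+e^{\tv^{k+1}_i}e^{(\tf^k_i-f^k_i)/\tau}\sum_{j:(i,j)\notin\EEE}e^{U^k_{ij}}\nu_j\le e^{\Delta^k/\tau}\bigl(1+e^{\max\tv^{k+1}-M^k}\bigr).
\]
This gives $\Delta^{k+1}\le\Delta^k+\tau e^{\max\tv^{k+1}-M^k}$ for every $M^k$ and every $k$. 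For $k=0,1$ it directly produces the terms $e^{\max\tv^1-M^0}$ and $e^{\max\tw^2-M^1}$. The increment bounds of \autoref{lm:prelim:unifbound_vk_wk} are then applied to the infinite-cost run, which is a genuine Sinkhorn run on a pattern without isolated vertices. This is the paper's proof; your upper bound and your derivation of the second inequality agree with it.

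\textbf{Index bookkeeping.} The step $k=2\to3$ contributes $e^{\max\tv^3-M^2}\le\nu_{\min}^{-1}e^{-M^2}$, since $v^3\le-\log\nu_{\min}$ by \autoref{lm:prelim:unifbound_vk_wk}. So the telescoped bound naturally has $\sum_{l=2}^{k-1}$. The paper's own argument silently drops this term, so the lower limit $3$ in the statement appears to be an off-by-one. This is harmless downstream, where the sum is bounded by $\sum_{l=0}^{k-1}$, so do not try to force the range to start at $3$.
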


\begin{proof}
    Let $\tv^k, \tw^k$ denote the rescaled one-iteration increments of $(\tf^k, \tg^k)$ as defined in \eqref{eq:prelim:def_vk_wk}.
    For any $k \geq 0$ even, for any $i$, by definition of the Sinkhorn update,
    \begin{align}
        \left( f^{k+1}_i - \tf^{k+1}_i \right) / \tau
        &= -\log \frac{
            \sum_j\, e^{[g^k_j-C_{ij}]/\tau} \nu_j
        }{
            \sum_{j: (i,j) \in \EEE}\, e^{[\tg^k_j-C_{ij}]/\tau} \nu_j
        } \\
        &= -\log \frac{
            \sum_{j: (i,j) \in \EEE}
            ~
            e^{(g^k_j-\tg^k_j)/\tau} ~ 
            e^{[\tg^k_j-C_{ij}]/\tau} \nu_j
            + \sum_{j: (i,j) \not\in \EEE}\,
            e^{[g^k_j-C_{ij}]/\tau} \nu_j
        }{
            \sum_{j: (i,j) \in \EEE}\, e^{[\tg^k_j-C_{ij}]/\tau} \nu_j
        } \\
        &= -\log \left(
            e^{\lambda/\tau}
            + \frac{e^{(\tf^k_i - f^k_i)/\tau}}{\sum_{j: (i,j) \in \EEE}\, e^{[\tf^k_i+\tg^k_j-C_{ij}]/\tau} \nu_j}
            ~\sum_{j: (i,j) \not\in \EEE}
            e^{[f^k+g^k_j-C_{ij}]/\tau} \nu_j
        \right) \\
        &= -\log \left(
            e^{\lambda/\tau}
            + e^{\tv^{k+1}_i} \, 
            e^{(\tf^k_i - f^k_i)/\tau}
            \sum_{j: (i,j) \not\in \EEE}
            e^{[f^k+g^k_j-C_{ij}]/\tau} \nu_j
        \right)
    \end{align}
    for some
    \begin{equation}
        \min_{j: (i,j) \in \EEE}\,
        (g^k_j-\tg^k_j)
        \leq \lambda \leq
        \max_{j: (i,j) \in \EEE}\,
        (g^k_j-\tg^k_j),
    \end{equation}
    where in the last line we used that 
    $\tv^{k+1}_i = -\log \sum_{j: (i,j) \in \EEE} e^{[\tf^k_i + \tg^k_j - C_{ij}]/\tau} \nu_j$
    as noted in \eqref{eq:prelim:upd_vwU}.
    So
    \begin{equation}
        \left( f^{k+1}_i - \tf^{k+1}_i \right) / \tau
        \leq -\log( e^{\lambda/\tau} + 0)
        = -\lambda/\tau
        \leq \norm{\tg^k-g^k}_\infty / \tau
    \end{equation}
    and in the other direction,
    \begin{align}
        -\left( f^{k+1}_i - \tf^{k+1}_i \right) / \tau
        &\leq 
        \log \left(
            e^{\lambda /\tau}
            + e^{\tv^{k+1}_i} \,
            e^{(\tf^k_i - f^k_i)/\tau} \,
            e^{-M^k}
        \right) \\
        &\leq \log \left(
            e^{\norm{g^k-\tg^k}_\infty/\tau}
            + e^{(\max_{i'} \tv^{k+1}_{i'})}
            e^{-M^k}
            \cdot e^{\norm{\tf^k - f^k}_\infty/\tau}
        \right) \\
        &\leq \left(\norm{f^k-\tf^k}_\infty \vee \norm{g^k-\tg^k}_\infty \right)\! / \tau
        ~+~ \log\left( 1 + e^{(\max_{i'} \tv^{k+1}_{i'})} e^{-M^k} \right) \\
        &\leq \left(\norm{f^k-\tf^k}_\infty \vee \norm{g^k-\tg^k}_\infty \right)\! / \tau
        ~+~ e^{(\max_{i'} \tv^{k+1}_{i'})} e^{-M^k}.
    \end{align}
    Thus, by the analogous computation for $k$ odd, denoting $\Delta^k = \norm{f^k-\tf^k}_\infty \vee \norm{g^k-\tg^k}_\infty$,
    \begin{align}
        \forall k \geq 0 ~\text{even},~~
        \Delta^{k+1}
        &\leq \Delta^k 
        + \tau \,e^{(\max \tv^{k+1})} e^{-M^k} \\
        \forall k \geq 0 ~\text{odd},~~
        \Delta^{k+1}
        &\leq \Delta^k 
        + \tau \,e^{(\max \tw^{k+1})} e^{-M^k}.
    \end{align}
    Now by \autoref{lm:prelim:unifbound_vk_wk}, for all $k \geq 3$, 
    $\max \tv^{k+1}, \max \tw^{k+1} \leq -\log (\mu_{\min} \wedge \nu_{\min})$.
    Hence, as announced,
    \begin{equation}
        \forall k \geq 2,~
        \Delta^k
        \leq \Delta^0 
        + \tau \left[ 
            e^{(\max \tv^1) - M^0}
            + e^{(\max \tw^2) - M^1}
            + (\mu_{\min} \wedge \nu_{\min})^{-1}
            \sum_{l=3}^{k-1}  e^{-M^l}
        \right].
    \end{equation}

    To get the more explicit bound in the second inequality of the lemma,
    use that under the stated assumption on $\tf^0, \tg^0$, 
    by \autoref{lm:prelim:unifbound_vk_wk},
    $\max_i \tv^1_i \leq -\log \nu_{\min} + \delta$
    and
    $\max_j \tw^2_j \leq -\log \mu_{\min} + 2 \delta$.
\end{proof}

\subsection{Quantitative convergence bounds for Sinkhorn with infinite costs} \label{subsec:prelim:qtve_BV24}

As a final preliminary, we now state two quantitative versions of the convergence in \autoref{thm:prelim:baradat_ventre}: a polynomial convergence bound in $\tO(1/k)$ in the general case, and an exponential convergence bound under an additional scalability assumption.
The proofs are delayed to \autoref{sec:apx_pf_qtve_BV24}.

Our result for the general case is as follows.
Its proof makes crucial use of several insights from \cite{baradat2024convergence}---notably the reduction to the asymptotically scalable case implicit in their Proposition~5.3 (see \autoref{lm:apx_pf_qtve_BV24:control_E_olS}, \autoref{lm:apx_pf_qtve_BV24:prop5.3_BV24})---but also of a recent result by the author in \cite{wang2026almost} to get the $k^{-1} \log k$ convergence rate. 
Let us mention that the arguments used in the proof of \cite[Theorem~3.2]{baradat2024convergence} only allow to show a rate of $k^{-1/2}$ (see \autoref{lm:apx_pf_qtve_BV24:slow_rate}), which would be insufficient for our later purposes.

\begin{proposition} \label{prop:prelim:qtve_BV24}
    Let $\mu, \nu, C, \EEE$ be as in \autoref{thm:prelim:baradat_ventre}.
    Let $\delta \geq 0$ and consider any initialization 
    $U^0 = \left( (f^0_i + g^0_j - C_{ij})/\tau \right)_{ij} \in (\RR \cup \{-\infty\})^{m \times n}$
    of the Sinkhorn algorithm such that
    \begin{equation}
        \forall (i,j) \in \EEE,~
        -\delta \leq U^0_{ij} \leq \delta
        \qquad \text{and} \qquad
        \forall (i,j) \not\in \EEE,~
        U^0_{ij} = -\infty.
    \end{equation}
    Then the rescaled one-iteration increments $v^k, w^k$ satisfy
    \begin{equation}
        \forall k \geq K_0 (1+\delta),~~
        \norm{v^k - v^*(\EEE)}_\infty,~
        \norm{w^k - w^*(\EEE)}_\infty
        \leq B\, \frac{1 + \delta + \log k}{k}
    \end{equation}
    for some constants $K_0, B$ dependent only on $\mu, \nu$, and $\EEE$.
\end{proposition}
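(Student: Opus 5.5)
The approach is to reduce to the asymptotically scalable support $\SSS$ (or $\ol\SSS$), where a fast rate is available, and to control separately the mass that Sinkhorn places on $\EEE \setminus \SSS$. This follows the strategy implicit in \cite[Proposition~5.3]{baradat2024convergence}.

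\textbf{Step 1: drift of the logits off $\ol\SSS$.} By \autoref{lm:prelim:equal_mu*}, $v^*(\EEE)=v^*(\SSS)=v^*(\ol\SSS)$, and likewise for $w^*$, so it suffices to compare with the restricted problem. Using the slow $O(k^{-1/2})$ rate obtainable from the arguments of \cite[Theorem~3.2]{baradat2024convergence}, I will show that after $K_0(1+\delta)$ iterations:
\begin{itemize}
    \item the increments are within a fixed margin $\eta$ of $(v^*,w^*)$;
    \item on $\EEE\setminus\ol\SSS$ we have $v^*_i+w^*_j\le -2\eta'<0$, and $\eta'$ depends only on $\mu,\nu,\EEE$ since finitely many $\EEE$-derived quantities are involved.
\end{itemize}
Since $U^k_{ij}=U^0_{ij}+\sum_{l\le k}(v^l_i\text{ or }w^l_j)$, the logits $U^k_{ij}$ for $(i,j)\in\EEE\setminus\ol\SSS$ then decrease at least linearly in $k$: $U^k_{ij}\le \delta + C' - \eta' k$. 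The bounds of \autoref{lm:prelim:unifbound_vk_wk} keep the transient at $O(1+\delta)$.

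\textbf{Step 2: transfer to $\ol\SSS$ via \autoref{lm:prelim:control_finite_infinite_costs}.} Starting at some $k_1\asymp K_0(1+\delta)$, I compare the iterates with those of Sinkhorn on $\ol\SSS$ started from the current iterate. The cost $C$ here may be infinite, but the lemma's proof only uses the update rule. With $M^l\gtrsim \eta'(l-k_1)$, the lemma gives a perturbation in the $U$-scale that is summable and geometrically small: $\sum_l e^{-M^l}=O(1)$. The increments of the two runs differ by at most the difference of consecutive $\Delta$'s. Bounding this by a tail sum $\sum_{l\ge k} e^{-M^l}\lesssim e^{-\eta'(k-k_1)}$ requires a version of the lemma comparing increments rather than iterates. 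I will redo its recursion one step at a time, as in the displayed per-step bound $\Delta^{k+1}\le\Delta^k+\tau e^{\max \tv}e^{-M^k}$, using the $1$-Lipschitz property of $v[\cdot]$ and $w[\cdot]$ in sup-norm.

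\textbf{Step 3: rate on $\ol\SSS$.} The reduced problem has support $\ol\SSS$ and is asymptotically scalable in the sense of \cite{baradat2024convergence}: its limiting marginals are $\mu^*,\nu^*$, and $\SSS\subset\ol\SSS$ carries a plan with those marginals. After the shift by $(v^*,w^*)$, which is constant on $\ol\SSS$ because $v^*_i+w^*_j=0$ there, Sinkhorn on $\ol\SSS$ becomes matrix scaling of a scalable-in-the-limit pattern. Here I invoke \cite{wang2026almost}, via the potential-function analysis of \cite{kalantari2008complexity,allen2017much}, to obtain $\|v^k-v^*\|_\infty\le B(\log(\text{initial potential})+\log k)/k$. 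The initial logits at time $k_1$ are bounded by $O(1+\delta)$ using \autoref{lm:prelim:unifbound_vk_wk}, which yields the $1+\delta+\log k$ numerator. Re-indexing from $k_1$ to $k$ costs only a constant factor once $k\ge 2k_1$.

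\textbf{Main obstacle.} I expect Step 3 to be hardest. The result of \cite{wang2026almost} is likely stated as a dual-objective or KL-marginal bound, and converting it into an $\ell_\infty$ bound on the increments $v^k$ at rate $k^{-1}$, not $k^{-1/2}$, is delicate. My first attempt uses \eqref{eq:prelim:rel_vk_Xpik}: $v^{k+1}_i=\log(\mu_i/(X_\sharp\pi^k)_i)$, so an $\ell_\infty$ bound on the marginal error of $\pi^k$ relative to $\mu^*$ suffices. That marginal error is then controlled by the ($k^{-1}\log k$) decrease of the potential between consecutive iterates through Pinsker-free, monotone summation arguments, since the per-step KL decrease equals the dual gain.
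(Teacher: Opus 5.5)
\textbf{There is a genuine gap in Step~2.} Your overall architecture matches the paper's: the slow rate gives linear decay of the logits on $\EEE\setminus\ol\SSS$; you compare with the run restricted to $\ol\SSS$ via \autoref{lm:prelim:control_finite_infinite_costs}; you then apply \cite{wang2026almost} to the restricted run.

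The problem is the decay you need. You need the increment discrepancy between the full run and the $\ol\SSS$-restricted run, restarted at a \emph{fixed} time $k_1\asymp K_0(1+\delta)$, to decay like $e^{-\eta'(k-k_1)}$. Neither of your justifications gives this.
\begin{itemize}
\item The quantity $\|v^k-\ol v^k\|_\infty=\tau^{-1}\|(f^k-\ol f^k)-(f^{k-2}-\ol f^{k-2})\|_\infty$ is not bounded by $\Delta^k-\Delta^{k-2}$. A difference of norms only bounds the norm of a difference from below.
\item The one-step recursion with the $1$-Lipschitz property of $v[\cdot]$ yields $\|v^{k+1}-\ol v^{k+1}\|_\infty\le\max_{(i,j)\in\ol\SSS}|U^k_{ij}-\ol U^k_{ij}|+O(e^{-M^k})$. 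Here the first term is the \emph{accumulated} discrepancy $\lesssim\sum_{l=k_1}^{k-1}e^{-M^l}\asymp e^{-M^{k_1}}$. It is dominated by the earliest perturbations, not by a tail sum, and it does not decay in $k$.
\end{itemize}

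This is not just a bookkeeping artifact. The two runs in general converge to different limiting logits on $\ol\SSS$, since these limits depend on the initialization. So the logit discrepancy persists and feeds into every later increment. Any decay of $v^k-\ol v^k$ would have to come from a contraction property of the restricted dynamics. Your sketch does not supply one, and you should not expect it at an exponential rate when $\SSS\neq\ol\SSS$, since the restricted run then converges only polynomially. With $k_1$ fixed, you are left with an error term that is small but constant in $k$, and hence no rate.

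\textbf{The missing idea is to let the restart time depend on the target iteration $k$.} In the paper, \autoref{lm:apx_pf_qtve_BV24:control_E_olS} proves only a uniform-in-$k$ bound: $\|v^k-\ol v^{(k_0)k}\|_\infty\le B_2e^{-\Delta\gamma/4}$ for all $k\ge k_0=K_2(1+\delta)+\Delta$. The proof of the proposition then chooses $\Delta\approx\frac4\gamma\log k$, so this term is $O(1/k)$. The price is that the logits at the restart time are only bounded by $O(1+\delta+k_0)=O(1+\delta+\log k)$, via \autoref{lm:prelim:unifbound_vk_wk}. This is why \autoref{lm:apx_pf_qtve_BV24:asymp_scalable_logk/k} tracks the dependence on $k_0$ in its numerator. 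Since $k-k_0\ge k/2$, this costs only an additive $\log k$.

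\textbf{Two smaller corrections to Step~3.}
\begin{itemize}
\item Adding $v^*_i+w^*_j$ to the logits is a no-op on $\ol\SSS$. Moreover, there is in general no plan on $\ol\SSS$ with marginals $(\mu^*,\nu^*)$. The reduction that makes \cite{wang2026almost} applicable is \autoref{lm:apx_pf_qtve_BV24:prop5.3_BV24}. It shifts the even logits by $v^*_i$ alone and replaces $\mu$ by $\mu^*$, giving an asymptotically scalable problem with marginals $(\mu^*,\nu)$; symmetrically, the odd iterates give $(\mu,\nu^*)$.
\item Once that reduction is in place, the obstacle you anticipated disappears. \cite{wang2026almost} directly gives an $\ell_1$ marginal-error bound at rate $k^{-1}\log k$. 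This converts to the $\ell_\infty$ bound on the increments through \eqref{eq:prelim:rel_vk_Xpik} and the lower bound on $X_\sharp\tpi^k$ from \autoref{lm:prelim:unifbound_vk_wk}, with no square-root loss.
\end{itemize}
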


Before stating our second result, let us first show an equivalence between equality of the sets $\SSS, \ol\SSS$ appearing in \autoref{thm:prelim:baradat_ventre_sets} and certain exact scalability conditions.
These equivalences follow essentially immediately from the definitions, but we find it useful to record them for future reference.

\begin{lemma} \label{lm:prelim:equiv_exact_scalability}
    Let $\mu, \nu, C, \EEE$ be as in \autoref{thm:prelim:baradat_ventre} and $\SSS, \ol\SSS$ be as in \autoref{thm:prelim:baradat_ventre_sets}.
    Let $\mu^* = \mu^*(\EEE) = \mu^*(\ol\SSS) = \mu^*(\SSS)$, 
    $\nu^* = \nu^*(\EEE) = \nu^*(\ol\SSS) = \nu^*(\SSS)$
    by \autoref{lm:prelim:equal_mu*}.
    Let $\ol A \in \RR_+^{m \times n}$ be any nonnegative matrix such that $\left\{ (i,j);~ \ol A_{ij}>0 \right\} = \ol\SSS$.
    Then $\olA$ is asymptotically $(\mu^*, \nu)$-scalable 
    and asymptotically $(\mu, \nu^*)$-scalable
    in the sense of \cite[Theorem~4.2]{idel2016review}. 
    Moreover, the following statements are equivalent:
    \begin{enumerate}[noitemsep]
        \item[(i)] $\SSS = \ol\SSS$.
        \item[(ii)] The matrix $\olA$ is exactly $(\mu^*, \nu)$-scalable in the sense of \cite[Theorem~4.1]{idel2016review}.
        \item[(iii)] The matrix $\olA$ is exactly $(\mu, \nu^*)$-scalable.
    \end{enumerate}
\end{lemma}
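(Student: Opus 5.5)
The plan rests on the standard scaling theorems of \cite{idel2016review}, combined with the characterizations of $\mu^*,\nu^*,\SSS$ recalled from \cite{baradat2024convergence}. Recall that a nonnegative matrix $A$ is \emph{exactly} $(r,c)$-scalable iff there exists a matrix $B$ with the \emph{same} support as $A$ and marginals $(r,c)$. It is \emph{asymptotically} scalable iff there exists such a $B$ whose support is \emph{contained} in that of $A$. Equivalently, the latter means that the support pattern admits some transport plan with marginals $(r,c)$.

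Asymptotic scalability is immediate from this description. By definition of $\mu^*(\ol\SSS)$, there is $Q\in\Delta_{\ol\SSS}$ with $X_\sharp Q=\mu^*(\ol\SSS)$ and $Y_\sharp Q=\nu$. By \autoref{lm:prelim:equal_mu*} we have $\mu^*(\ol\SSS)=\mu^*$, so the support of $Q$ lies inside $\ol\SSS=\mathrm{supp}\,\olA$. Hence $\olA$ is asymptotically $(\mu^*,\nu)$-scalable, and symmetrically with $P\in\Delta_{\ol\SSS}$ for $(\mu,\nu^*)$.

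For (i)$\Leftrightarrow$(ii), I would use the characterization of $\SSS$ from the footnote to \autoref{thm:prelim:baradat_ventre_sets}:
\[
(i_0,j_0)\in\SSS \iff \exists Q\in\Delta_\EEE \text{ with } X_\sharp Q=\mu^*,\ Y_\sharp Q=\nu,\ Q_{i_0j_0}>0.
\]
Averaging finitely many such $Q$'s, one per edge, gives a single $Q$ in this feasible set with support exactly $\SSS$. Since the feasible set is convex, $\SSS$ is the maximal support of plans $Q\in\Delta_\EEE$ with marginals $(\mu^*,\nu)$.

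Now suppose (i), $\SSS=\ol\SSS$. Then this $Q$ has support exactly $\ol\SSS=\mathrm{supp}\,\olA$, which gives exact scalability, i.e.\ (ii).

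Conversely, suppose (ii). Then there is $Q$ with support exactly $\ol\SSS\subset\EEE$ and marginals $(\mu^*,\nu)$. By the characterization above, every edge of $\ol\SSS$ lies in $\SSS$. Combined with $\SSS\subset\ol\SSS$, this gives (i).

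The main subtlety is (iii), because the footnote characterizes $\SSS$ only through plans with marginals $(\mu^*,\nu)$. I need the symmetric statement: $(i_0,j_0)\in\SSS$ iff some $P\in\Delta_\EEE$ with marginals $(\mu,\nu^*)$ has $P_{i_0j_0}>0$.

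I would derive this from the symmetry of the setup. Swapping the roles of $f$ and $g$, i.e.\ transposing, turns the Sinkhorn algorithm into the same algorithm up to a shift of one iteration. By \autoref{thm:prelim:baradat_ventre_sets}, $\SSS_{\mathrm{even}}=\SSS_{\mathrm{odd}}$, so this shift does not change $\SSS$. Consequently the transposed version of the footnote also holds, with $(\mu^*,\nu)$ replaced by $(\mu,\nu^*)$.

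An alternative, if that symmetry argument feels too loose, is more explicit. The limits $U^\infty_{\mathrm{even}}$ and $U^\infty_{\mathrm{odd}}$ correspond to the plans $Q^*$ and $P^*$ of \cite[Eq.~(3.2)]{baradat2024convergence}, with marginals $(\mu^*,\nu)$ and $(\mu,\nu^*)$ respectively. Both have support $\SSS$. Moreover, $P^*$ arises as a diagonal rescaling of $Q^*$, which preserves support. This lets me transfer the maximal-support property between the two families of plans.

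With the symmetric characterization in hand, (i)$\Leftrightarrow$(iii) follows by exactly the same argument as (i)$\Leftrightarrow$(ii).
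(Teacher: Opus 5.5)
Your proposal is correct and follows essentially the paper's proof. Both arguments combine the support characterization of exact versus asymptotic scalability with the description of $\SSS$ as the maximal support of plans on $\EEE$ with marginals $(\mu^*,\nu)$, resp.\ $(\mu,\nu^*)$. The only difference is that the paper asserts the $(\mu,\nu^*)$ version as ``likewise'' from \cite[Eq.~(3.2)]{baradat2024convergence}, whereas you derive it from the transposition symmetry together with $\SSS_{\mathrm{even}}=\SSS_{\mathrm{odd}}$; that extra step is valid.
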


\begin{proof}
    By definition \cite{idel2016review}, a matrix $A'$ with pattern $\left\{ (i,j);~ A'_{ij}>0 \right\} = \SSS'$ is exactly $(\mu', \nu')$-scalable if and only if
    there exists $Q \in \Delta_{\SSS'}$ such that $X_\sharp Q = \mu'$, $Y_\sharp Q = \nu'$, and $Q_{ij}>0$ for all $(i,j) \in \SSS'$.
    Asymptotic scalability is defined in the same way except the condition on the positivity of entries in $\SSS'$ is removed.
    The first part of the lemma, on asymptotic $(\mu^*, \nu)$- and $(\mu, \nu^*)$-scalability of $\ol A$, then follows directly from the definition of $\mu^*(\ol\SSS)$ and $\nu^*(\ol\SSS)$.
    
    For the second part of the lemma, note that by \cite[Eq.~(3.2)]{baradat2024convergence}, we have
    $(i_0,j_0) \in \SSS \iff \exists Q \in \Delta_{\EEE};~ X_\sharp Q = \mu^*, Y_\sharp Q = \nu$, and $Q_{i_0 j_0}>0$, and likewise with $(\mu, \nu^*)$ instead of $(\mu^*, \nu)$.
    The claimed equivalences follow immediately.
\end{proof}

Our second result is that the convergence $(v^k, w^k) \to (v^*(\EEE), w^*(\EEE))$ is actually exponential if $\SSS = \ol\SSS$.
At a high level, this is in line with the intuition that the Sinkhorn algorithm converges linearly when, and only when, the underlying problem is exactly scalable \cite{soules1991rate,achilles1993implications}.

\begin{proposition} \label{prop:prelim:qtve_BV24_exp}
    In the same setting as \autoref{prop:prelim:qtve_BV24}, additionally suppose $\SSS = \ol\SSS$, where $\SSS, \ol\SSS$ are the sets defined in \autoref{thm:prelim:baradat_ventre_sets}.
    Then
    \begin{equation}
        \forall k \geq K_0 (1+\delta),~~
        \norm{v^k - v^*(\EEE)}_\infty,~
        \norm{w^k - w^*(\EEE)}_\infty
        \leq B (1+\delta) \left( 1-e^{-R(1+\delta)} \right)^k
    \end{equation}
    for some constants $K_0, B, R$ dependent only on $\mu, \nu$, and $\EEE$.
\end{proposition}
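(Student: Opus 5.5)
Under the assumption $\SSS = \ol\SSS$, the plan is to reduce to the classical exactly-scalable setting and then invoke a linear-rate analysis of Sinkhorn (Hilbert-metric contraction or a local strong convexity argument). The proof has three steps.

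\textbf{Step 1: localize to $\ol\SSS$.} The entries $(i,j) \in \EEE \setminus \ol\SSS$ satisfy $v^*_i + w^*_j < 0$, with a gap $\gamma>0$ depending only on $\mu,\nu,\EEE$. Using $U^{k}_{ij} = U^0_{ij} + \tau^{-1}\cdot$(sum of increments) and \autoref{prop:prelim:qtve_BV24}, after $k \geq K_0(1+\delta)$ iterations we get $U^k_{ij} \leq \delta - c\gamma k + O(\log^2 k)$ on these entries. So they decay linearly in $k$, while by \autoref{lm:prelim:unifbound_vk_wk} the entries on $\ol\SSS$ stay within $O(1+\delta)$ of their initial values. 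Next I compare with the Sinkhorn iterates for the restricted pattern $\ol\SSS$, started from the current iterate. This uses \autoref{lm:prelim:control_finite_infinite_costs} with $\EEE$ replaced by $\ol\SSS$ and $M^l \gtrsim \gamma l$. The discrepancy in $(f,g)$ is then a geometrically summable tail $\sum_{l \geq k} e^{-c\gamma l}$. Converting this into a bound on the increments needs some care, so I would rather redo the one-step comparison directly on $v,w$: the extra mass $\sum_{j \notin \ol\SSS} e^{U^k_{ij}}\nu_j \le e^{-c\gamma k}$ perturbs $v^{k+1}_i$ by at most $O(e^{-c\gamma k})$.

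\textbf{Step 2: linear convergence on the exactly scalable pattern.} By \autoref{lm:prelim:equiv_exact_scalability}, the pattern $\ol\SSS$ is exactly $(\mu^*,\nu)$- and $(\mu,\nu^*)$-scalable. Moreover $v^*,w^*$ satisfy $v^*_i+w^*_j=0$ on $\ol\SSS$, i.e.\ $v^*$ and $-w^*$ are constant on each connected component of $\ol\SSS$. Subtracting these constant drifts, the Sinkhorn iterates on $\ol\SSS$ become Sinkhorn iterates for the exactly scalable problem with marginals $(\mu^*,\nu)$ and $(\mu,\nu^*)$, in alternation, on each component. Scalability means each component is balanced: $\mu^*$-mass equals $\nu$-mass, and $\mu$-mass equals $\nu^*$-mass. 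For exactly scalable problems, Sinkhorn converges linearly in Hilbert's projective metric (Franklin--Lorenz), with contraction factor $\tanh(\Delta/4)$, where $\Delta$ is the projective diameter of the kernel on each component. The kernel entries are $e^{U_{ij}}$, and their logs have spread $O(1+\delta)$, so $1-\tanh(\Delta/4) \gtrsim e^{-R(1+\delta)}$.

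There is a catch: Franklin--Lorenz requires a positive kernel, while $\ol\SSS$ may be sparse. I would handle this by applying the contraction to the two-step map, or to a power of it. The component of $\ol\SSS$ is connected, so a product of $\mathrm{diam}$ such kernels is positive on the component, and its log-spread is still $O(1+\delta)$. Alternatively, following \cite{knight2008sinkhorn,qu2025sinkhorn}, one can use local strong convexity of the dual restricted to the affine set orthogonal to the gauge directions. The Hessian bounds there are $e^{\pm O(1+\delta)}$, and I would use the dual-gap bound from \autoref{prop:prelim:qtve_BV24} to enter the local regime.

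\textbf{Step 3: combine.} The errors of $(v^k,w^k)$ follow the recursion $e_{k+1} \leq (1-e^{-R(1+\delta)})\,e_k + O(e^{-c\gamma k})$. Unrolling it gives the bound $B(1+\delta)(1-e^{-R'(1+\delta)})^k$ after shrinking $R$. The initial error is $O(1+\delta)$ by \autoref{lm:prelim:unifbound_vk_wk}, which explains the prefactor $1+\delta$.

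\textbf{Main obstacle.} The hardest step is Step 2 for the non-positive pattern: getting a contraction rate that depends only on $(\mu,\nu,\EEE)$ and $\delta$ requires controlling the Birkhoff diameter uniformly along the iteration. Here the relevant quantity is the spread of $U^k$ on $\ol\SSS$ modulo gauge. I would bound it using the fact that the iterates converge to a finite limit $U^\infty$ (since $\SSS=\ol\SSS$), quantitatively with spread $O(1+\delta)$, via the KL-projection characterization in \cite[Eq.~(3.2)]{baradat2024convergence}.
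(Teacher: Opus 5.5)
Your architecture matches the paper's: localize to $\ol\SSS$ using the gap $\gamma$, shift by $v^*$ to get exactly scalable Sinkhorn with marginals $(\mu^*,\nu)$ and $(\mu,\nu^*)$ as in \autoref{lm:apx_pf_qtve_BV24:prop5.3_BV24}, then apply a linear rate $1-e^{-R(1+\delta)}$. However, the step that makes the rate depend only on $\delta$ is missing.

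In Step~1 you claim that, by \autoref{lm:prelim:unifbound_vk_wk}, the entries of $U^k$ on $\ol\SSS$ stay within $O(1+\delta)$ of their initial values. That lemma only bounds each increment by $O(1)$. So it gives $|U^k_{ij}-U^0_{ij}|\le O(1+\delta)+O(k)$, and a contraction factor computed from that spread is $1-e^{-R(1+\delta+k)}$, which is useless.

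Your ``main obstacle'' paragraph sees this issue. But bounding the spread of the limit $U^\infty$ via the KL-projection characterization controls only the endpoint. You need the bound at every step where the contraction is invoked, along the perturbed trajectory.

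The missing ingredient is nonexpansiveness. For an exactly scalable problem, $\norm{f^k-f^*}_\infty\vee\norm{g^k-g^*}_\infty$ is non-increasing along Sinkhorn, since soft $C$-transforms are monotone and commute with constants. Moreover, the optimal potentials are $O(1+\delta)$ in terms of the spread of the initial kernel on the pattern \cite[Proposition~2.7, Lemma~1.7]{wang2026almost}. Together these give the first inequality of \autoref{coroll:apx_pf_qtve_BV24:QGGU25}.

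The paper applies this to the dynamics restricted to $\ol\SSS$ started at $k_2=K_2(1+\delta)$. After $k_2$ the entries on $\EEE\setminus\ol\SSS$ stay below an absolute constant, and the spread at $k_2$ is still $O(1+\delta)$. It then transfers the bound to the true iterates at all later times through \autoref{lm:apx_pf_qtve_BV24:control_E_olS} with $\Delta=0$ (deviation at most $B_2$). This yields $\max_{\ol\SSS}|U^{k_0}|\le B''(1+\delta)$ uniformly in $k_0$.

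Second, your fix for sparse $\ol\SSS$ in the Hilbert-metric route does not work. The Sinkhorn map composes the linear maps $K,K^\top$ with coordinatewise inversions. Birkhoff's bound for the composite is the product of the coefficients $\tanh(\Delta/4)$ of the linear factors, and a single zero entry in a connected pattern makes $\Delta=\infty$, i.e.\ coefficient $1$. The $p$-fold iterate is not the linear map with kernel $(KK^\top)^p$, so positivity of kernel products is beside the point.

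Your other option is the right one. \cite[Theorem~3]{qu2025sinkhorn}, with $\lambda_2(\LLL[A])\ge\min_{\ol\SSS}A_{ij}\,\lambda_2(\LLL[\bmone_{\ol\SSS}])$, handles connected sparse patterns. Its rate depends on $\sup_k\max_{ij}|f^k_i+g^k_j|$, i.e.\ again on the uniform bound above.

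That theorem also contracts the dual gap, which is a state quantity, not $\norm{v^k-v^*}_\infty$. So a recursion like your Step~3 must be run on that gap and converted to increments at the end via \cite[Lemma~2]{altschuler2017near} and Pinsker. It should start at $k_2$, where the perturbations become $O(e^{-c\gamma(k-k_2)})$ with an absolute prefactor.

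The paper avoids perturbed recursions altogether. It compares with the restricted dynamics started at $k_0\approx K_2(1+\delta)+k/2$, giving $B_2e^{-\gamma k/8}+B_8(1+\delta)\bigl(1-e^{-R_8(1+\delta)}\bigr)^{k/2-K_2(1+\delta)}$. It then uses that $\bigl(1-e^{-R_8(1+\delta)}\bigr)^{-K_2(1+\delta)}$ is bounded uniformly in $\delta$.
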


\section{The cold Sinkhorn dynamics} \label{sec:CS}

In this section, we introduce the cold Sinkhorn dynamics, which is the limiting dynamics of the Sinkhorn algorithm in the regime of small regularization $\tau$.
We start by deriving it informally from the Sinkhorn algorithm in \autoref{subsec:CS:informal}.
Then we give its formal definition in \autoref{subsec:CS:formaldef}, and we analyze its properties in \autoref{subsec:CS:ppties}.

\subsection{Informal derivation} \label{subsec:CS:informal}

In this section, we use $o(\cdot), O(\cdot), \Omega(\cdot), \Theta(\cdot)$ to indicate scalings in the regime $\tau \to 0$.

\paragraph{The first and second iterations.}
By a well-known property of the log-sum-exp function, we have $f_\tau[g] = f_0[g] + o(1)$ for any $g$, and likewise for $g_\tau[\cdot]$.
So it is tempting to consider the sequence
$f_0[g_0[ ... [f_0[g^0]] ... ]]$
as an approximation of the Sinkhorn iterates
$f^k = f_\tau[g_\tau[ ... [f_\tau[g^0]] ... ]]$, 
and likewise for $g^k$.
However, it is classical that the former sequence is stationary after only two steps: 
indeed one can show that $(\olf, \ol g) = (f_0[g^0],~ g_0[f_0[g^0]]) \in \partial \FF$, the Pareto frontier of the dual OT problem's feasible set, and so $f_0[\ol g] = \ol f, g_0[\ol f] = \ol g$ by \autoref{lm:prelim:partialFF}
\cite[Section~3.2]{peyre2019computational}.
So this approximation is insufficient to capture the small-$\tau$ behavior of the Sinkhorn algorithm.

To derive a more precise approximation, let us consider the first few iterations of the algorithm.
At the first two steps, approximating $f_\tau[\cdot], g_\tau[\cdot]$ by $f_0[\cdot], g_0[\cdot]$ is meaningful: effectively, the first two iterations project $(f^0, g^0)$ onto some point $(f^2, g^2)$ that is $o(1)$-close to $\partial \FF$.
Thereafter, the iterates evolve according to
\begin{equation}
    \forall k \geq 3,~~
    f^k - f^{k-2} = \tau v^k
    ~~~~\text{and}~~~~
    g^k - g^{k-2} = \tau w^k
\end{equation}
by definition, and we know by \autoref{lm:prelim:unifbound_vk_wk} that $v^k, w^k = O(1)$ uniformly for all $k \geq 3$---consistent with the intuition that the algorithm is non-stationary only thanks to the presence of entropy regularization. (To be exact, $w^k = O(1)$ for $k \geq 4$, but $g^3 = g^2$ by definition anyway.)

\paragraph{The first $\Theta(1)$ iterations.}
Since $v^k, w^k = O(1)$, the matrix $\tau U^k_{ij} = f^k_i + g^k_j - C_{ij}$ stays approximately constant for $\Theta(1)$ iterations. That is, for ease of reference, 
\begin{align}
    \text{for all}~~
    2 \leq k \leq K = \Theta(1),~~~~
    \tau U^k_{ij} 
    = f^k_i + g^k_j - C_{ij}
    &= f^2_i + g^2_j - C_{ij} + O(\tau).
\end{align}
Let us describe in detail the behavior of the algorithm for $k \in \{2, ..., K\}$.
Denote 
\begin{equation}
    \ol f = f_0[g^0],
    ~~~~
    \ol g = g_0[\ol f],
    ~~~~
    \ol U_{ij} = \ol f_i + \ol g_j - C_{ij},
    ~~~~\text{and}~~~~
    \EEE = \left\{ (i,j);~ \ol U_{ij} = 0 \right\}.
\end{equation}
Then by \autoref{lm:prelim:partialFF}, $\ol U_{ij} \leq 0$ for all $(i,j)$ and the bipartite graph $( \{1 \dots m\} \sqcup \{1 \dots n\}, \EEE )$ has no isolated vertex.
Now we have $(f^2, g^2) = (\ol f, \ol g) + o(1)$, and more precisely one can show that $(f^2, g^2) = (\ol f, \ol g) + O(\tau)$ (\autoref{lm:cv:logsumexp_rate} below).
Thus, $\tau U^k_{ij} = \ol U_{ij} + O(\tau)$ for all $k \leq K$, and so
\begin{equation}
    \forall (i,j) \in \EEE,~~
    U^k_{ij} = O(1)
    ~~~~\text{and}~~~~
    \forall (i,j) \not\in \EEE,~~
    U^k_{ij} = \underbrace{\ol U_{ij}}_{<0}/\tau + O(1) = -\Theta(1/\tau)
    \xrightarrow{\tau\to0} -\infty.
\end{equation}

Consequently, for all $k \in \{2, ..., K\}$, the summation $\sum_j$ in the definition of $v[U^k]_i = -\log \sum_j e^{U^k_{ij}} \nu_j$ can be approximately replaced by $\sum_{j: (i,j) \in \EEE}$, and likewise for the $w[U^k]_j$.
In other words, \emph{the Sinkhorn algorithm behaves approximately as in the case where the cost matrix has infinite entries at the $(i,j) \not\in \EEE$.}
In particular, by \autoref{thm:prelim:baradat_ventre}, $(v^k, w^k)$ converges to some $(\ol v, \ol w)$ determined only by $\mu, \nu$, and $\EEE$. 

Let us make an Ansatz that the speed of this convergence is independent of $\tau$ and that we attain $(v^{K'}, w^{K'}) = (\ol v, \ol w) + O(\tau)$ in a number of iterations $K' < K$.
Then the Sinkhorn iterates evolve linearly with $(f^k-f^{k-2}, g^k-g^{k-2}) = (\tau \ol v, \tau \ol w) + O(\tau^2)$ over $k \in \{ K', ..., K\}$.

\paragraph{The first $\Theta(1/\tau)$ iterations (the first ``phase'').}
Reflecting on the derivation so far, notice that our reasoning is valid not just until some $K=\Theta(1)$, but also until the maximal $K$ such that 
\begin{equation}
    \text{for all}~~
    2 \leq k \leq K,~~~~
    \forall (i,j) \not\in \EEE,~ U^k_{ij} \leq -\Omega(1/\tau).
\end{equation}
Indeed, this is the condition that makes the algorithm behave approximately as in the infinite cost case, and so $(v^k, w^k) = (\ol v, \ol w) + O(\tau)$ for all $k \in \{K', ..., K\}$.
(In particular, whether or not the $U^k_{ij}$ for $(i, j) \in \EEE$ remain bounded does not matter.)
Now since 
$U^{k+1}_{ij} - U^k_{ij} = O(1)$ 
and, initially, 
$\forall (i,j) \not\in \EEE, U^2_{ij} = \ol U_{ij}/\tau + O(1) = -\Theta(1/\tau)$, 
the above condition stays satisfied for at least $\Theta(1/\tau)$ iterations.

The iterations $\{2, ..., K\}$ correspond to what we will call the first phase, and $\{2, ..., K'\}$ can be thought of as the first phase transition.

\paragraph{End of the first phase, start of the second phase.}
The first phase stops when the condition above is violated, i.e., when one of the $U^k_{i_0 j_0}$ for $(i_0, j_0) \not\in \EEE$ reaches $o(1/\tau)$ in magnitude.
Now during the first phase, since $(v^k, w^k) = (\ol v, \ol w) + O(\tau)$, the $U^k_{ij}$ evolve according to
$U^k_{ij} - U^{k-2}_{ij} = \ol v_i + \ol w_j + O(\tau)$.
So there are two cases to distinguish:
\begin{itemize}
    \item If for all $(i_0, j_0) \not\in \EEE$, $\ol v_{i_0} + \ol w_{j_0} \leq 0$, then the first phase never ends. In this rare case, the first phase coincides with the final phase, which is detailed in the next paragraph.
    \item If there exists $(i_0, j_0) \not\in \EEE$ such that $\ol v_{i_0} + \ol w_{j_0} > 0$, then there exists $K_1>K'$ such that $U^{K'}_{i_0 j_0} + (K_1-K') (\ol v_{i_0} + \ol w_{j_0}) > 0$.
    Consider the smallest such integer $K_1$.
    While it would be difficult to finely describe the behavior of the Sinkhorn algorithm at iterations $k \approx K_1$, it turns out that for small $\tau$, everything happens roughly as though $U^{K_1}$ itself satisfies $U^{K_1}_{i_0 j_0} = O(1)$.
    That is, considering $\EEE' = \left\{ (i,j);~ U^{K_1}_{ij} = O(1) \right\}$, we again have that for $k \approx K_1$,
    \begin{equation}
        \forall (i,j) \in \EEE',~~
        U^k_{ij} = O(1)
        ~~~~\text{and}~~~~
        \forall (i,j) \not\in \EEE',~~
        U^k_{ij} = -\Theta(1/\tau)
        \xrightarrow{\tau\to0} -\infty.
    \end{equation}
    Thereafter, the reasoning from the previous paragraphs again applies: the Sinkhorn iterates go through a second phase transition where $(v^k, w^k)$ converge to some $(\ol v', \ol w')$, and then there occurs a second phase where $U^k_{ij} - U^{k-2}_{ij} = \ol v'_i + \ol w'_j + O(\tau)$, for $\Theta(1/\tau)$ iterations.
\end{itemize}

\paragraph{The final phase.}
The behavior described above repeats for a certain number of phases:
at each phase $\ell$, one has $U^k_{ij} - U^{k-2}_{ij} = \ol v^\ell_i + \ol w^\ell_j + O(\tau)$, where $(\ol v^\ell, \ol w^\ell)$ is determined by a certain edge set $\EEE^\ell$ corresponding to the ``effective finite-cost pattern'' seen by the algorithm.

This repeats until there comes a phase $L$ where, for all $(i_0, j_0) \not\in \EEE^L$, $\ol v^L_{i_0} + \ol w^L_{j_0} \leq 0$.
One can show that this necessarily occurs eventually (\autoref{lm:CS:L_finite}) and that $(\ol v^L, \ol w^L) = (0, 0)$ (\autoref{lm:CS:finalphase_optimal}).
The iterates $f^k, g^k$ then evolve at a much slower rate: $(f^k-f^{k-2}, g^k-g^{k-2}) = O(\tau^2)$ instead of $\Theta(\tau)$ at the previous phases.
So at time-scales $k \asymp \tau^{-1}$, the Sinkhorn iterates effectively become stationary, and phase $L$ acts as the final phase of the dynamics.
One can also show that in the limit $\tau \to 0$, the final value taken by $(f,g)$ is an optimal solution of the dual OT problem.

\subsection{Formal definition} \label{subsec:CS:formaldef}

We can now give the formal definition of the cold Sinkhorn dynamics.

\begin{samepage}
\begin{definition} \label{def:CS:formaldef}
    For any initial pair $(\ol f^0, \ol g^0) \in \partial \FF$, the cold Sinkhorn dynamics is the continuous-time curve $(f(t), g(t))$ given by
    $f(0) = \ol f^0, g(0) = \ol g^0$, and
    \begin{equation}
        \forall \ell \leq L,~
        \forall t \in [t_\ell, t_{\ell+1}),~~
        \frac{d}{dt} f(t) = \ol v^\ell,
        ~~~~
        \frac{d}{dt} g(t) = \ol w^\ell
    \end{equation}
    for a sequence $(t_\ell, \ol v^\ell, \ol w^\ell, \EEE^\ell)_{\ell \leq L}$ defined recursively as follows.
    \begin{itemize}
        \item For $\ell=0$:
        $t_0=0$ and $\ol v^0 = v^*(\EEE^0), \ol w^0 = w^*(\EEE^0)$
        where
        $\EEE^0 = \left\{ (i,j);~ \ol f^0_i + \ol g^0_j - C_{ij} = 0 \right\}$.
        \item For any $\ell \geq 0$,
        \begin{equation}
            t_{\ell+1} = \inf \left\{ 
                t \geq t_\ell;~~
                \max_{(i,j) \not\in \EEE^\ell}~
                f_i(t_\ell) 
                + g_j(t_\ell)
                + (t-t_\ell) \left( \ol v^\ell_i + \ol w^\ell_j \right)
                - C_{ij}
                \geq 0
            \right\}.
        \end{equation}
        If $t_{\ell+1} < \infty$, then
        $\ol v^{\ell+1} = v^*(\EEE^{\ell+1}),
        \ol w^{\ell+1} = w^*(\EEE^{\ell+1})$
        where 
        \begin{equation}
            \EEE^{\ell+1} = \left\{ (i,j);~ 
                f_i(t_\ell) 
                + g_j(t_\ell)
                + (t_{\ell+1}-t_\ell) \left( \ol v^\ell_i + \ol w^\ell_j \right)
                - C_{ij}
                = 0
            \right\}.
        \end{equation}
        If $t_{\ell+1} = \infty$, then $L=\ell$ and the sequence terminates. $\ol v^{\ell+1}, \ol w^{\ell+1}$, and $\EEE^{\ell+1}$ are not defined.
    \end{itemize}
    Further set $t_{L+1} = \infty$.
    We refer to each interval $[t_\ell, t_{\ell+1})$ as a \emph{phase} of the dynamics.
\end{definition}
\end{samepage}

\begin{remark}
    Recall that $v^*_i(\EEE^{\ell}) + w^*_j(\EEE^{\ell}) \leq 0$ for all $(i,j) \in \EEE^{\ell}$, by \autoref{thm:prelim:baradat_ventre}.
    So by construction, $f_i(t) + g_j(t) - C_{ij} \leq 0$ for all $(i,j)$, for all $t$.
\end{remark}

\pagebreak

%

As a support for intuition, we display in \autoref{fig:CS:CS} the behavior of the cold Sinkhorn dynamics on a simple example with $m=2$ and $n=3$.
As \autoref{fig:CS:CS_Uij} illustrates, the matrix $\ol U_{ij}(t) = f_i(t) + g_j(t) - C_{ij}$ evolves linearly during each phase $t \in [t_\ell, t_{\ell+1})$, and a new phase begins whenever a component $\ol U_{i_0j_0}(t)$ which was previously negative reaches $0$.
The new direction $\frac{d}{dt} \ol U_{ij}(t_{\ell+1}^+) = \ol v^{\ell+1}_i + \ol w^{\ell+1}_j$ then gets computed according to 
$\ol v^{\ell+1} = v^*(\EEE^{\ell+1}),
\ol w^{\ell+1} = w^*(\EEE^{\ell+1})$,
where $\EEE^{\ell+1} = \left\{ (i,j);~ \ol U_{ij}(t_{\ell+1}) = 0 \right\}$.
This may cause some previously zero components $\ol U_{i_1j_1}(t)$ to immediately become negative, but this is not reflected in the definition of $\EEE^{\ell+1}$: these are the edges $(i_1, j_1) \in \EEE^{\ell+1} \setminus \ol \SSS^{\ell+1}$ where $\ol\SSS^{\ell+1} = \left\{(i,j) \in \EEE^{\ell+1};~ \ol v^{\ell+1}_i + \ol w^{\ell+1}_j = 0 \right\}$.

This process repeats until there comes a phase where the direction 
$\frac{d}{dt} \ol U_{ij}(t_{L}^+) = \ol v^{L}_i + \ol w^{L}_j$ is non-positive for \textit{all} components $i \leq m, j \leq n$.
In fact, it then necessarily holds $\ol v^L_i = \ol w^L_j = 0$ for all $i,j$, as shown in \autoref{lm:CS:finalphase_optimal} below.

We note that, although it does not occur for the simple example presented in \autoref{fig:CS:CS}, it is possible for a component $\ol U_{ij}(t)$ to be successively $0$, decreasing, increasing, and $0$ again along successive phases of the dynamics. 

\begin{figure}[t]
    \centering
    \begin{subfigure}[t]{\textwidth}
        \centering
        \includegraphics[width=\linewidth]{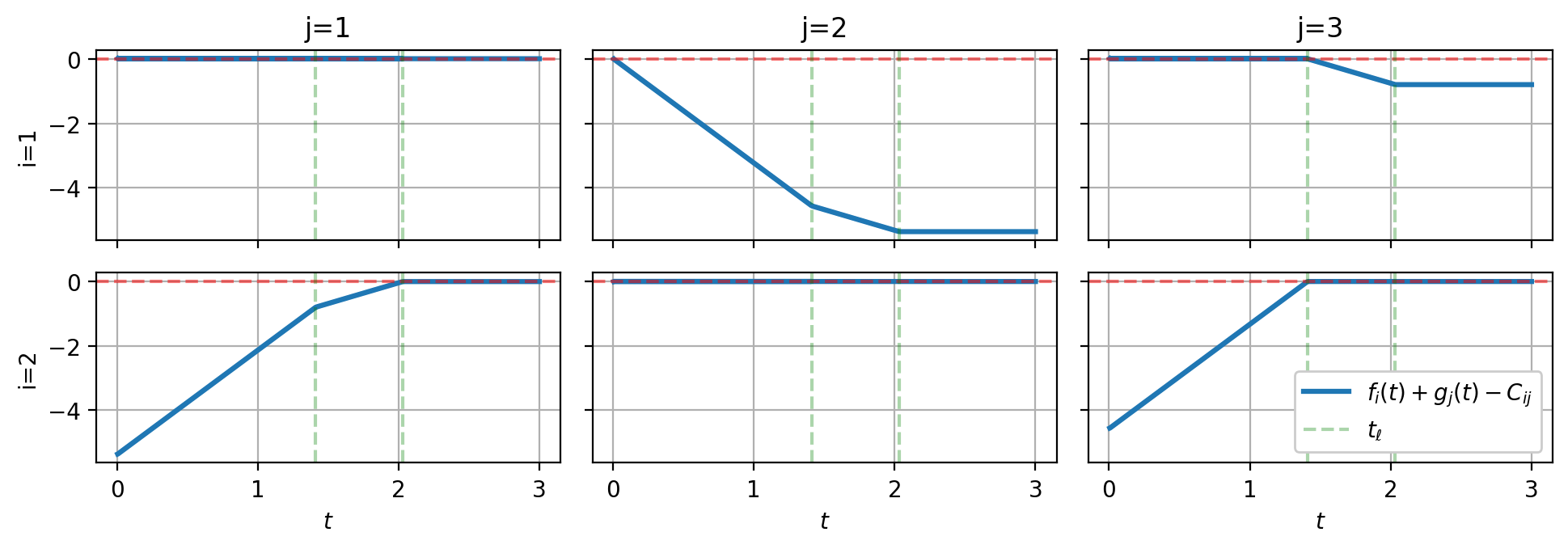} 
        \caption{Evolution of $f_i(t) + g_j(t) - C_{ij}$ along the cold Sinkhorn dynamics.
        (In reality, this figure was obtained by running the Sinkhorn algorithm with $\tau = 0.005$: the x-axis shows $\tau k/2$ and the y-axis shows $f^k_i + g^k_j - C_{ij}$.)}
        \label{fig:CS:CS_Uij}
    \end{subfigure}
    \\[1em]
    \begin{subfigure}[t]{0.32\textwidth}
        \centering
        \includegraphics[width=\linewidth]{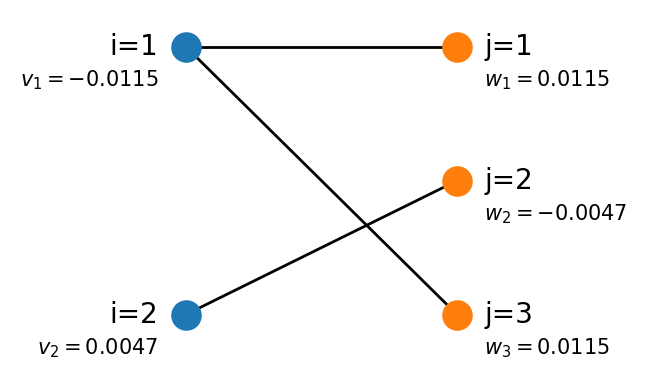}
        \caption{Phase $\ell=0$, $t \in [0, 1.4)$.}
        \label{fig:CS:CS_phase1}
    \end{subfigure}
    \hfill
    \begin{subfigure}[t]{0.32\textwidth}
        \centering
        \includegraphics[width=\linewidth]{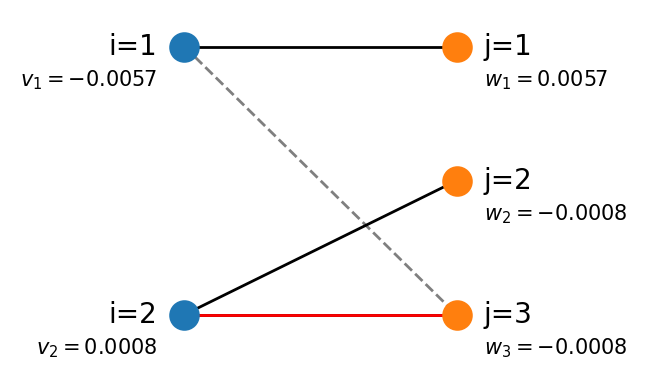}
        \caption{Phase $\ell=1$, $t \in [1.4, 2)$.}
        \label{fig:CS:CS_phase2}
    \end{subfigure}
    \hfill
    \begin{subfigure}[t]{0.32\textwidth}
        \centering
        \includegraphics[width=\linewidth]{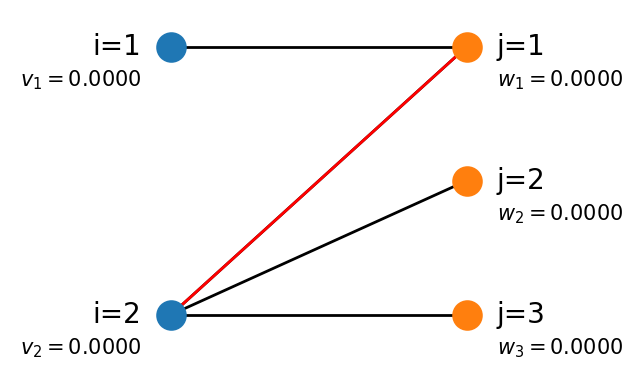}
        \caption{Phase $\ell=L=2$, $t \in [2, \infty)$.}
        \label{fig:CS:CS_phase3}
    \end{subfigure}
    \caption{Behavior of the cold Sinkhorn dynamics on \eqref{eq:intro:OT} for $m=2, n=3$, non-uniform target marginals, and a generic cost matrix.
    Bottom row: the bipartite graph with edge set $\EEE^\ell$ for $0 \leq \ell \leq L=2$. New edges $(i,j) \in \EEE^\ell \setminus \EEE^{\ell-1}$ are represented in red.
    Edges $(i,j) \in \EEE^\ell \setminus \ol\SSS^\ell$, where $\ol\SSS^\ell = \left\{(i,j) \in \EEE^\ell;~ \ol v^\ell_i + \ol w^\ell_j = 0 \right\}$, are represented as dashed lines.}
    \label{fig:CS:CS}
\end{figure}

\subsection{Properties of the dynamics} \label{subsec:CS:ppties}

In this section, we analyze the cold Sinkhorn dynamics.
In all of the following lemmas, we consider $(f(t), g(t))$, $(t_\ell, \ol v^\ell, \ol w^\ell, \EEE^\ell)_{\ell \leq L}$, and $t_{L+1}$ defined as in \autoref{def:CS:formaldef}.

\begin{lemma} \label{lm:CS:ftgt_partialFF}
    For any $t$, $(f(t), g(t)) \in \partial \FF$.
\end{lemma}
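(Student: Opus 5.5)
We will use the third characterization in \autoref{lm:prelim:partialFF}: $(f(t),g(t)) \in \partial\FF$ iff $f_i(t)+g_j(t)\le C_{ij}$ for all $(i,j)$ and the tight graph $\{(i,j);\ f_i(t)+g_j(t)=C_{ij}\}$ has no isolated vertex. Write $\ol U_{ij}(t)=f_i(t)+g_j(t)-C_{ij}$. The argument is by induction over the phases $\ell=0,\dots,L$, treating each interval $[t_\ell,t_{\ell+1})$ separately. On such an interval $\ol U$ is affine in $t$, with slope $\ol v^\ell_i+\ol w^\ell_j$.

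\textbf{Feasibility.} We prove by induction that at $t_\ell$, $\ol U(t_\ell)\le 0$ and $\EEE^\ell=\{(i,j);\ \ol U_{ij}(t_\ell)=0\}$.
\begin{itemize}
\item For $\ell=0$ this holds since $(\ol f^0,\ol g^0)\in\partial\FF$.
\item For $(i,j)\in\EEE^\ell$, we have $\ol U_{ij}(t_\ell)=0$, and the slope is $\le 0$ by \autoref{thm:prelim:baradat_ventre}. Hence $\ol U_{ij}(t)\le 0$ for all $t\ge t_\ell$.
\item For $(i,j)\notin\EEE^\ell$, we have $\ol U_{ij}(t_\ell)<0$, and by the definition of $t_{\ell+1}$ as an infimum, $\ol U_{ij}(t)<0$ for $t<t_{\ell+1}$.
\item By continuity, at $t_{\ell+1}$ all entries are $\le 0$. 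The zero set at $t_{\ell+1}$ is exactly the set $\EEE^{\ell+1}$ of the definition, since that definition just evaluates $\ol U(t_{\ell+1})$ along the phase-$\ell$ line.
\end{itemize}
This also covers the final phase $\ell=L$ on $[t_L,\infty)$.

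\textbf{No isolated vertex.} Fix $t\in[t_\ell,t_{\ell+1})$. The edges that remain tight throughout the phase are those $(i,j)\in\EEE^\ell$ with slope $\ol v^\ell_i+\ol w^\ell_j=0$, i.e.\ the set $\ol\SSS^\ell$ associated with $\EEE^\ell$ as in \autoref{thm:prelim:baradat_ventre_sets}, computed with $C$ replaced by a matrix with finite entries exactly on $\EEE^\ell$. Every edge of $\ol\SSS^\ell$ is tight at every $t$ in the phase, so the tight graph at time $t$ contains $\ol\SSS^\ell$. The last claim of \autoref{thm:prelim:baradat_ventre_sets} says the graph with edge set $\ol\SSS^\ell$ has no isolated vertex. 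To apply that theorem we need its hypothesis that $\EEE^\ell$ has no isolated vertex; this follows by induction, since $\EEE^\ell$ is the tight set at $t_\ell$. Concretely, $\EEE^0$ is fine by \autoref{lm:prelim:partialFF}. At $t_{\ell+1}$, the set $\EEE^{\ell+1}$ contains $\ol\SSS^\ell$: those edges stay tight up to and including $t_{\ell+1}$ by continuity of the affine formula. So $\EEE^{\ell+1}$ also has no isolated vertex.

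The only delicate point is this induction ordering. The hypothesis on $\EEE^{\ell+1}$ must be established before $v^*(\EEE^{\ell+1})$ and $w^*(\EEE^{\ell+1})$ are even well defined, and it comes from $\ol\SSS^\ell\subset\EEE^{\ell+1}$. Everything else is bookkeeping with the affine evolution.
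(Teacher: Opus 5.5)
Your proof is correct and follows essentially the same route as the paper's: an induction over phases, feasibility from the definition of $t_{\ell+1}$, and absence of isolated vertices from the fact that the tight set during phase $\ell$ always contains $\ol\SSS^\ell$, which has no isolated vertex by \autoref{thm:prelim:baradat_ventre_sets}. Your version is slightly more explicit than the paper's. You justify feasibility on $\EEE^\ell$ via the sign condition $\ol v^\ell_i+\ol w^\ell_j\le 0$, and you check that $\EEE^{\ell+1}\supset\ol\SSS^\ell$ has no isolated vertex, whereas the paper obtains the latter only implicitly, from continuity and closedness of $\partial\FF$.
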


\begin{proof}
    Let us show by induction that for any $\ell \leq L$, we have $\forall t \in [t_\ell, t_{\ell+1}), (f(t), g(t)) \in \partial \FF$.
    
    For $\ell=0$, by definition $(f(0), g(0)) = (\ol f^0, \ol g^0) \in \partial \FF$.
    By definition of $t_1$, for all $t \in [t_0, t_1)$, $f_i(t) + g_j(t) - C_{ij} \leq 0$, i.e., $(f(t), g(t)) \in \FF$.
    To prove that it lies on the Pareto frontier, denote
    $\SSS(t) = \left\{ (i,j);~ f_i(t) + g_j(t) - C_{ij} = 0 \right\}$
    and
    $\ol \SSS^0 = \left\{ (i,j) \in \EEE^0;~ \ol v^0_i + \ol w^0_j = 0 \right\}$.
    By \autoref{thm:prelim:baradat_ventre}, $(\{1 \dots m\} \sqcup \{1 \dots n\}, \ol \SSS^0)$ has no isolated vertex.
    Now for any $t \in [t_0, t_1)$, by definition,
    $\SSS(t) \supset \ol \SSS^0$. 
    Thus $(\{1 \dots m\} \sqcup \{1 \dots n\}, \SSS(t))$ has no isolated vertex, and so by \autoref{lm:prelim:partialFF}, $(f(t), g(t)) \in \partial \FF$.

    For any $\ell \geq 1$, by induction hypothesis and continuity, 
    $(f(t_\ell), g(t_\ell)) \in \partial \FF$.
    A similar reasoning as in the case $\ell=0$ shows that at subsequent times $t \in [t_\ell, t_{\ell+1})$, we also have $(f(t), g(t)) \in \partial \FF$.
\end{proof}

\begin{lemma} \label{lm:CS:ddtPsi0}
    The dual OT objective $t \mapsto \Psi_0(f(t), g(t))$ is non-increasing and
    \begin{align}
        \forall \ell \leq L,~
        \forall t \in [t_\ell, t_{\ell+1}),~~
        \frac{d}{dt} \Psi_0(f(t), g(t))
        &= -\Hdiv{\mu}{\mu^*(\EEE^\ell)}
        - \Hdiv{\nu}{\nu^*(\EEE^\ell)} \\
        &= -\Hdiv{\mu^*(\EEE^\ell)}{\mu}
        - \Hdiv{\nu^*(\EEE^\ell)}{\nu}.
    \end{align}
\end{lemma}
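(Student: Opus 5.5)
Since $f,g$ are affine on each phase with velocity $(\ol v^\ell,\ol w^\ell)=(v^*(\EEE^\ell),w^*(\EEE^\ell))$, we get $\frac{d}{dt}\Psi_0(f(t),g(t)) = -\mu^\top v^*(\EEE^\ell) - \nu^\top w^*(\EEE^\ell)$. Writing $\mu^*=\mu^*(\EEE^\ell)$ and $\nu^*=\nu^*(\EEE^\ell)$ and using $v^*_i=\log(\mu_i/\mu^*_i)$, $w^*_j=\log(\nu_j/\nu^*_j)$, this equals $-\Hdiv{\mu}{\mu^*}-\Hdiv{\nu}{\nu^*}$. Both $\mu^*,\nu^*$ lie in the simplices with positive entries (\autoref{coroll:prelim:bound_v*_w*}), so the divergences are well defined. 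The derivative is therefore nonpositive on each phase, and since $t\mapsto(f(t),g(t))$ is continuous and piecewise affine, $\Psi_0$ is non-increasing. This settles the first equality and the monotonicity.

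The second equality, $\Hdiv{\mu}{\mu^*}+\Hdiv{\nu}{\nu^*} = \Hdiv{\mu^*}{\mu}+\Hdiv{\nu^*}{\nu}$, is the substantive part. Subtracting, it amounts to $\sum_i(\mu_i+\mu^*_i)v^*_i + \sum_j(\nu_j+\nu^*_j)w^*_j = 0$. The plan is to obtain this from two orthogonality-type identities coming from the fixed-point structure. We use the coupling $Q\in\Delta_{\EEE}$ with $X_\sharp Q=\mu^*$, $Y_\sharp Q=\nu$, and the coupling $P$ with $X_\sharp P=\mu$, $Y_\sharp P=\nu^*$.

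Both couplings are supported on $\SSS\subset\ol\SSS$, where $v^*_i+w^*_j=0$. This follows from \autoref{thm:prelim:baradat_ventre_sets} and its footnote (the analogous statement for $P$ follows from the same characterization via $U^\infty_{\mathrm{odd}}$). We may take $P,Q$ to be the limiting Sinkhorn plans $e^{U^\infty}\mu\otimes\nu$ for the odd and even limits. Summing $v^*_i+w^*_j=0$ against $Q$ gives $\sum_i\mu^*_iv^*_i+\sum_j\nu_jw^*_j=0$, and summing against $P$ gives $\sum_i\mu_iv^*_i+\sum_j\nu^*_jw^*_j=0$. Adding the two identities yields exactly the required relation, which proves the second equality.

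The main obstacle is justifying that $P$ and $Q$ are supported where $v^*_i+w^*_j=0$. The cleanest route is the fixed-point relation at $U^\infty$. The limiting iterates satisfy $U^\infty_{\mathrm{odd}}=U^\infty_{\mathrm{even}}+v^*$ and $U^\infty_{\mathrm{even}}=U^\infty_{\mathrm{odd}}+w^*$ entrywise. Hence $v^*_i+w^*_j=0$ wherever these entries are finite, i.e.\ on $\SSS$. By \eqref{eq:prelim:rel_vk_Xpik} in the limit, the marginals of these limiting plans are $(\mu,\nu^*)$ and $(\mu^*,\nu)$, which is what the argument requires.
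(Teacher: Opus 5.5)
Your proof is correct. The first equality and the monotonicity are handled exactly as in the paper, by differentiating $\Psi_0$ along the affine phase and substituting $\ol v^\ell_i=\log(\mu_i/\mu^*_i)$ and $\ol w^\ell_j=\log(\nu_j/\nu^*_j)$.

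For the second equality the paper gives no argument of its own. It cites \cite[Remark~3.4]{baradat2024convergence} for the two cross-identities $\Hdiv{\mu}{\mu^*}=\Hdiv{\nu^*}{\nu}$ and $\Hdiv{\mu^*}{\mu}=\Hdiv{\nu}{\nu^*}$. Your two orthogonality relations are exactly these identities written out. The $Q$-relation $\sum_i\mu^*_iv^*_i+\sum_j\nu_jw^*_j=0$ reads $\Hdiv{\nu}{\nu^*}=\Hdiv{\mu^*}{\mu}$, and the $P$-relation reads $\Hdiv{\mu}{\mu^*}=\Hdiv{\nu^*}{\nu}$. So you have in effect reproved the cited remark, and you get slightly more than the lemma needs, since you only use the sum.

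Of your two justifications for the support property, the fixed-point one is the more self-contained. From $e^{U^\infty_{\mathrm{odd}}}=e^{U^\infty_{\mathrm{even}}}e^{v^*_i}$ and $e^{U^\infty_{\mathrm{even}}}=e^{U^\infty_{\mathrm{odd}}}e^{w^*_j}$ you get $v^*_i+w^*_j=0$ wherever either limit is finite. This needs neither the footnote characterization of $\SSS$ nor the equality $\SSS_{\mathrm{even}}=\SSS_{\mathrm{odd}}$. The marginals $(\mu^*,\nu)$ and $(\mu,\nu^*)$ of the limiting plans then follow from \eqref{eq:prelim:rel_vk_Xpik}, together with the fact that each half-step restores one marginal exactly.

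The trade-off is simple. The paper's citation is shorter. Your argument relies only on the convergence statement of \autoref{thm:prelim:baradat_ventre}, and it shows why the identity holds: $(v^*,w^*)$ is orthogonal to both limiting plans because they live where $v^*_i+w^*_j=0$.
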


\begin{proof}
    Let $\ell \leq L$ and $t \in [t_\ell, t_{\ell+1})$. By definition of $\Psi_0(f, g) = -f^\top \mu - g^\top \nu$ and of $\ol v^\ell_i = v^*_i(\EEE^\ell) = \log \left( \mu_i / \mu^*_i(\EEE^\ell) \right)$ and $\ol w^\ell_j$,
    \begin{multline}
        \frac{d}{dt} \Psi_0(f(t), g(t))
        = -\sum_{i=1}^m \mu_i \, \frac{d}{dt} f_i(t)
        - \sum_{j=1}^n \nu_j \, \frac{d}{dt} g_j(t)
        = -\sum_{i=1}^m \mu_i \, v^*_i(\EEE^\ell)
        - \sum_{j=1}^n \nu_j \, w^*_j(\EEE^\ell) \\
        = -\sum_{i=1}^m \mu_i \log \left( \mu_i / \mu^*_i(\EEE^\ell) \right)
        - \sum_{j=1}^n \nu_j \log \left( \nu_j / \nu^*_j(\EEE^\ell) \right)
        = -\Hdiv{\mu}{\mu^*(\EEE^\ell)}
        - \Hdiv{\nu}{\nu^*(\EEE^\ell)}.
    \end{multline}
    This proves the first equality of the lemma, and the second equality follows from the fact that
    $\Hdiv{\mu}{\mu^*(\EEE^\ell)} = \Hdiv{\nu^*(\EEE^\ell)}{\nu}$ and vice-versa,
    as proved in \cite[Remark~3.4]{baradat2024convergence}.
\end{proof}

\begin{lemma} \label{lm:CS:HdivHdiv_decr}
    The sequences $\left(
        \Hdiv{\mu^*(\EEE^\ell)}{\mu}
    \right)_{\ell \leq L}$
    and
    $\left(
        \Hdiv{\nu^*(\EEE^\ell)}{\nu}
    \right)_{\ell \leq L}$
    are non-increasing,
    and
    $\left(
        \Hdiv{\mu^*(\EEE^\ell)}{\mu} + \Hdiv{\nu^*(\EEE^\ell)}{\nu}
    \right)_{\ell \leq L}$
    is strictly decreasing.
\end{lemma}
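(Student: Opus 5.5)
The plan is to compare consecutive phases $\ell$ and $\ell+1$ (for $\ell+1 \leq L$) through the intermediate edge set $\ol\SSS^\ell = \{(i,j) \in \EEE^\ell;~ \ol v^\ell_i + \ol w^\ell_j = 0\}$, and then use the optimization characterization of $\mu^*(\cdot)$ and $\nu^*(\cdot)$ from \autoref{thm:prelim:baradat_ventre}.

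\textbf{Step 1: $\ol\SSS^\ell \subset \EEE^{\ell+1}$, and $\EEE^{\ell+1}$ contains a new edge.} Every edge of $\ol\SSS^\ell$ lies in $\EEE^\ell$, so its $\ol U_{ij}$ is $0$ at $t_\ell$ and has slope $0$ during phase $\ell$. Hence it is still $0$ at $t_{\ell+1}$, which gives $\ol\SSS^\ell \subset \EEE^{\ell+1}$.

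By \autoref{thm:prelim:baradat_ventre_sets}, $\ol\SSS^\ell$ has no isolated vertex, so $\mu^*(\ol\SSS^\ell)$ and $\nu^*(\ol\SSS^\ell)$ are well defined. Moreover \autoref{lm:prelim:equal_mu*} gives $\mu^*(\ol\SSS^\ell) = \mu^*(\EEE^\ell)$ and $\nu^*(\ol\SSS^\ell) = \nu^*(\EEE^\ell)$.

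Next, by the remark after \autoref{def:CS:formaldef}, every $(i,j) \notin \EEE^\ell$ satisfies $f_i(t_\ell)+g_j(t_\ell)-C_{ij} < 0$. Since $t_{\ell+1}<\infty$, the infimum defining $t_{\ell+1}$ is attained by some $(i_0,j_0)\notin\EEE^\ell$. This forces $t_{\ell+1} > t_\ell$ and $\ol v^\ell_{i_0}+\ol w^\ell_{j_0} > 0$, and it gives $(i_0,j_0) \in \EEE^{\ell+1}$.

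\textbf{Step 2: monotonicity.} The problem defining $\mu^*(\EEE)$ minimizes $\Hdiv{\cdot}{\mu}$ over the set $\{X_\sharp Q;~ Q\in\Delta_\EEE,~ Y_\sharp Q=\nu\}$. This set grows with $\EEE$, so the optimal value is non-increasing in $\EEE$. Since $\ol\SSS^\ell \subset \EEE^{\ell+1}$,
\begin{equation}
\Hdiv{\mu^*(\EEE^{\ell+1})}{\mu} \leq \Hdiv{\mu^*(\ol\SSS^\ell)}{\mu} = \Hdiv{\mu^*(\EEE^{\ell})}{\mu},
\end{equation}
and the same argument applies to $\nu$.

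\textbf{Step 3: strictness of the sum.} Suppose the sum does not strictly decrease. By Step 2, both terms are then equal across the two phases, so $\mu^*(\ol\SSS^\ell)$ is feasible and optimal for the problem defining $\mu^*(\EEE^{\ell+1})$. The feasible set is a convex polytope and relative entropy is strictly convex, so the minimizer is unique. Therefore $\mu^*(\EEE^{\ell+1})=\mu^*(\EEE^\ell)$, and likewise $\nu^*(\EEE^{\ell+1})=\nu^*(\EEE^\ell)$, which means $(\ol v^{\ell+1},\ol w^{\ell+1})=(\ol v^\ell,\ol w^\ell)$.

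By Step 1, the new edge $(i_0,j_0)\in\EEE^{\ell+1}$ then satisfies $\ol v^{\ell+1}_{i_0}+\ol w^{\ell+1}_{j_0}>0$. This contradicts the inequality $v^*_i(\EEE^{\ell+1})+w^*_j(\EEE^{\ell+1})\le0$ on $\EEE^{\ell+1}$ from \autoref{thm:prelim:baradat_ventre}.

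The main point requiring care is Step 1: one must make sure that the edge triggering phase $\ell+1$ genuinely has positive slope under the phase-$\ell$ direction, which needs $t_{\ell+1} > t_\ell$. The rest follows from the uniqueness of the entropic projection.
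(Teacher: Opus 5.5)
Your proof is correct and follows the paper's argument. You compare phases $\ell$ and $\ell+1$ through $\ol\SSS^\ell \subset \EEE^{\ell+1}$, use $\mu^*(\ol\SSS^\ell)=\mu^*(\EEE^\ell)$ from \autoref{lm:prelim:equal_mu*} for monotonicity, and get strictness from a contradiction with $v^*_i+w^*_j\le 0$ at the newly activated edge; along the way you make explicit two points the paper leaves implicit, namely why that edge has positive phase-$\ell$ slope (via $t_{\ell+1}>t_\ell$ and strict negativity off $\EEE^\ell$) and why the entropic projection is unique.
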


\begin{proof}
    Fix $\ell < L$. Let us first show that $\Hdiv{\mu^*(\EEE^{\ell+1})}{\mu} \leq \Hdiv{\mu^*(\EEE^\ell)}{\mu}$, and the corresponding statement for the $\nu$'s will follow similarly.
    Let $\ol\SSS^\ell = \left\{ (i,j) \in \EEE^\ell;~ \ol v^\ell_i + \ol w^\ell_j = 0 \right\}$.
    By definition of the algorithm, ${\EEE^{\ell+1} \supset \ol\SSS^\ell}$.
    So by definition of
    $\mu^*(\EEE) = \argmin_{\ol\mu} \Hdiv{\ol\mu}{\mu}$
    subject to
    $\exists Q \in \Delta_\EEE;~ X_\sharp Q = \ol\mu$ and $Y_\sharp Q = \nu$,
    we have
    $\Hdiv{\mu^*(\EEE^{\ell+1})}{\mu} \leq \Hdiv{\mu^*(\ol\SSS^\ell)}{\mu}$.
    Now we showed in \autoref{lm:prelim:equal_mu*} that $\mu^*(\ol\SSS^\ell) = \mu^*(\EEE^\ell)$, hence the announced inequality.
    
    This shows that 
    $\Hdiv{\mu^*(\EEE^{\ell+1})}{\mu} \leq \Hdiv{\mu^*(\EEE^\ell)}{\mu}$
    and 
    $\Hdiv{\nu^*(\EEE^{\ell+1})}{\nu} \leq \Hdiv{\nu^*(\EEE^\ell)}{\nu}$.
    Now let us show that at least one of these inequalities is strict.
    Suppose by contradiction that they are both equalities.
    Since $\ol\SSS^\ell \subset \EEE^{\ell+1}$, then $\mu^*(\EEE^\ell) = \mu^*(\ol\SSS^\ell)$ is feasible for the optimization problem defining $\mu^*(\EEE^{\ell+1})$, and it is also optimal by the contradiction hypothesis, so $\mu^*(\EEE^\ell) = \mu^*(\EEE^{\ell+1})$.
    Likewise, $\nu^*(\EEE^\ell) = \nu^*(\EEE^{\ell+1})$.
    Consequently, $\ol v^\ell = \ol v^{\ell+1}$ and $\ol w^\ell = \ol w^{\ell+1}$.
    But on the other hand, for $(i_0, j_0) \in \EEE^{\ell+1} \setminus \ol\SSS^\ell$---which exists by definition of the algorithm since $\ell < L$---we necessarily have $\ol v^\ell_{i_0} + \ol w^\ell_{j_0} > 0$ and $\ol v^{\ell+1}_{i_0} + \ol w^{\ell+1}_{j_0} \leq 0$. 
    Thus,
    $\Hdiv{\mu^*(\EEE^{\ell+1})}{\mu} < \Hdiv{\mu^*(\EEE^\ell)}{\mu}$
    or
    $\Hdiv{\nu^*(\EEE^{\ell+1})}{\nu} < \Hdiv{\nu^*(\EEE^\ell)}{\nu}$,
    and so the sequence 
    $\left(
        \Hdiv{\mu^*(\EEE^\ell)}{\mu} + \Hdiv{\nu^*(\EEE^\ell)}{\nu}
    \right)_{\ell \leq L}$
    is decreasing.
\end{proof}

\begin{remark}
    \autoref{lm:CS:ddtPsi0} and \autoref{lm:CS:HdivHdiv_decr} can be viewed as the $\tau \to 0$ limits of \cite[Lemma~2]{altschuler2017near} and \cite[Proposition~6.10]{nutz2021introduction}, respectively.
\end{remark}

\begin{lemma} \label{lm:CS:L_finite}
    The dynamics has a finite number of phases: $L+1 \leq 2^{mn}$.
\end{lemma}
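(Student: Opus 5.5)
The bound $L+1 \leq 2^{mn}$ will follow from showing that the edge sets $\EEE^0, \EEE^1, \dots, \EEE^L$ are pairwise distinct. They are subsets of $\{1 \dots m\} \times \{1 \dots n\}$, and there are at most $2^{mn}$ such subsets. Note also that $L$ could a priori be infinite in \autoref{def:CS:formaldef}. The argument below applies to any finite initial segment of the sequence, so it also rules this out.

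The key tool is \autoref{lm:CS:HdivHdiv_decr}. It states that the quantity
\begin{equation}
    h_\ell \coloneqq \Hdiv{\mu^*(\EEE^\ell)}{\mu} + \Hdiv{\nu^*(\EEE^\ell)}{\nu}
\end{equation}
is strictly decreasing in $\ell$. The proof of that lemma only uses the relation between consecutive phases $\ell$ and $\ell+1$ with $\ell < L$, so it holds along any number of constructed phases.

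The crucial observation is that $h_\ell$ is a function of the set $\EEE^\ell$ alone. Indeed, $\mu^*(\EEE)$ and $\nu^*(\EEE)$ depend only on $\mu$, $\nu$ and $\EEE$, as noted after \autoref{thm:prelim:baradat_ventre}. Each $\EEE^\ell$ is admissible for this definition: its bipartite graph has no isolated vertex, because $(f(t_\ell), g(t_\ell)) \in \partial\FF$ by \autoref{lm:CS:ftgt_partialFF} together with \autoref{lm:prelim:partialFF}.

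Now suppose $\EEE^\ell = \EEE^{\ell'}$ for some $\ell < \ell'$. Then $h_\ell = h_{\ell'}$, which contradicts strict monotonicity. Hence the map $\ell \mapsto \EEE^\ell$ is injective. This gives at most $2^{mn}$ phases, and in particular $L$ is finite.

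There is no real obstacle here. The only points needing care are confirming that strict monotonicity was established for every $\ell < L$, and checking that each $\EEE^\ell$ is a valid input for $\mu^*(\cdot)$ and $\nu^*(\cdot)$; both are covered by the lemmas above.
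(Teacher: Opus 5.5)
Your proposal is correct and takes the same route as the paper: the strict decrease of $\Hdiv{\mu^*(\EEE^\ell)}{\mu} + \Hdiv{\nu^*(\EEE^\ell)}{\nu}$ from \autoref{lm:CS:HdivHdiv_decr} depends only on $\EEE^\ell$, so the sets $\EEE^\ell$ are pairwise distinct, and there are at most $2^{mn}$ such subsets. The paper leaves two points implicit that you spell out correctly: each $\EEE^\ell$ has no isolated vertex, and $L=\infty$ is excluded by considering finite initial segments.
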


\begin{proof}
    By the previous lemma, the sets $(\EEE^\ell)_{0 \leq \ell \leq L}$ are pair-wise distinct. Now $\EEE^\ell \subset \{1 \dots m \} \times \{1 \dots n\}$ for each $\ell$, so there can be at most $2^{mn}$ such sets. So $L+1 \leq 2^{mn} < \infty$.
\end{proof}

\begin{lemma} \label{lm:CS:finalphase_optimal}
    At the final phase, $(\ol v^L, \ol w^L) = (0,0)$, and the final point $(f(t_L), g(t_L))$ is an optimal solution of the dual OT problem.
\end{lemma}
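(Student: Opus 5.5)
The argument has two parts. First, a sign condition valid at all edges forces the final direction to vanish. Second, complementary slackness gives optimality.

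\textbf{Step 1: sign of $\ol v^L_i + \ol w^L_j$ at every edge.} Since $t_{L+1} = \infty$, the definition of $t_{L+1}$ gives, for every $(i,j) \not\in \EEE^L$ and every $t \geq t_L$,
\[
f_i(t_L) + g_j(t_L) + (t-t_L)(\ol v^L_i + \ol w^L_j) - C_{ij} < 0 .
\]
An affine function of $t$ that stays negative on $[t_L,\infty)$ has non-positive slope. Hence $\ol v^L_i + \ol w^L_j \leq 0$ for all $(i,j) \not\in \EEE^L$.

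For $(i,j) \in \EEE^L$, \autoref{thm:prelim:baradat_ventre} gives $v^*_i(\EEE^L) + w^*_j(\EEE^L) \leq 0$. So $\ol v^L_i + \ol w^L_j \leq 0$ for \emph{all} pairs $(i,j)$.

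\textbf{Step 2: the direction vanishes.} Test this inequality against the product coupling $\mu \otimes \nu$. Using $\ol v^L_i = \log(\mu_i/\mu^*_i(\EEE^L))$ and $\ol w^L_j = \log(\nu_j/\nu^*_j(\EEE^L))$, we get
\[
0 \geq \sum_{ij} \mu_i \nu_j (\ol v^L_i + \ol w^L_j) = \Hdiv{\mu}{\mu^*(\EEE^L)} + \Hdiv{\nu}{\nu^*(\EEE^L)} .
\]
Both relative entropies are nonnegative, so both vanish. Therefore $\mu^*(\EEE^L) = \mu$ and $\nu^*(\EEE^L) = \nu$, which gives $\ol v^L = 0$ and $\ol w^L = 0$.

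\textbf{Step 3: optimality via complementary slackness.} By \autoref{lm:CS:ftgt_partialFF}, $(f,g) := (f(t_L), g(t_L))$ lies in $\FF$. Since $\mu^*(\EEE^L) = \mu$, the feasibility constraint in the definition of $\mu^*(\EEE^L)$ provides some $Q \in \Delta_{\EEE^L}$ with $X_\sharp Q = \mu$ and $Y_\sharp Q = \nu$. This $Q$ is a feasible transport plan for \eqref{eq:intro:OT}.

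By definition of $\EEE^L$, every $(i,j) \in \EEE^L$ satisfies $f_i + g_j = C_{ij}$. This is clear for $L = 0$. For $L \geq 1$, it holds at time $t_L$ by the definition of $\EEE^L$ in \autoref{def:CS:formaldef}. Since $Q$ is supported on $\EEE^L$, this yields
\[
\sum_{ij} C_{ij} Q_{ij} = \sum_{ij} Q_{ij}(f_i + g_j) = f^\top \mu + g^\top \nu .
\]
By weak LP duality, every feasible dual value is at most every feasible primal value. Here a feasible dual value equals a feasible primal value, so $(f,g)$ is optimal for \eqref{eq:prelim:dualOT}, and $Q$ is optimal for \eqref{eq:intro:OT}.

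Because the direction is zero from $t_L$ onward, $(f(t),g(t))$ stays at this optimal point for all $t \geq t_L$.

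\textbf{Expected difficulty.} The only delicate point is Step 1: we must handle both edges inside $\EEE^L$ and edges outside it, and read the non-positivity of the slope correctly from $t_{L+1} = \infty$. The remaining steps follow directly from the earlier results.
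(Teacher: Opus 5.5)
Your proof is correct. Your Step 3 is the paper's complementary-slackness argument, phrased via weak duality. The first half takes a somewhat different route.

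The paper argues globally. The map $t \mapsto \Psi_0(f(t),g(t))$ is piecewise linear, non-increasing, and bounded below by $\min_{\FF}\Psi_0$. Its slope on the infinite final phase $[t_L,\infty)$ must therefore vanish. \autoref{lm:CS:ddtPsi0} then identifies that slope with $-\Hdiv{\mu}{\mu^*(\EEE^L)}-\Hdiv{\nu}{\nu^*(\EEE^L)}$.

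You argue locally. You extract the pointwise sign condition $\ol v^L_i+\ol w^L_j\le 0$ for all $(i,j)$ from two sources: $t_{L+1}=\infty$ for pairs outside $\EEE^L$, and \autoref{thm:prelim:baradat_ventre} for pairs inside. You then average this condition against $\mu\otimes\nu$.

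The two arguments rest on the same fact. Your averaged quantity is exactly $-\frac{d}{dt}\Psi_0(f(t),g(t))$ on the final phase. The paper's lower bound on the curve uses the same two ingredients implicitly. First, the curve stays in $\FF$ for all $t\ge t_L$, which is your sign condition. Second, $\min_{\FF}\Psi_0>-\infty$ by weak duality against a primal feasible point such as $\mu\otimes\nu$.

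Your version is more self-contained: it needs neither \autoref{lm:CS:ddtPsi0} nor finiteness of the dual optimum. It also shows directly that the stopping test in \autoref{alg:CS:CS} ($\ol v^\ell_i+\ol w^\ell_j\le 0$ for all $i,j$) certifies optimality. The paper's version is shorter given the preceding lemmas, and it emphasizes the view of cold Sinkhorn as a monotone descent dynamics for the dual problem.
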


\begin{proof}
    Denote by $\Psi_0^* \in \RR$ the optimal value of the dual OT problem:
    $\Psi_0^* = \min_{(f, g) \in \FF} \Psi_0(f, g)$.
    Since $\Psi_0(f(t), g(t))$ is piecewise-linear, non-increasing, and lower-bounded by $\Psi_0^*$, its time-derivative during the final phase $t \in [t_L, \infty)$ must be zero. So by \autoref{lm:CS:ddtPsi0},
    $\Hdiv{\mu^*(\EEE^L)}{\mu} = \Hdiv{\nu^*(\EEE^L)}{\nu} = 0$,
    or equivalently,
    $v^*(\EEE^L) = \ol v^L = 0$ and $w^*(\EEE^L) = \ol w^L = 0$.

    Denote for concision $f^\infty = f(t_L)$ and $g^\infty = g(t_L)$ and let us show that $(f^\infty, g^\infty)$ is an optimal solution of the dual OT problem. 
    By duality, it suffices to show that there exists $\pi \in \Delta_{m \times n}$ which is feasible for the primal OT problem and satisfies the complementary slackness condition
    $\forall i,j,~ \pi_{ij} (f^\infty_i + g^\infty_j - C_{ij}) = 0$.
    Now by definition of $\mu^*(\cdot)$, the fact that $\mu^*(\EEE^L) = \mu$ implies the existence of some $Q \in \Delta_{\EEE^L}$ such that 
    $X_\sharp Q = \mu$ and $Y_\sharp Q = \nu$.
    By setting $\pi_{ij} = Q_{ij}$ if $(i,j) \in \EEE^L$ and $0$ otherwise, we indeed have that $\pi$ is feasible for the primal OT problem and that for any $i, j$, either $(i,j) \in \EEE^L$ and then $f^\infty_i + g^\infty_j - C_{ij} = 0$, or $(i,j) \not\in \EEE^L$ and then $\pi_{ij} = 0$.
\end{proof}

\begin{lemma} \label{lm:CS:t_L=O(1)}
    There exists a constant $T$ dependent only on $\mu, \nu$, and $C$ such that $t_L \leq T$.
\end{lemma}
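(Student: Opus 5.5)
Since $\Psi_0(f(t),g(t))$ is non-increasing (\autoref{lm:CS:ddtPsi0}) and bounded below by $\Psi_0^*$, the total decrease over $[0,t_L]$ is at most $\Psi_0(\ol f^0,\ol g^0)-\Psi_0^*$. On each non-final phase $\ell<L$ the slope is $-\Hdiv{\mu^*(\EEE^\ell)}{\mu}-\Hdiv{\nu^*(\EEE^\ell)}{\nu}$. This quantity is strictly positive: otherwise $\ol v^\ell=\ol w^\ell=0$, so $\ol v^\ell_i+\ol w^\ell_j=0$ for all $(i,j)$ and $t_{\ell+1}=\infty$, contradicting $\ell<L$. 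The plan is therefore
\begin{equation}
    t_L = \sum_{\ell<L}(t_{\ell+1}-t_\ell)
    \leq \frac{\Psi_0(\ol f^0,\ol g^0)-\Psi_0^*}{\eta},
\end{equation}
where
\begin{equation}
    \eta = \min\big\{\Hdiv{\mu^*(\EEE)}{\mu}+\Hdiv{\nu^*(\EEE)}{\nu}\big\}>0,
\end{equation}
the minimum being over all edge sets $\EEE\subset\{1\dots m\}\times\{1\dots n\}$ with no isolated vertex for which this sum is nonzero. This set of edge sets is finite, so $\eta>0$, and $\eta$ depends only on $\mu,\nu$.

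It remains to bound $\Psi_0(\ol f^0,\ol g^0)-\Psi_0^*$ independently of the initialization. Strictly, the initial point is an input, so I read ``dependent only on $\mu,\nu,C$'' as uniform over $(\ol f^0,\ol g^0)\in\partial\FF$. The dual objective is invariant under $(f,g)\mapsto(f+c,g-c)$, so the objective gap is insensitive to this shift. On $\partial\FF$, \autoref{lm:prelim:partialFF} gives $f=f_0[g]$ and $g=g_0[f]$.

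\textbf{Main step.} I need to show that $\operatorname{osc}(f)=\max f-\min f$ is bounded on $\partial\FF$ by a quantity depending only on $C$, so that after normalizing $f_1=0$, $f$ and $g$ lie in a bounded box. For any $i,i'$, pick $j$ with $f_{i'}+g_j=C_{i'j}$, which exists because no vertex is isolated. Then
\begin{equation}
    f_i\leq C_{ij}-g_j = C_{ij}-C_{i'j}+f_{i'}.
\end{equation}
Hence $\operatorname{osc}(f)\leq 2\norm{C}_\infty$, and similarly $\operatorname{osc}(g)\leq 2\norm{C}_\infty$. With $f_1=0$ and $g=g_0[f]$, this gives $|f_i|,|g_j|\leq 3\norm{C}_\infty$, so $\Psi_0\leq 6\norm{C}_\infty$ on $\partial\FF$.

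Also, $\Psi_0^*\geq -\norm{C}_\infty$: for any feasible $(f,g)$ and any primal coupling $\pi$, weak duality gives $f^\top\mu+g^\top\nu\leq\sum_{ij}\pi_{ij}C_{ij}$. Combining these,
\begin{equation}
    T = \frac{7\norm{C}_\infty}{\eta}.
\end{equation}

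The step needing most care is the positivity and uniformity of $\eta$, namely that the minimum ranges over a finite family independent of the dynamics. That follows because $\mu^*(\EEE)$ and $\nu^*(\EEE)$ depend only on $(\mu,\nu,\EEE)$.
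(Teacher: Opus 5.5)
Your proof is correct and follows essentially the same route as the paper. You bound $\sup_{\partial\FF}\Psi_0$ using the tightness structure $f=f_0[g]$, $g=g_0[f]$, lower-bound $\min_{\FF}\Psi_0$, and divide the gap by the smallest strictly positive value of $\Hdiv{\mu^*(\EEE)}{\mu}+\Hdiv{\nu^*(\EEE)}{\nu}$ over the finitely many edge sets; the differences are only cosmetic (an oscillation bound after normalizing $f_1=0$ instead of lower-bounding each $f_i+g_j$ directly, plus weak duality), and they give the explicit constant $T=7\norm{C}_\infty/\eta$.
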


\begin{proof}
    Let us first show that $\sup_{(f,g) \in \partial \FF} \Psi_0(f, g) < \infty$. Indeed, for any $(f, g) \in \partial \FF$,
    \begin{align}
        \forall j, j',~
        g_j - g_{j'}
        = g_0[f]_j - g_0[f]_{j'}
        &= \left( \min_i C_{ij} - f_i \right) - \left( \min_{i'} C_{i'j'} - f_{i'} \right) \\
        &= \min_i \max_{i'}
        C_{ij} - f_i - C_{i'j'} + f_{i'} 
        \geq \min_i\, C_{ij} - C_{ij'}
    \end{align}
    and so
    \begin{equation}
        \forall i,j,~~
        f_i + g_j
        = f_0[g]_i + g_j
        = \min_{j'} C_{ij'} - g_{j'} + g_j
        \geq \min_{i', j'}\, C_{ij'} + C_{i'j} - C_{i'j'}
    \end{equation}
    and so
    $\Psi_0(f, g)
    = -f^\top \mu - g^\top \nu
    = -\sum_{i,j} (f_i + g_j) \mu_i \nu_j
    \leq -\left( \min_{i,j,i',j'}\, C_{ij'} + C_{i'j} - C_{i'j'} \right)$.

    By \autoref{lm:CS:ddtPsi0}, for any $t < t_L$,
    $
        \frac{d}{dt} \Psi_0(f(t), g(t))
        \leq -\min \big[ \Hdiv{\mu}{\mu^*(\EEE')}
        + \Hdiv{\nu}{\nu^*(\EEE')} \big] < 0
    $
    where~$\EEE'$ ranges over all subsets of $\{1 \dots m\} \times \{1 \dots n\}$ such that $(\mu^*(\EEE), \nu^*(\EEE)) \neq (\mu, \nu)$.
    On the other hand,
    $\Psi_0(f(0), g(0)) \leq \sup_{(f,g) \in \partial \FF} \Psi_0(f,g) < \infty$
    and $\Psi_0(f(t_L), g(t_L)) = \min_{\FF} \Psi_0 > -\infty$.
    So
    \begin{align}
        t_L \cdot
        \min \big[ \Hdiv{\mu}{\mu^*(\EEE')} + \Hdiv{\nu}{\nu^*(\EEE')} \big]
        &\leq -\int_0^{t_L} \frac{d}{dt} \Psi_0(f(t), g(t)) \,\d t \\
        &= \Psi_0(f(0), g(0)) - \Psi_0(f(t_L), g(t_L)) \\
        &\leq \sup_{(f,g) \in \partial \FF} \Psi_0(f,g) - \min_{\FF} \Psi_0
    \end{align}
    and so $t_L \leq \frac{
        \sup_{(f,g) \in \partial \FF} \Psi_0(f,g) - \min_{\FF} \Psi_0
    }{
        \min \big[ \Hdiv{\mu}{\mu^*(\EEE')} + \Hdiv{\nu}{\nu^*(\EEE')} \big]
    }$,
    which is indeed only dependent on $\mu, \nu$, and $C$.
\end{proof}

Together, these lemmas show that 
\emph{the cold Sinkhorn dynamics is a continuous-time optimization algorithm for the dual unregularized OT problem \eqref{eq:prelim:dualOT}, that walks piecewise-linearly along the Pareto frontier $\partial \FF$ of the feasibility polytope and converges after a finite number of phases $L$ and a finite time $t_L$.}
As such, it can be thought of as a simplex-type algorithm for dual OT problems.

\begin{algorithm}[t]
\caption{Direct implementation of the cold Sinkhorn dynamics}
\label{alg:CS:CS}
    \KwIn{$\mu \in \Delta_m, \nu \in \Delta_n, C \in \RR^{m \times n}, f^0 \in \RR^m, g^0 \in \RR^n$}
    $t_0 = 0$ \;
    $f(0) = f_0[g^0], ~ g(0) = g_0[f(0)]$ \;
    \For{$\ell=0, ..., 2^{mn}$}{
        $\ol U_{ij} = f_i(t_\ell) + g_j(t_\ell) - C_{ij}$ ~~ ($\leq 0$ for all $i,j$ by construction) \;
        $\EEE^\ell = \left\{ (i,j);~ \ol U_{ij} = 0 \right\}$ \;
        $\ol v^\ell = v^*(\EEE^\ell), ~ \ol w^\ell = w^*(\EEE^\ell)$ \;
        \If{$\forall i,j,~ \ol v^\ell_i + \ol w^\ell_j \leq 0$}{
            $L = \ell$ \;
            \Return{$f(t_L), g(t_L)$, an optimal solution of the dual OT problem \eqref{eq:prelim:dualOT}}
        }
        $\forall (i,j) \not\in \EEE^\ell,~ \Delta_{ij} = -\ol U_{ij} / (\ol v^\ell_i + \ol w^\ell_j)$ if $\ol v^\ell_i + \ol w^\ell_j > 0$ and $+\infty$ otherwise \;
        $t_{\ell+1} = t_\ell + \min_{ij} \Delta_{ij}$ \;
        $f(t_{\ell+1}) = f(t_\ell) + (t_{\ell+1}-t_\ell) \ol v^\ell,~ g(t_{\ell+1}) = g(t_\ell) + (t_{\ell+1}-t_\ell) \ol w^\ell$
    }
\end{algorithm}

\vspace{0.8em}
\begin{remark}
    In principle, the cold Sinkhorn dynamics \autoref{def:CS:formaldef} can be implemented directly, instead of viewing it as a limit of Sinkhorn, and this provides an algorithm for exact unregularized OT computation (\autoref{alg:CS:CS}).
    The only costly steps are to compute the drifts $v^*(\EEE^\ell), w^*(\EEE^\ell)$ at each phase,
    but these computations can be amortized: for a fixed pair of marginals $\mu, \nu$, one can pre-compute all of the $v^*(\EEE), w^*(\EEE)$ for $\EEE \subset \{1 \dots m\} \times \{1 \dots n\}$, and simply look up the values of $v^*(\EEE^\ell), w^*(\EEE^\ell)$ when actually running the dynamics.
    Because the $v^*(\EEE), w^*(\EEE)$ do not depend on the cost matrix $C$, these pre-computed values can be reused for an arbitrary number of OT problems, provided that they share the same target marginals $\mu, \nu$.

    However, even with the pre-computed values of $v^*(\EEE), w^*(\EEE)$, the iteration complexity of one run of the cold Sinkhorn dynamics that we can guarantee is $O(L) \leq O(2^{mn})$, with a per-iteration cost of $O(mn)$;
    moreover, the number of values to pre-compute is $O(2^{mn})$.
    So such an approach would be rather impractical, and much slower than classical exact linear solvers a priori.
\end{remark}

\section{Convergence of Sinkhorn to cold Sinkhorn} \label{sec:cv}

\subsection{Main result} \label{subsec:cv:mainres}

In this section, we state and prove the main result of this paper: the Sinkhorn iterates converge uniformly to the cold Sinkhorn dynamics as $\tau \to 0$.
We do not track the constants appearing in the convergence bound explicitly here, so we do not quantify how small $\tau$ must be for the bound to be meaningful; this aspect is left for future work.
Our main result is as follows.

\begin{theorem} \label{thm:cv:cv}
    Let $(f^k, g^k)_k$ denote the iterates of the Sinkhorn algorithm applied to \eqref{eq:intro:EOT} initialized at some $(f^0, g^0)$.
    Let $(f(t), g(t))$ denote the cold Sinkhorn dynamics applied to \eqref{eq:intro:OT} initialized at $(f(0), g(0)) = (f_0[g^0], g_0[f_0[g^0]]) \in \partial \FF$.
    Then for any $\tau \leq \tau_0$,
    \begin{equation}
        \sup_{0 \leq t \leq t_L}
        \norm{f^{\floor{2t/\tau}+2} - f(t)}_\infty
        + \norm{g^{\floor{2t/\tau}+2} - g(t)}_\infty
        \leq B \, \tau (\log 1/\tau)^{2L+1},
    \end{equation}
    for some constants $\tau_0, B$ dependent only on $\mu, \nu$, and $C$.
    Here we recall that $L \leq 2^{mn}-1$ is the number of phases of the cold Sinkhorn dynamics (excluding the stationary final phase $[t_L, \infty)$).
\end{theorem}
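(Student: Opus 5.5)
We proceed by induction over the phases $\ell = 0, \dots, L$ of the cold Sinkhorn dynamics. At each phase we compare the true Sinkhorn iterates with those of an auxiliary Sinkhorn run on the infinite-cost matrix $\tC^\ell$, which equals $C$ on $\EEE^\ell$ and $+\infty$ off it. The auxiliary run is restarted at the iterate $k_\ell \approx 2t_\ell/\tau$. The inductive hypothesis is
\[
\Delta_\ell := \norm{f^{k_\ell}-f(t_\ell)}_\infty \vee \norm{g^{k_\ell}-g(t_\ell)}_\infty \le B_\ell\, \tau(\log 1/\tau)^{2\ell}.
\]
The base case $\ell=0$ is a log-sum-exp estimate showing that the first two iterations land $O(\tau)$-close to $(f(0),g(0))$, i.e.\ the announced \autoref{lm:cv:logsumexp_rate}.

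\textbf{Within a phase.} Initialize the auxiliary run at $(\tf^0,\tg^0) = (f^{k_\ell}, g^{k_\ell})$, restricted in effect to $\EEE^\ell$. On $\EEE^\ell$ we have $f_i(t_\ell)+g_j(t_\ell)=C_{ij}$, so the initial logits satisfy $|\tU^0_{ij}| \le \delta := 2\Delta_\ell/\tau = O((\log 1/\tau)^{2\ell})$.

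\begin{itemize}
\item \emph{Auxiliary run versus cold dynamics.} \autoref{prop:prelim:qtve_BV24} gives $\norm{\tv^k-\ol v^\ell}_\infty,\norm{\tw^k-\ol w^\ell}_\infty \le B(1+\delta+\log k)/k$ for $k \ge K_0(1+\delta)$. The first $K_0(1+\delta)$ steps cost $\tau \cdot O(1+\delta)$ in position, by the uniform bounds of \autoref{lm:prelim:unifbound_vk_wk}. Summing the remaining errors over $k \le T/\tau$ gives
\[
\tau\sum_k (1+\delta+\log k)/k = O\big(\tau(1+\delta)\log(1/\tau) + \tau\log^2(1/\tau)\big).
\]
Hence the auxiliary iterates track the linear cold-Sinkhorn segment to within $\Delta_\ell + O(\tau(\log 1/\tau)^{2\ell+1})$. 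This harmonic-sum step is where one power of $\log$ is lost.

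\item \emph{True run versus auxiliary run.} Apply \autoref{lm:prelim:control_finite_infinite_costs}. Off $\EEE^\ell$ and for $t$ bounded away from $t_{\ell+1}$, the cold potentials satisfy $f_i(t)+g_j(t)-C_{ij} \le -c\,(t_{\ell+1}-t)$, with $c>0$ depending only on the data. Indeed, edges outside $\EEE^\ell$ are either strictly negative with nonpositive drift, or approach $0$ at rate $\ol v^\ell_i+\ol w^\ell_j>0$. As long as the total error is small compared to this gap, one may take $M^k \gtrsim (t_{\ell+1}-\tau k/2)/\tau - O(\text{error}/\tau)$. The sum $\tau\sum_l e^{-M^l}$ is then $O(\tau)$, a geometric series, plus the contribution of the $O(\log(1/\tau))$ steps closest to $t_{\ell+1}$.
\end{itemize}

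\textbf{Main obstacle: the phase transition.} We stop the comparison at $t'_{\ell+1} = t_{\ell+1} - A\tau\log(1/\tau)\cdot(\text{current error}/\tau)$, i.e.\ at the point where the gap $M^k$ would drop to about $\log(1/\tau)$. Through \autoref{lm:prelim:control_finite_infinite_costs} this keeps the leakage at $O(\tau)$. Over the remaining window of length $O(\tau\cdot \text{poly}\log)$ up to $t_{\ell+1}$, both the true iterates and $(f(t),g(t))$ move by at most $O(\tau\cdot\text{poly}\log)$, by \autoref{lm:prelim:unifbound_vk_wk} and the boundedness of $\ol v^\ell,\ol w^\ell$. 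This multiplies the error by another $\log(1/\tau)$ factor, giving $\Delta_{\ell+1} \lesssim \tau(\log 1/\tau)^{2\ell+2}$. Together with the extra log from the within-phase sum, this explains the final exponent $2L+1$.

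The delicate point is to check that the new edge set seen by the true algorithm at $k_{\ell+1}$ is exactly $\EEE^{\ell+1}$ up to logit errors of size $\delta_{\ell+1} = O((\log 1/\tau)^{2\ell+2})$. This needs the cold-dynamics nondegeneracy: edges not in $\EEE^{\ell+1}$ are at distance $\Omega(1)$ from $0$ at $t_{\ell+1}$. It also needs that edges becoming negative in the new phase do not matter, since \autoref{prop:prelim:qtve_BV24} handles any pattern $\EEE^{\ell+1}$ regardless of $\SSS\ne\ol\SSS$.

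The final phase $\ell=L$ has drift $(0,0)$ by \autoref{lm:CS:finalphase_optimal}, and the statement is only claimed on $[0,t_L]$. Since $L$ and all constants depend only on $\mu,\nu,C$, compounding the $L$ phases gives the claimed bound $B\tau(\log 1/\tau)^{2L+1}$ for $\tau\le\tau_0$.
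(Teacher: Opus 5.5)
Your proposal follows essentially the same route as the paper: induction over phases, an auxiliary infinite-cost Sinkhorn run on $\EEE^\ell$ restarted from the true iterate at $k_\ell$, \autoref{prop:prelim:qtve_BV24} plus a harmonic sum for auxiliary-versus-cold, \autoref{lm:prelim:control_finite_infinite_costs} for true-versus-auxiliary, and a polylogarithmic cutoff window before $t_{\ell+1}$ bridged by the Lipschitz bounds; the one methodological difference is that the paper resolves the circular dependence of $M^k$ on the true iterates with the discrete Bihari--LaSalle inequality, whereas your ``as long as the total error is small'' amounts to a bootstrap by induction on $k$, which does work because the lemma's bound at step $k$ involves only $M^l$ for $l<k$ (you should state it explicitly). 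There is one bookkeeping slip: at $\ell=0$ the term $\tau\sum_k (\log k)/k$ already contributes $\tau(\log 1/\tau)^2$, so with your multiplicative window the induction actually gives $\Delta_\ell\lesssim\tau(\log 1/\tau)^{2\ell+1}$ for $\ell\ge 1$ rather than $(\log 1/\tau)^{2\ell}$, which still yields the stated final bound $B\tau(\log 1/\tau)^{2L+1}$.
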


Moreover, uniform convergence also holds over an arbitrarily large portion of the final phase, albeit with constants that may depend on the initialization.

\begin{theorem} \label{thm:cv:cv_finalphase}
    In the setting of the previous theorem, for any $\tau \leq \tau_0'$,
    \begin{equation}
        \sup_{0 \leq t \leq t_L + \tau^{-1}}
        \norm{f^{\floor{2t/\tau}+2} - f(t)}_\infty 
        + \norm{g^{\floor{2t/\tau}+2} - g(t)}_\infty
        \leq B' \, \tau (\log 1/\tau)^{2L+2}
    \end{equation}
    for some constants $\tau_0', B'$ dependent on $\mu, \nu, C$, and the initialization $(f^0, g^0)$.
\end{theorem}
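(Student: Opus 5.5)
We take \autoref{thm:cv:cv} as given on $[0, t_L]$, so only the final phase $t \in [t_L, t_L + \tau^{-1}]$ remains. There $f(t) = f(t_L)$ and $g(t) = g(t_L)$ are constant. It therefore suffices to show that the Sinkhorn iterates move by at most $\tO(\tau)$ over the $O(\tau^{-2})$ iterations $k \in [K_L, K_L + 2\tau^{-2}]$, where $K_L = \floor{2t_L/\tau}+2$.

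\textbf{Step 1: reduction to the infinite-cost problem.} At $k = K_L$, \autoref{thm:cv:cv} gives $\norm{f^{K_L} - f(t_L)}_\infty + \norm{g^{K_L}-g(t_L)}_\infty \le B\tau(\log 1/\tau)^{2L+1}$. Consequently:
\begin{itemize}
\item on $\EEE^L$, the logits satisfy $\abs{U^{K_L}_{ij}} \le \delta := B(\log 1/\tau)^{2L+1}$;
\item off $\EEE^L$, they are at most $-c/(2\tau)$, where $c = \min_{(i,j)\notin\EEE^L} (C_{ij} - f_i(t_L) - g_j(t_L)) > 0$.
\end{itemize}
Restart Sinkhorn at $K_L$ alongside the infinite-cost Sinkhorn with cost $\tC$ (infinite off $\EEE^L$), both initialized at $(f^{K_L}, g^{K_L})$. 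We then apply \autoref{lm:prelim:control_finite_infinite_costs} with $\Delta^0 = 0$. This requires the off-$\EEE^L$ logits of the true iterates to stay below $-M^l$ with $M^l \ge c/(4\tau)$ for all $l \le 2\tau^{-2}$. The resulting discrepancy is $\Delta^k \le \tau(\mu_{\min}\wedge\nu_{\min})^{-1}[e^{2\delta}\cdot 2 + 2\tau^{-2}]e^{-c/(4\tau)}$, which is exponentially small. Note that $e^{2\delta} = e^{\mathrm{polylog}}$ is still dominated by $e^{-c/(4\tau)}$.

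\textbf{Step 2: drift of the infinite-cost iterates.} By \autoref{lm:CS:finalphase_optimal}, $v^*(\EEE^L) = w^*(\EEE^L) = 0$. \autoref{prop:prelim:qtve_BV24} then gives $\norm{\tv^k}_\infty, \norm{\tw^k}_\infty \le B_1(1+\delta+\log k)/k$ for $k \ge K_0(1+\delta)$. Before $K_0(1+\delta)$, we use \autoref{lm:prelim:unifbound_vk_wk}: the increments are $O(1+\delta)$. Summing $\tau\norm{\tv^k}$ over $k\le 2\tau^{-2}$ gives
\[
\norm{\tf^k - \tf^0}_\infty \lesssim \tau\big[(1+\delta)^2 + (1+\delta+\log(1/\tau))\log(1/\tau)\big].
\]
The first term comes from the initial window, the second from the harmonic sum. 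Substituting $\delta = B(\log 1/\tau)^{2L+1}$ yields $\tau(\log 1/\tau)^{4L+2}$, which is too large. To fix this, we refine Step 1 using the sharper quantity $\delta' := \norm{U^{K_L}}$ on $\EEE^L$ directly, which is $O((\log 1/\tau)^{2L+1})$. The initial-window contribution in fact scales as $\tau\delta$ and not $\tau\delta^2$, because by \autoref{lm:prelim:unifbound_vk_wk} all increments for $k\ge 4$ are $O(1)$ independently of $\delta$; only $v^1,w^2$ can be of size $\delta$. So the sum is $\tau[O(\delta) + O(K_0(1+\delta)) + (1+\delta+\log(1/\tau))\log(1/\tau)] = O(\tau(\log 1/\tau)^{2L+2})$, matching the claim. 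The constants may depend on the initialization through the phase structure, hence $B', \tau_0'$.

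\textbf{Main obstacle: the bootstrap on $M^l$.} We must verify that the off-$\EEE^L$ logits stay below $-c/(4\tau)$ throughout, and this is circular: it requires the true iterates to stay close to $(f(t_L), g(t_L))$. We would run a continuity/induction argument. Let $k^*$ be the first iteration at which some off-$\EEE^L$ logit exceeds $-c/(4\tau)$. Up to $k^*$, Steps 1–2 show $\norm{f^k - f(t_L)} + \norm{g^k - g(t_L)} \le B'\tau(\log 1/\tau)^{2L+2} \ll c/4$ for $\tau \le \tau_0'$. This forces the off-$\EEE^L$ logits to remain below $-c/(2\tau) + o(1/\tau) < -c/(4\tau)$, a contradiction unless $k^* > K_L + 2\tau^{-2}$.
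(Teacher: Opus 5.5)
Your proposal is correct and follows essentially the paper's proof: you restart at $k_L$ alongside the infinite-cost Sinkhorn on $\EEE^L$. You bound its drift via \autoref{prop:prelim:qtve_BV24} with $v^*(\EEE^L)=w^*(\EEE^L)=0$, where only $\tv^1,\tw^2$ are of size $\delta$, which yields the exponent $2L+2$. You then control the discrepancy via \autoref{lm:prelim:control_finite_infinite_costs}, using the initialization-dependent gap off $\EEE^L$.

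The only real difference is how you break the circularity in $M^l$. You use a first-exit bootstrap, which is simpler here because the gap stays of order one throughout the final phase. The paper instead reuses the discrete Bihari--LaSalle argument from \autoref{thm:cv:cv}. Note also that your ``$\delta'$'' refinement is vacuous, since $\delta'$ is just $\delta$; the actual fix is the $O(1)$ bound on the later increments.
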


\begin{remark}[Uniform convergence of the derivatives within phases]
    The theorems presented above show that the Sinkhorn iterates $(f^k, g^k)$ converge uniformly to the cold Sinkhorn dynamics.
    One may also ask for uniform convergence of the derivatives, i.e., one may ask whether the rescaled one-iteration increments $v^k, w^k$ defined in \eqref{eq:prelim:def_vk_wk} converge to the $\ol v^\ell, \ol w^\ell$ appearing in \autoref{def:CS:formaldef}.
    This is indeed the case uniformly away from the phase transitions: by inspecting step~\eqref{eq:cv:pf_cv_vkwk} of the proof of \autoref{thm:cv:cv}, one can show that
    for any $\tau \leq \tau_0$,
    \begin{equation}
        \forall \ell \leq L-1,~
        \sup_{t_\ell+\tau \log(\sfrac1\tau)^{2L+1} \leq t \leq t_{\ell+1}-\tau \log(\sfrac1\tau)^{2L+1}} 
        \norm{v^{\floor{2t/\tau}} - \ol v^\ell} 
        + \norm{w^{\floor{2t/\tau}} - \ol w^\ell} 
        \leq B\, \tau
    \end{equation}
    for some constants $\tau_0, B$ dependent only on $\mu, \nu$, and $C$.
\end{remark}

\begin{remark}[Impossibility of uniform convergence over all time] \label{rk:cv:centroid}
    This remark shows that the conclusion of \autoref{thm:cv:cv_finalphase} cannot hold if the $\sup$ is taken over all $t \geq 0$, in general.
    
    Note that
    $\Psi_\tau$ and $\Psi_0$ only depend on their arguments $f, g$ via the matrix $\ol U = (f_i+g_j-C_{ij})_{ij}$.
    Specifically,
    $\Psi_\tau(f,g) = \sum_{ij} \left[ \tau (e^{\ol U_{ij}/\tau} - 1) -\ol U_{ij} - C_{ij} \right] \mu_i \nu_j$,
    and likewise for $\Psi_0$ without the term in $\tau$.
    Note that $\Psi_\tau$ is strictly convex in $\ol U$ for any $\tau>0$, so that
    $
        \big\{ (f^*_i+g^*_j-C_{ij})_{ij},~ (f^*, g^*) \in \argmin \Psi_\tau \big\} 
        = \{ \ol U^*_\tau \}
    $
    is a singleton.
    For $\Psi_0$, the corresponding set rewrites
    \begin{align*}
        \UUU
        &\coloneqq \left\{ (f^*_i+g^*_j-C_{ij})_{ij},~ (f^*, g^*) \in \argmin\nolimits_{\FF} \Psi_0 \right\} \\
        &~= \argmax_{\ol U}\,
        \sum_{ij} \ol U_{ij} \mu_i \nu_j
        ~~~~\text{subject to}~~~~
        \forall i,j,~ \ol U_{ij} \leq 0
        \quad\text{and}\quad
        \exists f,g;~ \ol U+C = f \cdot \bmone_n^\top + \bmone_m \cdot g^\top,
    \end{align*}
    a polytope which is not a singleton in general, if $C$ is not in general position w.r.t\ $\mu$ and $\nu$.
    When this polytope $\UUU$ is not a singleton, it was shown by \cite[Section~3]{cominetti1994asymptotic} that it has a distinguished element $\ol U^*$ such that
    $\ol U^*_\tau \to \ol U^*$ as $\tau \to 0$.

    Now consider $\mu, \nu, C$ such that $\UUU$ is not a singleton, and consider any $(f^0, g^0)$ such that $\ol U^0 = (f^0_i + g^0_j - C_{ij})_{ij} \in \UUU \setminus \{\ol U^*\}$.
    Then the cold Sinkhorn dynamics initialized at $(f^0, g^0)$ is constant since $\ol U^0$ is already optimal, while the Sinkhorn algorithm initialized at $(f^0, g^0)$ for any $\tau>0$ converges to $\ol U^*_\tau$.
    Hence 
    \begin{equation}
        \lim_{\tau \to 0}~
        \lim_{t \to \infty}~
        \max_{ij} \abs{f^{\floor{2t/\tau}+2}_i - f_i(t)
        + g^{\floor{2t/\tau}+2}_j - g_j(t)}
        = \lim_{\tau \to 0} \norm{\ol U^*_\tau - \ol U^0}_\infty
        = \norm{\ol U^* - \ol U^0}_\infty > 0,
    \end{equation}
    implying that $\sup_{t \geq 0} \norm{f^{\floor{2t/\tau}+2} - f(t)}_\infty + \norm{g^{\floor{2t/\tau}+2} - g(t)}_\infty$ is not $o_\tau(1)$ in general.
\end{remark}

\vspace{1em}
The remainder of this section is dedicated to the proofs of \autoref{thm:cv:cv}, \autoref{thm:cv:cv_finalphase}.
We start by a simple lemma quantifying the rate of convergence of $f_\tau[\cdot]$ to $f_0[\cdot]$ as $\tau \to 0$.

\begin{lemma} \label{lm:cv:logsumexp_rate}
    For any $\varphi \in \RR^N$ and $p \in \Delta_N$,
    \begin{equation}
        \log p_{\min} \leq
        \log \sum\nolimits_I e^{\varphi_I} p_I
        - \max\nolimits_I \, \varphi_I
        \leq 0.
    \end{equation}
    In particular, the mappings $f_\tau[\cdot]$ and $g_\tau[\cdot]$ defined in \autoref{subsec:prelim:bg} satisfy
    \begin{equation}
        \forall g \in \RR^n,~
        \norm{f_\tau[g]-f_0[g]}_\infty \leq -\tau \log \nu_{\min},
        \qquad~~
        \forall f \in \RR^m,~
        \norm{g_\tau[f]-g_0[f]}_\infty \leq -\tau \log \mu_{\min}.
    \end{equation}
\end{lemma}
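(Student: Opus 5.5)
The plan is to prove the first inequality directly from elementary bounds on a weighted sum, and then specialize it to the soft-$C$-transforms.

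Let $I^* \in \argmax_I \varphi_I$. For the upper bound, we have $\sum_I e^{\varphi_I} p_I \leq e^{\max_I \varphi_I} \sum_I p_I = e^{\max_I \varphi_I}$, since $p \in \Delta_N$. For the lower bound, every term of the sum is nonnegative, so keeping only the term $I^*$ gives $\sum_I e^{\varphi_I} p_I \geq e^{\varphi_{I^*}} p_{I^*} \geq e^{\max_I \varphi_I}\, p_{\min}$. Taking logarithms of both bounds yields the first display.

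For the second part, fix $g \in \RR^n$ and an index $i$, and apply the first display with $N = n$, $p = \nu$ and $\varphi_j = (-C_{ij} + g_j)/\tau$. Then
\begin{equation}
    \max_j \varphi_j = -\min_j (C_{ij} - g_j)/\tau = -f_0[g]_i/\tau,
\end{equation}
and by \eqref{eq:prelim:def_f[g]_g[f]} we have $f_\tau[g]_i = -\tau \log \sum_j e^{\varphi_j} \nu_j$. Multiplying the first display by $-\tau < 0$ flips the inequalities and gives
\begin{equation}
    0 \leq f_\tau[g]_i - f_0[g]_i \leq -\tau \log \nu_{\min}.
\end{equation}
Since $-\log \nu_{\min} \geq 0$, this implies $|f_\tau[g]_i - f_0[g]_i| \leq -\tau \log \nu_{\min}$. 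Taking the maximum over $i$ gives the claimed $\ell_\infty$ bound for $f_\tau$. The bound for $g_\tau[f]$ follows by the symmetric argument, with the roles of $\mu$ and $\nu$ and of $i$ and $j$ exchanged.

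There is no real obstacle in this proof. The only step that needs care is tracking the sign flip when multiplying by $-\tau$.
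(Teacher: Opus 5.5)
Your proof is correct and follows essentially the same route as the paper: you obtain the upper bound from $\sum_I p_I = 1$ and the lower bound by discarding nonnegative terms, then specialize to $f_\tau[g]$ and $g_\tau[f]$. The paper keeps all maximizing indices to get the intermediate bound $\log p(S) \geq \log p_{\min}$, where you keep a single one, and you spell out the sign flip giving $0 \leq f_\tau[g]_i - f_0[g]_i \leq -\tau \log \nu_{\min}$, which the paper leaves as following by definition; neither difference matters.
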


\begin{proof}
    Let $\varphi \in \RR^N, p \in \Delta_N$ and denote
    $m = \max_I \varphi_I$,
    $S = \argmax_I \varphi_I$,
    $p(S) = \sum_{I \in S} p_I$.
    We have
    \begin{align}
        \log \sum\nolimits_I e^{\varphi_I} p_I - m
        &= \log \sum\nolimits_I e^{\varphi_I-m} p_I
        = \log \bigg( \sum_{I \in S} p_I + \sum_{I \not\in S} e^{\varphi_I-m} p_I \bigg) \\
        &= \log p(S)
        + \log \Bigg( 1 + \frac{\sum_{I \not\in S} e^{\varphi_I-m} p_I}{p(S)} \Bigg)
        \geq \log p(S)
        \geq\log p_{\min}
    \end{align}
    and in the other direction,
    $\log \sum_I e^{\varphi_I} p_I - m 
    = \log \sum\nolimits_I e^{\varphi_I-m} p_I
    \leq 0$.
    The uniform bound on $f_\tau[\cdot]-f_0[\cdot]$ follows by definition of $f_\tau[g]_i = -\tau \log \sum_j e^{[-C_{ij}+g_j]/\tau} \nu_j$,
    and likewise for $g_\tau[\cdot]$. 
\end{proof}

We can now proceed to the proof of \autoref{thm:cv:cv}.

\begin{proof}[Proof of \autoref{thm:cv:cv}]
    Throughout this proof, we use $O(\cdot), \Omega(\cdot), \Theta(\cdot)$ to hide constants that depend only on $\mu, \nu$, and $C$.
    Note that $t_1 \leq ... \leq t_L = O(1)$ by \autoref{lm:CS:t_L=O(1)}.
    Moreover, we will say ``for $\tau$ small enough'' to mean that a statement holds provided $\tau \leq \tau_0$, for some constant $\tau_0 = \Theta(1)$.
    
    Let us first check that for $t=t_0=0$, we have the pointwise estimate $(f^2, g^2) = (f(0), g(0)) + O(\tau)$.
    Since $f^2=f^1=f_\tau[g^0]$ and $g^2 = g_\tau[f^1]$ by definition, this indeed follows from \autoref{lm:cv:logsumexp_rate} and from the $1$-Lipschitzness of $g_0[\cdot]$.
    For ease of notation, let us re-index the Sinkhorn iterates by shifting back the index $k$ by $2$.
    That is, we consider henceforth the sequence $(f^k, g^k)_{k \geq -2}$ initialized at some arbitrary $(f^{-2}, g^{-2})$, and we have $(f^0, g^0) \to (f(0), g(0)) = (f_0[g^{-2}], g_0[f_0[g^{-2}]])$ as $\tau \to 0$.

    To prove the theorem, it suffices to show that for any $0 \leq \ell \leq L-1$,
    for $\tau$ small enough,
    \begin{equation}
        \sup_{t \in [t_\ell, t_{\ell+1}]}
        \abs{f^{\floor{2 t/\tau}} - f(t)} 
        + \abs{g^{\floor{2 t/\tau}} - g(t)}
        \leq O\left( \tau (\log 1/\tau)^{2\ell+3} \right).
    \end{equation}
    Since $t \mapsto (f(t), g(t))$ is $O(1)$-Lipschitz by definition and $t \mapsto (f^{\floor{2 t/\tau}}, g^{\floor{2 t/\tau}})$ is $O(1)$-Lipschitz by \autoref{lm:prelim:unifbound_vk_wk},
    then equivalently it suffices to show, for any $0 \leq \ell \leq L-1$ and $\tau$ small enough,
    \begin{equation} \label{eq:cv:induction_claim}
        \sup_{\floor{2t_\ell/\tau} \leq k \leq \floor{2t_{\ell+1}/\tau}}
        \abs{f^k - f(\tau k/2)}
        + \abs{g^k - g(\tau k/2)}
        \leq O\left( \tau (\log 1/\tau)^{2\ell+3} \right).
    \end{equation}
    We show this by induction.

    \vspace{0.5em}
    \underline{For $\ell=0$:}~
    Denote by $(\tf^k, \tg^k)_k$ the iterates of the Sinkhorn algorithm applied to the EOT problem with the same target marginals $\mu, \nu$ but with the cost matrix
    $\tC_{ij} = \begin{cases}
        C_{ij} ~~\text{if}~ (i,j) \in \EEE^0 \\
        \infty ~~\text{otherwise}
    \end{cases}$,
    initialized at $(\tf^0, \tg^0) = (f^0, g^0)$.
    Also let $\tv^k, \tw^k$ and $\tU^k$ denote the associated variables as defined in \eqref{eq:prelim:def_vk_wk}, \eqref{eq:prelim:def_Uk}.
    Since $(\tf^0, \tg^0) = (f(0), g(0)) + O(\tau)$ and $(f(0), g(0)) \in \partial \FF$, then
    \begin{equation}
        \forall (i,j) \in \EEE^0,~
        \tau \tU^0_{ij} = \tf^0_j + \tg^0_j - C_{ij} = O(\tau).
    \end{equation}
    So we can apply \autoref{lm:prelim:unifbound_vk_wk} and \autoref{prop:prelim:qtve_BV24} with $\delta = O(1)$, and we get the following---where we still keep track of the dependency on $\delta$ explicitly for ease of presentation later on, and where we assume $\delta \geq \Omega(1)$ without loss of generality:
    \begin{align*}
        \forall k \geq K_0 = \Theta(\delta),~~
        & \norm{(\tv^k, \tw^k) - (\olv^0, \ol w^0)}_\infty
        = \norm{(\tv^k, \tw^k) - (v^*(\EEE^0), w^*(\EEE^0))}_\infty
        \leq O(1) \frac{\delta + \log k}{k} \\
        \text{and}~~~~
        \forall 1 \leq k \leq 3,~~
        & \norm{\tv^k}_\infty, \norm{\tw^k}_\infty \leq O(\delta) \\
        \forall k \geq 4,~~
        & \norm{\tv^k}_\infty, \norm{\tw^k}_\infty \leq O(1).
    \end{align*}
    As a consequence,
    since $\tf^k = \tf^{k-2} + \tau \tv^k$
    and $f(\tau k/2) = f(\tau (k-2)/2) + \tau \ol v^0$
    for all $k$,
    \begin{itemize}
        \item For all $k \leq K_0 = \Theta(\delta)$,
        \begin{equation}
            \norm{\tf^k - f(\tau k/2)}_\infty
            \leq \norm{\tf^k - \tf^4}_\infty
            + \norm{\tf^4 - \tf^0}_\infty
            + \norm{f(\tau k/2) - f(0)}_\infty
            \leq O(\tau k + \delta)
            = O(\tau \delta).
        \end{equation}
        \item For all $K_0 \leq k \leq \floor{2 t_1/\tau}$,
        \begin{equation}
            \norm{\tf^k - f(\tau k/2)}_\infty
            = \norm{
                \tf^{K_0} 
                - f(\tau K_0/2)
                + \sum_{\substack{l=K_0 \\ l ~\text{even}}}^k
                \left( 
                    \tau \tv^l
                    - \tau \olv^0
                \right)
            }_\infty 
            \! \leq O(\tau \delta) 
            + \sum_{\substack{l=K_0 \\ l ~\text{even}}}^k
            \tau\, O(1) \frac{\delta + \log l}{l}
        \end{equation}
        and so since $\sum_{l=1}^k l^{-1} \log l \asymp \frac12 (\log k)^2$,
        \begin{equation}
            \norm{\tf^k - f(\tau k/2)}_\infty
            \leq O(\tau (\log k) (\delta + \log k))
            \leq O\big( \tau (\log 1/\tau) (\delta + \log 1/\tau) \big).
        \end{equation}
    \end{itemize}
    We have likewise the analogous bound for the $\tg^k$.
    
    Next, let us show that $(f^k, g^k)$ remains close to $(\tf^k, \tg^k)$ throughout the phase $[t_0, t_1)$, or to be exact, throughout some $[t_0, t_1-o_\tau(1))$.
    Since $\forall (i,j) \in \EEE^0, \tU^0_{ij} = O(1)$ as noted above, we can apply \autoref{lm:prelim:control_finite_infinite_costs} with $\EEE = \EEE^0$ and $\delta = O(1)$, yielding---still keeping track of the dependency on $\delta$ explicitly for ease of later presentation:
    \begin{equation}
        \forall k \geq 0,~
        \norm{f^k-\tf^k}_\infty\!,~
        \norm{g^k-\tg^k}_\infty
        \leq \tau\, O(1)\, e^{2\delta}
        \sum_{l=0}^{k-1}
        e^{-M^l}
        ~\quad \text{where} \quad~
        -M^k = \max_{(i,j) \not\in \EEE^0} [f^k_i+g^k_j-C_{ij}]/\tau.
    \end{equation}
    Denote likewise
    \begin{equation}
        -\tM^k = \max_{(i,j) \not\in \EEE^0} [\tf^k_i+\tg^k_j-C_{ij}]/\tau
        \qquad\text{and}\qquad
        -\ol M(t) = \max_{(i,j) \not\in \EEE^0} f_i(t)+g_j(t)-C_{ij}.
    \end{equation}
    Fix $K_0' = \floor{2 t_1/\tau} - (\log 1/\tau)^N$ for some large integer $N$ to be chosen later.
    Then for any $k \leq K_0'$,
    \begin{align*}
        \abs{M^k - \tau^{-1} \ol M(\tau k/2)}
        &\leq \abs{M^k - \tM^k} + \abs{\tM^k - \tau^{-1} \ol M(\tau k/2)} \\
        &\leq \tau^{-1} \norm{(f^k, g^k)-(\tf^k, \tg^k)}_\infty
        + \tau^{-1} \norm{(\tf^k, \tg^k) - (f(\tau k/2), g(\tau k/2))}_\infty \\
        &\leq O(1)\, e^{2\delta} \sum_{l=0}^{k-1} e^{-M^l}
        + O((\log 1/\tau) (\delta + \log 1/\tau)) \\
        &\leq O(1)\, e^{2\delta} \sum_{l=0}^{k-1} e^{-\tau^{-1} \ol M(\tau l/2)}~
        e^{\abs{M^l - \tau^{-1} \ol M(\tau l/2)}}
        + O((\log 1/\tau) (\delta + \log 1/\tau)).
    \end{align*}
    That is, denoting $u_k = \abs{M^k - \tau^{-1} \ol M(\tau k/2)}$, $b_k = \Theta(1)\, e^{2\delta} e^{-\tau^{-1} \ol M(\tau k/2)}$, and $c = \Theta((\log 1/\tau) (\delta + \log 1/\tau))$,
    \vspace{-0.5em}
    \begin{equation}
        \forall 0 \leq k \leq K_0',~ 
        u_k \leq c + \sum_{l=0}^{k-1} b_l \, e^{u_l}.
    \end{equation}
    So by the discrete Bihari-LaSalle inequality \cite[Theorem~2.3.1]{pachpatte2001inequalities},
    \begin{align}
        u_k
        &\leq -\log \bigg( e^{-c} 
        - \sum_{l=0}^{k-1} b_l \bigg) \\
        e^{u_k} =
        e^{\abs{M^k - \tau^{-1} \ol M(\tau k/2)}}
        &\leq \left( 
            e^{-\Theta\left( (\log 1/\tau) (\delta+\log 1/\tau) \right)} 
            - \Theta(1)\, e^{2\delta} \sum_{l=0}^{k-1} e^{-\tau^{-1} \ol M(\tau l/2)} 
        \right)^{-1}.
    \end{align}
    Moreover, by definition of $K_0' = \floor{2 t_1/\tau} - (\log 1/\tau)^N$ and of $t_1$, for all $k \leq K_0'$,
    \begin{equation} \label{eq:CS:olM_negative}
        -\ol M(\tau k/2) \leq 
        ~\underbrace{-\ol M(t_1)}_{0}~
        - \Theta\left( t_1-\tau k/2 \right)
        \leq - \Theta\left( \tau (\log 1/\tau)^N \right)
    \end{equation}
    where the first $\Theta(\cdot)$ hides the constant
    $\min \left\{ \ol v^0_i + \ol w^0_j;~ (i,j) \not\in \EEE^0 ~\text{and}~ \ol v^0_i + \ol w^0_j > 0 \right\}$,
    which is indeed only dependent on $\mu$ and $\nu$ upon taking an infimum over all possible sets $\EEE^0$, since by definition $\olv^0=v^*(\EEE^0), \ol w^0=w^*(\EEE^0)$.
    Hence,
    \begin{align}
        e^{\abs{M^k - \tau^{-1} \ol M(\tau k/2)}}
        &\leq \left( 
            e^{-\Theta\left( (\log 1/\tau) (\delta+\log 1/\tau) \right)} 
            - \Theta(1)\, e^{2\delta} \cdot \floor{2 t_1/\tau} e^{-\Theta((\log 1/\tau)^N)} 
        \right)^{-1} \\
        &= \left( 
            e^{-\Theta\left( (\log 1/\tau) (\delta+\log 1/\tau) \right)} 
            -\Theta(1)\, e^{2\delta + \log(1/\tau) - \Theta((\log 1/\tau)^N)}
        \right)^{-1} \\
        &= O \left(
            e^{\Theta\left( (\log 1/\tau) (\delta+\log 1/\tau) \right)} 
        \right)
    \end{align}
    for $\tau$ small enough,
    provided that $\delta + \log 1/\tau = o((\log 1/\tau)^{N-1})$.
    Thus, for all $k \leq K_0'$,
    \begin{align}
        \norm{f^k-\tf^k}_\infty
        &\leq \tau\, O(1)\, e^{2\delta}
        \sum_{l=0}^{k-1}
        e^{-M^l}
        \leq \tau\, O(1)\, e^{2 \delta} \cdot \floor{2 t_1/\tau} \max_{l \leq k-1} e^{-M^l} \\
        &\leq O(1)\, e^{2 \delta}\,
        \max_{l \leq k-1} 
        e^{-\tau^{-1} \ol M(\tau l/2)}~
        e^{\abs{M^l - \tau^{-1} \ol M(\tau l/2)}} \\
        &\leq O(1)\, e^{2 \delta}~
        e^{-\Theta((\log 1/\tau)^N)}~
        e^{\Theta\left( (\log 1/\tau) (\delta+\log 1/\tau) \right)}
        = O(1) e^{-\Theta((\log 1/\tau)^N)}
        \leq O(\tau), \qquad
    \label{eq:cv:pf_cv_vkwk}
    \end{align}
    again for $\tau$ small enough and provided that $\delta + \log 1/\tau = o((\log 1/\tau)^{N-1})$.
    In summary, we have 
    \begin{align}
        \forall 0 \leq k \leq K_0',~
        \norm{f^k-f(\tau k/2)}_\infty
        &\leq \norm{f^k-\tf^k}_\infty + \norm{\tf^k - f(\tau k/2)} \\
        &\leq O(\tau) + O\big( \tau (\log 1/\tau) (\delta + \log 1/\tau) \big)
    \end{align}
    for $\tau$ small enough and provided that $\delta + \log 1/\tau = o((\log 1/\tau)^{N-1})$,
    and likewise for the $g^k$.

    \pagebreak

    It only remains to treat the $K_0' \leq k \leq \floor{2 t_1/\tau}$ where $K_0' = \floor{2 t_1/\tau} - (\log 1/\tau)^N$. In this case,
    \begin{equation}
        \norm{f^k - f(\tau k/2)}_\infty
        \leq \underbrace{
            \norm{f^{K_0'} - f(\tau K_0'/2)}_\infty
        }_{\leq~ O( \tau (\log 1/\tau) (\delta + \log 1/\tau) )}
        +~ \underbrace{
            \norm{f^k - f^{K_0'}}_\infty
            + \norm{f(\tau k/2) - f(\tau K_0'/2)}_\infty
        }_{\leq~ O(\tau) (k-K_0') ~=~ O(\tau (\log 1/\tau)^N)}
    \end{equation}
    since
    $\norm{v^l}_\infty = O(1)$ for all $l \geq 3$
    by \autoref{lm:prelim:unifbound_vk_wk},
    and likewise for the $g^k$.
    
    Since $\delta = O(1)$, then by choosing $N=3$, we indeed have $\delta + \log 1/\tau = o((\log 1/\tau)^{N-1}))$, and so we obtain the claimed inequality \eqref{eq:cv:induction_claim} at rank $\ell=0$.
    
    \vspace{0.5em}
    \underline{For $1 \leq \ell \leq L-1$:}~
    Suppose the induction hypothesis \eqref{eq:cv:induction_claim} holds at rank $\ell-1$.
    In particular, 
    $(f^{\floor{2t_\ell/\tau}}, g^{\floor{2t_\ell/\tau}})
    = (f(t_\ell), g(t_\ell)) + O\left( \tau (\log 1/\tau)^{2\ell+1} \right)$
    and $(f(t_\ell), g(t_\ell)) \in \partial \FF$.
    So we can apply the same reasoning as described in detail for $\ell=0$ above, 
    but this time with $\delta = (\log 1/\tau)^{2\ell+1}$,
    and choosing $N = (2\ell+1) + 2$
    so that $\delta + \log 1/\tau = o((\log 1/\tau)^{N-1})$ is satisfied.
\end{proof}

Next, we prove \autoref{thm:cv:cv_finalphase}, showing uniform convergence over an arbitrarily large portion of the final phase, with constants that may additionally depend on the initialization.

\begin{proof}[Proof of \autoref{thm:cv:cv_finalphase}]
    The uniform bound over $0 \leq t \leq t_L$ was shown in \autoref{thm:cv:cv}, so it suffices to show the bound over $t_L \leq t \leq t_L + \tau^{-1}$.
    For clarity, to more easily keep track of which coefficients come from what, we will show a slightly stronger version of the statement: we show a uniform bound over all $t_L \leq t \leq t_L + \tau^{-\alpha}$ for an arbitrary fixed $\alpha \geq 1$.

    Similar to the previous proof, throughout this proof we use $O(\cdot), \Omega(\cdot), \Theta(\cdot)$ to hide constants that depend only on $\mu, \nu, C, (f^0, g^0)$, and $\alpha$,
    and we will say ``for $\tau$ small enough'' to mean that a statement holds provided $\tau \leq \tau_0'$ for some constant $\tau_0' = \Theta(1)$.
    We also re-index the Sinkhorn iterates by shifting back the index $k$ by $2$.
    Recall that the cold Sinkhorn dynamics is stationary:
    $\forall t \geq t_L, (f(t), g(t)) = (f(t_L), g(t_L))$,
    as shown in \autoref{lm:CS:finalphase_optimal}.
    
    Introduce $(\tf^k, \tg^k)_k$ the iterates of the Sinkhorn algorithm for the EOT problem with target marginals $\mu, \nu$ and cost matrix
    $\tC_{ij} = \begin{cases}
        C_{ij} ~~\text{if}~ (i,j) \in \EEE^L \\
        \infty ~~\text{otherwise}
    \end{cases}$,
    initialized at $(\tf^{\floor{2 t_L/\tau}}, \tg^{\floor{2 t_L/\tau}}) = (f^{\floor{2 t_L/\tau}}, g^{\floor{2 t_L/\tau}})$.
    By \autoref{thm:cv:cv}, we have
    $(f^{\floor{2t_L/\tau}}, g^{\floor{2t_L/\tau}})
    = (f(t_L), g(t_L)) + O\left( \tau (\log 1/\tau)^{2L+1} \right)$,
    and by definition $(f(t_L), g(t_L)) \in \partial \FF$.

    Denote for concision
    \begin{equation*}
        k_L = \floor{2 t_L/\tau},
        \qquad
        k_{L+1} = \floor{2 (t_L + \tau^{-\alpha})/\tau} = k_L + \Theta(\tau^{-\alpha-1}).
    \end{equation*}
    By the same reasoning as in the proof of \autoref{thm:cv:cv} up until \eqref{eq:CS:olM_negative} excluded, we find that
    \begin{align}
        \forall k_L + \Theta(\delta) \leq k \leq k_{L+1},~~
        \norm{\tf^k - f(\tau k/2)}_\infty
        &\leq O\big( \tau \log (k-k_L) (\delta + \log (k-k_L)) \big) \\
        &\leq O\big( \tau (\log 1/\tau) (\delta + \log 1/\tau) \big)
    \end{align}
    where $\delta = O\left( (\log 1/\tau)^{2L+1} \right)$,
    for $\tau$ small enough, and likewise for the $\tg^k$.
    Still by the same reasoning,
    \begin{equation*}
        \forall k_L \leq k \leq k_{L+1},~~
        \norm{f^k-\tf^k}_\infty
        \leq \tau\, O(1)\, e^{2\delta} 
        \cdot O(\tau^{-\alpha-1})
        \max_{k_L \leq l \leq k-1} e^{-\tau^{-1} \ol M(\tau l/2)} ~
        e^{\abs{M^l - \tau^{-1} \ol M(\tau l/2)}}
    \end{equation*}
    and likewise for the $g^k$, 
    and
    \begin{gather*}
        \forall k_L \leq k \leq k_{L+1},~~
        e^{\abs{M^k - \tau^{-1} \ol M(\tau k/2)}}
        \leq \left( 
            e^{-\Theta\left( (\log 1/\tau) (\delta+\log 1/\tau) \right)} 
            - \Theta(1)\, e^{2\delta}
            \sum_{l=k_L}^{k-1}
            e^{-\tau^{-1} \ol M(\tau l/2)} 
        \right)^{-1} \\
        \text{where}\quad
        -M^k = \max_{(i,j) \not\in \EEE^L} [f^k_i+g^k_j-C_{ij}]/\tau
        \quad\text{and}\quad
        -\ol M(t) = \max_{(i,j) \not\in \EEE^L} f_i(t)+g_j(t)-C_{ij}.
    \end{gather*}
    However, we do not have an analog of the estimate on $-\ol M(\tau k/2)$ from \eqref{eq:CS:olM_negative} anymore. 
    Instead,
    \begin{equation}
        \forall t \geq t_L,~
        -\ol M(t) = -\ol M(t_L) = -\Theta(1)
    \end{equation}
    by stationarity of the cold Sinkhorn dynamics.
    Here the right-hand side is strictly negative and independent of $t$ and $\tau$, but it may depend on the initialization $(f(0), g(0))$ of the dynamics---and so on the initialization $(f^0, g^0)$ of the Sinkhorn algorithm---a priori.

    \pagebreak

    By continuing to follow the same reasoning as in the proof of \autoref{thm:cv:cv}, albeit with constants that may depend on $(f^0, g^0)$ instead of only on $\mu, \nu, C$,
    we similarly obtain
    \begin{align}
        \forall k_L \leq k \leq k_{L+1},~~
        e^{\abs{M^k - \tau^{-1} \ol M(\tau k/2)}}
        &\leq \left( 
            e^{-\Theta\left( (\log 1/\tau) (\delta+\log 1/\tau) \right)} 
            - \Theta(1)\, e^{2\delta} \cdot O(\tau^{-\alpha-1}) e^{-\Theta(\tau^{-1})} 
        \right)^{-1} 
    \nonumber \\
        &= \left( 
            e^{-\Theta\left( (\log 1/\tau) (\delta+\log 1/\tau) \right)} 
            -\Theta(1)\, e^{2\delta + O(\log(1/\tau)) - \Theta(\tau^{-1})}
        \right)^{-1} 
    \nonumber \\
        &\leq O\left( e^{\Theta\left( (\log 1/\tau) (\delta+\log 1/\tau) \right)} \right) 
    \nonumber \\
        \text{and}\qquad\qquad\qquad~
        \norm{f^k-\tf^k}_\infty
        &\leq O(\tau^{-\alpha}) \, e^{2\delta}~ e^{-\Theta(\tau^{-1})}~ O\left( e^{\Theta\left( (\log 1/\tau) (\delta+\log 1/\tau) \right)} \right)
        \leq O(\tau) 
    \qquad
    \label{eq:cv:pf_cv_vkwk_finalphase} \\
        \text{and}\qquad\qquad
        \norm{f^k-f(\tau k/2)}_\infty
        &\leq \norm{f^k-\tf^k}_\infty + \norm{\tf^k - f(\tau k/2)} 
    \nonumber \\
        &\leq O(\tau) + O\big( \tau (\log 1/\tau) (\delta + \log 1/\tau) \big)
    \nonumber
    \end{align}
    for $\tau$ small enough, and likewise for the $g^k$,
    where we recall that $\delta = O\left( (\log 1/\tau)^{2L+1} \right)$.
\end{proof}

\subsection{An improved convergence guarantee for the Sinkhorn algorithm}

In this section, we formalize the following reasoning.
We have shown in \autoref{subsec:CS:ppties} that the cold Sinkhorn dynamics $(f(t), g(t))$ converges in finite time $t_L$, and in \autoref{subsec:cv:mainres} that the Sinkhorn iterates $(f^k, g^k)$ stay $\tO(\tau)$-close to $(f(\tau k/2), g(\tau k/2))$ until $t_L$.
Since moreover the dual objectives $\Psi_0$ and $\Psi_\tau$ differ essentially by $O(\tau)$ and are Lipschitz-continuous, this implies that at iteration $k = \floor{2 t_L/\tau} = \Theta(1/\tau)$, the Sinkhorn algorithm reaches dual suboptimality $\Psi_\tau(f^k, g^k) - \min \Psi_\tau \leq \tO(\tau)$.

This convergence guarantee is new: previous works analyzing Sinkhorn show either exponential convergence but with a poor dependency on $\tau$, or polynomial convergence bounds in $\Psi_\tau(f^k, g^k) - \min \Psi_\tau \leq O(1/(\tau k))$ \cite{dvurechensky2018computational}.
So according to previous works, dual suboptimality $\leq \tO(\tau)$ can be ensured by using $k = O(1/\tau^2)$ iterations, while our result shows that actually $k = \Theta(1/\tau)$ is sufficient.
Furthermore, our argument based on the cold Sinkhorn dynamics also implies that $k=\Theta(1/\tau)$ is sharp.

\begin{theorem} \label{thm:cv:improved_guarantee}
    Denote by $(f^k, g^k)_k$ the iterates of the Sinkhorn algorithm applied to \eqref{eq:intro:EOT}.
    Then there exist constants $T, B$ dependent only on $\mu, \nu$, and $C$ 
    such that
    \begin{equation}
        \forall k \geq T/\tau,~~
        \Psi_\tau(f^k, g^k) - \min \Psi_\tau
        \leq B\, \tau (\log 1/\tau)^{2L+1}.
    \end{equation}
    Moreover, there exist constants $T', B', B''>0$ dependent only on $\mu, \nu, C$, and $(f^0, g^0)$ 
    such that either $(f_0[g^0], g_0[f_0[g^0]]) \in \argmin_\FF \Psi_0$ or
    \begin{equation}
        \qquad
        \forall k \leq T'/\tau,~~
        \Psi_\tau(f^k, g^k) - \min \Psi_\tau
        \geq B' - B'' \,\tau (\log 1/\tau)^{2L+1}.
    \end{equation}
\end{theorem}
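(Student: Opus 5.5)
The plan is to compare $\Psi_\tau$ with $\Psi_0$ along the Sinkhorn iterates. For the first claim I would combine three facts: the identity $\Psi_\tau(f^k,g^k)=\Psi_0(f^k,g^k)+\tau(Z_\tau(f^k,g^k)-1)=\Psi_0(f^k,g^k)$ for $k\ge1$ (since $Z_\tau=1$ there); the monotonicity of Sinkhorn in the dual objective, namely $\Psi_\tau(f^{k+1},g^{k+1})\le\Psi_\tau(f^k,g^k)$ since each step is an exact block minimization; and \autoref{thm:cv:cv} at $t=t_L$. The latter gives $\norm{(f^{k_L},g^{k_L})-(f(t_L),g(t_L))}_\infty\le B\tau(\log1/\tau)^{2L+1}$ with $k_L=\floor{2t_L/\tau}+2$. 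Since $\Psi_0$ is $1$-Lipschitz in $\ell_\infty$ (its coefficients are $\mu,\nu$, both summing to $1$), this yields
\[
\Psi_\tau(f^{k_L},g^{k_L})=\Psi_0(f^{k_L},g^{k_L})\le\Psi_0(f(t_L),g(t_L))+O(\tau(\log1/\tau)^{2L+1})=\min_\FF\Psi_0+O(\tau(\log1/\tau)^{2L+1}),
\]
where the last step uses \autoref{lm:CS:finalphase_optimal}. It remains to lower bound $\min\Psi_\tau$ by $\min_\FF\Psi_0-O(\tau)$. I would do this by taking a minimizer $(f^*,g^*)$ of $\Psi_\tau$ with normalization $Z_\tau=1$. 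Then $f^*=f_\tau[g^*]$, and by \autoref{lm:cv:logsumexp_rate} we have $f^*\le f_0[g^*]-\tau\log\nu_{\min}$ componentwise, so $(f^*+\tau\log\nu_{\min},g^*)\in\FF$. Hence $\min\Psi_\tau=\Psi_0(f^*,g^*)\ge\min_\FF\Psi_0+\tau\log\nu_{\min}$. Using $t_L\le T$ from \autoref{lm:CS:t_L=O(1)} and monotonicity, this gives the bound for all $k\ge 2T/\tau+2$, and the $+2$ can be absorbed by enlarging $T$ (for $\tau\le\tau_0$; larger $\tau$ are handled by adjusting $B$ via a crude bound, or the statement is read for $\tau\le\tau_0$).

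For the second claim, suppose $(f(0),g(0))=(f_0[g^0],g_0[f_0[g^0]])\notin\argmin_\FF\Psi_0$. Then $L\ge1$, and by \autoref{lm:CS:ddtPsi0} together with \autoref{lm:CS:HdivHdiv_decr}, $\Psi_0(f(t),g(t))$ is strictly decreasing on $[0,t_L]$ with slope at most $-c<0$. Set $T'=t_L/2>0$ (it depends on the initialization) and $B'=\Psi_0(f(T'),g(T'))-\min_\FF\Psi_0>0$. For $2\le k\le 2T'/\tau+2$, \autoref{thm:cv:cv} and Lipschitzness give $\Psi_\tau(f^k,g^k)=\Psi_0(f^k,g^k)\ge\Psi_0(f(\tau(k-2)/2),g(\cdot))-O(\tau(\log1/\tau)^{2L+1})\ge\Psi_0(f(T'),g(T'))-O(\cdot)$. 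For the upper bound on $\min\Psi_\tau$, I would evaluate $\Psi_\tau$ at an optimal point of $\FF$, where $\Psi_\tau\le\Psi_0+\tau(1-1)+\tau$, that is, $\min\Psi_\tau\le\min_\FF\Psi_0+O(\tau)$. Subtracting the two estimates gives the claim. The iterates $k=0,1$ can be handled by monotonicity, since $\Psi_\tau(f^k,g^k)\ge\Psi_\tau(f^2,g^2)$.

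The main obstacle is bookkeeping rather than mathematics. One point is the index shift of $2$ in \autoref{thm:cv:cv}. The other is ensuring that constants in the first claim depend only on $\mu,\nu,C$. This holds because \autoref{thm:cv:cv} and \autoref{lm:CS:t_L=O(1)} already have initialization-free constants, and the $\Psi_\tau$ versus $\Psi_0$ comparison only involves $\nu_{\min}$.
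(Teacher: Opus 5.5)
Your proposal is correct and follows the paper's proof essentially step by step. It uses \autoref{thm:cv:cv} at $t_L$ (and at $t_L/2$ for the lower bound), the identity $\Psi_\tau(f^k,g^k)=\Psi_0(f^k,g^k)$ for $k\ge1$, $1$-Lipschitzness of $\Psi_0$, monotonicity of $\Psi_\tau$ along the iterates, and $t_L\le T$ from \autoref{lm:CS:t_L=O(1)}. The only minor variation is how you compare the optimal values. You work on the dual side via \autoref{lm:cv:logsumexp_rate} and obtain $\min_\FF\Psi_0+\tau\log\nu_{\min}\le\min\Psi_\tau\le\min_\FF\Psi_0$, whereas the paper bounds the gap between the primal OT and EOT values by $\tau\max_\pi\Hdiv{\pi}{\mu\otimes\nu}=-\tau\log(\mu_{\min}\nu_{\min})$. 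Both work.
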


\begin{proof}
	Denote by $(f(t), g(t))$ the cold Sinkhorn dynamics applied to \eqref{eq:intro:OT} initialized at $(f(0), g(0)) = (f_0[g^0], g_0[f_0[g^0]])$.
	By \autoref{thm:cv:cv}, for $k_L = \floor{2t_L/\tau}+2$, we have
	\begin{equation}
		\norm{f^{k_L} - f(t_L)}_\infty
		+ \norm{g^{k_L} - g(t_L)}_\infty
		\leq B \, \tau (\log 1/\tau)^{2L+1}
	\end{equation}
	for some $B$ dependent only on $\mu, \nu$, and $C$.
	Recall that for any $k \geq 1$, $Z_\tau(f^k, g^k)=1$ and so $\Psi_\tau(f^k, g^k) = -\mu^\top f^k - \nu^\top g^k = \Psi_0(f^k, g^k)$.
	Hence, by $1$-Lipschitz-continuity of $\Psi_0$,
	\begin{align}
		\Psi_\tau(f^{k_L}, g^{k_L})
		= \Psi_0(f^{k_L}, g^{k_L})
		&\leq \Psi_0(f(t_L), g(t_L)) 
		+ \norm{f^{k_L} - f(t_L)}_\infty
		+ \norm{g^{k_L} - g(t_L)}_\infty \\
		&\leq \min \Psi_0
		+ B \, \tau (\log 1/\tau)^{2L+1}
	\end{align}
	since $\Psi_0(f(t_L), g(t_L)) = \min \Psi_0$ by \autoref{lm:CS:finalphase_optimal}.
	Now $\abs{\min \Psi_0 - \min \Psi_\tau}$ is equal to the difference between the optimal values of the primal OT/EOT problems \eqref{eq:intro:OT} and \eqref{eq:intro:EOT},
	so $\abs{\min \Psi_0 - \min \Psi_\tau} \leq \tau \max_{\pi \in \Delta_{m \times n}} \Hdiv{\pi}{\mu \otimes \nu} = -\tau \log(\mu_{\min} \nu_{\min})$.
	Thus,
	\begin{align}
		\Psi_\tau(f^{k_L}, g^{k_L}) - \min \Psi_\tau
		\leq -\tau \log(\mu_{\min} \nu_{\min})
		+ B \, \tau (\log 1/\tau)^{2L+1},
	\end{align}
	and the same holds for all $k \geq k_L$ since $(\Psi_\tau(f^k, g^k))_k$ is non-increasing along the Sinkhorn algorithm by definition.
	In particular, recall from \autoref{lm:CS:t_L=O(1)} that $t_L$ is upper-bounded by a constant $T$ dependent only on $\mu, \nu$, and $C$ (and independent of $(f(0), g(0))$);
	then the above bound holds for all $k \geq \floor{2 T/\tau}+2$.
	This proves the first part of the lemma.
	
	For the second part, i.e., the lower bound, suppose that $(f_0[g^0], g_0[f_0[g^0]]) \not\in \argmin_\FF \Psi_0$. 
	Then note that $t_L>0$ and
	$\Psi_0(f(t_L/2), g(t_L/2))-\min \Psi_0 > 0$, and that these two quantities are dependent only on $\mu, \nu, C$, and $(f^0, g^0)$. 
	So for $k_L' = \floor{t_L/\tau} + 2$,
	\begin{align*}
		\MoveEqLeft
		\Psi_\tau(f^{k_L'}, g^{k_L'}) - \min \Psi_\tau
		\geq \Psi_0(f^{k_L'}, g^{k_L'}) 
		- \min \Psi_0
		+ \tau \log(\mu_{\min} \nu_{\min}) \\
		&\geq \Psi_0(f(t_L/2), g(t_L/2)) - \min \Psi_0
		- \norm{f^{k_L'} - f(t_L/2)}_\infty
		- \norm{g^{k_L'} - g(t_L/2)}_\infty
		+ \tau \log(\mu_{\min} \nu_{\min}) \\
		&\geq \Psi_0(f(t_L/2), g(t_L/2)) - \min \Psi_0
		- B\, \tau (\log 1/\tau)^{2L+1}
		+ \tau \log(\mu_{\min} \nu_{\min}).
	\end{align*}
	Finally, the same lower bound holds for all $k \leq k_L'$ since $(\Psi_\tau(f^k, g^k))_k$ is non-increasing along the Sinkhorn algorithm.
\end{proof}

We can also deduce a corresponding convergence bound in terms of the alternative suboptimality metric
$
    E_k = \norm{X_\sharp \pi^k-\mu}_1 + \norm{Y_\sharp \pi^k-\nu}_1
$,
where $\pi^k = \pi_\tau[f^k, g^k]$ are the primal Sinkhorn iterates.
Our new result guarantees that $E_k \leq \tO(\sqrt{\tau})$ as soon as $k = \Theta(1/\tau)$, instead of $k = O(\tau^{-3/2})$ as previous works would suggest since the best applicable bound is $E_k \leq O(1/(\tau k))$ \cite{dvurechensky2018computational}.

\begin{corollary} \label{coroll:cv:improved_guarantee_Ek}
    Denote by $(\pi^k)_k$ the primal iterates of the Sinkhorn algorithm applied to \eqref{eq:intro:EOT}.
    Then there exist constants $T, B$ dependent only on $\mu, \nu$, and $C$ 
    such that
    \begin{equation}
        \forall k \geq T/\tau,~~
        \norm{X_\sharp \pi^k - \mu}_1 + \norm{Y_\sharp \pi^k - \nu}_1
        \leq B\, \sqrt{\tau}\, (\log 1/\tau)^{L+\sfrac12}.
    \end{equation}
\end{corollary}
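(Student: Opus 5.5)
The plan is to convert the dual suboptimality bound of \autoref{thm:cv:improved_guarantee} into a marginal-error bound, using a standard Pinsker-type inequality along Sinkhorn iterations. The key identity concerns one Sinkhorn step: the decrease of the dual objective equals a relative entropy between a target marginal and the current marginal. Concretely, for $k \geq 1$ odd (so that $Y_\sharp \pi^k = \nu$ and $Z_\tau(f^k,g^k)=1$), the update $f^{k+1} = f^k + \tau v^{k+1}$ with $v^{k+1}_i = \log(\mu_i/(X_\sharp\pi^k)_i)$ gives
\begin{equation}
    \Psi_\tau(f^k,g^k) - \Psi_\tau(f^{k+1},g^{k+1})
    = \tau\, \mu^\top v^{k+1}
    = \tau\, \Hdiv{\mu}{X_\sharp \pi^k}.
\end{equation}
This follows since both iterates have $Z_\tau=1$, hence $\Psi_\tau = -\mu^\top f - \nu^\top g$. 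The symmetric identity holds for even $k \geq 2$, with $\tau\,\Hdiv{\nu}{Y_\sharp\pi^k}$. This is essentially \cite[Lemma~2]{altschuler2017near}. At each $k\geq1$ one of the two marginals is exact, so $E_k$ reduces to a single $\ell_1$ term.

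Next, I combine this identity with Pinsker's inequality, $\norm{\mu - X_\sharp\pi^k}_1^2 \leq 2\Hdiv{\mu}{X_\sharp\pi^k}$, and with $\Psi_\tau(f^{k+1},g^{k+1}) \geq \min\Psi_\tau$. This yields
\begin{equation}
    E_k^2 \leq \frac{2}{\tau}\left(\Psi_\tau(f^k,g^k) - \min\Psi_\tau\right).
\end{equation}
For $k \geq T/\tau$, \autoref{thm:cv:improved_guarantee} bounds the right-hand side by $\frac{2}{\tau}\cdot B\,\tau(\log 1/\tau)^{2L+1}$. This gives $E_k \leq \sqrt{2B}\,(\log1/\tau)^{L+1/2}$, which is only $O(\mathrm{polylog})$ and not $\sqrt\tau$. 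So the naive use of the theorem loses a factor $\tau^{-1/2}$, and I need to refine it.

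The main obstacle is therefore to obtain a per-step decrease that is sharper, of order $\tau^2(\log1/\tau)^{2L+1}$. My approach is to sum the per-step identity over a window rather than apply it at a single step. Over the $\Theta(1/\tau)$ iterations $k \in [T/\tau, 2T/\tau]$, the total decrease of $\Psi_\tau$ is at most $B\tau(\log1/\tau)^{2L+1}$ by \autoref{thm:cv:improved_guarantee}. Hence $\sum_k \tau E_k^2/2 \leq B\tau(\log 1/\tau)^{2L+1}$, so some $k$ in the window has $E_k^2 \lesssim \tau(\log1/\tau)^{2L+1}$. To pass from "some $k$" to "all $k\geq T'/\tau$", I use that $E_k$ is non-increasing along Sinkhorn in a suitable sense. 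Failing a clean monotonicity statement, I would instead use that $\Hdiv{\mu}{X_\sharp\pi^k}$ at odd steps is non-increasing, a known fact from the mirror-descent interpretation \cite{leger2021gradient}. Then the minimum over the window controls all later iterates, and enlarging $T$ to $2T$ gives the corollary with $E_k \leq B\sqrt\tau(\log1/\tau)^{L+1/2}$. If this monotonicity were unavailable, I would fall back on the near-constancy of $v^k$ within the final phase, established in step \eqref{eq:cv:pf_cv_vkwk_finalphase} and the subsequent remark, to transfer the bound from one $k$ to its neighbors.
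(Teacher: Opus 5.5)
Your proposal is correct and is essentially the paper's proof. The paper imports from \cite[proof of Lemma~2.5]{wang2026almost} the bound $E_k^2 \leq \frac{8}{\tau k}\bigl(\Psi_\tau(f^{\lceil k/2\rceil}, g^{\lceil k/2\rceil}) - \min \Psi_\tau\bigr)$, which is your per-step dual-decrease identity summed over the window $[k/2, k]$ combined with Pinsker and monotonicity, and then plugs in \autoref{thm:cv:improved_guarantee} for $k \geq 2T/\tau$. The monotonicity you hedge on holds in its simplest form, $E_{k+1} \leq \norm{\pi^{k+1} - \pi^k}_1 = E_k$ for all $k \geq 1$ (contraction of the $\ell_1$ distance under marginalization), and your parity labels are swapped relative to the paper's update convention, which is harmless.
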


\begin{proof}
	By \cite[proof of Lemma~2.5]{wang2026almost}, for any $k \geq 2$,
	\begin{equation}
		E_k^2 \leq \frac{8}{\tau k} \left( \Psi_\tau(f^{\ceil{k/2}}, g^{\ceil{k/2}}) - \min \Psi_\tau \right).
	\end{equation}
	So by \autoref{thm:cv:improved_guarantee}, there exist $T, B$ dependent only on $\mu, \nu$, and $C$ such that
	for all $k \geq 2 T/\tau$,
	\begin{equation}
		E_k^2 \leq \frac{8}{\tau k} \cdot B\, \tau (\log 1/\tau)^{2L+1}
		\leq \frac{8}{2T} \cdot B\, \tau (\log 1/\tau)^{2L+1}
		= B'\, \tau (\log 1/\tau)^{2L+1}
	\end{equation}
	with $B'$ dependent only on $\mu, \nu$, and $C$, as announced.
\end{proof}

\subsection{The case with exactly scalable sub-problems} \label{subsec:cv:scalcase}

The proof of the convergence of Sinkhorn to cold Sinkhorn presented in \autoref{sec:cv} is fully general, but it is relatively intricate.
In this section, we show an alternative proof under the following additional assumption.
In words, in view of \autoref{lm:prelim:equiv_exact_scalability}, our assumption is that the sub-problems encountered by the dynamics are exactly scalable.

\begin{assumption} \label{assump:cv:scalcase}
    Consider a run of the cold Sinkhorn dynamics as defined in \autoref{def:CS:formaldef}. 
    For each $\ell \leq L$, consider the sets $\SSS^\ell, \ol\SSS^\ell$ associated to $\mu, \nu$, and $\EEE^\ell$ as defined by \autoref{thm:prelim:baradat_ventre_sets}.
    We assume that
    \begin{equation}
        \forall 0 \leq \ell \leq L,~
        \SSS^\ell = \ol\SSS^\ell.
    \end{equation}
\end{assumption}

In our numerical experiments, this assumption appeared to hold generically, so the analysis presented in this section may be closer to typical practical behavior.



Under this assumption, the convergence of $(v^k, w^k)$ to $(\ol v^\ell, \ol w^\ell)$ at each ``phase transition'' is exponentially fast, thanks to \autoref{prop:prelim:qtve_BV24_exp}.
This leads to the following variant of \autoref{thm:cv:cv} without log factors in the upper bound.
Similar to \autoref{thm:cv:cv_finalphase}, one could also show that uniform convergence holds over an arbitrarily large portion of the final phase without log factors, but we do not develop this here.

\begin{theorem} \label{thm:cv:scalcase}
    Let $(f^k, g^k)_k$ denote the iterates of the Sinkhorn algorithm applied to \eqref{eq:intro:EOT} initialized at some $(f^0, g^0)$.
    Let $(f(t), g(t))$ denote the cold Sinkhorn dynamics applied to \eqref{eq:intro:OT} initialized at $(f(0), g(0)) = (f_0[g^0], g_0[f_0[g^0]]) \in \partial \FF$.
    Suppose that \autoref{assump:cv:scalcase} holds.
    Then for any $\tau \leq \tau_0$,
    \begin{equation}
        \sup_{0 \leq t \leq t_L}
        \norm{f^{\floor{2t/\tau}+2} - f(t)}_\infty
        + \norm{g^{\floor{2t/\tau}+2} - g(t)}_\infty
        \leq B \, \tau,
    \end{equation}
    for some constants $\tau_0, B$ dependent only on $\mu, \nu$, and $C$.
\end{theorem}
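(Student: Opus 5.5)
We follow the same inductive, phase-by-phase scheme as in the proof of \autoref{thm:cv:cv}, using \autoref{prop:prelim:qtve_BV24_exp} in place of \autoref{prop:prelim:qtve_BV24}. The aim is to show that the error carried from one phase to the next does not pick up any $\log(1/\tau)$ factor.

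We re-index so that $(f^0,g^0)=(f(0),g(0))+O(\tau)$ by \autoref{lm:cv:logsumexp_rate}. The induction claim at rank $\ell$ is that
\begin{equation}
\sup_{\floor{2t_\ell/\tau}\le k\le\floor{2t_{\ell+1}/\tau}}\norm{f^k-f(\tau k/2)}_\infty+\norm{g^k-g(\tau k/2)}_\infty\le O(\tau).
\end{equation}
For $\ell=0$ we introduce the infinite-cost iterates $(\tf^k,\tg^k)$ with pattern $\EEE^0$, started from the current Sinkhorn iterate. Their initial logits on $\EEE^0$ satisfy $|\tU^0_{ij}|\le\delta=O(1)$. \autoref{assump:cv:scalcase} gives $\SSS^0=\ol\SSS^0$, so \autoref{prop:prelim:qtve_BV24_exp} applies and yields
\begin{equation}
\norm{(\tv^k,\tw^k)-(\ol v^0,\ol w^0)}_\infty\le B(1+\delta)\bigl(1-e^{-R(1+\delta)}\bigr)^k
\end{equation}
for $k\ge K_0(1+\delta)$. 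For $k\le K_0(1+\delta)$ we use \autoref{lm:prelim:unifbound_vk_wk}, which bounds these increments by $O(\delta)$. Summing $\tau(\tv^l-\ol v^0)$ over $l$, the geometric tail gives a total of $O(\tau(1+\delta)e^{R(1+\delta)})$. This is $O(\tau)$ because $\delta=O(1)$, so $\norm{\tf^k-f(\tau k/2)}_\infty=O(\tau)$ on the whole phase.

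Next we compare $(f^k,g^k)$ with $(\tf^k,\tg^k)$ through \autoref{lm:prelim:control_finite_infinite_costs} and the Bihari--LaSalle argument, as before. The constant $c$ there is now $O(1)$ instead of $\Theta(\log(1/\tau)(\delta+\log 1/\tau))$. The argument goes through with a cutoff $K_0'=\floor{2t_1/\tau}-N_0$, where $N_0$ is a constant, or at worst $\Theta(\log 1/\tau)$. The point is that we need $\floor{2t_1/\tau}\,e^{-\Theta(N_0)}\le O(\tau)\cdot\tau^{-1}$ so that $\norm{f^k-\tf^k}_\infty=O(\tau)$. The last $N_0$ iterations of the phase are then handled by the crude $O(1)$ bound on the increments, which contributes $O(\tau N_0)$.

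This window is the main obstacle. Taking $N_0\asymp\log(1/\tau)$ would reintroduce one log factor per phase. To avoid it we exploit the geometry near $t_1$: for $k$ close to $2t_1/\tau$, the quantity $e^{-M^l}$ decays geometrically backward in time with rate $\Theta(\ol v^0_i+\ol w^0_j)>0$. The sum $\tau\sum_l e^{-M^l}$ is therefore dominated by $O(\tau)e^{-M^{k}}$, and we stop at $N_0=O(1)$ steps before $t_1$, where $-M^k\le -\Theta(1)$. Inside the window of $O(1)$ iterations around the transition we simply accept an $O(\tau)$ error.

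For the induction step we restart at $t_\ell$ with an $O(\tau)$ initial error. The new $\delta$ is $O(1)$: the entries newly activated in $\EEE^{\ell}$ have $|\tU|=O(1)$ by the previous step, including the $O(1)$-iteration window around $t_\ell$. Hence the constants do not grow with $\ell$ beyond factors that depend only on $\mu,\nu,C$, since $L\le 2^{mn}$ by \autoref{lm:CS:L_finite} and $t_L=O(1)$ by \autoref{lm:CS:t_L=O(1)}. The same argument therefore repeats for every phase, which gives the claim uniformly over $[0,t_L]$.
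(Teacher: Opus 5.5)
Your proposal is correct and follows essentially the same route as the paper. It runs a phase-by-phase induction against the infinite-cost iterates on $\EEE^\ell$, uses \autoref{prop:prelim:qtve_BV24_exp} to make the drift error sum to $O(\tau)$, and closes the Bihari--LaSalle step with a window of only $\Theta(1)$ iterations before $t_{\ell+1}$ by summing the geometric series $\sum_{l \geq \Delta} e^{-(\xi/2) l}$ rather than bounding the sum by (number of terms)$\times$(max), which keeps $\delta = O(1)$ at every phase. One point should be stated more carefully: apply the geometric decay to the deterministic comparator $e^{-\tau^{-1}\ol M(\tau l/2)}$, choosing $\Delta$ large enough that $e^{-c} - \Theta(1)\sum_l e^{-\tau^{-1}\ol M(\tau l/2)}$ stays bounded below, and only then transfer it to $e^{-M^l}$ via the Bihari--LaSalle bound, instead of asserting it directly for $M^l$.
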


\begin{proof}
    We use the same conventions for $O(\cdot), \Omega(\cdot), \Theta(\cdot)$ as in the proof of \autoref{thm:cv:cv}, that is, they hide constants that depend only on $\mu, \nu$, and $C$.
    We also perform the same re-indexing of the Sinkhorn iterates by shifting back the index $k$ by $2$,
    so that $(f^0, g^0) = (f(0), g(0)) + O(\tau)$ by \autoref{lm:cv:logsumexp_rate}.
    It suffices to show by induction that for any $0 \leq \ell \leq L-1$,
    for $\tau$ small enough,
    \begin{equation} \label{eq:cv:scalcase_induction_claim}
        \sup_{\floor{2t_\ell/\tau} \leq k \leq \floor{2t_{\ell+1}/\tau}}
        \abs{f^k - f(\tau k/2)}
        + \abs{g^k - g(\tau k/2)}
        \leq O(\tau).
    \end{equation}
    
    \vspace{0.5em}
    \underline{For $\ell=0$:}~
    As in the proof of \autoref{thm:cv:cv},
    denote by $(\tf^k, \tg^k)_k$ the iterates of the Sinkhorn algorithm applied to the EOT problem with target marginals $\mu, \nu$ and cost matrix
    $\tC_{ij} = \begin{cases}
        C_{ij} ~~\text{if}~ (i,j) \in \EEE^0 \\
        \infty ~~\text{otherwise}
    \end{cases}$,
    initialized at $(\tf^0, \tg^0) = (f^0, g^0)$.
    Denote by $\tv^k, \tw^k$ and $\tU^k$ the associated variables as in \eqref{eq:prelim:def_vk_wk}, \eqref{eq:prelim:def_Uk}.
    Since $(\tf^0, \tg^0) = (f(0), g(0)) + O(\tau)$ by \autoref{lm:cv:logsumexp_rate} and $(f(0), g(0)) \in \partial \FF$, then
    \begin{equation}
        \forall (i,j) \in \EEE^0,~
        \tau \tU^0_{ij} = \tf^0_j + \tg^0_j - C_{ij} = O(\tau).
    \end{equation}
    So by \autoref{lm:prelim:unifbound_vk_wk} and \autoref{prop:prelim:qtve_BV24_exp} with $\delta = \Theta(1)$, 
    \begin{align*}
        \forall k \geq K_0 = \Theta(1),~~
        & \norm{(\tv^k, \tw^k) - (\olv^0, \ol w^0)}_\infty
        = \norm{(\tv^k, \tw^k) - (v^*(\EEE^0), w^*(\EEE^0))}_\infty
        \leq O(1) \left(1  - e^{-\Theta(1)} \right)^k \\
        \text{and}~~~~
        \forall k \geq 1,~~
        & \norm{\tv^k}_\infty, \norm{\tw^k}_\infty \leq O(1).
    \end{align*}
    As a consequence,
    since $\tf^k = \tf^{k-2} + \tau \tv^k$
    and $f(\tau k/2) = f(\tau (k-2)/2) + \tau \ol v^0$
    for all $k$,
    \begin{itemize}
        \item For all $k \leq K_0 = \Theta(1)$,
        $
            \norm{\tf^k - f(\tau k/2)}_\infty
            \leq O(\tau k) = O(\tau)
        $,
        \item For all $K_0 \leq k \leq \floor{2 t_1/\tau}$,
        \begin{equation}
            \norm{\tf^k - f(\tau k/2)}_\infty
            \leq O(\tau) 
            + \sum_{\substack{l=K_0 \\ l ~\text{even}}}^k
            \tau\, O(1) \left(1  - e^{-\Theta(1)} \right)^l
            \leq O(\tau) \sum_{l=0}^\infty \left(1  - e^{-\Theta(1)} \right)^l
            = O(\tau),
        \end{equation}
    \end{itemize}
    and likewise for the $\tg^k$.
    
    Next, still following the proof of \autoref{thm:cv:cv}, let us show that $(f^k, g^k)$ remains close to $(\tf^k, \tg^k)$ essentially throughout the phase.
    By \autoref{lm:prelim:control_finite_infinite_costs} with $\EEE = \EEE^0$ and $\delta = O(1)$, we have
    \begin{equation}
        \forall k \geq 0,~
        \norm{f^k-\tf^k}_\infty\!,~
        \norm{g^k-\tg^k}_\infty
        \leq \tau\, O(1)
        \sum_{l=0}^{k-1}
        e^{-M^l}
        ~\quad \text{where} \quad~
        -M^k = \max_{(i,j) \not\in \EEE^0} [f^k_i+g^k_j-C_{ij}]/\tau.
    \end{equation}
    Denote likewise
    \begin{equation}
        -\tM^k = \max_{(i,j) \not\in \EEE^0} [\tf^k_i+\tg^k_j-C_{ij}]/\tau
        \qquad\text{and}\qquad
        -\ol M(t) = \max_{(i,j) \not\in \EEE^0} f_i(t)+g_j(t)-C_{ij}.
    \end{equation}
    Fix $K_0' = \floor{2 t_1/\tau} - \Delta$ for some large $\Delta>0$ to be chosen later.
    Then for any $k \leq K_0'$,
    \begin{align*}
        \abs{M^k - \tau^{-1} \ol M(\tau k/2)}
        &\leq \abs{M^k - \tM^k} + \abs{\tM^k - \tau^{-1} \ol M(\tau k/2)} \\
        &\leq \tau^{-1} \norm{(f^k, g^k)-(\tf^k, \tg^k)}_\infty
        + \tau^{-1} \norm{(\tf^k, \tg^k) - (f(\tau k/2), g(\tau k/2))}_\infty \\
        &\leq O(1)\, \sum_{l=0}^{k-1} e^{-\tau^{-1} \ol M(\tau l/2)}~
        e^{\abs{M^l - \tau^{-1} \ol M(\tau l/2)}}
        + O(1).
    \end{align*}
    So by the discrete Bihari-LaSalle inequality \cite[Theorem~2.3.1]{pachpatte2001inequalities},
    \vspace{-0.5em}
    \begin{equation*}
        e^{\abs{M^k - \tau^{-1} \ol M(\tau k/2)}}
        \leq \left( 
            e^{-\Theta(1)} 
            - \Theta(1)\, \sum_{l=0}^{k-1} e^{-\tau^{-1} \ol M(\tau l/2)} 
        \right)^{-1}.
    \end{equation*}
    Moreover, by definition of $K_0' = \floor{2 t_1/\tau} - \Delta$ and of $t_1$, for all $k \leq K_0'$,
    \begin{equation}
        -\ol M(\tau k/2) \leq 
        ~\underbrace{-\ol M(t_1)}_{0}
        - \xi \left( t_1-\tau k/2 \right)
    \end{equation}
    where
    $\xi = \min \left\{ \ol v^0_i + \ol w^0_j;~ (i,j) \not\in \EEE^0 ~\text{and}~ \ol v^0_i + \ol w^0_j > 0 \right\} = \Theta(1)$.
    Hence, for all $k \leq K_0'$,
    \begin{equation}
        \sum_{l=0}^{k-1} e^{-\tau^{-1} \ol M(\tau l/2)}
        \leq \sum_{l=0}^{K_0'} e^{-\tau^{-1} \xi (t_1 - \tau l/2)}
        = \sum_{l=0}^{\floor{2t_1/\tau} - \Delta} e^{-(\xi/2) (2t_1/\tau - l)}
        \leq \sum_{l=\Delta}^\infty e^{-(\xi/2) l}
        \leq O(e^{-\xi \Delta/2})
    \end{equation}
    and so,
    \begin{equation}
        e^{\abs{M^k - \tau^{-1} \ol M(\tau k/2)}}
        \leq \left( 
            e^{-\Theta(1)} 
            - O(e^{-\xi \Delta/2})
        \right)^{-1}
        \leq \left( \frac12\, e^{-\Theta(1)} \right)^{-1} = O(1)
    \end{equation}
    for $\Delta = \Theta(1)$ large enough.
    Thus, for all $k \leq K_0'$,\,
    $e^{-M^k} \leq e^{-\tau^{-1} \ol M(\tau k/2)} \cdot O(1)$ and
    \begin{align}
        \norm{f^k-\tf^k}_\infty,~
        \norm{g^k-\tg^k}_\infty
        &\leq \tau\, O(1)\,
        \sum_{l=0}^{k-1}
        e^{-M^l}
        \leq \tau\, O(1)\,
        \sum_{l=0}^{k-1}
        e^{-\tau^{-1} \ol M(\tau l/2)}
        \leq \tau\, O(1).
    \end{align}
    
    It only remains to treat the iterates $K_0' = \floor{2 t_1/\tau} - \Delta \leq k \leq \floor{2 t_1/\tau}$. In this case,
    \begin{equation}
        \norm{f^k - f(\tau k/2)}_\infty
        \leq \underbrace{
            \norm{f^{K_0'} - f(\tau K_0'/2)}_\infty
        }_{\leq O(\tau)}
        +~ \underbrace{
            \norm{f^k - f^{K_0'}}_\infty
            + \norm{f(\tau k/2) - f(\tau K_0'/2)}_\infty
        }_{\leq O(\tau \Delta) = O(\tau)}
    \end{equation}
    since
    $\norm{v^l}_\infty = O(1)$ for all $l \geq 3$
    by \autoref{lm:prelim:unifbound_vk_wk},
    and likewise for the $g^k$.
    
    \vspace{0.5em}
    \underline{For $1 \leq \ell \leq L-1$:}~
    Suppose the induction hypothesis \eqref{eq:cv:scalcase_induction_claim} holds at rank $\ell-1$.
    In particular, 
    $(f^{\floor{2t_\ell/\tau}}, g^{\floor{2t_\ell/\tau}})
    = (f(t_\ell), g(t_\ell)) + O(\tau)$
    and $(f(t_\ell), g(t_\ell)) \in \partial \FF$.
    So we can apply the exact same reasoning as described in detail for $\ell=0$ above.
\end{proof}

\begin{remark}
    Neither the proof of \autoref{thm:cv:cv} nor that of \autoref{thm:cv:scalcase} really follow the intuitive picture given in \autoref{subsec:CS:informal}: there is no notion of ``phase transition period'' appearing in the proofs.
    In the case with exactly scalable sub-problems, it would indeed be possible to formalize the intuitive picture, with the caveat that the phase transitions last for $K' = \Theta(\log 1/\tau)$ iterations (instead of $\Theta(1)$ as stated in the Ansatz from \autoref{subsec:CS:informal}).
    But this approach would yield a bound with logarithmic factors, which \autoref{thm:cv:scalcase} shows are avoidable in this case.
\end{remark}


\section{Saddle-to-saddle behavior of the primal variable} \label{sec:pik}

The previous sections provided a complete description of the small-$\tau$ behavior of the Sinkhorn algorithm in terms of the dual variables $(f^k, g^k)$.
In this section, we show that an explicit description is also available for the primal variables $\pi^k$, which we recall are defined by
$\pi^k_{ij} = e^{[-C_{ij} + f^k_i + g^k_j]/\tau} \mu_i \nu_j$ for all $k \geq 1$.

To state our result more easily, let us first introduce some notation.

\pagebreak

\begin{lemma} \label{lm:pik:def_Q*P*}
    For any $\mu \in \Delta_m, \nu \in \Delta_n, C \in \RR^{m \times n}$, 
    for any $\EEE \subset \{1 \dots m \} \times \{1 \dots n\}$ such that the bipartite graph with edge set $\EEE$ has no isolated vertex, 
    for any $\tau>0$,
    denote
    \begin{align}
        Q^*_\tau(\mu, \nu, \EEE, C)
        &= \argmin_{Q \in \Delta_\EEE} \sum_{(i,j) \in \EEE} C_{ij} Q_{ij} + \tau \Hdiv{Q}{\mu \otimes \nu}
        ~~~~\text{subject to}~~~~
        X_\sharp Q = \mu^*(\EEE),~~
        Y_\sharp Q = \nu, \\
        P^*_\tau(\mu, \nu, \EEE, C)
        &= \argmin_{P \in \Delta_\EEE} \sum_{(i,j) \in \EEE} C_{ij} P_{ij} + \tau \Hdiv{P}{\mu \otimes \nu}
        ~~~~\text{subject to}~~~~
        X_\sharp P = \mu,~~
        Y_\sharp P = \nu^*(\EEE).
    \end{align}
    Then,
    \begin{itemize}
        \item For $\pi^k$ the primal iterates of the Sinkhorn algorithm applied to the EOT problem with target marginals $\mu, \nu$ and cost matrix 
        $\tC_{ij} = \begin{cases}
            C_{ij} ~\text{if}~ (i,j) \in \EEE \\
            \infty ~~\text{otherwise}
        \end{cases}$,
        we have
        $\pi^{2k} \to Q^*_\tau(\mu, \nu, \EEE, C)$
        and
        $\pi^{2k+1} \to P^*_\tau(\mu, \nu, \EEE, C)$ as $k \to \infty$.
        \item The following limits exist:
        \begin{equation}
            Q^*_0(\mu, \nu, \EEE, C)
            \coloneqq \lim_{\tau \to 0}
            Q^*_\tau(\mu, \nu, \EEE, C),
            \qquad
            P^*_0(\mu, \nu, \EEE, C)
            \coloneqq \lim_{\tau \to 0}
            P^*_\tau(\mu, \nu, \EEE, C).
        \end{equation}
    \end{itemize}
\end{lemma}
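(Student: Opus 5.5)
For the first item, I will work directly from \autoref{thm:prelim:baradat_ventre} and the explicit characterization of the limits $U^\infty_{\mathrm{even}}, U^\infty_{\mathrm{odd}}$ in \cite[Eq.~(3.2)]{baradat2024convergence}. The plan is to identify $\lim_k \pi^{2k} = (e^{(U^\infty_{\mathrm{even}})_{ij}}\mu_i\nu_j)_{ij}$ with the $Q^*$ of that reference, and to check that it solves the stated entropic problem. By the first item of \autoref{thm:prelim:baradat_ventre}, $v^k\to v^*(\EEE)$ and $w^k\to w^*(\EEE)$. By \eqref{eq:prelim:rel_vk_Xpik}, $v^{k+1}_i=\log(\mu_i/(X_\sharp\pi^k)_i)$ for $k$ even, and $Y_\sharp\pi^{k}=\nu$ exactly for $k\ge2$ even, since the last update was a $g$-update. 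Letting $k\to\infty$ along even indices, the limit $Q$ has $X_\sharp Q=\mu^*(\EEE)$ and $Y_\sharp Q=\nu$, and it is supported in $\EEE$; the same argument gives the marginals $(\mu,\nu^*(\EEE))$ along odd indices.

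The remaining task for the first item is optimality. Each $\pi^{2k}$ has the Gibbs form $e^{(-C_{ij}+f_i+g_j)/\tau}\mu_i\nu_j$ on $\EEE$. The limit has support $\SSS$, and on $\SSS$ it keeps a log-separable form $\log(Q_{ij}/\mu_i\nu_j)=(-C_{ij}+a_i+b_j)/\tau$. I would obtain this as a limit in the quotient space of log-separable matrices on $\SSS$, which is closed, so the limit of $\tau U^{2k}+C$ restricted to $\SSS$ is still of the form $a_i+b_j$.

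Optimality then follows by a KKT/Pythagorean argument. Any feasible $Q'$ for the $Q^*_\tau$-problem is supported in $\SSS$: by the footnote to \autoref{thm:prelim:baradat_ventre_sets}, $\SSS$ is the union of the supports of all $Q\in\Delta_\EEE$ with marginals $(\mu^*,\nu)$. On $\SSS$ the objective equals $\tau\Hdiv{Q'}{Q}$ plus terms that depend only on the marginals, which are fixed. Hence $Q$ is the unique minimizer, by strict convexity. The odd case is symmetric. This is exactly the restatement announced in the footnote.

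For the second item, I will use the standard $\Gamma$-convergence argument for entropic OT on a fixed finite support. The feasible set $\Pi=\{Q\in\Delta_\EEE: X_\sharp Q=\mu^*(\EEE),\,Y_\sharp Q=\nu\}$ is a polytope independent of $\tau$. The objective is $\langle C,Q\rangle+\tau\Hdiv{Q}{\mu\otimes\nu}$, with the entropy bounded on $\Pi$. Any cluster point of $Q^*_\tau$ therefore lies in the optimal face $F$ of the linear program $\min_\Pi\langle C,Q\rangle$. To get an actual limit rather than just cluster points, I would follow \cite{cominetti1994asymptotic}, or argue directly: $\langle C,Q^*_\tau\rangle-\min\le\tau\cdot O(1)$, so $\Hdiv{Q^*_\tau}{\mu\otimes\nu}\le\Hdiv{Q}{\mu\otimes\nu}+(\langle C,Q\rangle-\langle C,Q^*_\tau\rangle)/\tau\le\Hdiv{Q}{\mu\otimes\nu}$ for every $Q\in F$. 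Passing to the limit with continuity of $\Hdiv{\cdot}{\mu\otimes\nu}$ shows that every cluster point minimizes $\Hdiv{\cdot}{\mu\otimes\nu}$ over $F$. This minimizer is unique by strict convexity, so the limit $Q^*_0=\argmin_{Q\in F}\Hdiv{Q}{\mu\otimes\nu}$ exists. $P^*_0$ is handled the same way.

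The main obstacle is the first item's identification of the support and the separable form of the limit. Individual entries of $U^{2k}$ on $\EEE\setminus\SSS$ diverge to $-\infty$, so the form must be handled on $\SSS$ alone, and I must make sure $\SSS$ contains every feasible support. I will lean on the footnote characterization from \cite[Eq.~(3.2)]{baradat2024convergence} rather than reprove it.
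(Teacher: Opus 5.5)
Your proof is correct, and it is more self-contained than the paper's. The paper's proof consists of two citations: item one is taken directly from \cite[Theorem~3.2]{baradat2024convergence}, which identifies the even/odd limits as relative-entropy projections of the initial plan onto couplings with marginals $(\mu^*,\nu)$, resp.\ $(\mu,\nu^*)$; that is the stated EOT problem up to an additive constant. Item two is taken from \cite[Proposition~4.1]{cominetti1994asymptotic}.

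For item one, you reassemble the result from the paper's restatements. You use the convergence of $(v^k,w^k)$ and of $U^{2k}$ from \autoref{thm:prelim:baradat_ventre}, and the closedness of the subspace $\{(a_i+b_j)_{(i,j)\in\SSS}\}$ to keep the Gibbs form on $\SSS$. You then combine the support characterization from the footnote to \autoref{thm:prelim:baradat_ventre_sets} with the Pythagorean identity. You therefore still rely on \cite{baradat2024convergence} for the existence of the limits and for the support characterization. What your route adds is that it shows exactly why that characterization is needed: no feasible coupling may charge $\EEE\setminus\SSS$, since otherwise $\Hdiv{Q'}{Q}=\infty$ and the identity breaks.

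One small point on the odd case: the footnote as written only treats marginals $(\mu^*,\nu)$. You need its $(\mu,\nu^*)$ analogue. That analogue is asserted in the proof of \autoref{lm:prelim:equiv_exact_scalability}, or you can get it by applying the footnote to the transposed problem started from $\pi^1$, together with $\SSS_{\mathrm{odd}}=\SSS_{\mathrm{even}}$.

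For item two, your min-entropy selection argument is elementary and complete. It also gives something the paper does not state: an explicit description $Q^*_0=\argmin_{Q\in F}\Hdiv{Q}{\mu\otimes\nu}$, where $F$ is the optimal face of $\min\langle C,Q\rangle$ over couplings in $\Delta_\EEE$ with marginals $(\mu^*(\EEE),\nu)$, and similarly for $P^*_0$. The paper's citation buys brevity. It also ties in with the same reference the paper uses for the dual-side limit in \autoref{rk:cv:centroid}.
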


\begin{proof}
    The first item is precisely the content of \cite[Theorem~3.2]{baradat2024convergence}.
    The second item is a consequence of \cite[Proposition~4.1]{cominetti1994asymptotic}.
\end{proof}

Our result is as follows. Contrary to \autoref{sec:cv}, here we only show a qualitative pointwise convergence without a rate, for simplicity.

\begin{theorem} \label{thm:pik:even_odd}
    Let $(\pi^k)_k$ denote the primal iterates of the Sinkhorn algorithm applied to \eqref{eq:intro:EOT} initialized at some $(f^0, g^0)$.
    Consider the phase transition times $t_0 = 0 < t_1 < ... < t_L < t_{L+1} = \infty$ and the sets $(\EEE^\ell)_{0 \leq \ell \leq L}$ appearing in the cold Sinkhorn dynamics initialized at $(f_0[g^0], g_0[f_0[g^0]])$, as defined in \autoref{def:CS:formaldef}.
    Then for any $0 \leq \ell \leq L$,
    \begin{equation}
        \forall t_\ell < t < t_{\ell+1},~~
        \left( \pi^{2 \floor{t/\tau}+2}, \pi^{2 \floor{t/\tau}+3} \right)
        \to \left( \ol\pi^\ell_{\mathrm{even}}, \ol\pi^\ell_{\mathrm{odd}} \right)
    \end{equation}
    where for each $\ell$,
    \begin{equation}
        \ol\pi^\ell_{\mathrm{even}}
        = Q^*_0(\mu, \nu, \EEE^\ell, C),
        \qquad
        \ol\pi^\ell_{\mathrm{odd}}
        = P^*_0(\mu, \nu, \EEE^\ell, C).
    \end{equation}
    %
\end{theorem}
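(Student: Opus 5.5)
Fix $\ell \leq L$ and $t \in (t_\ell, t_{\ell+1})$, and set $k = 2\floor{t/\tau}+2$; after the re-indexing used in the proof of \autoref{thm:cv:cv}, $k$ is an even iterate at rescaled time $\approx t$. The plan is to compare $\pi^k$ with the primal iterates of the Sinkhorn algorithm for the truncated cost $\tC$ (equal to $C$ on $\EEE^\ell$, $+\infty$ elsewhere), started at the iterate $(f^{k_\ell}, g^{k_\ell})$ with $k_\ell = \floor{2t_\ell/\tau}$. Then we pass to the limit twice: first in the number of iterations, using \autoref{lm:pik:def_Q*P*} at fixed $\tau$, and then in $\tau$. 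The limiting plan obtained this way must be $Q^*_0(\mu,\nu,\EEE^\ell,C)$.

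\textbf{Step 1: discard the edges outside $\EEE^\ell$.} For $(i,j)\notin\EEE^\ell$, \autoref{thm:cv:cv} and the definition of $t_{\ell+1}$ give $\tau U^k_{ij} = f_i(t)+g_j(t)-C_{ij} + \tO(\tau)$. Since $t<t_{\ell+1}$ and $t>t_\ell$, this quantity is bounded above by $-c(t)<0$; for edges in $\EEE^\ell\setminus\ol\SSS^\ell$ it is strictly negative because time $t-t_\ell>0$ has elapsed. Hence $\pi^k_{ij}\le e^{-c(t)/\tau}\to 0$ on those edges.

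\textbf{Step 2: identify the exponents on $\EEE^\ell$.} On $\EEE^\ell$ we have
\[
\pi^k_{ij}=\exp\bigl([-C_{ij}+f^k_i+g^k_j]/\tau\bigr)\mu_i\nu_j ,
\]
so we need the $O(\tau)$-scale corrections of $(f^k,g^k)$ to $(f(t),g(t))$, not merely their limit. I would use the comparison from the proof of \autoref{thm:cv:cv}: \autoref{lm:prelim:control_finite_infinite_costs} together with the bound \eqref{eq:cv:pf_cv_vkwk} gives $\|f^k-\tf^k\|_\infty=o(\tau)$, and in fact it is exponentially small in $(\log 1/\tau)^N$, for $k$ strictly inside the phase. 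Therefore $\pi^k=\tilde\pi^k+o(1)$, where $\tilde\pi$ denotes the primal iterates of the truncated-cost run.

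By \autoref{lm:pik:def_Q*P*}, the truncated run satisfies $\tilde\pi^{k_\ell+2s}\to Q^*_\tau(\mu,\nu,\EEE^\ell,C)$ as $s\to\infty$. However, \autoref{lm:pik:def_Q*P*} only gives this limit at fixed $\tau$, whereas here $\tau$ and the number of iterations $k-k_\ell=\Theta(1/\tau)$ vary jointly.

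\textbf{Step 3: make the convergence in $s$ uniform in $\tau$ (main obstacle).} The initial logits $\tilde U^{k_\ell}$ on $\EEE^\ell$ are only known to be bounded by $\delta=\tO(1)$ in the sense of \autoref{prop:prelim:qtve_BV24}, and they genuinely depend on $\tau$. To handle this, I would work with the quantity $\tau\tilde U^{k}$ restricted to $\EEE^\ell$, which is what actually matters. Along the phase, $\tau \tilde U^{k}_{ij}$ equals $f_i(t)+g_j(t)-C_{ij}+O(\tau\,\mathrm{polylog})$.

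The reference point is the EOT problem on $\EEE^\ell$ with marginals $(\mu^*,\nu)$. Its optimal dual potentials $(f^*_\tau,g^*_\tau)$ are those of the limit $Q^*_\tau$, and by the characterization in \cite[Eq.~(3.2)]{baradat2024convergence} they are fixed by $\mu^*,\nu$ alone. The primal limit $U^\infty_{\mathrm{even}}$ is obtained as the I-projection of the initial kernel $R=(e^{\tilde U^{k_\ell}_{ij}}\mu_i\nu_j)$ onto the set of plans with marginals $(\mu^*,\nu)$. Since $R_{ij}\propto e^{(a_i+b_j-C_{ij})/\tau}$ is a diagonal rescaling of $e^{-C/\tau}$ on $\EEE^\ell$, its projection is exactly $Q^*_\tau$, independently of the initialization. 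So the only issue is the rate. I would argue that $\tilde\pi^{k}$ is within $B(1+\delta+\log s)/s$ of its limit, uniformly in the pattern, by transferring \autoref{prop:prelim:qtve_BV24} from the increments to the plans through the marginal identity \eqref{eq:prelim:rel_vk_Xpik} and Pinsker-type bounds. With $s=\Theta(1/\tau)$ this error vanishes.

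\textbf{Step 4: take $\tau\to0$ and treat the odd iterate.} Letting $\tau\to0$ and applying the second item of \autoref{lm:pik:def_Q*P*} gives $\pi^k\to Q^*_0(\mu,\nu,\EEE^\ell,C)$. The same argument with the odd iterate $k+1$ yields $P^*_0$. If Step 3's uniform rate for plans fails, the fallback is a compactness argument: extract a limit point of $\pi^k$, and show it satisfies the marginal constraints $(\mu^*(\EEE^\ell),\nu)$ using the convergence of $v^k$ from the Remark after \autoref{thm:cv:cv_finalphase}. One would then show it is the limit of Gibbs-form plans $e^{(a_i+b_j-C_{ij})/\tau}$, hence the $\tau\to0$ selection of \cite{cominetti1994asymptotic}, which is unique.
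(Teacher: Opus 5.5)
Your Steps~1, 2 and~4 follow the paper's proof. That is: compare $\pi^k$ with the truncated-cost run restarted at $(f^{k_\ell},g^{k_\ell})$, use the super-polynomially small gap from \eqref{eq:cv:pf_cv_vkwk}, then apply \autoref{lm:pik:def_Q*P*} and Pinsker. The gap is in Step~3, which you rightly call the crux: the mechanism you propose does not deliver it. The marginal identity \eqref{eq:prelim:rel_vk_Xpik} turns \autoref{prop:prelim:qtve_BV24} into convergence of the \emph{marginals} of $\tpi^k$: at even steps the column marginal is exactly $\nu$ and the row marginal tends to $\mu^*(\EEE^\ell)$. Pinsker only converts KL into $\ell_1$. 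Nothing in this chain controls the plan itself. Passing from marginal errors to plan errors needs a stability estimate polynomial in the logit bound $\delta=O((\log 1/\tau)^{2\ell+1})$, whereas generic perturbation bounds for scaling degrade like $e^{O(\delta)}$. Summing the two-step logit increments $(v^{l+1}_i-v^*_i)+(w^{l+2}_j-w^*_j)$ on $\SSS^\ell$ also fails, since $(\delta+\log l)/l$ is not summable. The estimate the paper invokes at this point can be obtained from the telescoping identity in the proof of \autoref{lm:apx_pf_qtve_BV24:slow_rate}, applied after two reductions: restrict to $\ol\SSS^\ell$ (\autoref{lm:apx_pf_qtve_BV24:control_E_olS}), then shift to the asymptotically scalable problem with marginals $(\mu^*,\nu)$ (\autoref{lm:apx_pf_qtve_BV24:prop5.3_BV24}). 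For the plans $\rho^k$ of that problem and their limit $Q$, one has $\Hdiv{Q}{\rho^k}-\Hdiv{Q}{\rho^{k+2}}=\Hdiv{\mu^*}{X_\sharp\rho^k}+\Hdiv{\nu}{Y_\sharp\rho^{k+1}}$, with no $-\Hdiv{\mu^*}{\mu}$ term. Each decrement is therefore quadratic in the marginal errors, and the tail sum gives $\Hdiv{Q}{\rho^k}\lesssim(\delta+\log k)^2/k$, which is polynomial in $\delta$. Through Pinsker this yields $(\delta+\log s)/\sqrt{s}$ in $\ell_1$ rather than your $(\delta+\log s)/s$, which is still enough.

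Your fallback can be completed, but not by appealing to \cite{cominetti1994asymptotic}. That result concerns exact minimizers with exactly prescribed marginals, and knowing that a limit point is a limit of Gibbs-form plans does not by itself identify it. The missing observation is that $C_{ij}=f_i(t_\ell)+g_j(t_\ell)$ for $(i,j)\in\EEE^\ell$, by \autoref{def:CS:formaldef}. Two consequences follow. First, on $\EEE^\ell$ every $\pi^k$ equals $e^{(f^k_i-f_i(t_\ell))/\tau}\,e^{(g^k_j-g_j(t_\ell))/\tau}\mu_i\nu_j$, a diagonal rescaling of $\mu\otimes\nu$. Second, for every $\tau$, $Q^*_\tau(\mu,\nu,\EEE^\ell,C)$ is the I-projection of $\mu\otimes\nu$ onto $\Pi^\ell=\{Q\in\Delta_{\EEE^\ell}: X_\sharp Q=\mu^*(\EEE^\ell),\,Y_\sharp Q=\nu\}$, so your Step~4 is trivial. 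For any such rescaling $\pi$, $\Hdiv{Q}{\pi}-\Hdiv{Q}{\mu\otimes\nu}$ is constant on $\Pi^\ell$. Hence $Q^*_0$ is also the I-projection of $\hat\pi^k$, the normalized restriction of $\pi^k$ to $\EEE^\ell$. Now let $\bar\pi$ be a subsequential limit. It lies in $\Pi^\ell$ by your Step~1 together with the convergence of $v^k$: use the remark on convergence of the derivatives following \autoref{thm:cv:cv_finalphase}, and \eqref{eq:cv:pf_cv_vkwk_finalphase} when $\ell=L$. Then $\Hdiv{Q^*_0}{\hat\pi^k}\le\Hdiv{\bar\pi}{\hat\pi^k}\to0$ along the subsequence, so $\bar\pi=Q^*_0$ by Pinsker. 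The odd iterates are handled identically with $P^*_0$. Completed this way, your fallback is a genuinely simpler proof than the paper's. It uses only marginal convergence, which the dual analysis already provides, and needs neither a plan-level rate for the truncated run nor \cite{cominetti1994asymptotic}.
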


\begin{remark} \label{rk:pik:noclosedform}
    This description of the cold Sinkhorn dynamics is not self-contained, as it does not specify how the sets $\EEE^\ell$ are defined.
    One could hope to formulate a recursive relation that jointly determines the sequence $\left( \ol\pi^\ell_{\mathrm{even}}, \ol\pi^\ell_{\mathrm{odd}}, \EEE^\ell \right)_\ell$,
    but we expect that such a formulation is actually impossible.
    Indeed, at any phase $\ell \leq L-1$, the next set $\EEE^{\ell+1}$ depends on the value of $f_i(t_\ell) + g_j(t_\ell) - C_{ij}$ for some index $(i,j) \not\in \EEE^\ell$, 
    while the knowledge of $\ol\pi^\ell_{\mathrm{even}}, \ol\pi^\ell_{\mathrm{odd}}$, and $\EEE^\ell$ does not contain any information on the $C_{ij}$ for $(i,j) \not\in \EEE^\ell$
    since $\support(\ol\pi^\ell_{\mathrm{even}}) = \support(\ol\pi^\ell_{\mathrm{odd}}) \subset \EEE^\ell$ by definition.
    Instead, it could be interesting to try and characterize $\EEE^\ell$ via some auxiliary parametric minimization problem, similar to what was achieved in \cite{berthier2023incremental} in a related context.

    A fortiori, interpreting the cold Sinkhorn dynamics as a mirror flow by adapting the framework of \cite{leger2021gradient} is impossible,
    contrary to the parabolic Monge-Amp\`ere equation \cite{deb2023wasserstein}.
    Indeed, this framework prescribes to track the variable $\mu(t) = \lim_{\tau \to 0} X_\sharp \pi^{2\floor{t/\tau}}$ along the limiting dynamics.
    But the theorem above shows that
    $\mu(t)$ is piecewise constant with
    $\forall t_\ell < t < t_{\ell+1}, \mu(t) = \mu^*(\EEE^\ell)$, so its evolution over $t \in \RR_+$ cannot be described by an autonomous ODE.
    
    In a sense, the behavior of the Sinkhorn algorithm with small $\tau$ is really driven by the evolution of the logits $[-C_{ij} + f^k_i + g^k_j]/\tau$ at all indices $(i,j)$, even the ones for which $\pi^k_{ij} = e^{-\Theta(1/\tau)} \ll 1$---which a perspective centered only on the primal variables $\pi^k$ is likely to miss.
\end{remark}

\vspace{0.05em}
\begin{proof}
    Denote by $(f^k, g^k)$ the dual Sinkhorn iterates, so that $\pi^k_{ij} = e^{[-C_{ij}+f_i+g_j]/\tau} \mu_i \nu_j$ for all $k \geq 1$.
    For convenience, let us re-index all the iterates by shifting back the index $k$ by $2$.
    Throughout this proof, we use $O(\cdot), \Omega(\cdot), \Theta(\cdot)$ to hide constants dependent on $\mu, \nu, C$, and $(f^0, g^0)$, and we use $\tO(\cdot)$ to additionally hide poly-logarithmic factors in $1/\tau$.

    \pagebreak

    Fix $0 \leq \ell \leq L$, let $k_\ell = \floor{2t_\ell/\tau}$, and denote by $(\tf^k, \tg^k)_{k \geq k_\ell}$ the iterates of the Sinkhorn algorithm applied to the EOT problem with target marginals $\mu, \nu$ and cost matrix $\tC_{ij} = \begin{cases}
        C_{ij} ~\text{if}~ (i,j) \in \EEE^\ell \\
        \infty ~~\text{otherwise}
    \end{cases}$,
    initialized at $(\tf^{k_\ell}, \tg^{k_\ell}) = (f^{k_\ell}, g^{k_\ell})$.
    Further denote $\tpi^k = \Big( e^{[-\tC_{ij} + \tf^k_i + \tg^k_j]/\tau} \mu_i \nu_j \Big)_{ij} \in \Delta_{\EEE^\ell}$ for all $k \geq k_\ell+1$.

    Note that for all $k$,
    \begin{equation}
        \Hdiv{\tpi^k}{\pi^k}
        = \sum_{(i,j) \in \EEE^\ell} \tpi^k_{ij} \log \frac{\tpi^k_{ij}}{\pi^k_{ij}}
        = \sum_{(i,j) \in \EEE^\ell} \tpi^k_{ij} \,
        \frac{\tf^k_i - f^k_i + \tg^k_j - g^k_j}{\tau}
        \leq \frac1\tau \left( \norm{\tf^k - f^k}_\infty + \norm{\tg^k - g^k}_\infty \right).
    \end{equation}
    Now as we showed at step \eqref{eq:cv:pf_cv_vkwk} of the proof of \autoref{thm:cv:cv}, we have for all $0 \leq \ell \leq L-1$
    \begin{align}
        \forall t_\ell + \tO(\tau) \leq t \leq t_{\ell+1} - \tO(\tau),~~
        \norm{f^{\floor{2t/\tau}} - \tf^{\floor{2t/\tau}}}_\infty
        &\leq e^{-\Theta((\log 1/\tau)^{2L+1})}
        \leq O(\tau^2), \\
        \text{and so}\qquad
        \Hdiv{\tpi^{\floor{2t/\tau}}}{\pi^{\floor{2t/\tau}}}
        &\leq O(\tau) = o_\tau(1).
    \end{align}
    Likewise, for $\ell = L$, as we showed at step~\eqref{eq:cv:pf_cv_vkwk_finalphase} of the proof of \autoref{thm:cv:cv_finalphase},
    \begin{align}
        \forall t_L + \tO(\tau) \leq t \leq t_L + \Theta(1/\tau),~~
        \norm{f^{\floor{2t/\tau}} - \tf^{\floor{2t/\tau}}}_\infty
        &\leq e^{-\Theta(\tau^{-1})}
        \leq O(\tau^2), \\
        \text{and so}\qquad
        \Hdiv{\tpi^{\floor{2t/\tau}}}{\pi^{\floor{2t/\tau}}}
        &\leq O(\tau) = o_\tau(1).
    \end{align}

    Next, let $\tpi^\infty_{\mathrm{even}} = Q^*_\tau(\mu, \nu, \EEE^\ell, C)$ and
    $\tpi^\infty_{\mathrm{odd}} = P^*_\tau(\mu, \nu, \EEE^\ell, C)$. Then by the first item of \autoref{lm:pik:def_Q*P*},
    $\tpi^{2k} \to \tpi^\infty_{\mathrm{even}}$ and $\tpi^{2k+1} \to \tpi^\infty_{\mathrm{odd}}$ as $k \to \infty$.
    Moreover, one can show by adapting the proof of \autoref{prop:prelim:qtve_BV24} that the convergence occurs at a time-scale governed by
    $\delta = \max_{(i,j) \in \EEE^\ell} \abs{C_{ij} - \tf^{k_\ell}_i - \tg^{k_\ell}_j}$, and more precisely,
    \begin{equation}
        \forall k \geq k_\ell/2 + \Theta(1 \vee \delta), \quad
        \Hdiv{\tpi^\infty_{\mathrm{even}}}{\tpi^{2k}},~
        \Hdiv{\tpi^\infty_{\mathrm{odd}}}{\tpi^{2k+1}} \leq O\left( \frac{(\delta + \log k)^2}{k} \right).
    \end{equation}
    Now as remarked in the proof of \autoref{thm:cv:cv}, $\delta = O((\log 1/\tau)^N)$ where $N = 2L+1$, so 
    \begin{equation}
        \forall k \geq k_\ell/2 + \Theta((\log 1/\tau)^{2N+1}), \quad
        \Hdiv{\tpi^\infty_{\mathrm{even}}}{\tpi^{2k}},~
        \Hdiv{\tpi^\infty_{\mathrm{odd}}}{\tpi^{2k+1}} \leq O\left(\frac{1}{\log (1/\tau)} \right) = o_\tau(1).
    \end{equation}
    
    The theorem statement now follows by triangle inequality and Pinsker's inequality. Indeed for all 
    $t \in \bigcup_{0 \leq \ell \leq L-1} [t_\ell + \tO(\tau), t_{\ell+1}-\tO(\tau)] \cup [t_L + \tO(\tau), t_L+\Theta(1/\tau)]$, we get
    \begin{align*}
        \MoveEqLeft
        \norm{\pi^{2\floor{t/\tau}} - \ol\pi^\ell_{\mathrm{even}}}_1
        \leq \norm{\pi^{2\floor{t/\tau}} - \tpi^{2\floor{t/\tau}}}_1
        + \norm{\tpi^{2\floor{t/\tau}} - \tpi^{\infty}_{\mathrm{even}}}_1
        + \norm{\tpi^{\infty}_{\mathrm{even}} - \ol\pi^\ell_{\mathrm{even}}}_1 \\
        &\leq \sqrt{2 \Hdiv{\pi^{2\floor{t/\tau}}}{\tpi^{2\floor{t/\tau}}}}
        + \sqrt{2 \Hdiv{\tpi^{\infty}_{\mathrm{even}}}{\tpi^{2\floor{t/\tau}}}}
        + \norm{Q^*_\tau(\mu, \nu, \EEE^\ell, C) - Q^*_0(\mu, \nu, \EEE^\ell, C)}_1 \\
        &= o_\tau(1),
    \end{align*}
    and likewise for the odd iterations.
\end{proof}

\section{Conclusion} \label{sec:ccl}

In this paper, we analyzed the behavior of the Sinkhorn algorithm for discrete EOT computation in the regime of low regularization parameter $\tau$.
We showed that in this regime, Sinkhorn effectively implements a certain simplex-type algorithm for unregularized OT computation, termed the \emph{cold Sinkhorn} dynamics.
\pagebreak
More precisely, the sequence of dual Sinkhorn iterates $(f^k, g^k)_k$ converges to a continuous curve $(f(t), g(t))$ which moves piecewise-linearly along the boundary of the dual OT problem's feasibility polytope, and converges to an optimal dual solution after a finite number $L$ of phases.
Leveraging this fact, we also deduced a novel convergence guarantee for the Sinkhorn algorithm itself.

\vspace{4pt}
\textbf{From a technical perspective,} our work leaves open a number of natural questions.
Firstly, we were rather loose with the constants appearing in our analysis, so it is unclear how small $\tau$ must be for our bounds to be meaningful.
In particular, numerical experiments suggest that even for large $m, n$, the cold Sinkhorn dynamics regime can kick in at relatively large values of $\tau$, and it would be interesting to determine whether $\tau = o(\frac{1}{\log(mn)})$---the regime advocated by \cite{altschuler2017near}---suffices.

Secondly, the dual iterates $(f^k, g^k)$ appear to trace out a smooth curve even when $\tau$ is small but not infinitesimal (cf \autoref{fig:intro:illustrative}), which would be interesting to characterize.
In other words, the question is to describe more finely the behavior of the Sinkhorn iterates at the phase transitions.

Thirdly, we observe numerically that the phase transition times tend to be more and more spread out towards the end of the cold Sinkhorn dynamics (cf \autoref{fig:apx_addtl_exps:1}, \autoref{fig:apx_addtl_exps:2}). It could be interesting to explain and to try to leverage this phenomenon algorithmically.

\vspace{4pt}
\textbf{From a broader perspective,}
our work uncovers intriguing directions for future research in several areas.
From the point of view of optimal transport, our work deepens our understanding of the relation between discrete EOT and OT in a previously unexplored direction.
Indeed the convergence of EOT to OT at the level of variational problems, including convergence of the optimal solutions, has been studied extensively \cite{cominetti1994asymptotic,weed2018explicit}, and
the convergence of the gradient flow on the semi-dual of EOT to that of OT is also well understood \cite{genevay2016stochastic,cuturi2018semidual}.
Our work reveals that a similar connection exists between the Sinkhorn algorithm---the standard method for EOT computation---and a newly discovered simplex-type algorithm for OT computation, the cold Sinkhorn dynamics.

From the point of view of matrix scaling, our finding
is perhaps especially surprising: it shows that in the limit $\tau \to 0$, the canonical method for scaling a matrix $A = (e^{-C_{ij}/\tau})_{ij}$ to prescribed row- and column-sums reduces to an algorithm for linear programming.
It would be interesting to generalize our analysis to other related settings, such as the multi-marginal setting, the unbalanced setting, or matrix balancing \cite{idel2016review,cai2026near}.


\section*{Acknowledgments}
I would like to thank Christopher Criscitiello for insightful discussions around matrix scaling and for suggesting what became \autoref{thm:pik:even_odd}, 
as well as Jonathan Niles-Weed for discussions that inspired \autoref{rk:cv:centroid}.

\printbibliography
\addcontentsline{toc}{section}{\refname} 

\ifextended%
\newpage
\appendix
\phantomsection
\addcontentsline{toc}{section}{APPENDIX}


\section{Additional illustrative experiments} \label{sec:apx_addtl_exps}

In this appendix, we present two additional numerical experiments to qualitatively illustrate the phenomena studied in this paper, as a complement to \autoref{fig:intro:illustrative}.

The setup is identical for both experiments:
we drew $\mu, \nu$ from the uniform distribution on the simplices $\Delta_m, \Delta_n$ respectively,
we drew the entries of $C$ i.i.d.\ from the standard normal distribution,
and we set $\tau = 0.001$.
The only difference is in the choice of $m$ and $n$:
we took $m=n=50$ for \autoref{fig:apx_addtl_exps:1} and $m=n=400$ for \autoref{fig:apx_addtl_exps:2}.

We display the evolution of the dual iterates $(f^k, g^k)_{k \geq 2}$ of the Sinkhorn algorithm initialized at $(f^0, g^0) = (0,0)$,
as well as the suboptimality measured by relative entropy of the marginals: $\Hdiv{\mu}{X_\sharp \pi^k}, \Hdiv{\nu}{Y_\sharp \pi^k}$.
For readability, for each experiment, we display separately the evolution at a short horizon (until $k = \floor{2/\tau}$, top subfigures)
and at a long horizon (until approximate convergence, bottom subfigures).

\begin{figure}[h]
    \centering
    \begin{subfigure}{0.95\textwidth}
        \centering
        \includegraphics[width=\linewidth]{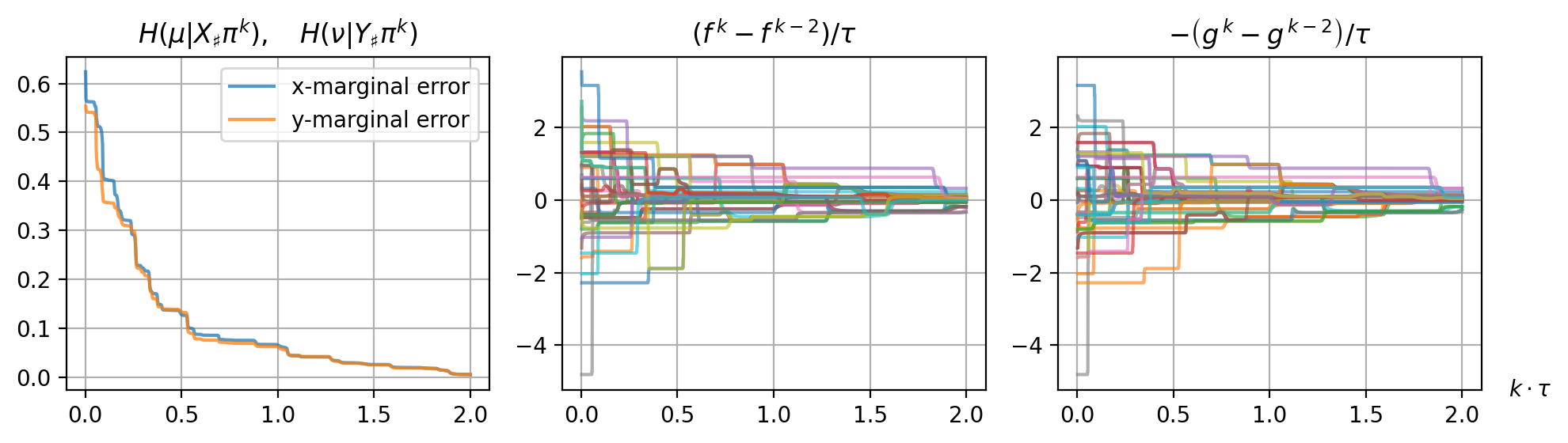} 
    \end{subfigure}

    \medskip
    \begin{subfigure}{0.95\textwidth}
        \centering
        \includegraphics[trim={0 0 0 0.8cm}, clip, width=\linewidth]{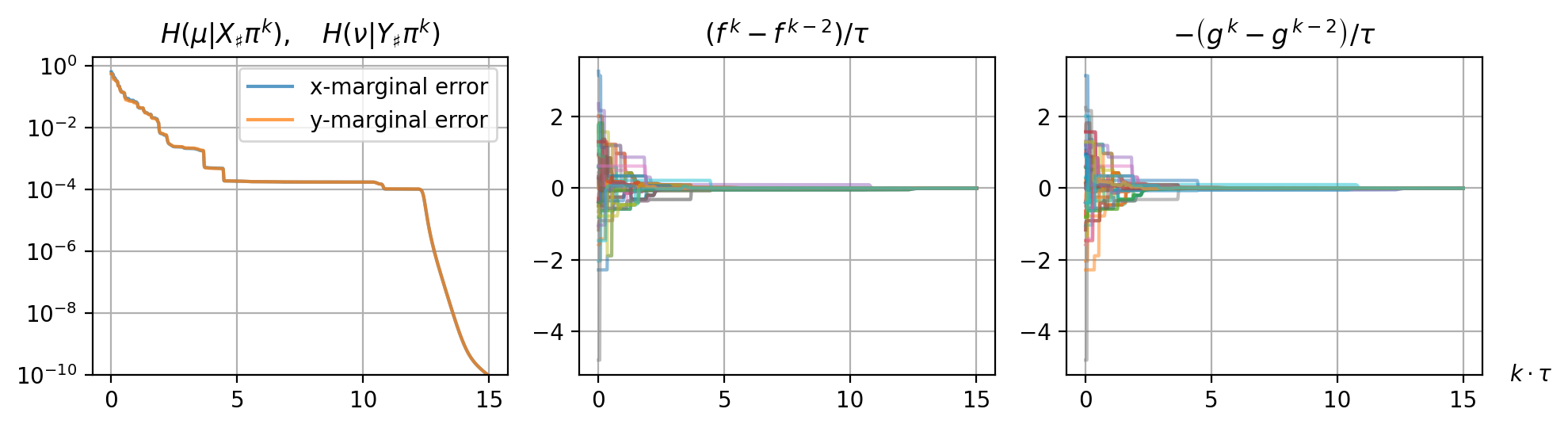}
    \end{subfigure}
    \vspace{-0.5em}
    \caption{One run of the Sinkhorn algorithm with $m=n=50$
    and $\tau = 0.001$
    }
    \label{fig:apx_addtl_exps:1}

    \vspace{1.2em}

    \begin{subfigure}{0.95\textwidth}
        \centering
        \includegraphics[width=\linewidth]{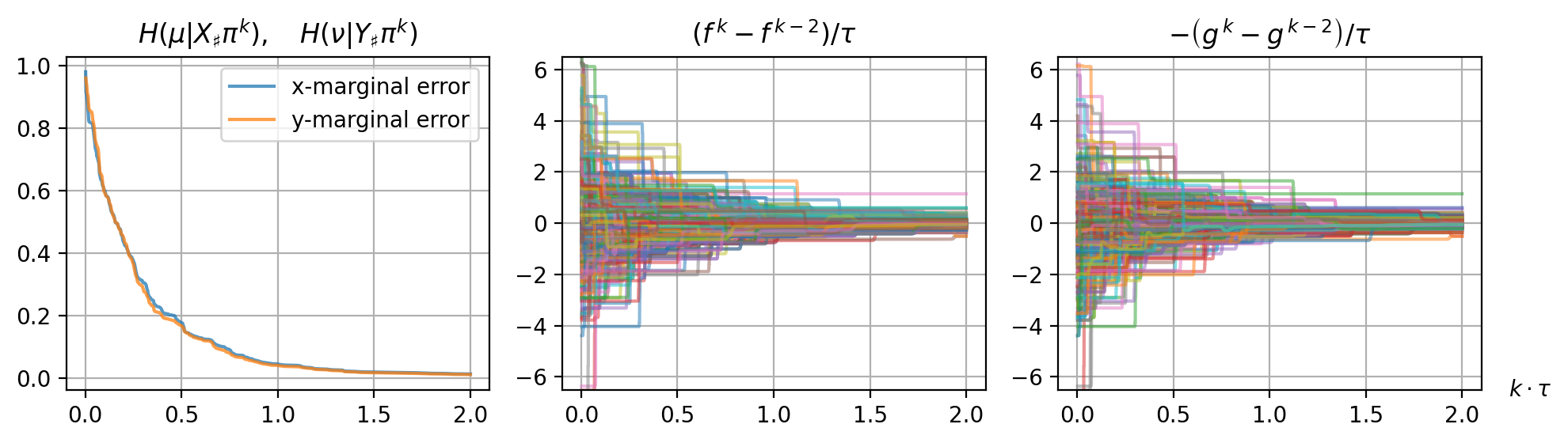} 
    \end{subfigure}

    \medskip
    \begin{subfigure}{0.95\textwidth}
        \centering
        \includegraphics[trim={0 0 0 0.8cm}, clip, width=\linewidth]{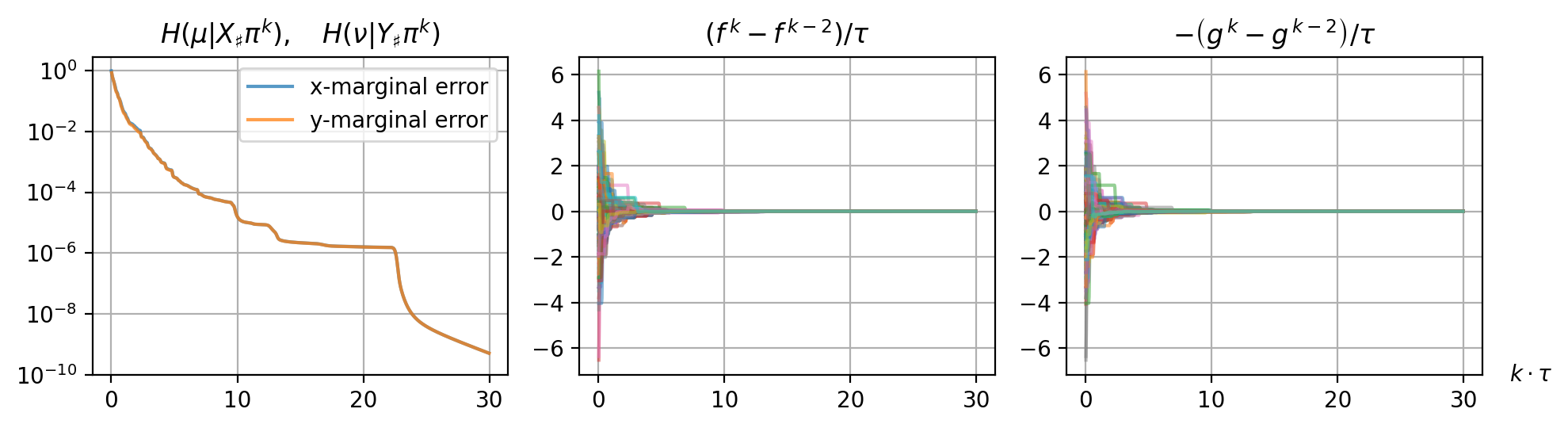} 
    \end{subfigure}
    \vspace{-0.5em}
    \caption{One run of the Sinkhorn algorithm with $m=n=400$
    and $\tau = 0.001$
    }
    \label{fig:apx_addtl_exps:2}
    \vspace{-0.7em}
\end{figure}

\section{Proofs for \autoref{subsec:prelim:qtve_BV24}}
\label{sec:apx_pf_qtve_BV24}

In this appendix, we present the proofs of \autoref{prop:prelim:qtve_BV24} and \autoref{prop:prelim:qtve_BV24_exp}, restated below.

\begin{proposition*}[\autoref{prop:prelim:qtve_BV24}, restated]
    Let $\mu, \nu, C, \EEE$ be as in \autoref{thm:prelim:baradat_ventre}.
    Let $\delta \geq 0$ and consider any initialization 
    $U^0 = \left( (f^0_i + g^0_j - C_{ij})/\tau \right)_{ij} \in (\RR \cup \{-\infty\})^{m \times n}$
    of the Sinkhorn algorithm such that
    \begin{equation}
        \forall (i,j) \in \EEE,~
        -\delta \leq U^0_{ij} \leq \delta
        \qquad \text{and} \qquad
        \forall (i,j) \not\in \EEE,~
        U^0_{ij} = -\infty.
    \end{equation}
    Then the rescaled one-iteration increments $v^k, w^k$ satisfy
    \begin{equation}
        \forall k \geq K_0 (1+\delta),~~
        \norm{v^k - v^*(\EEE)}_\infty,~
        \norm{w^k - w^*(\EEE)}_\infty
        \leq B\, \frac{1 + \delta + \log k}{k}
    \end{equation}
    for some constants $K_0, B$ dependent only on $\mu, \nu$, and $\EEE$.
\end{proposition*}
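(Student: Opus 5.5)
The plan follows the structure the paper hints at: reduce to the asymptotically scalable subproblem on $\ol\SSS$ (in the spirit of \cite[Proposition~5.3]{baradat2024convergence}), get an $\tO(1/k)$ rate there from \cite{wang2026almost}, and transfer it back to $\EEE$. The edges in $\EEE\setminus\ol\SSS$ satisfy $v^*_i+w^*_j<0$, so their logits should drift to $-\infty$ linearly and cause only a small perturbation.

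\textbf{Step 1 (slow rate and drift of non-$\ol\SSS$ logits).} I would first adapt the argument of \cite[Theorem~3.2]{baradat2024convergence}, tracking constants, to get a crude rate. The dual objective gap of the limiting problem would be bounded by $O((1+\delta)/k)$ through the usual $1/k$ Sinkhorn analysis, starting from an initial gap of $O(1+\delta)$ by \autoref{lm:prelim:unifbound_vk_wk}. This gives $\norm{v^k-v^*}_\infty,\norm{w^k-w^*}_\infty \le O(\sqrt{(1+\delta)/k})$. Then for $(i,j)\in\EEE\setminus\ol\SSS$ we have $v^*_i+w^*_j\le -\xi<0$, with $\xi$ depending only on $\mu,\nu,\EEE$. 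Summing the increments, $U^{k}_{ij}\le \delta + O(1) - \xi k/2 + O(\sqrt{(1+\delta)k})$. Hence, for $k\ge K_0(1+\delta)$, these logits are at most $-\xi k/4$, and their total mass is $e^{-\Omega(k)}$.

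\textbf{Step 2 (comparison with the $\ol\SSS$ problem).} Next I would use \autoref{lm:prelim:control_finite_infinite_costs} with $\EEE$ replaced by $\ol\SSS$. I would restart a Sinkhorn run on cost $C$ restricted to $\ol\SSS$ from the current iterate at time $k_1\asymp K_0(1+\delta)$, where the initial logits on $\ol\SSS$ are $O(1+\delta)$ in magnitude. The lemma bounds the discrepancy by $\tau\sum_l e^{-M^l}$ with $M^l\gtrsim \xi l$. In rescaled variables, the increments of the two runs then differ by $e^{-\Omega(k)}$, which is negligible. By \autoref{lm:prelim:equal_mu*}, $v^*(\ol\SSS)=v^*(\EEE)$ and $w^*(\ol\SSS)=w^*(\EEE)$.

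\textbf{Step 3 (fast rate on $\ol\SSS$).} On $\ol\SSS$, \autoref{lm:prelim:equiv_exact_scalability} says the problem is asymptotically $(\mu^*,\nu)$- and $(\mu,\nu^*)$-scalable. I would invoke the almost-linear-rate result of \cite{wang2026almost} (built on \cite{kalantari2008complexity,allen2017much}). It gives that the marginal error, measured as $\Hdiv{\mu^*}{X_\sharp\pi^k}$ or in $\ell_1$, is $O((1+\delta+\log k)/k)$, where $1+\delta$ enters through the initial potential gap. Since $v^{k+1}_i=\log(\mu_i/(X_\sharp\pi^k)_i)$ by \eqref{eq:prelim:rel_vk_Xpik}, and $\mu^*_i\ge\mu_{\min}\nu_{\min}$ by \autoref{coroll:prelim:bound_v*_w*}, closeness of $X_\sharp\pi^k$ to $\mu^*$ in $\ell_\infty$ gives the same bound on $\norm{v^{k}-v^*}_\infty$; the same holds for $w$. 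Combining this with the $e^{-\Omega(k)}$ error from Step 2 gives the claim.

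\textbf{Main obstacle.} The main obstacle is Step 3. The result of \cite{wang2026almost} has to be put in exactly the right form: the target marginal $\mu^*$ is not the one Sinkhorn is run with, since the run targets $(\mu,\nu)$ on a non-scalable pattern. So I must identify the quantity that decays at rate $\log k/k$, presumably the relative entropy between the iterate's marginal and $\mu^*$ (equivalently, the gap in the asymptotic Sinkhorn potential). I also need to check that the dependence on the initialization is only additive in $\delta$.
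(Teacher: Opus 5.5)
Your architecture matches the paper's: slow rate, linear drift of the logits on $\EEE\setminus\ol\SSS$, comparison with a run restricted to $\ol\SSS$ via \autoref{lm:prelim:control_finite_infinite_costs}, then \cite{wang2026almost} on an asymptotically scalable problem. However, Step~2 does not give what you claim.

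\autoref{lm:prelim:control_finite_infinite_costs} only provides a cumulative bound, non-decreasing in $k$. Restarting at time $k_1$, it yields $\norm{f^k-\ol f^k}_\infty\vee\norm{g^k-\ol g^k}_\infty\lesssim\tau\sum_{l\ge k_1}e^{-M^l}=\tau e^{-\Omega(k_1)}$ for all $k\ge k_1$, modulo the prefactors discussed below. Since $v^k-\ol v^k=\tau^{-1}[(f^k-\ol f^k)-(f^{k-2}-\ol f^{k-2})]$, the increment error is $e^{-\Omega(k_1)}$, not $e^{-\Omega(k)}$. With $k_1\asymp K_0(1+\delta)$ fixed, this error is a constant, and the $(1+\delta+\log k)/k$ rate is lost.

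The restart time must grow with the target $k$. The paper takes $k_0=K_2(1+\delta)+\Delta$ with $\Delta\approx\frac{4}{\gamma}\log k$, so the comparison error $B_2e^{-\gamma\Delta/4}$ is $O(1/k)$. Two things then need care that your sketch skips.
\begin{enumerate}
\item At such a restart time, the logits on $\ol\SSS$ are only known to be $O(\delta+k_0)$ in magnitude; those on $\ol\SSS\setminus\SSS$ really do drift to $-\infty$. So the rate on $\ol\SSS$ must depend additively on the initial logit range. The bound of \cite{wang2026almost} does, giving $(1+\delta+k_0+\log(k-k_0))/(k-k_0)=O((1+\delta+\log k)/k)$. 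This is also why the restart has to be logarithmic rather than, say, at $k/2$.
\item You cannot use the explicit prefactor $e^{2\delta'}$ of \autoref{lm:prelim:control_finite_infinite_costs} with $\delta'$ the restart logit range. Since $\xi\le-\log(\mu_{\min}\nu_{\min})$, the available bound $\delta'\lesssim\delta+k_0\log\frac{1}{\mu_{\min}\wedge\nu_{\min}}$ makes $e^{2\delta'}e^{-M^{k_0}}$ grow with $k_0$. The paper instead uses the first inequality of that lemma and bounds $\max_i\ol v^{k_0+1}_i$ and $\max_j\ol w^{k_0+2}_j$ by constants. This works because once the logits off $\ol\SSS$ are below $2\log(\mu_{\min}\wedge\nu_{\min})$, every row and column has an entry of size $\Omega(1)$ inside $\ol\SSS$ (\autoref{claim:apx_pf_qtve_BV24:claim1}--\autoref{claim:apx_pf_qtve_BV24:claim3}).
\end{enumerate}

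For Step~3, the missing idea is an exact change of variables, not a choice of Lyapunov quantity (\autoref{lm:apx_pf_qtve_BV24:prop5.3_BV24}, adapted from \cite[Proposition~5.3]{baradat2024convergence}). The restricted run still targets $(\mu,\nu)$, which is infeasible on $\ol\SSS$, so \cite{wang2026almost} does not apply to it as is. But $\mu^*_i=\mu_ie^{-v^*_i}$, and $e^{-v^*_i}=e^{w^*_j}$ for $(i,j)\in\ol\SSS$ because $v^*_i+w^*_j=0$ there. An induction then shows that $\tU^k_{ij}=\ol U^k_{ij}+v^*_i$ ($k$ even) are exactly the Sinkhorn logits for marginals $(\mu^*,\nu)$ on $\ol\SSS$, with $\tv^{k+1}=\ol v^{k+1}-v^*$. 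Symmetrically, $\ol U^k_{ij}+w^*_j$ at odd $k$ gives the run for $(\mu,\nu^*)$. That problem is asymptotically scalable, so \cite{wang2026almost} bounds $\norm{X_\sharp\tpi^k-\mu^*}_1$ by a constant times $(1+\max_{ij}\tU^{k_0}_{ij}-\min_{(i,j)\in\ol\SSS}\tU^{k_0}_{ij}+\log(k-k_0))/(k-k_0)$. The identity $\tv^{k+1}_i=\log(\mu^*_i/(X_\sharp\tpi^k)_i)$ then converts this into the $\ell_\infty$ bound on $\ol v^{k+1}-v^*$, exactly as in your Step~3.
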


\begin{proposition*}[\autoref{prop:prelim:qtve_BV24_exp}, restated]
    In the same setting as \autoref{prop:prelim:qtve_BV24}, additionally suppose $\SSS = \ol\SSS$, where $\SSS, \ol\SSS$ are the sets defined in \autoref{thm:prelim:baradat_ventre_sets}.
    Then
    \begin{equation}
        \forall k \geq K_0 (1+\delta),~~
        \norm{v^k - v^*(\EEE)}_\infty,~
        \norm{w^k - w^*(\EEE)}_\infty
        \leq B (1+\delta) \left( 1-e^{-R(1+\delta)} \right)^k
    \end{equation}
    for some constants $K_0, B, R$ dependent only on $\mu, \nu$, and $\EEE$.
\end{proposition*}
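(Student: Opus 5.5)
The plan is to reduce to an exactly scalable problem on the edge set $\ol\SSS$, where Sinkhorn is a contraction with an explicit linear rate, and to treat the edges in $\EEE \setminus \ol\SSS$ as an exponentially small perturbation.

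\textbf{Step 1 (entering a neighbourhood of the limit).} Let $\xi = \min\{ -(v^*_i+w^*_j);\ (i,j) \in \EEE \setminus \ol\SSS\} > 0$, a constant depending only on $\mu,\nu,\EEE$. I apply \autoref{prop:prelim:qtve_BV24}, which is already established at this point. It gives $K_1 = K_0(1+\delta)$, with $K_0$ enlarged if necessary, such that $\norm{v^k-v^*}_\infty, \norm{w^k-w^*}_\infty \leq \xi/4$ for all $k \geq K_1$.

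\textbf{Step 2 (the edges outside $\ol\SSS$ die off).} From $k \geq K_1$ onward, every $(i,j) \in \EEE\setminus\ol\SSS$ satisfies $U^{k}_{ij} - U^{k-2}_{ij} = v^k_i + w^k_j \leq -\xi/2$. By \autoref{lm:prelim:unifbound_vk_wk}, the logits satisfy $U^{K_1}_{ij} \leq O(\delta + K_1) = O(1+\delta)$. Hence
\[
U^k_{ij} \leq O(1+\delta) - \xi (k-K_1)/4 \qquad \text{for } (i,j) \in \EEE\setminus\ol\SSS .
\]
I then apply \autoref{lm:prelim:control_finite_infinite_costs}, written in the $U$-variables with $\tau$ factored out. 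The comparison process is the Sinkhorn iteration with pattern $\ol\SSS$, started at $\tU^{K_1} = U^{K_1}$ restricted to $\ol\SSS$. Its increments stay within $\sum_{l \geq k} e^{O(1+\delta)-\xi (l-K_1)/4}$ of ours. This bound is geometrically small in $k$ and can be absorbed into $B(1+\delta)(1-e^{-R(1+\delta)})^k$ once $R$ is small enough. By \autoref{lm:prelim:equal_mu*}, the limiting increments are the same: $v^*(\ol\SSS) = v^*(\EEE)$ and $w^*(\ol\SSS) = w^*(\EEE)$.

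\textbf{Step 3 (linear rate on $\ol\SSS$).} Since $v^*_i + w^*_j = 0$ on $\ol\SSS$, I pass to the recentered potentials $\hat f^k = f^k - \tau \lceil k/2\rceil v^*$ and $\hat g^k = g^k - \tau\lfloor k/2\rfloor w^*$. The logits on $\ol\SSS$ are unchanged up to alternating bounded shifts. The recentered iteration is Sinkhorn scaling of the pattern-$\ol\SSS$ matrix toward the marginals $(\mu^*,\nu)$ on even steps and $(\mu,\nu^*)$ on odd steps. By \autoref{lm:prelim:equiv_exact_scalability}, the hypothesis $\SSS=\ol\SSS$ means exactly that this matrix is exactly scalable. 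Exact scalability gives a bounded optimal scaling $U^\infty$ with $\norm{U^\infty_{\ol\SSS}}_\infty \leq O(1+\delta)$. This bound comes from the scaling relative to the initial matrix, which I bound by $\delta$ plus a constant from the pattern, using the explicit characterization \cite[Eq.~(3.2)]{baradat2024convergence}. I then show that the iterates stay in an $\ell_\infty$-ball of radius $O(1+\delta)$ around the optimum, using the monotonicity of $\max$ and $\min$ of $U-U^\infty$ along Sinkhorn updates.

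On that ball, the dual objective restricted to each connected component of $\ol\SSS$, modulo the one-dimensional gauge, is $e^{-O(1+\delta)}$-strongly convex and $O(1)$-smooth. Standard alternating-minimization theory then gives a contraction of the dual gap by a factor $1-e^{-R(1+\delta)}$ per step. Converting gap into increments works as in \eqref{eq:prelim:rel_vk_Xpik}: $v^{k+1} - v^* = \log(\mu^*/X_\sharp \tpi^k)$ is controlled by the square root of the gap through Pinsker's inequality and the lower bound on $\mu^*$ from \autoref{coroll:prelim:bound_v*_w*}. Taking the square root only halves $R$.

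\textbf{Main obstacle.} The hard step is Step 3, specifically making the strong-convexity constant explicitly $e^{-O(1+\delta)}$ with only $\mu,\nu,\EEE$-dependent constants. This requires the uniform $O(1+\delta)$ bound on the scaled logits, including the case where $\ol\SSS$ has several connected components. If the Hessian argument proves awkward, I would first try Birkhoff--Hopkins contraction in the Hilbert projective metric. Its contraction coefficient is $\tanh(\Delta/4)$, where $\Delta \leq O(1+\delta)$ is the projective diameter, so the coefficient is $1-e^{-O(1+\delta)}$. That is the claimed form of the rate.
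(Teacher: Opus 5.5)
Your Steps 1 and 3 use the same ingredients as the paper: the $\ol\SSS$-restricted iteration reduces to exactly scalable $(\mu^*,\nu)$-scaling, bounded logits give a linear rate $1-e^{-R(1+\delta)}$, and Pinsker converts the gap into increments. There is, however, a genuine gap in Step 2.

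\autoref{lm:prelim:control_finite_infinite_costs} bounds the discrepancy between the true and restricted iterates by the perturbation \emph{accumulated since the comparison was started}. With your fixed start $K_1$ it gives $\max_{\ol\SSS}\abs{U^k_{ij}-\widetilde U^k_{ij}} \lesssim \sum_{l=K_1}^{k-1} e^{-M^l}$ with $-M^l = O(1+\delta)-\xi(l-K_1)/4$. The resulting bound is of order $e^{O(1+\delta)}/\xi$, which is neither small nor decaying in $k$. Since $\norm{v^k-\widetilde v^k}_\infty$ is only controlled by the accumulated potential differences at times $k$ and $k-2$, Step 2 as written yields no rate. Replacing this head sum by the tail $\sum_{l\ge k}$ you claim would require the restricted iteration to damp an error injected at step $l$ by time $k$. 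That is a stability or contraction property in a metric compatible with sup-norm perturbations, and you neither state nor prove it. The dual-gap contraction of Step 3, set up for the unperturbed restricted iteration, does not provide it by itself.

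The paper avoids this by letting the comparison start depend on $k$.
\begin{itemize}
\item After $k_2 = K_2(1+\delta)$, the off-pattern logits satisfy $U^l_{ij} \le 2\log(\mu_{\min}\wedge\nu_{\min}) - \xi(l-k_2)/4$. Restarting the restricted iteration at $k_0 = k_2+\Delta$ with $\Delta\approx k/2$ leaves an error of only $B_2e^{-\xi\Delta/4}$ (\autoref{lm:apx_pf_qtve_BV24:control_E_olS}).
\item The price is that Step 3 must then start from $U^{k_0}$ with $k_0\approx k/2$. The a priori bound $\abs{U^{k_0}_{ij}}\le 4\delta+O(k_0)$ from \autoref{lm:prelim:unifbound_vk_wk} would turn the contraction factor into $1-e^{-R(1+\delta+k)}$, which kills the rate.
\item The missing ingredient is a bound $\max_{\ol\SSS}\abs{U^{k_0}_{ij}}\le O(1+\delta)$ uniform in $k_0$. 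The paper obtains it in the first half of the proof of \autoref{lm:apx_pf_qtve_BV24:olvw_exp_rate} from two facts. First, a second comparison started at $k_2$ itself has accumulated error only $O(1)$, because the off-pattern logits are already below $2\log(\mu_{\min}\wedge\nu_{\min})$ there. Second, exactly scalable restricted iterates stay $O(1+\delta)$-bounded for all time (first part of \autoref{coroll:apx_pf_qtve_BV24:QGGU25}).
\end{itemize}
Your ``stay in a ball around $U^\infty$'' argument could supply the second fact, but Steps 2 and 3 must be restructured along these lines.

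Separately, your Birkhoff--Hopf fallback does not apply as stated. For a sparse pattern $\ol\SSS$ the kernel has zero entries and hence infinite projective diameter, so you would need to work with compositions of the scaling maps over the graph diameter.
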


Throughout this appendix, $\mu, \nu, C$ are fixed, $\SSS \subset \ol\SSS \subset \EEE$ are as defined in \autoref{thm:prelim:baradat_ventre}, \autoref{thm:prelim:baradat_ventre_sets},
and we abbreviate $v^*(\EEE), w^*(\EEE), \mu^*(\EEE), \nu^*(\EEE)$ to $v^*, w^*, \mu^*, \nu^*$ respectively.
Moreover, throughout, $v^k, w^k, U^k$ denote the iterates of the Sinkhorn algorithm in the formulation \eqref{eq:prelim:upd_vwU} and it is assumed that at initialization, $\forall (i,j) \in \EEE, -\delta \leq U^0_{ij} \leq \delta$.

\subsection{Preparatory lemmas}

We start by showing a slow $O(1/\sqrt k)$ convergence rate, as it will be needed to bootstrap the analysis.
To show the slow rate, it is sufficient to follow the same steps as \cite[proof of Theorem~3.2]{baradat2024convergence}, keeping track of constants slightly more explicitly.

\begin{lemma} \label{lm:apx_pf_qtve_BV24:slow_rate}
    We have
    \begin{equation}
        \forall k \geq 4,~~
        \norm{v^k - v^*}_\infty,~
        \norm{w^k - w^*}_\infty
        \leq \sqrt{\frac{B_1 (1+\delta)}{k}}
    \end{equation}
    for some constant $B_1$ dependent only on $\mu_{\min}$ and $\nu_{\min}$.
\end{lemma}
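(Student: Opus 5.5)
We follow the proof of \cite[Theorem~3.2]{baradat2024convergence}, which goes through the mirror-descent / relative-entropy structure of Sinkhorn, keeping the constants explicit. Write $p^k = X_\sharp \pi^k$ for $k$ odd (so $Y_\sharp \pi^k = \nu$) and $q^k = Y_\sharp \pi^k$ for $k$ even (so $X_\sharp \pi^k = \mu$). By \eqref{eq:prelim:rel_vk_Xpik}, $v^{k+1}_i = \log(\mu_i/p^k_i)$, and similarly for $w$. Hence the claim reduces to showing that $p^k \to \mu^*$ (resp.\ $q^k \to \nu^*$) at rate $O(\sqrt{(1+\delta)/k})$ in a log-ratio sense. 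By \autoref{lm:prelim:unifbound_vk_wk}, all ratios $\mu_i/p^k_i$ are bounded above and below by constants depending only on $\mu_{\min},\nu_{\min}$ once $k\ge 3$. This means $\ell_\infty$ control of the log-ratios is equivalent, up to such constants, to $\ell_1$ or relative-entropy control of $p^k$ against $\mu^*$.

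The first step is a Lyapunov/telescoping bound. We take as potential the dual objective restricted to $\EEE$, or equivalently $\Hdiv{\cdot}{\pi^k}$ against a fixed reference plan. Concretely, we pick $Q\in\Delta_\EEE$ with $X_\sharp Q=\mu^*$ and $Y_\sharp Q=\nu$, and expand $\Hdiv{Q}{\pi^{k+1}}-\Hdiv{Q}{\pi^{k}}$. Since $\log \pi^{k+1}-\log\pi^k = v^{k+1}_i$ (a function of $i$ only), this difference equals $-\sum_i \mu^*_i v^{k+1}_i = -\sum_i\mu^*_i\log(\mu_i/p^k_i)$. The analogous identity holds at odd steps with $P$ and $\nu^*$. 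Summing these increments and using that $\sum_i \mu^*_i\log(\mu^*_i/\mu_i)$ is a constant drift, we obtain that
\[
\sum_{l\le k}\Big(\Hdiv{\mu^*}{p^l} + \Hdiv{\nu^*}{q^l}\Big)
\]
is bounded by $\Hdiv{Q}{\pi^{1}}$ plus a telescoping remainder. This is where we use \cite[Remark~3.4]{baradat2024convergence} and the variational characterization of $\mu^*,\nu^*$, which guarantee $\sum_i\mu^*_i v^*_i + \sum_j \nu_j w^*_j$-type cross terms cancel. The initial term is $O(1+\delta)$ because $|U^0_{ij}|\le\delta$ on $\EEE$ and $\Hdiv{Q}{\pi^1}\le \max_\EEE|\log(Q_{ij}/\pi^1_{ij})|$, with $\pi^1$ controlled via \autoref{lm:prelim:unifbound_vk_wk}.

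The second step uses monotonicity. Along Sinkhorn, $\Hdiv{\mu^*}{p^k}$ (or a closely related quantity) is non-increasing, by the data-processing / Pythagorean property of Bregman projections. Hence the summed bound yields $\Hdiv{\mu^*}{p^k}\le B(1+\delta)/k$. Pinsker's inequality, together with the uniform bounds on the ratios, then converts this to $\norm{v^{k+1}-v^*}_\infty\le \sqrt{B_1(1+\delta)/k}$, and symmetrically for $w$. The restriction $k\ge4$ is needed so that \autoref{lm:prelim:unifbound_vk_wk} applies.

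The main obstacle is the first step. The telescoping identity involves the non-scalable limit, where $\pi^k$ has entries going to zero outside $\SSS$, so $\Hdiv{Q}{\pi^k}$ must be chosen with $Q$ supported on $\SSS$ to stay finite. We also need to verify that the drift terms exactly cancel using the optimality conditions defining $\mu^*$, namely $\log(\mu^*_i/\mu_i)+\log(\nu^*_j/\nu_j)\ge 0$-type relations on $\EEE$ with equality on $\ol\SSS$. If exact monotonicity in the second step fails, we will instead take the minimum over $l\le k$ and combine it with the $O(1)$-Lipschitz evolution of $v^l$.
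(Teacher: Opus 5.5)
The gap is in your second step, the passage from a bound on the \emph{sum} $\sum_k\Hdiv{\mu^*}{X_\sharp\pi^k}$ to a bound on the \emph{last} iterate. Your first step is salvageable. For the second, you need $k\mapsto\Hdiv{\mu^*}{X_\sharp\pi^k}$ (even $k$) to be non-increasing, and neither tool you cite gives this.
\begin{itemize}
\item Data processing gives monotonicity of $\Hdiv{X_\sharp\pi^k}{\mu}$ and $\Hdiv{\mu}{X_\sharp\pi^k}$. The reason is that the two half-steps act on the row marginal by a Markov operator sending $X_\sharp\pi^k\mapsto X_\sharp\pi^{k+2}$ which fixes $\mu$. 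It does not fix $\mu^*$, and $\mu^*\neq\mu$ is exactly the non-scalable case this lemma is for.
\item The Pythagorean inequality for the I-projection $\mu^*$ controls $\Hdiv{p}{\mu^*}$, with $p$ in the \emph{first} slot, not $\Hdiv{\mu^*}{p}$.
\item Your fallback also fails. $v^l$ is itself a rescaled increment and moves by $O(1)$ per iteration, so a small $\min_{l\le k}\norm{v^l-v^*}_\infty$ says nothing about $v^k$.
\end{itemize}

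The missing idea is to telescope a quantity that is genuinely monotone. Use a single reference $Q$ (marginals $\mu^*,\nu$) at both half-steps. Then
\[
\Hdiv{Q}{\pi^k}-\Hdiv{Q}{\pi^{k+2}}=\Hdiv{\mu^*}{X_\sharp\pi^k}-\Hdiv{\mu^*}{\mu}+\Hdiv{\nu}{Y_\sharp\pi^{k+1}}.
\]
Drop the first term (it is nonnegative), and lower-bound $\Hdiv{\nu}{Y_\sharp\pi^{k+1}}\ge\Hdiv{X_\sharp\pi^{k+2}}{\mu}$ by data processing. The resulting summands $\Hdiv{X_\sharp\pi^{k+2}}{\mu}-\Hdiv{\mu^*}{\mu}$ are nonnegative, because $X_\sharp\pi^{k+2}$ is feasible for the problem defining $\mu^*$. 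They are also non-increasing, so averaging gives $\Hdiv{X_\sharp\pi^K}{\mu}-\Hdiv{\mu^*}{\mu}\le\frac2K\Hdiv{Q}{\pi^0}$. The three-point identity plus first-order optimality of $\mu^*$ then bounds $\Hdiv{X_\sharp\pi^K}{\mu^*}$ by the same quantity. Pinsker together with $(X_\sharp\pi^K)_i,\mu^*_i\ge\mu_{\min}\nu_{\min}$ finishes the $\ell_\infty$ bound on $v^{K+1}-v^*$.

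Smaller issues to fix:
\begin{itemize}
\item Your parities are reversed: $X_\sharp\pi^k=\mu$ for odd $k$, and $v^{k+1}_i=\log(\mu_i/(X_\sharp\pi^k)_i)$ for even $k$.
\item Switching from $Q$ to $P$ between half-steps breaks the telescoping. Use one reference throughout, or two separately telescoping potentials.
\item After cancelling the constant drift via $\sum_i\mu^*_iv^*_i+\sum_j\nu_jw^*_j=0$, the odd-step leftover is $\sum_j\nu_j\log(\nu^*_j/(Y_\sharp\pi^{k+1})_j)$, not $\Hdiv{\nu^*}{Y_\sharp\pi^{k+1}}$. Its nonnegativity needs an argument, e.g.\ Jensen together with $v^*_i+w^*_j\le0$ on $\EEE$.
\item $Q$ need not be restricted to $\SSS$ for finiteness, since $\pi^k>0$ on all of $\EEE$ for finite $k$.
\end{itemize}
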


\begin{proof}
    Recall that the primal variables are given by
    $\pi^k_{ij} = e^{U^k_{ij}} \mu_i \nu_j / Z^k$,
    $Z^k = \sum_{i'j'} e^{U^k_{i'j'}} \mu_{i'} \nu_{j'}$
    for all $k \geq 0$
    and that $Z^k = 1$ for all $k \geq 1$.
    Also recall from \eqref{eq:prelim:rel_vk_Xpik} that
    for any $k \geq 0$ even,
    $v^{k+1}_i 
    = \log \left( \mu_i / (X_\sharp \pi^k)_i \right) - \log Z^k$,
    and likewise for the $w^k$.
    
    Following \cite[Eq.~(3.11)]{baradat2024convergence}, first note that 
    for any $Q \in \Delta_\EEE$ such that $X_\sharp Q = \mu^*$ and $Y_\sharp Q = \nu$,
    \begingroup
    \allowdisplaybreaks
    \begin{align}
        &
        \forall k \geq 0 ~\text{even,}~~
        \Hdiv{Q}{\pi^k} - \Hdiv{Q}{\pi^{k+2}}
        = \sum_{(i,j) \in \EEE} Q_{ij} \log \left( \pi^{k+2}_{ij} / \pi^k_{ij} \right) \\
        &= \sum_{(i,j) \in \EEE} Q_{ij} \left(
            U^{k+2}_{ij} - U^k_{ij}
        \right)
        - \log Z^{k+2} + \log Z^k \\
        &= \sum_{(i,j) \in \EEE} Q_{ij} \left(
            v^{k+1}_i + w^{k+2}_j
        \right) 
        - \log Z^{k+2} + \log Z^k \\
        &= \sum_i (X_\sharp Q)_i \, v^{k+1}_i
        + \sum_j (Y_\sharp Q)_j \, w^{k+2}_j 
        - \log Z^{k+2} + \log Z^k \\
        &= \sum_i \mu^*_i \, \log \left( \mu_i / (X_\sharp \pi^k)_i \right)
        - \log Z^k 
        + \sum_j \nu_j \log \left( \nu_j / (Y_\sharp \pi^{k+1})_j \right) 
        - \log Z^{k+1} 
        - \log Z^{k+2} + \log Z^k \\
        &= \Hdiv{\mu^*}{X_\sharp \pi^k}
        - \Hdiv{\mu^*}{\mu}
        + \Hdiv{\nu}{Y_\sharp \pi^{k+1}}
    \end{align}
    \endgroup
    since $\log Z^k = 0$ for all $k>0$.
    So by a telescopic sum,
    for any $K \geq 2$ even,
    \begin{equation}
        \Hdiv{Q}{\pi^0}
        \geq \Hdiv{Q}{\pi^0} - \Hdiv{Q}{\pi^K}
        = \sum_{\substack{k=0 \\ k \text{ even}}}^{K-2} \Hdiv{\mu^*}{X_\sharp \pi^k}
        + \left[
            \Hdiv{\nu}{Y_\sharp \pi^{k+1}}
            - \Hdiv{\mu^*}{\mu}
        \right].
    \end{equation}
    Now by \cite[Proposition~6.10]{nutz2021introduction}---or rather, a straightforward adaptation thereof to the case with infinite costs---%
    $\Hdiv{\nu}{Y_\sharp \pi^{k+1}}
    \geq \Hdiv{X_\sharp \pi^{k+2}}{\mu}$
    and
    the sequence 
    $\left( \Hdiv{X_\sharp \pi^k}{\mu} \right)_{k \in 2\NN}$
    is non-increasing.
    Moreover, by definition of
    \begin{equation}
        \mu^* = \argmin_{\ol\mu}~ \Hdiv{\ol\mu}{\mu}
        ~~~~\text{subject to}~~~~
        \exists Q \in \Delta_\EEE;~~
        \begin{cases}
            X_\sharp Q = \ol\mu \\
            Y_\sharp Q = \nu,
        \end{cases}
    \end{equation}
    since $X_\sharp \pi^k$ is feasible for this optimization problem by definition, then
    $\Hdiv{X_\sharp \pi^k}{\mu} - \Hdiv{\mu^*}{\mu} \geq 0$
    for all $k$ even.
    Thus
    \begin{gather*}
        \Hdiv{Q}{\pi^0}
        \geq \sum_{\substack{k=0 \\ k \text{ even}}}^{K-2}
        \underbrace{
            \Hdiv{\mu^*}{X_\sharp \pi^k}
        }_{\geq 0}
        + \left[
            \Hdiv{X_\sharp \pi^{k+2}}{\mu}
            - \Hdiv{\mu^*}{\mu}
        \right] \\
        \text{and so}~~~~
        \Hdiv{X_\sharp \pi^K}{\mu}
        - \Hdiv{\mu^*}{\mu}
        \leq \frac{1}{K/2}
        \sum_{\substack{k=0 \\ k \text{ even}}}^{K-2}
        \left[
            \Hdiv{X_\sharp \pi^{k+2}}{\mu}
            - \Hdiv{\mu^*}{\mu}
        \right]
        \leq \frac{1}{K/2}
        \Hdiv{Q}{\pi^0}.
    \end{gather*}
    Furthermore, 
    \begin{align}
        \Hdiv{Q}{\pi^0}
        &= \sum_{(i',j') \in \EEE} Q_{i'j'} 
        \left( 
            \log \frac{Q_{i'j'}}{\mu_{i'} \nu_{j'}}
            - U^0_{i'j'}
        \right) 
        + \log \sum_{i',j'} e^{U^0_{i'j'}} \mu_{i'} \nu_{j'} \\
        &\leq \max_{\pi \in \Delta_{m \times n}} \Hdiv{\pi}{\mu \otimes \nu}
        + \max_{(i',j') \in \EEE} (-U^0_{i'j'})
        + \max_{i',j'} U^0_{i'j'}
        \leq -\log (\mu_{\min} \nu_{\min})
        + 2 \delta.
    \end{align}
    
    It remains to relate 
    $\Hdiv{X_\sharp \pi^K}{\mu}
    - \Hdiv{\mu^*}{\mu}$
    to $\norm{v^{K+1}-v^*}_\infty$,
    where we recall that $v^{K+1}_i = \log \left( \mu_i / (X_\sharp \pi^K)_i \right)$
    and
    $v^*_i = \log \left( \mu_i / \mu^*_i \right)$.
    First note that, since the feasible set of the optimization problem defining $\mu^*$ is convex and $X_\sharp \pi^K - \mu^*$ belongs to its tangent cone at the minimizer $\mu^*$, then
    \begin{equation}
        (X_\sharp \pi^K - \mu^*)^\top
        \restr{\nabla_{\ol\mu} \Hdiv{\ol\mu}{\mu}}{\mu^*}
        = (X_\sharp \pi^K - \mu^*)^\top
        (\log \mu^* - \log \mu)
        \geq 0
    \end{equation}
    with $\log$ applied pointwise.
    On the other hand, by Bregman three-point identity,
    \begin{align}
        & \Hdiv{X_\sharp \pi^K}{\mu}
        = \Hdiv{X_\sharp \pi^K}{\mu^*}
        + \Hdiv{\mu^*}{\mu}
        - (X_\sharp \pi^K - \mu^*)^\top
        (\log \mu - \log \mu^*) \\
        \text{so}~~~~
        & \Hdiv{X_\sharp \pi^K}{\mu}
        - \Hdiv{\mu^*}{\mu}
        \geq \Hdiv{X_\sharp \pi^K}{\mu^*}.
    \end{align}
    Thus by Pinsker's inequality,
    \begin{align}
        \forall K \geq 2 ~\text{even},~~
        \frac12 \norm{X_\sharp \pi^k-\mu^*}_1^2
        &\leq \Hdiv{X_\sharp \pi^K}{\mu^*}
        \leq \frac{1}{K/2} \Hdiv{Q}{\pi^0}
        \leq \frac{-\log (\mu_{\min} \nu_{\min}) + 2 \delta}{K/2} \\
        \norm{X_\sharp \pi^k-\mu^*}_1
        &\leq 2 \sqrt{\frac{-\log(\mu_{\min} \nu_{\min}) + 2 \delta}{K}}.
    \end{align}
    Now by \autoref{lm:prelim:unifbound_vk_wk} and \autoref{coroll:prelim:bound_v*_w*},
    $\forall i,\, (X_\sharp \pi^K)_i = \mu_i e^{-v^{K+1}_i} \geq \mu_i \nu_{\min}$
    and
    $\mu^*_i \geq \mu_{\min} \nu_{\min}$.
    So by $a^{-1}$-Lipschitz-continuity of $\log$ over $[a, +\infty)$ applied with $a = \mu_{\min} \nu_{\min}$,
    \begin{align}
        \forall K \geq 2 ~\text{even},~~
        \forall i,~~
        \abs{v^{K+1}_i-v^*_i}
        &= \abs{\log \left( \mu^*_i / (X_\sharp \pi^K)_i \right)}
        \leq \frac{1}{\mu_{\min} \nu_{\min}} \abs{\mu^*_i - (X_\sharp \pi^k)_i} \\
        \norm{v^{K+1}-v^*}_\infty
        &\leq \frac{1}{\mu_{\min} \nu_{\min}} \norm{\mu^* - X_\sharp \pi^k}_\infty 
        \leq \frac{2 \sqrt{-\log(\mu_{\min} \nu_{\min}) + 2 \delta}}{\mu_{\min} \nu_{\min}} \cdot
        \frac{1}{\sqrt{K}}.
    \end{align}
    This shows the claimed convergence bound on the $v^k$.
    The bound for the $w^k$ follows similarly.
\end{proof}

In preparation for the second lemma, let us show the following auxiliary claims.

\begin{claim} \label{claim:apx_pf_qtve_BV24:claim1}
    For any $k \geq 3$,~
    $\forall i, \max_j e^{U^k_{ij}} \geq \nu_{\min}$
    ~and~
    $\forall j, \max_i e^{U^k_{ij}} \geq \mu_{\min}$.
\end{claim}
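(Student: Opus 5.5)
The plan is to use the normalization identities already written down in the proof of \autoref{lm:prelim:unifbound_vk_wk}. After every update, the freshly updated side is exactly normalized. For $k\geq 1$ odd, i.e.\ just after a $v$-update, we have $\sum_j e^{U^k_{ij}}\nu_j = 1$ for every $i$. For $k\geq 2$ even, i.e.\ just after a $w$-update, we have $\sum_i e^{U^k_{ij}}\mu_i = 1$ for every $j$. The sums are restricted to $\EEE$, with the convention $e^{-\infty}=0$. From such a normalized sum the claimed bound follows immediately: since $\nu_j\leq 1$,
\begin{equation}
    1 = \sum_j e^{U^k_{ij}} \nu_j \leq \max_j e^{U^k_{ij}} \sum_j \nu_j = \max_j e^{U^k_{ij}},
\end{equation}
which is in fact stronger than $\geq \nu_{\min}$. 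The real content of the claim is therefore the \emph{other} side, namely the row bound at even $k$ and the column bound at odd $k$.

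Take $k\geq 4$ even and fix $i$. Then $U^k_{ij} = U^{k-1}_{ij} + w^k_j$, and $U^{k-1}$ is row-normalized, so $\sum_j e^{U^{k-1}_{ij}}\nu_j = 1$. This gives
\begin{equation}
    \sum_j e^{U^k_{ij}}\nu_j \geq e^{\min_{j'} w^k_{j'}} \geq \nu_{\min},
\end{equation}
using the lower bound $w^k_j\geq \log\nu_{\min}$, valid for $k\geq 2$ by the proof of \autoref{lm:prelim:unifbound_vk_wk}. Since $\sum_j\nu_j=1$, the maximum dominates the average, so $\max_j e^{U^k_{ij}} \geq \sum_j e^{U^k_{ij}}\nu_j \geq \nu_{\min}$.

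Symmetrically, take $k\geq 3$ odd and fix $j$. Then $U^k_{ij} = U^{k-1}_{ij} + v^k_i$, with $U^{k-1}$ column-normalized and $v^k_i\geq\log\mu_{\min}$ for $k\geq 3$. Hence $\max_i e^{U^k_{ij}} \geq \sum_i e^{U^k_{ij}}\mu_i \geq \mu_{\min}$.

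The remaining point is to check the index ranges. For $k\geq 3$ odd, the row bound holds by normalization and the column bound needs $v^k$ with $k\geq 3$, which is available. For $k\geq 4$ even, the column bound holds by normalization and the row bound needs $w^k$ with $k\geq 2$, which is available. Together these cover every $k\geq 3$. The only mild obstacle is making sure the lower bounds on $v^k,w^k$ do not involve $Z^0\neq 1$. They do not: they come from \eqref{eq:prelim:rel_vk_Xpik} applied at $k-1\geq 2$, where $Z=1$ holds.
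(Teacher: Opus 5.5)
Your proof is correct and is essentially the paper's argument. Both proofs combine the exact normalization from the preceding half-step (row sums at odd $k$, column sums at even $k$) with the lower bounds $v^k_i \geq \log\mu_{\min}$ and $w^k_j \geq \log\nu_{\min}$ from \autoref{lm:prelim:unifbound_vk_wk}. The only cosmetic difference is that the paper picks a single entry with $e^{U^{k-1}_{ij}} \geq 1$ and multiplies it by the increment's exponential, whereas you lower-bound the whole weighted sum by $\nu_{\min}$ (resp.\ $\mu_{\min}$) and then use that the maximum dominates the weighted average.
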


\begin{proof}
    As noted in the proof of \autoref{lm:prelim:unifbound_vk_wk},
    by definition of the update \eqref{eq:prelim:upd_vwU},
    \begin{align*}
        \forall k \geq 0 ~\text{even},~
        \forall i,~~
        & \sum\nolimits_j e^{U^{k+1}_{ij}} \nu_j 
        = 1 
        \qquad \text{so} \qquad
        \exists j;~ e^{U^{k+1}_{ij}} \geq 1
        ~~~\text{and}~~~
        e^{U^{k+2}_{ij}}
        = e^{U^{k+1}_{ij}} e^{w^{k+2}_j}
        \geq \nu_{\min} \\
        \forall k \geq 1 ~\text{odd},~
        \forall j,~~
        & \sum\nolimits_i e^{U^{k+1}_{ij}} \mu_i
        = 1
        \qquad \text{so} \qquad
        \exists i;~ e^{U^{k+1}_{ij}} \geq 1
        ~~~\text{and}~~~
        e^{U^{k+2}_{ij}}
        = e^{U^{k+1}_{ij}} e^{v^{k+2}_i}
        \geq \mu_{\min}
    \end{align*}
    where the last inequality on each line follows from the lower bounds of \autoref{lm:prelim:unifbound_vk_wk}.
\end{proof}

\begin{claim} \label{claim:apx_pf_qtve_BV24:claim2}
    For any $c>0$ and $U, U' \in (\RR \cup \{-\infty\})^{m \times n}$ such that 
    $\forall i, \max_j e^{U_{ij}}, \max_j e^{U'_{ij}} \geq c$
    and
    $\forall j, \max_i e^{U_{ij}}, \max_i e^{U'_{ij}} \geq c$,
    we have
    \begin{equation}
        \norm{v[U] - v[U']}_\infty,~
        \norm{w[U] - w[U']}_\infty
        \leq c^{-1} (\mu_{\min} \wedge \nu_{\min})^{-1} \,
        \max_{ij} \abs{e^{U_{ij}} - e^{U'_{ij}}}.
    \end{equation}
\end{claim}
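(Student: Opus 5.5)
By the symmetry between rows and columns, it suffices to bound $\abs{v[U]_i - v[U']_i}$ for a fixed $i$; the bound for $w$ is identical with the roles of $\mu$ and $\nu$ exchanged. Recall that $v[U]_i = -\log S_i$ and $v[U']_i = -\log S'_i$, where
\begin{equation*}
    S_i = \sum\nolimits_j e^{U_{ij}} \nu_j
    \qquad\text{and}\qquad
    S'_i = \sum\nolimits_j e^{U'_{ij}} \nu_j ,
\end{equation*}
with the convention $\exp(-\infty)=0$. The plan is to reduce the claim to a Lipschitz bound for the logarithm, after lower-bounding the arguments $S_i, S'_i$ using the hypothesis.

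First I lower-bound the sums. By assumption there exists $j_0$ with $e^{U_{ij_0}} \geq c$. Since all terms are nonnegative,
\begin{equation*}
    S_i \geq e^{U_{ij_0}} \nu_{j_0} \geq c\, \nu_{\min},
\end{equation*}
and likewise $S'_i \geq c\,\nu_{\min}$. Next I upper-bound the difference of the sums. Because $\sum_j \nu_j = 1$,
\begin{equation*}
    \abs{S_i - S'_i} \leq \sum\nolimits_j \nu_j \abs{e^{U_{ij}} - e^{U'_{ij}}} \leq \max_{ij} \abs{e^{U_{ij}} - e^{U'_{ij}}} .
\end{equation*}

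Finally, $\log$ is $a^{-1}$-Lipschitz on $[a,\infty)$. Applying this with $a = c\,\nu_{\min}$ gives
\begin{equation*}
    \abs{v[U]_i - v[U']_i} \leq c^{-1}\nu_{\min}^{-1} \max_{ij} \abs{e^{U_{ij}} - e^{U'_{ij}}},
\end{equation*}
which is at most the stated bound because $\nu_{\min}^{-1} \leq (\mu_{\min}\wedge\nu_{\min})^{-1}$. The column version for $w$ gives the factor $\mu_{\min}^{-1}$ in the same way.

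There is no real obstacle here. The only point needing care is that entries equal to $-\infty$ are handled correctly. This works because the hypothesis guarantees a finite, large entry in each row and column, so both $S_i$ and $S'_i$ are strictly positive and the logarithms are finite.
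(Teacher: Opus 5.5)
Your proposal is correct and follows essentially the same approach as the paper: you lower-bound both sums by $c\,\nu_{\min}$ via the hypothesis, bound their difference by $\max_{ij} \abs{e^{U_{ij}} - e^{U'_{ij}}}$ using $\sum_j \nu_j = 1$, and conclude by the $a^{-1}$-Lipschitz continuity of $\log$ on $[a,\infty)$, with the symmetric argument for $w$. Your remark on the $-\infty$ entries is also correct, since such entries contribute $0$ both to the sums and to the differences.
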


\begin{proof}
    For any $i$, by definition,
    $\abs{v[U]_i - v[U']_i}
    = \abs{\log \sum_j e^{U_{ij}} \nu_j - \log \sum_j e^{U'_{ij}} \nu_j}$.
    Now by the assumption,
    $\sum_j e^{U_{ij}} \nu_j
    \geq c \, \nu_{\min}$
    and likewise for $U'$.
    So by $a^{-1}$-Lipschitz-continuity of $\log$ over $[a, +\infty)$,
    \begin{equation}
        \abs{v[U]_i - v[U']_i}
        \leq c^{-1} \nu_{\min}^{-1}
        \abs{\sum\nolimits_j (e^{U_{ij}} - e^{U'_{ij}}) \nu_j}
        \leq c^{-1} \nu_{\min}^{-1}\, 
        \max\nolimits_j
        \abs{e^{U_{ij}} - e^{U'_{ij}}}.
    \end{equation}
    Hence the bound on $\norm{v[U] - v[U']}_\infty$, and the bound on $\norm{w[U] - w[U']}_\infty$ follows similarly.
\end{proof}

\begin{claim} \label{claim:apx_pf_qtve_BV24:claim3}
    For any $k \geq 1$,
    $\max_{ij} e^{U^k_{ij}} \leq (\mu_{\min} \wedge \nu_{\min})^{-1}$.
\end{claim}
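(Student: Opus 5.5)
The plan is to use the normalization identities from the proof of \autoref{lm:prelim:unifbound_vk_wk} together with the lower bounds on the increments. Since $e^{U_{ij}} > 0$ and the weights are positive, each normalization forces every individual entry to be small.

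\emph{Case $k$ odd.} Here $k = k'+1$ with $k' \geq 0$ even. By the update \eqref{eq:prelim:upd_vwU}, for every $i$ we have $\sum_j e^{U^{k}_{ij}} \nu_j = 1$. Every term in this sum is nonnegative, so each single term is at most $1$. Hence $e^{U^k_{ij}} \nu_j \leq 1$, that is, $e^{U^k_{ij}} \leq \nu_j^{-1} \leq \nu_{\min}^{-1}$. This holds for every $(i,j)$, with the convention $\exp(-\infty)=0$ for $(i,j)\notin\EEE$.

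\emph{Case $k \geq 2$ even.} Here $k = k'+1$ with $k'$ odd. The column normalization $\sum_i e^{U^k_{ij}} \mu_i = 1$ holds for every $j$. The same argument then gives $e^{U^k_{ij}} \leq \mu_i^{-1} \leq \mu_{\min}^{-1}$.

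Combining the two cases, for all $k \geq 1$ we get $\max_{ij} e^{U^k_{ij}} \leq \max(\mu_{\min}^{-1}, \nu_{\min}^{-1}) = (\mu_{\min} \wedge \nu_{\min})^{-1}$. This is exactly the claim.

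There is no real obstacle. The only point needing care is the index bookkeeping: every $k \geq 1$ is obtained from $k-1 \geq 0$ by either a row update or a column update, and in both situations the matching normalization identity holds at $U^k$. The case $k=0$ is excluded because $U^0$ need not satisfy either normalization, since $Z_\tau(f^0,g^0)\neq1$ in general.
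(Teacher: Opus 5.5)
Your proof is correct and follows the paper's argument: for $k$ odd, the row normalization $\sum_j e^{U^k_{ij}}\nu_j = 1$ gives $e^{U^k_{ij}} \leq \nu_{\min}^{-1}$, and for $k \geq 2$ even, the column normalization gives $e^{U^k_{ij}} \leq \mu_{\min}^{-1}$. The lower bounds on the increments that your opening sentence mentions are never actually needed, and your argument rightly does not use them.
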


\begin{proof}
    As noted in the proof of \autoref{lm:prelim:unifbound_vk_wk}, 
    by definition of the update \eqref{eq:prelim:upd_vwU},
    for any $k \geq 1$ odd, for any $i, j$,
    $e^{U^k_{ij}} \nu_{\min}
    \leq \sum_{j'} e^{U^k_{ij'}} \nu_{j'} = 1$
    so
    $e^{U^k_{ij}} \leq \nu_{\min}^{-1}$.
    Likewise, for any $k \geq 1$ even, 
    $e^{U^k_{ij}} \leq \mu_{\min}^{-1}$.
\end{proof}

The second lemma quantifies the deviation between the true Sinkhorn iterates and the iterates of the algorithm artificially restricted to $\ol\SSS$ starting from some iteration $k_0$.

\begin{lemma} \label{lm:apx_pf_qtve_BV24:control_E_olS}
    For any $k_0$ even, let $\ol v^{(k_0) k}, \ol w^{(k_0) k}, \ol U^{(k_0) k}$ denote the iterates of the Sinkhorn algorithm in the formulation \eqref{eq:prelim:upd_vwU}, applied to the EOT problem with target marginals $\mu, \nu$ and cost matrix
    $
        \ol C_{ij} = \begin{cases}
            C_{ij} ~~\text{if}~ (i,j) \in \ol\SSS \\
            \infty ~~\text{otherwise}
        \end{cases} 
    $
    and initialized at
    $
        \ol U^{(k_0) k_0}_{ij} = \begin{cases}
            U^{k_0}_{ij} ~~\text{if}~ (i,j) \in \ol\SSS \\
            -\infty ~~\text{otherwise}
        \end{cases}
    $.
    There exist constants $K_2, B_2, \gamma$ dependent only on $\mu, \nu$, and $\EEE$ such that for any $\Delta>0$,
    if $k_0 = K_2 (1+\delta) + \Delta$,
    \begin{equation}
        \forall k \geq k_0,~~~
        \max_{(i,j) \in \ol\SSS} \abs{\ol U^{(k_0) k}_{ij} - U^k_{ij}},~~
        \norm{v^k - \ol v^{(k_0) k}}_\infty,~~
        \norm{w^k - \ol w^{(k_0) k}}_\infty
        \leq
        B_2 \, e^{-\Delta \gamma/4}.
    \end{equation}
\end{lemma}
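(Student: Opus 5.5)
We will compare the true iterates with the iterates restricted to $\ol\SSS$ through the same perturbation mechanism as in \autoref{lm:prelim:control_finite_infinite_costs}. The two algorithms differ only through the mass that the true iterates place on the edges $(i,j) \in \EEE \setminus \ol\SSS$. So the plan has two parts: (a) show that this mass becomes exponentially small in $k$, and (b) propagate that smallness through the updates, which are stable.

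\textbf{Step (a): decay off $\ol\SSS$.} By definition of $\ol\SSS$, there is a constant $\gamma>0$ depending only on $\mu,\nu,\EEE$ such that $v^*_i + w^*_j \leq -\gamma$ for all $(i,j) \in \EEE\setminus\ol\SSS$. For such an edge we have $U^k_{ij} = U^{k'}_{ij} + \sum_{l} (v^l_i + w^l_j)$ along the intermediate iterations. By \autoref{lm:apx_pf_qtve_BV24:slow_rate}, once $k \geq K_2(1+\delta)$ with $K_2$ large enough, every increment satisfies $v^l_i + w^l_j \leq -\gamma/2$. We also have the starting bound $U^{k'}_{ij} \leq \log(\mu_{\min}\wedge\nu_{\min})^{-1}$ from \autoref{claim:apx_pf_qtve_BV24:claim3}. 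Together these give, for $k \geq K_2(1+\delta)+\Delta'$,
\[
e^{U^k_{ij}} \leq (\mu_{\min}\wedge\nu_{\min})^{-1} e^{-\gamma\Delta'/4}.
\]
Here the per-two-iterations decrement $\gamma/2$ becomes $\gamma/4$ per iteration. Consequently, at $k_0 = K_2(1+\delta)+\Delta$, every off-$\ol\SSS$ entry is already $O(e^{-\gamma\Delta/4})$ and keeps decreasing geometrically afterwards.

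\textbf{Step (b): propagation.} Write $U^k = \ol U^{(k_0)k} + D^k$ on $\ol\SSS$. On edges outside $\ol\SSS$, the restricted iterate is $-\infty$ while the true one is exponentially small. One update on both sides gives
\[
D^{k+1}_{ij} = D^k_{ij} + \big(v[U^k]_i - v[\ol U^{(k_0)k}]_i\big),
\]
and similarly for the $w$-updates. Repeating the log-ratio computation of \autoref{lm:prelim:control_finite_infinite_costs}, the gap $\Delta^k := \max_{\ol\SSS}|D^k|$ satisfies
\[
\Delta^{k+1} \leq \Delta^k + C_0 \max_{(i,j)\in\EEE\setminus\ol\SSS} e^{U^k_{ij}} \leq \Delta^k + C_0' e^{-\gamma(k-k_0+\Delta)/4}.
\]
The constant $C_0$ comes from the lower bounds $\max_j e^{U_{ij}}\geq\nu_{\min}$ of \autoref{claim:apx_pf_qtve_BV24:claim1}. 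These bound the denominators, as in \autoref{claim:apx_pf_qtve_BV24:claim2}, and that claim needs the same row and column lower bounds for the restricted iterates. We get those by using that $\ol\SSS$ has no isolated vertex and repeating the argument of \autoref{claim:apx_pf_qtve_BV24:claim1}. We have $\Delta^{k_0}=0$. Summing the geometric series gives $\Delta^k \leq B_2 e^{-\gamma\Delta/4}$ uniformly in $k\geq k_0$. The bounds on $\|v^k-\ol v^{(k_0)k}\|_\infty$ and $\|w^k-\ol w^{(k_0)k}\|_\infty$ then follow from the per-step differences controlled above, which are dominated by $\Delta^k$ plus the off-$\ol\SSS$ mass.

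\textbf{Main obstacle.} The delicate point is Step (a). We need uniform control of the increments $v^l+w^l$ near $(v^*,w^*)$ from a threshold that is linear in $(1+\delta)$. The slow rate of \autoref{lm:apx_pf_qtve_BV24:slow_rate} only gives this from $k \gtrsim (1+\delta)/\gamma^2$, but that is still linear in $(1+\delta)$, so absorbing $\gamma^{-2}B_1$ into $K_2$ suffices. A second point of care is the parity bookkeeping: the $v$- and $w$-increments alternate, so we use pairs of iterations, which turns the exponent $\gamma/2$ into the $\gamma/4$ of the statement.
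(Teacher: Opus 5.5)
Your architecture matches the paper's proof. \autoref{lm:apx_pf_qtve_BV24:slow_rate} makes $v^l_i+w^l_j\le-\gamma/2$ on $\EEE\setminus\ol\SSS$ after $O((1+\delta)/\gamma^2)$ iterations, the off-$\ol\SSS$ logits then drop by $\gamma/4$ per iteration, and \autoref{lm:prelim:control_finite_infinite_costs} propagates this to the restricted run. Starting the decay from the $k$-independent bound of \autoref{claim:apx_pf_qtve_BV24:claim3} is fine, and slightly cleaner than the paper's bound on $U^{k_1}$.

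Step (b), however, does not go through as written, for two reasons.

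First, $\max_{\ol\SSS}\abs{D^k}$ is not non-expansive. Your formula reads $D^{k+1}_{ij}=D^k_{ij}-\log\bigl(\sum_{j'}p^{(i)}_{j'}e^{D^k_{ij'}}+\epsilon_i\bigr)$, with $p^{(i)}$ the normalized restricted row weights. This only yields $\abs{D^{k+1}_{ij}}\le 2\max_{\ol\SSS}\abs{D^k}+O(\epsilon_i)$. The factor $2$ is attained when row $i$ of $D^k$ takes the values $\pm\Delta^k$ and $p^{(i)}$ concentrates on the $-\Delta^k$ entry, so iterating this bound gives nothing. The log-ratio computation of \autoref{lm:prelim:control_finite_infinite_costs} controls a different quantity: the potential gap $\norm{f^k-\ol f^k}_\infty\vee\norm{g^k-\ol g^k}_\infty$. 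In your notation, write $D^k_{ij}=a^k_i+b^k_j$ on $\ol\SSS$, with $a^k,b^k$ the cumulative $v$- and $w$-differences since $k_0$, and track $\norm{a^k}_\infty\vee\norm{b^k}_\infty$. This quantity is non-expansive up to the off-$\ol\SSS$ error, because the updated $a$ is minus a soft average of the old $b$ (and vice versa). The bounds on $\max_{\ol\SSS}\abs{D^k}$ and on the increments then follow with a factor $2$.

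Second, the row and column lower bounds for the restricted iterates near $k_0$ do not follow from $\ol\SSS$ having no isolated vertex. $\ol U^{(k_0)k_0}$ is a truncation of $U^{k_0}$, not a Sinkhorn output, so the argument of \autoref{claim:apx_pf_qtve_BV24:claim1} applies to the restricted run only after it has made its own first normalizations. Yet it is precisely these bounds at $k_0$ that control $\max_i\ol v^{(k_0)k_0+1}_i$ and $\max_j\ol w^{(k_0)k_0+2}_j$. These are the factors $e^{\max\tilde v^1}$ and $e^{\max\tilde w^2}$ of \autoref{lm:prelim:control_finite_infinite_costs}, and they also control your $\epsilon_i$. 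The crude bound obtained by summing increment lower bounds degrades like $e^{O(1+\delta)}(\mu_{\min}\nu_{\min})^{-\Delta/2}$. The factor $e^{-\gamma\Delta/4}$ cannot compensate this, since $\gamma\le\log(1/(\mu_{\min}\nu_{\min}))$ by \autoref{coroll:prelim:bound_v*_w*}.

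The missing step is to combine \autoref{claim:apx_pf_qtve_BV24:claim1} for the true iterates with your Step (a). Once $e^{U^k_{ij}}<\mu_{\min}\wedge\nu_{\min}$ for all $(i,j)\notin\ol\SSS$, the large entry that the claim provides in each row and column must lie in $\ol\SSS$. \autoref{claim:apx_pf_qtve_BV24:claim2} then bounds the first restricted increments. Since the lemma is asserted for every $\Delta>0$, $K_2(1+\delta)$ must already include the extra $O(\gamma^{-1}\log(1/(\mu_{\min}\wedge\nu_{\min})))$ iterations needed to enter this regime; this is the paper's threshold $k_2$. With these two corrections your argument coincides with the paper's proof.
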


\begin{proof}
    Recall that $\ol\SSS = \left\{ (i,j) \in \EEE;~ v^*_i+w^*_j = 0 \right\}$.
    If $\ol\SSS = \EEE$, there is nothing to prove, so suppose henceforth the inclusion $\ol\SSS \subset \EEE$ is strict.
    Let
    \begin{equation}
        -\gamma 
        = \max_{(i,j) \in \EEE \setminus \ol\SSS} v^*_i + w^*_j
        < 0
    \end{equation}
    (and note that $\gamma$ depends only on $\mu, \nu, \EEE$).
    Let $B_1$ be as in \autoref{lm:apx_pf_qtve_BV24:slow_rate}
    and $k_1 = 4 \vee \ceil{\frac{16 B_1}{\gamma^2} (1+\delta)}$.
    Then
    \begin{equation}
        \forall k \geq k_1,~
        \max_{(i,j) \in \EEE \setminus \ol\SSS}
        v^k_i + w^k_j
        \leq -\gamma 
        + \norm{v^k-v^*}_\infty
        + \norm{w^k-w^*}_\infty
        \leq -\gamma + 2 \sqrt{\frac{B_1 (1+\delta)}{k}}
        \leq -\frac12 \gamma.
    \end{equation}
    Note that by \autoref{lm:prelim:unifbound_vk_wk},
    \begin{align}
        \forall i,j,~~
        U^{k_1}_{ij}
        &= U^0_{ij}
        + v^1_i + w^2_j
        + \sum_{\substack{l=2\\l~\text{even}}}^{k_1-1}
        v^{l+1}_i
        + \sum_{\substack{l=3\\l~\text{odd}}}^{k_1-1}
        w^{l+1}_i \\
        &\leq \delta -\log(\mu_{\min} \nu_{\min}) + 3\delta
        + (k_1-2)\, [-\log(\mu_{\min} \wedge \nu_{\min})]
        \leq 4 \delta - k_1 \, \log(\mu_{\min} \wedge \nu_{\min}).
    \end{align}
    Consequently,
    \begin{align}
        \forall (i,j) \in \EEE \setminus \ol\SSS,~~~
        &\forall l \geq 0 ~\text{even},~~
        U^{k_1+l}_{ij}
        \leq U^{k_1}_{ij}
        - (l/2) \frac{\gamma}{2} 
        \leq 
        4 \delta - k_1 \, \log(\mu_{\min} \wedge \nu_{\min})
        - l \, \gamma/4 \\
        &\forall l \geq 0 ~\text{odd},~~~
        U^{k_1+l}_{ij}
        \leq U^{k_1+l-1}_{ij}
        - \log (\mu_{\min} \wedge \nu_{\min})
    \end{align}
    where for the odd case we used \autoref{lm:prelim:unifbound_vk_wk} again.
    Thus, recalling the definition of $k_1 = 4 \vee \ceil{\frac{16 B_1}{\gamma^2} (1+\delta)}$,
    \begin{equation}
        \forall (i,j) \not\in \ol\SSS,~~~
        \forall k \geq k_1,~~
        U^k_{ij} 
        \leq B_2' (1+\delta)
        - (k-k_1) \,\gamma/4
    \end{equation}
    for a constant $B_2'$ dependent only on $\mu, \nu$, and $\EEE$.

    Now set $k_2 = k_1 + (4/\gamma) \left[ B_2' (1+\delta) - 2 \log(\mu_{\min} \wedge \nu_{\min}) \right]$.
    Then
    \begin{align}
        \forall (i,j) \not\in \ol\SSS,~~~
        \forall k \geq k_2,~~
        U^k_{ij}
        &\leq 2 \log (\mu_{\min} \wedge \nu_{\min}) \\
        e^{U^k_{ij}} 
        &\leq (\mu_{\min} \wedge \nu_{\min})^2
        < \mu_{\min} \wedge \nu_{\min}.
    \end{align}
    So we can refine the result of \autoref{claim:apx_pf_qtve_BV24:claim1} by affirming that for any $i$, the $j$ for which $e^{U^k_{ij}} \geq \nu_{\min}$ must be such that $(i,j) \in \ol\SSS$, and likewise for the other estimate; formally,
    \begin{align}
        \forall k \geq k_2,~~~
        & \forall i,~ \exists j;~ (i,j) \in \ol\SSS ~~\text{and}~~
        e^{U^k_{ij}} \geq \nu_{\min} \\
        & \forall j,~ \exists i;~ (i,j) \in \ol\SSS ~~\text{and}~~
        e^{U^k_{ij}} \geq \mu_{\min}.
    \end{align}
    Note that we can write $k_2 = K_2 (1+\delta)$ with $K_2$ dependent only on $\mu, \nu$, and $\EEE$.
    
    Fix $\Delta>0$ and $k_0 = k_2 + \Delta$ even and consider $\ol v^{(k_0) k}, \ol w^{(k_0) k}, \ol U^{(k_0) k}$ as in the lemma statement, 
    abbreviated in the rest of this proof as $\ol v^k, \ol w^k, \ol U^k$.
    Let us bound $\max_i \ol v^{k_0+1}_i$ and $\max_j \ol w^{k_0+2}_j$.
    The previous paragraph and the fact that $\forall (i,j) \in \ol\SSS,~ \ol U^{k_0}_{ij} = U^{k_0}_{ij}$ by definition imply that the assumptions of \autoref{claim:apx_pf_qtve_BV24:claim2} are verified for $U = U^{k_0}$, $U' = \ol U^{k_0}$, and $c = \mu_{\min} \wedge \nu_{\min}$, so
    \begin{equation}
        \norm{\ol v^{k_0+1} - v^{k_0+1}}_\infty 
        \leq (\mu_{\min} \wedge \nu_{\min})^{-2} \, 
        \max_{ij} \abs{e^{U^{k_0}_{ij}} - e^{\ol U^{k_0}_{ij}}}
        = (\mu_{\min} \wedge \nu_{\min})^{-2} \, 
        \max_{(i,j) \in \EEE \setminus \ol\SSS} e^{U^{k_0}_{ij}}
        \leq 1
    \end{equation}
    where in the middle equality we used that for $(i,j) \in \ol\SSS,~ \ol U^{k_0}_{ij} = U^{k_0}_{ij}$
    and for $(i,j) \not\in \ol\SSS$, $e^{\ol U^{k_0}_{ij}} = 0$,
    and in the last inequality we used the previous paragraph again.
    In particular by \autoref{lm:prelim:unifbound_vk_wk},
    \begin{equation}
        \max_i \ol v^{k_0+1}_i
        \leq \max_i v^{k_0+1}_i + 1
        \leq -\log \nu_{\min} + 1.
    \end{equation}
    To bound $\max_j \ol w^{k_0+2}_j$, let us apply \autoref{claim:apx_pf_qtve_BV24:claim2} to $U = U^{k_0+1}$, $U' = \ol U^{k_0+1}$, and 
    $c = e^{-1} (\mu_{\min} \wedge \nu_{\min})$. 
    Indeed, the assumption on $U'$ is verified as
    \begin{align}
        \forall (i,j) \in \ol\SSS,~~
        \ol U^{k_0+1}_{ij}
        &= \ol U^{k_0}_{ij}
        + \ol v^{k_0+1}_i
        = U^{k_0+1}_{ij}
        - v^{k_0+1}_i
        + \ol v^{k_0+1}_i \\
        e^{\ol U^{k_0+1}_{ij}}
        &\geq e^{U^{k_0+1}_{ij}} e^{-1}.
    \end{align}
    Applying the claim yields
    \begin{equation}
        \max_j \ol w^{k_0+2}_j 
        \leq \max_j w^{k_0+2}_j 
        + e\, (\mu_{\min} \wedge \nu_{\min})^{-2} \,
        \max_{ij} \abs{e^{U^{k_0+1}_{ij}} - e^{\ol U^{k_0+1}_{ij}}}.
    \end{equation}
    The first term is upper-bounded by $-\log\mu_{\min}$ by \autoref{lm:prelim:unifbound_vk_wk}.
    In the second term, the $\max_{ij}$ decomposes into a max over $\ol\SSS$ and a max over $\EEE \setminus \ol\SSS$.
    The latter one is equal to
    $\max_{(i,j) \in \EEE \setminus \ol\SSS}\, e^{U^{k_0+1}_{ij}}$
    and is upper-bounded by $(\mu_{\min} \wedge \nu_{\min})^2$ by the previous paragraph.
    As for the max over $\ol\SSS$, we have
    \begin{align*}
        \forall i,j,~
        \abs{e^{U^{k_0+1}_{ij}} - e^{\ol U^{k_0+1}_{ij}}}
        &= e^{U^{k_0+1}_{ij}} 
        \abs{1 - e^{\ol U^{k_0+1}_{ij} - U^{k_0+1}_{ij}}}
        = e^{U^{k_0+1}_{ij}} 
        \abs{1 - e^{\ol v^{k_0+1}_i - v^{k_0+1}_i}}
        \leq
        e^{U^{k_0+1}_{ij}} 
        (1+e) \\
        \max_{(i,j) \in \ol\SSS} \abs{e^{U^{k_0+1}_{ij}} - e^{\ol U^{k_0+1}_{ij}}}
        &\leq (1+e) \max_{(i,j) \in \ol\SSS} e^{U^{k_0+1}_{ij}}
        \leq (1+e)
        (\mu_{\min} \wedge \nu_{\min})^{-1}
    \end{align*}
    by \autoref{claim:apx_pf_qtve_BV24:claim3}.
    In summary, we have shown that
    \begin{equation}
        \max\nolimits_i \ol v^{k_0+1}_i
        \leq -\log \nu_{\min} + 1
        \qquad \text{and} \qquad
        \max\nolimits_j \ol w^{k_0+2}_j \leq 
        -\log \mu_{\min}
        + e\, (1+e) (\mu_{\min} \wedge \nu_{\min})^{-3}.
    \end{equation}
    
    Denote by $(f^k, g^k)_{k \geq 0}$ and $(\ol f^k, \ol g^k)_{k \geq k_0}$ the dual iterates corresponding to $v^k, w^k, U^k$ resp.\ $\ol v^k, \ol w^k, \ol U^k$ in the original formulation of the Sinkhorn algorithm.
    That is, $v^k = (f^k-f^{k-2})/\tau$, $w^k = (g^k-g^{k-2})/\tau$, $U^k_{ij} = (f^k_i+g^k_j-C_{ij})/\tau$ for all $k \geq 2$, and likewise for the $\ol f^k, \ol g^k$ for $k \geq k_0$.
    In particular $(\ol f^{k_0}, \ol g^{k_0}) = (f^{k_0}, g^{k_0})$.
    Moreover, set 
    $-M^k = B_2' (1+\delta)
    - (k-k_1) \,\gamma/4$
    and recall from the first paragraph of this proof that
    $\forall (i,j) \not\in \ol\SSS,~
    \forall k \geq k_0,~
    U^k_{ij} 
    \leq -M^k$.
    Furthermore, note that
    \begin{equation}
        \forall k \geq k_0 = k_2 + \Delta,~~~
        -M^k 
        = -M^{k_2} - (k-k_2) \frac\gamma4
        = 2 \log (\mu_{\min} \wedge \nu_{\min})
        - (k-k_0 + \Delta) \frac\gamma4.
    \end{equation}
    Then by \autoref{lm:prelim:control_finite_infinite_costs},
    for all $k \geq k_0+2$,
    \begin{align*}
        \norm{f^k-\ol f^k}_\infty \!,
        \norm{g^k-\ol g^k}_\infty
        &\leq
        \tau \left[
            e^{(\max \ol v^{k_0+1}) - M^{k_0}}
            + e^{(\max \ol w^{k_0+2}) - M^{k_0+1}}
            + (\mu_{\min} \!\wedge\! \nu_{\min})^{-1}
            \! \sum_{l=k_0+3}^{k-1} e^{-M^l}
        \right] \\
        &\leq \tau 
        \left[ 
            e^{(\max \ol v^{k_0+1})}
            \vee 
            e^{(\max \ol w^{k_0+2})}
            \vee
            (\mu_{\min} \!\wedge\! \nu_{\min})^{-1}
        \right]
        \sum_{l=k_0}^\infty e^{-M^l} \\
        &\leq \tau B_2'' \, e^{-\Delta \gamma/4}
    \end{align*}
    for a constant $B_2''$ dependent only on $\mu, \nu, \EEE$.
    In particular,
    for all $k \geq k_0+2$,
    \begin{equation}
        \forall (i,j) \in \ol\SSS,~
        \abs{\ol U^k_{ij} - U^k_{ij}}
        = \abs{(\ol f^k_i+\ol g^k_j)/\tau - (f^k_i+g^k_j)/\tau}
        \leq 2 B_2'' \, e^{-\Delta \gamma/4}
    \end{equation}
    and for all $k \geq k_0+4$,
    \begin{equation}
        \norm{v^k - \ol v^k}_\infty
        = \norm{(f^k-f^{k-2})/\tau - (\ol f^k-\ol f^{k-2})/\tau}_\infty
        \leq 2 B_2'' \, e^{-\Delta \gamma/4}
    \end{equation}
    and likewise for the $w^k - \ol w^k$.
\end{proof}

The following lemma is an adaptation of \cite[proof of Proposition~5.3]{baradat2024convergence}.
It shows that to analyze the algorithm artificially restricted to $\ol\SSS$, it is equivalent to analyze the convergence of the Sinkhorn algorithm in the asymptotically scalable case.

\begin{lemma} \label{lm:apx_pf_qtve_BV24:prop5.3_BV24}
    Consider any $k_0$ even and let $\ol v^{(k_0) k}, \ol w^{(k_0) k}, \ol U^{(k_0) k}$ be as in \autoref{lm:apx_pf_qtve_BV24:control_E_olS}, so that
    $\left\{ (i,j);~ \ol U^{(k_0) k}_{ij} > -\infty \right\} = \ol\SSS$.
    Let $\tv^k, \tw^k, \tU^k$ denote the iterates of the Sinkhorn algorithm, in the formulation \eqref{eq:prelim:upd_vwU}, applied to the EOT problem with target marginals $\mu^*, \nu$ and initialized at $\tU^{k_0}_{ij} = \ol U^{(k_0) k_0}_{ij} + v^*_i$.
    Then
    \begin{equation}
        \forall k \geq k_0 ~\text{even},~~
        \forall i,j,~~
        \tU^k_{ij} = \ol U^{(k_0) k}_{ij} + v^*_i.
    \end{equation}

    Symmetrically, if $\tv^{\prime k}, \tw^{\prime k}, \tU^{\prime k}$ denote the iterates of the Sinkhorn algorithm applied to the EOT problem with target marginals $\mu, \nu^*$ and initialized at $\tU^{\prime k_0+1}_{ij} = \ol U^{(k_0) k_0+1}_{ij} + w^*_j$,
    then
    \begin{equation}
        \forall k \geq k_0 ~\text{odd},~~
        \forall i,j,~~
        \tU^{\prime k}_{ij} = \ol U^{(k_0) k}_{ij} + w^*_j.
    \end{equation}
\end{lemma}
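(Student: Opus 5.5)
We will prove the first identity by induction over pairs of iterations $k \to k+1 \to k+2$ with $k \geq k_0$ even, directly from the update rule \eqref{eq:prelim:upd_vwU}; the second identity is symmetric. For brevity write $\ol U^k$ for $\ol U^{(k_0) k}$. The base case $k = k_0$ is the definition of $\tU^{k_0}$. Since the initialization of $\tU^{k_0}$ has finite entries exactly on $\ol\SSS$, the restricted problem with marginals $(\mu^*, \nu)$ lives on the same pattern $\ol\SSS$.

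\emph{Odd step.} Assume $\tU^k_{ij} = \ol U^k_{ij} + v^*_i$ for some even $k \geq k_0$. The $v$-update for marginals $(\mu^*,\nu)$ involves only $\nu$:
\begin{equation}
    \tv^{k+1}_i = -\log \sum_j e^{\tU^k_{ij}} \nu_j = -\log \sum_j e^{\ol U^k_{ij}} \nu_j - v^*_i = \ol v^{k+1}_i - v^*_i .
\end{equation}
Hence $\tU^{k+1}_{ij} = \tU^k_{ij} + \tv^{k+1}_i = \ol U^k_{ij} + \ol v^{k+1}_i = \ol U^{k+1}_{ij}$. The shift by $v^*_i$ is thus absorbed at odd iterations, which is consistent with the symmetric statement.

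\emph{Even step.} The $w$-update for marginals $(\mu^*,\nu)$ uses $\mu^*$:
\begin{equation}
    \tw^{k+2}_j = -\log \sum_i e^{\tU^{k+1}_{ij}} \mu^*_i = -\log \sum_i e^{\ol U^{k+1}_{ij} - v^*_i} \mu_i = -\log \sum_i e^{\ol U^{k+1}_{ij}} \mu_i = \ol w^{k+2}_j ,
\end{equation}
since $\mu^*_i = \mu_i e^{-v^*_i}$ by definition of $v^*$. Therefore $\tU^{k+2}_{ij} = \ol U^{k+1}_{ij} + \ol w^{k+2}_j = \ol U^{k+2}_{ij}$. This looks like it loses the shift, and that is the one point needing care. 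Here is the correct bookkeeping: at even indices we compare $\tU^k$ against $\ol U^k + v^*$. After the odd step, $\tU^{k+1} = \ol U^{k+1}$ exactly, so in the even step the weight $e^{\tU^{k+1}_{ij}}\mu^*_i$ equals $e^{\ol U^{k+1}_{ij}}\mu_i e^{-v^*_i}$, not $e^{\ol U^{k+1}_{ij}}\mu_i$.

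\emph{Main obstacle: reconciling the shift.} The computation above must be redone carefully: whether the claimed identity holds at even indices depends on the convention for $U$ when marginals change. The primal variable is $e^{U_{ij}}\mu_i\nu_j$, so replacing $\mu$ by $\mu^* = \mu e^{-v^*}$ while adding $v^*_i$ to $U$ leaves $e^{U_{ij}}\mu_i\nu_j$ invariant. This invariance is the real content of the lemma (as in \cite[Proposition~5.3]{baradat2024convergence}). I would therefore re-run both steps in the primal form $\pi_{ij} = e^{U_{ij}}\mu_i\nu_j$ versus $\tpi_{ij} = e^{\tU_{ij}}\mu^*_i\nu_j$, where the updates read $\pi \mapsto \pi \cdot \mu_i / (X_\sharp\pi)_i$ and $\pi \mapsto \pi \cdot \nu_j/(Y_\sharp\pi)_j$. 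The second (column) scaling coincides for both problems. The first (row) scaling targets $\mu$ versus $\mu^*$, so the two primal sequences differ only by the row factor $e^{\pm v^*_i}$ at alternating parities. I would then translate back to logits, fixing at which parity the additive $v^*_i$ appears, and verify the base case consistent with that parity. Once the per-step relation is fixed, the induction closes immediately and no estimates are needed.
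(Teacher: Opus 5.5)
There is a genuine gap, and it sits exactly at your even step. From $\tU^{k+1}=\ol U^{k+1}$ and $\mu^*_i=\mu_i e^{-v^*_i}$ you correctly get $\tw^{k+2}_j=-\log\sum_i e^{\ol U^{k+1}_{ij}}\mu_i e^{-v^*_i}$, but you then silently drop the factor $e^{-v^*_i}$. You notice the slip, but you do not resolve it. Instead you defer to a primal reformulation and to a ``convention'' question that does not exist: the update rules with marginals $(\mu^*,\nu)$ and the parity in the statement are both fixed.

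The missing idea is the one the lemma rests on. Because $\ol U^{k+1}_{ij}=-\infty$ off $\ol\SSS$, the sum runs only over $i$ with $(i,j)\in\ol\SSS$. On $\ol\SSS$ one has $v^*_i+w^*_j=0$ by the very definition of $\ol\SSS$. Hence $e^{-v^*_i}=e^{w^*_j}$ is constant along each column's support and factors out:
\[
\tw^{k+2}_j=-\log\Big(e^{w^*_j}\sum_{i:(i,j)\in\ol\SSS}e^{\ol U^{k+1}_{ij}}\mu_i\Big)=\ol w^{k+2}_j-w^*_j .
\]
It follows that $\tU^{k+2}_{ij}=\ol U^{k+2}_{ij}-w^*_j=\ol U^{k+2}_{ij}+v^*_i$ on $\ol\SSS$, with both sides equal to $-\infty$ elsewhere, and the induction closes. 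This is the paper's computation, phrased there as $\tv^{k+1}_i+\tw^{k+2}_j=\ol v^{k+1}_i+\ol w^{k+2}_j-v^*_i-w^*_j$ on $\ol\SSS$.

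Your primal plan does not supply this ingredient either. The statement that ``the column scaling coincides for both problems'' is not the mechanism. Rescaling the columns of $\pi$ and of $(\pi_{ij}e^{-v^*_i})_{ij}$ to the same target $\nu$ yields matrices related by a row factor only if that factor is constant on each column's support. That condition is again $v^*_i+w^*_j=0$ on $\ol\SSS$.

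This is not a technicality. On $\EEE\supsetneq\ol\SSS$, where $v^*_i+w^*_j<0$ for some edges, the analogous identity fails in general. So any argument that never invokes the restriction to $\ol\SSS$ cannot be complete.

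Similarly, the invariance of $e^{U_{ij}}\mu_i\nu_j$ under $(U,\mu)\mapsto(U+v^*,\mu e^{-v^*})$ is trivial. The content of the lemma is that the Sinkhorn updates propagate this relation, and that is precisely where $\ol\SSS$ enters.
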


\begin{proof}
    Abbreviate $\ol v^{(k_0) k}, \ol w^{(k_0) k}, \ol U^{(k_0) k}$ as $\ol v^k, \ol w^k, \ol U^k$.
    Let us prove the first part of the lemma, and the second part will follow by the symmetric arguments.
    First note that the desired equality is trivial for the $(i,j) \not\in \ol\SSS$ as both sides equal $-\infty$, so it suffices to show it for the $(i,j) \in \ol\SSS$.
    
    We proceed by induction.
    The claimed equality is true by definition at $k=k_0$.
    Let any $k \geq k_0$ even and suppose that the equality holds at $k$.
    To show that it holds also at $k+2$, it suffices to show that 
    $\forall (i,j) \in \ol\SSS,~ \tv^{k+1}_i + \tw^{k+2}_j = \ol v^{k+1}_i + \ol w^{k+2}_j$.
    Now by definition,
    \begin{align}
        \forall i,~
        \tv^{k+1}_i
        &= -\log \sum\nolimits_j e^{\tU^k_{ij}} \nu_j
        & \qquad
        \forall j,~
        \tw^{k+2}_j
        &= -\log \sum\nolimits_i
        e^{\tU^{k+1}_{ij}} \mu^*_i
        = -\log \sum\nolimits_i
        e^{\tU^k_{ij} + \tv^{k+1}_i} \mu^*_i \\
        \text{and} \quad~
        \forall i,~
        \ol v^{k+1}_i
        &= -\log \sum\nolimits_j e^{\ol U^k_{ij}} \nu_j
        & \qquad
        \forall j,~
        \ol w^{k+2}_j
        &= -\log \sum\nolimits_i
        e^{\ol U^{k+1}_{ij}} \mu_i
        = -\log \sum\nolimits_i
        e^{\ol U^k_{ij} + \ol v^{k+1}_i} \mu_i.
    \end{align}
    So by the induction hypothesis, for any $i$,
    \begin{equation}
        \tv^{k+1}_i 
        = -\log \sum\nolimits_j e^{\tU^k_{ij}} \nu_j
        = -\log \sum\nolimits_j e^{\ol U^k_{ij} + v^*_i} \nu_j
        = \ol v^{k+1}_i - v^*_i
    \end{equation}
    and for any $j$,
    \begin{align}
        \tw^{k+2}_j
        &= -\log \sum\nolimits_i e^{\tU^k_{ij}} \cdot e^{\tv^{k+1}_i} \cdot \mu^*_i \\
        &= -\log \sum\nolimits_i e^{\ol U^k_{ij} + v^*_i} \cdot e^{\ol v^{k+1}_i - v^*_i} \cdot e^{-v^*_i} \mu_i
        \\
        &= -\log \sum_{i: (i,j) \in \ol\SSS} e^{\ol U^k_{ij} + \ol v^{k+1}_i} \mu_i \cdot
        ~\underbrace{ e^{-v^*_i} }_{e^{w^*_j}}~
        = \ol w^{k+2}_j - w^*_j
    \end{align}
    since $v^*_i = -w^*_j$ for all $(i,j) \in \ol\SSS$.
    Thus for any $(i,j) \in \ol\SSS$,
    \begin{equation}
        \tv^{k+1}_i + \tw^{k+2}_j
        = \ol v^{k+1}_i + \ol w^{k+2}_j
        - v^*_i - w^*_j
        = \ol v^{k+1}_i + \ol w^{k+2}_j,
    \end{equation}
    which concludes the proof by induction.
\end{proof}

\subsection{Proof of \autoref{prop:prelim:qtve_BV24}}

The next lemma shows a convergence bound for the algorithm artificially restricted to $\ol\SSS$ starting from iteration $k_0$,
with special care given to the dependency of the bound on $k_0$.

\begin{lemma} \label{lm:apx_pf_qtve_BV24:asymp_scalable_logk/k}
    Consider any $k_0 \geq 4$ even and let $\ol v^{(k_0) k}, \ol w^{(k_0) k}, \ol U^{(k_0) k}$ be as in \autoref{lm:apx_pf_qtve_BV24:control_E_olS}.
    Then
    \begin{equation}
        \forall k \geq k_0 + K_4,~~
        \norm{\ol v^{(k_0) k} - v^*}_\infty,~
        \norm{\ol w^{(k_0) k} - w^*}_\infty
        \leq B_4\,
        \frac{1 + \delta + k_0 + \log (k-k_0)}{k-k_0}
    \end{equation}
    for some constants $K_4, B_4$ dependent only on $\mu, \nu$, and $\EEE$.
\end{lemma}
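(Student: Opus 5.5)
The plan is to reduce the restricted iterates to a \emph{standard} Sinkhorn run on an asymptotically scalable instance, and then import the $\tO(1/k)$ rate for that case from \cite{wang2026almost}.

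\textbf{Step 1: reduction.} Apply \autoref{lm:apx_pf_qtve_BV24:prop5.3_BV24}. For $k \geq k_0$ even, $\ol U^{(k_0) k}_{ij} = \tU^k_{ij} - v^*_i$, where $\tU^k$ are the iterates of Sinkhorn with marginals $(\mu^*, \nu)$ and pattern $\ol\SSS$. The computation in that proof shows $\tv^{k+1} = \ol v^{(k_0)k+1} - v^*$ and $\tw^{k+2} = \ol w^{(k_0)k+2} - w^*$. Hence it suffices to show that $\tv^k, \tw^k \to 0$ at the claimed rate. The same holds for the odd-indexed version with marginals $(\mu, \nu^*)$, which handles $\ol w$ as well. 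By \autoref{lm:prelim:equiv_exact_scalability}, any matrix with support $\ol\SSS$ is asymptotically $(\mu^*, \nu)$-scalable. Also, $\tv^{k+1}_i = \log(\mu^*_i / (X_\sharp \tpi^k)_i)$ and $\tw^{k+2}_j = \log(\nu_j / (Y_\sharp \tpi^{k+1})_j)$, so the target rate is a rate on marginal errors of the scalable instance.

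\textbf{Step 2: size of the initialization.} The bound from \cite{wang2026almost} depends on the initialization through a quantity such as $\Hdiv{Q}{\tpi^{k_0}}$ for a feasible coupling $Q \in \Delta_{\ol\SSS}$ with marginals $(\mu^*, \nu)$, or equivalently through $\max_{\ol\SSS} \abs{\tU^{k_0}}$. Upper bounds come from \autoref{lm:prelim:unifbound_vk_wk}, exactly as in the proof of \autoref{lm:apx_pf_qtve_BV24:control_E_olS}: $U^{k_0}_{ij} \leq 4\delta - k_0 \log(\mu_{\min} \wedge \nu_{\min})$. Lower bounds on $\EEE$ come from the lower bounds on $v^l, w^l$ and on $v^1 \geq -\delta$, giving $U^{k_0}_{ij} \geq -O(\delta + k_0)$. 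The shift by $v^*$ costs only $O(1)$ by \autoref{coroll:prelim:bound_v*_w*}. So the relevant initialization size is $D = O(1 + \delta + k_0)$. This is why $k_0$ enters the numerator of the bound.

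\textbf{Step 3: rate and conversion.} I would invoke the almost-linear result of \cite{wang2026almost} for asymptotically scalable matrix scaling. It should give a marginal $\ell_1$ (or $\ell_\infty$) error $\norm{X_\sharp \tpi^k - \mu^*}_1 \leq O\big((D + \log(k-k_0))/(k-k_0)\big)$ once $k - k_0 \geq K_4$, with constants depending only on $(\mu^*, \nu, \ol\SSS)$, hence only on $\mu, \nu, \EEE$. The analogous bound should hold for the column marginals. The key point is that this bound is linear in the error, not the square root one would get from Pinsker as in \autoref{lm:apx_pf_qtve_BV24:slow_rate}.

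Then I convert to $\tv$ using $\frac{1}{a}$-Lipschitzness of $\log$ on $[a, \infty)$, exactly as at the end of \autoref{lm:apx_pf_qtve_BV24:slow_rate}. The needed lower bounds are $(X_\sharp \tpi^k)_i \geq \mu^*_i \nu_{\min}$, which follows from \autoref{lm:prelim:unifbound_vk_wk} applied to the tilde run, and $\mu^*_i \geq \mu_{\min}\nu_{\min}$, from \autoref{coroll:prelim:bound_v*_w*}. Finally, translating back through Step 1 gives the bounds for $\ol v^{(k_0)k}$ and $\ol w^{(k_0)k}$.

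\textbf{Main obstacle.} The main obstacle is Step 3: extracting from \cite{wang2026almost} a bound that is (i) linear in the marginal error rather than its square root, and (ii) explicit, only additive and linear, in the initialization size $D$. If that reference only controls the dual suboptimality $\Psi - \min\Psi$ by $O((D + \log k)/k)$, I would first try to recover the marginal rate through a dual-gap-to-marginal inequality. This should be valid in the asymptotically scalable case, where the dual is bounded below, and work along the lines of \cite[Lemma~2.5]{wang2026almost}. I would also use that the successive KL decrements are monotone, as in \cite[Proposition~6.10]{nutz2021introduction}, so that an averaged bound becomes a last-iterate bound.
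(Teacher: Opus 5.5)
Your proposal is correct and follows essentially the same route as the paper. You reduce via \autoref{lm:apx_pf_qtve_BV24:prop5.3_BV24} to the asymptotically scalable $(\mu^*,\nu)$ instance on $\ol\SSS$, bound the initialization spread $\max \tU^{k_0} - \min_{\ol\SSS}\tU^{k_0} = O(1+\delta+k_0)$ via \autoref{lm:prelim:unifbound_vk_wk}, invoke the main result of \cite{wang2026almost}, and convert the $\ell_1$ marginal error into a bound on $\norm{\tv^{k+1}}_\infty$ using Lipschitzness of $\log$ on $[\mu_{\min}\nu_{\min}^2,\infty)$. The paper treats your ``main obstacle'' as settled by citation: it states the result of \cite{wang2026almost} directly in the form $\norm{X_\sharp \tpi^k - \mu^*}_1 \leq \tB_4 \bigl(1 - \min_{\ol\SSS}\tU^{k_0} + \max \tU^{k_0} + \log(k-k_0)\bigr)/(k-k_0)$, so your fallback is not needed (and you are slightly more careful than the paper in accounting for the $O(1)$ cost of the $v^*$ shift).
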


\begin{proof}
    Abbreviate $\ol v^{(k_0) k}, \ol w^{(k_0) k}, \ol U^{(k_0) k}$ as $\ol v^k, \ol w^k, \ol U^k$.
    Let $\tv^k, \tw^k, \tU^k$
    be defined as in \autoref{lm:apx_pf_qtve_BV24:prop5.3_BV24}
    and further pose $\tpi^k = (e^{\tU^k_{ij}} \mu^*_i \nu_j)_{ij} \in \Delta_{\ol\SSS}$ for all $k \geq 1$.
    It suffices to show that for some $K_4, B_4$ to be determined,
    \begin{equation}
        \forall k \geq k_0 + K_4 ~\text{even},~~
        \norm{\tv^{k+1}}_\infty \leq B_4 \, \frac{1 + \delta + k_0 + \log (k-k_0)}{k-k_0}.
    \end{equation}
    Indeed since $\tv^{k+1}_i = v[\tU^k]_i = v[\ol U^k]_i - v^*_i = \ol v^{k+1}_i - v^*_i$ for all $k$ even by \autoref{lm:apx_pf_qtve_BV24:prop5.3_BV24}, 
    this will show the announced bound on $\norm{\ol v^k - v^*}_\infty$, and the bound on $\norm{\ol w^k-w^*}_\infty$ will follow similarly.

    To show the above convergence bound on the $\tv^k$, 
    note that they are defined as the increments of the Sinkhorn algorithm applied to an EOT problem with marginals $\mu^*, \nu$ and finite-cost pattern~$\ol\SSS$, and this problem admits a feasible primal solution by definition of $\mu^* = \mu^*(\ol\SSS)$.
    In other words, the corresponding matrix scaling problem is asymptotically scalable.
    So by the main result of \cite{wang2026almost}, there exist $\tK_4$ and $\tB_4$ dependent only on $\mu^*, \nu$, and $\ol\SSS$---and hence only on $\mu, \nu$, and $\EEE$---such that
    \begin{equation}
        \forall k \geq k_0 + \tK_4,~~
        \norm{X_\sharp \tpi^k - \mu^*}_1
        \leq \frac{\tB_4}{k-k_0} \left( 1 - \min_{(i,j) \in \ol\SSS} \tU^{k_0}_{ij} + \max_{ij} \tU^{k_0}_{ij} + \log (k-k_0) \right).
    \end{equation}
    Moreover, note that for any $k \geq k_0 + 1$ even,
    $\tv^{k+1}_i = \log(\mu^*_i / (X_\sharp \tpi^k)_i)$ by \eqref{eq:prelim:rel_vk_Xpik}, 
    and by \autoref{lm:prelim:unifbound_vk_wk},
    $\forall i, (X_\sharp \tpi^k)_i = \mu^*_i e^{-\tv^{k+1}_i} \geq \mu^*_i \nu_{\min} \geq \mu_{\min} \nu_{\min}^2$.
    So by local Lipschitz-continuity of $\log$,
    \begin{equation}
        \norm{\tv^{k+1}}_\infty
        \leq \frac{1}{\mu_{\min} \nu_{\min}^2}
        \norm{X_\sharp \tpi^k - \mu^*}_\infty
        \leq \frac{1}{\mu_{\min} \nu_{\min}^2}
        \norm{X_\sharp \tpi^k - \mu^*}_1.
    \end{equation}
    Thus it only remains to bound 
    $\max_{ij} \tU^{k_0}_{ij} - \min_{(i,j) \in \ol\SSS} \tU^{k_0}_{ij}$.
    For this, recall that $\tU^{k_0}_{ij} = \ol U^{(k_0) k_0}_{ij} + v^*_i = U^{k_0}_{ij} + v^*_i$ for all $(i,j) \in \ol\SSS$ by definition and note that by \autoref{lm:prelim:unifbound_vk_wk},
    \begin{multline}
        \forall (i,j) \in \ol\SSS,~~
        U^{k_0}_{ij}
        = U^0_{ij}
        + v^1_i + w^2_j
        + \sum_{\substack{l=2\\l~\text{even}}}^{k_0-1}
        v^{l+1}_i
        + \sum_{\substack{l=3\\l~\text{odd}}}^{k_0-1}
        w^{l+1}_i \\
        \abs{U^{k_0}_{ij}}
        \leq \delta -2 \log(\mu_{\min} \wedge \nu_{\min}) + 3\delta
        + (k_0-2)\, [-\log(\mu_{\min} \wedge \nu_{\min})]
        \leq 4 \delta - k_0 \, \log(\mu_{\min} \wedge \nu_{\min}).
    \end{multline}
    Hence $\max_{ij} \tU^{k_0}_{ij} - \min_{(i,j) \in \ol\SSS} \tU^{k_0}_{ij} \leq 8 \delta - 2 k_0 \, \log(\mu_{\min} \wedge \nu_{\min})$.
    Substituting into the inequalities above yields the announced convergence bound on the $\tv^k$, and the lemma follows.
\end{proof}

We can now present the proof of \autoref{prop:prelim:qtve_BV24}.

\begin{proof}[Proof of \autoref{prop:prelim:qtve_BV24}]
    Let $K_2, B_2, \gamma$ be as in \autoref{lm:apx_pf_qtve_BV24:control_E_olS} and 
    let $K_4, B_4$ be as in \autoref{lm:apx_pf_qtve_BV24:asymp_scalable_logk/k}.
    Fix any $k \geq 2 \left[ K_2 (1+\delta) + K_4 \right] + K_5$, where $K_5$ is a constant dependent only on $\mu, \nu, \EEE$ which will be specified later.
    Consider any $\Delta > 0$ such that $k_0 = K_2 (1+\delta) + \Delta$ is even and $k_0 + K_4 \leq k$,
    and let $(\ol v^{(k_0)l}, \ol w^{(k_0)l}, \ol U^{(k_0)l})_{l \geq k_0}$ be defined as in \autoref{lm:apx_pf_qtve_BV24:control_E_olS}.
    Then by \autoref{lm:apx_pf_qtve_BV24:control_E_olS} for the first term and by \autoref{lm:apx_pf_qtve_BV24:asymp_scalable_logk/k} for the second term,
    \begin{align}
        \norm{v^k-v^*}_\infty
        &\leq \norm{v^k-\ol v^{(k_0) k}}_\infty
        + \norm{\ol v^{(k_0) k} - v^*}_\infty \\
        &\leq B_2 e^{-\Delta \gamma/4}
        + B_4 \frac{1+\delta + k_0 + \log(k-k_0)}{k-k_0} \\
        &= B_2 e^{-\Delta \gamma/4}
        + B_4 \frac{(1+K_2) (1+\delta) + \Delta + \log(k-k_0)}{k - K_2 (1+\delta) - \Delta}.
    \end{align}
    
    Take $\Delta = \frac4\gamma \log\left[ k - K_2 (1+\delta) - K_4 \right] - r$, where $0 \leq r \leq 2$ serves to ensure that $k_0 = K_2(1+\delta)+\Delta$ is an even integer.
    Let us check that this choice also satisfies the condition $k_0 + K_4 \leq k$, i.e., that
    \begin{gather}
        K_4 + K_2 (1+\delta) + \frac4\gamma \log\left[ k - K_2 (1+\delta) - K_4 \right] - r
        \leq k \\
        \impliedby
        \frac4\gamma \log\left[ k - K_2 (1+\delta) - K_4 \right]
        \leq k - K_2 (1+\delta) - K_4.
    \end{gather}
    Since $\log x / x \to 0$ as $x \to \infty$, this can indeed be ensured by choosing $K_5$ larger than a constant dependent only on $\gamma$, and hence only on $\mu, \nu, \EEE$.
    For concreteness and to prepare the sequel, let us take $K_5$ such that
    $\forall x \geq K_5,~ \frac4\gamma \frac{\log x}{x} \leq \frac12$, so that
    \begin{equation}
        \frac4\gamma \log\left[ k - K_2 (1+\delta) - K_4 \right]
        \leq \frac12 \big( k - K_2 (1+\delta) - K_4 \big).
    \end{equation}
    
    Plugging this choice of $\Delta$ into the bound on $\norm{v^k-v^*}_\infty$ above, we obtain
    \begin{equation}
        \norm{v^k-v^*}_\infty
        \leq \frac{B_2~ e^{r \gamma/4}}{k-K_2 (1+\delta) - K_4}
        + B_4 \frac{(1+K_2) (1+\delta) + \frac4\gamma \log\left[ k - K_2 (1+\delta) - K_4 \right] + r + \log k}{k - K_2 (1+\delta) - \frac4\gamma \log\left[ k - K_2 (1+\delta) - K_4 \right] + r}.
    \end{equation}
    Now by assumption on $k$, the denominators are lower-bounded as
    \begin{equation}
        k - K_2 (1+\delta) - K_4
        \geq k/2 + [K_2 (1+\delta) + K_4] + K_5/2 - K_2 (1+\delta) - K_4
        \geq k/2
    \end{equation}
    and
    \begin{align}
        &~~ k - K_2 (1+\delta) - \frac4\gamma \log\left[ k - K_2 (1+\delta) - K_4 \right] + r \\
        &\geq k - K_2 (1+\delta) - K_4 - \frac4\gamma \log\left[ k - K_2 (1+\delta) - K_4 \right]
        \geq \frac12 \big( k - K_2 (1+\delta) - K_4 \big)
        \geq k/2
    \end{align}
    by our choice of $K_5$.
    Hence, 
    \begin{align}
        \norm{v^k-v^*}_\infty
        \leq
        \frac{B_2~ e^{2 \gamma/4}}{k/2}
        + B_4
        \frac{(1+K_2)(1+\delta) + (\frac4\gamma + 1) \log k + 2}{k/2}
        \leq B \frac{1+\delta+\log k}{k}
    \end{align}
    for a constant $B$ dependent only on $\mu, \nu, \EEE$.
    This proves the announced bound for the $v^k$, and the bound for the $w^k$ follows similarly.
\end{proof}

\subsection{Interlude: explicit exponential rate for the exactly scalable case}

In this section, as a preparation for the proof of \autoref{prop:prelim:qtve_BV24_exp}, we recall the following result from \cite{qu2025sinkhorn}
and we deduce a corollary which is more directly applicable for our purpose.

\begin{proposition}[{\cite[Theorem~3]{qu2025sinkhorn}}] \label{prop:apx_pf_qtve_BV24:QGGU25}
    Consider $\mu \in \Delta_m, \nu \in \Delta_n$ and $A \in \RR_+^{m \times n}$.
    Let $\EEE = \{ (i,j);~ A_{ij}>0 \}$
    and let $\SSS, \ol\SSS$ be defined from $\mu, \nu, \EEE$ as in \autoref{thm:prelim:baradat_ventre_sets}.
    Suppose that $A$ is exactly $(\mu, \nu)$-scalable, or equivalently by \autoref{lm:prelim:equiv_exact_scalability}, $\SSS = \ol\SSS = \EEE$,
    and that the bipartite graph with edge set $\EEE$ is connected.
    Let $C \in (\RR \cup \{\infty\})^{m \times n}$ such that $A_{ij} = e^{-C_{ij}} \mu_i \nu_j$
    and denote by $(f^k, g^k)_{k \geq 0}$ the iterates of the Sinkhorn algorithm applied to the EOT problem with marginals $\mu, \nu$, cost matrix $C$, and temperature $\tau=1$.
    %
    Further denote
    $\Psi(f, g) = \sum_{ij} A_{ij} e^{f_i+g_j} - \mu^\top f - \nu^\top g$.
    Then 
    \begin{equation}
        \forall k \geq 0,~
        \Psi(f^{k+1}, g^{k+1}) - \min \Psi
        \leq \left( 
            1 - \frac{e^{-2B}~ \lambda_2(\LLL[A])}
            {\big( \max_i \sum_j A_{ij} \big) \wedge \big( \max_j \sum_i A_{ij} \big)} 
        \right)
        (\Psi(f^k, g^k) - \min \Psi)
    \end{equation}
    where%
    \footnote{Compared to the original statement of \cite[Theorem~3]{qu2025sinkhorn}, the constant ``$B$'' in our statement is slightly smaller. One can check by inspecting step~II of their proof that our $B$ is indeed all they need.}
    $B = \sup_{k \geq 0} \max_{ij} \abs{f^k_i + g^k_j}$
    and
    $\lambda_2(\LLL[A])$ is the second smallest eigenvalue of the matrix
    \begin{equation}
        \LLL[A] = \begin{bmatrix}
            \mathrm{diag}(A \bmone_n) & -A \\
            -A^\top & \mathrm{diag}(A^\top \bmone_m).
        \end{bmatrix}.
    \end{equation}
\end{proposition}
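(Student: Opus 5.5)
The plan is to combine a Polyak–Łojasiewicz (PL) type inequality for $\Psi$, coming from restricted strong convexity, with a sufficient-decrease estimate for each exact block minimization. Throughout I write $\tA_{ij} = A_{ij} e^{f_i+g_j}$ for the current scaled matrix. Then $\nabla_f \Psi = \tA\bmone_n - \mu$ and $\nabla_g\Psi = \tA^\top\bmone_m - \nu$. The Hessian of $\Psi$ at $(f,g)$ is exactly the weighted Laplacian $\LLL[\tA]$.

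\textbf{Step 1 (restricted strong convexity).} Let $\mathcal{R}_B = \{(f,g);~ \max_{ij}\abs{f_i+g_j}\le B\}$. This set is convex, being an intersection of slabs. On $\mathcal{R}_B$ we have $\tA \ge e^{-B}A$ entrywise, hence $\LLL[\tA] \succeq e^{-B}\LLL[A]$. Since the graph $\EEE$ is connected, $\ker \LLL[A]$ is spanned by $(\bmone_m,-\bmone_n)$. Moreover $\Psi$ is invariant along this direction, because $\mu^\top\bmone_m = \nu^\top\bmone_n = 1$. So $\Psi$ is $e^{-B}\lambda_2(\LLL[A])$-strongly convex on $\mathcal{R}_B$ modulo this direction. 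Exact scalability guarantees that a minimizer $(f^*,g^*)$ exists, and I take it to be a limit point of the iterates. The iterates lie in $\mathcal{R}_B$ by the definition of $B$, and $\mathcal{R}_B$ is closed, so $(f^*,g^*)\in\mathcal{R}_B$. By convexity of $\mathcal{R}_B$, the whole segment between $(f^k,g^k)$ and $(f^*,g^*)$ stays in $\mathcal{R}_B$. Integrating the Hessian bound along this segment, projected orthogonally to the kernel, gives the PL inequality
\begin{equation*}
\Psi(f^k,g^k)-\min\Psi \le \frac{\norm{\nabla\Psi(f^k,g^k)}_2^2}{2 e^{-B}\lambda_2(\LLL[A])}.
\end{equation*}

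\textbf{Step 2 (decrease per step).} Take $k$ even, so the next update is in $f$; the odd case is symmetric. At this point $\nabla_g\Psi(f^k,g^k)=0$, so $\norm{\nabla\Psi}^2 = \norm{r-\mu}_2^2$ where $r = \tA\bmone_n$ is the row-sum vector. The exact $f$-update gives
\begin{equation*}
\Psi(f^k,g^k)-\Psi(f^{k+1},g^{k+1}) = \sum_i \mu_i\, h(r_i/\mu_i), \qquad h(x)=x-1-\log x.
\end{equation*}
Using $h(x)\ge (x-1)^2/(2\max(x,1))$, the decrease is at least $\sum_i (r_i-\mu_i)^2/(2\max(r_i,\mu_i))$. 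On $\mathcal{R}_B$ each row sum satisfies $r_i \le e^{B}\max_i\sum_j A_{ij}$. Dividing the decrease bound by the PL bound, the ratio (decrease)/(suboptimality) is at least $e^{-2B}\lambda_2/\max_i\sum_jA_{ij}$, and symmetrically $e^{-2B}\lambda_2/\max_j\sum_iA_{ij}$ for $g$-steps. This yields the contraction in the statement.

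\textbf{Main obstacles.} The first is the denominator $\max(r_i,\mu_i)$. It requires $\max_i\sum_j A_{ij} \gtrsim \mu_i$, which I would get from the normalization: at the iterates the row sums of $\tA$ sum to $1$, so a suitable rescaling of $A$ handles it. If only the weaker bound with the maximum of the two quantities comes out, I would accept that. The second point needing care is obtaining the minimum of the row and column quantities, as stated, rather than their maximum. For this I would try to show that after the very first half-step the two block constants can be interchanged: the row sums equal $\mu$ after an $f$-step, which lets the column and row sums be compared.
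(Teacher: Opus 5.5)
Your two ingredients are sound, and they match the mechanism the paper points to in the cited proof (step~II there, whose first half is \autoref{prop:apx_pf_qtve_BV24:spectral_gap_lb}). They are the exact block-minimization decrease $\sum_i \mu_i h(r_i/\mu_i) \ge \sum_i (r_i-\mu_i)^2/(2\max(r_i,\mu_i))$, and a PL inequality from $\tilde A \ge e^{-B} A$ on the convex set $\mathcal R_B$. A minor slip: the Hessian is the signless Laplacian $D\,\LLL[\tilde A]\,D$ with $D=\mathrm{diag}(I_m,-I_n)$. It has the spectrum of $\LLL[\tilde A]$ and kernel spanned by $(\bmone_m,-\bmone_n)$, so nothing changes.

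The genuine gap is the one you flag second, and the fix you sketch cannot work. Write $a_{\mathrm{row}}=\max_i\sum_j A_{ij}$ and $a_{\mathrm{col}}=\max_j\sum_i A_{ij}$. Your argument shows that an $f$-step contracts with $a_{\mathrm{row}}$ in the denominator and a $g$-step with $a_{\mathrm{col}}$. Uniformly over half-steps you therefore only get $a_{\mathrm{row}}\vee a_{\mathrm{col}}$.

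No comparison of row and column sums can swap them. After an $f$-step, $\mu_i=\sum_j A_{ij}e^{f_i+g_j}$, so $a_{\mathrm{row}}$ is within a factor $e^{\pm B}$ of $\max_i\mu_i$. Likewise $a_{\mathrm{col}}$ is within $e^{\pm B}$ of $\max_j\nu_j$. These two quantities are unrelated, e.g.\ take $\mu$ uniform on $m$ points and $\nu$ uniform on $n\ll m$ points.

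What your ingredients do give with the minimum is the bound over two consecutive half-steps. For $k\ge1$, one of the steps $k\to k+1$, $k+1\to k+2$ updates the block with constant $a_{\mathrm{row}}\wedge a_{\mathrm{col}}$, and $\Psi(f^k,g^k)$ is nonincreasing. Hence
\[
\Psi(f^{k+2},g^{k+2})-\min\Psi \le \Big(1-\frac{e^{-2B}\lambda_2(\LLL[A])}{a_{\mathrm{row}}\wedge a_{\mathrm{col}}}\Big)\big(\Psi(f^k,g^k)-\min\Psi\big),
\]
which is the stated rate per full sweep (one row plus one column normalization). A single-half-step version with $\wedge$ would need a genuinely different argument. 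The only use of the proposition in the paper, \autoref{coroll:apx_pf_qtve_BV24:QGGU25}, bounds both $a_{\mathrm{row}}$ and $a_{\mathrm{col}}$ by $e^{\delta}$. So the $\vee$ version you do prove suffices there.

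Three smaller repairs.
\begin{enumerate}
\item Your first obstacle has a direct fix; the normalization $\sum_{ij}\tilde A_{ij}=1$ does not help, since it only yields $\mu_i\le 1\le e^{B}\sum_{ij}A_{ij}\le m\,e^{B}a_{\mathrm{row}}$. Instead, $\mu_i$ is the $i$-th row sum at the next iterate $(f^{k+1},g^{k+1})\in\mathcal R_B$, so $\mu_i\le e^{B}\sum_j A_{ij}$. Together with $r_i\le e^{B}\sum_j A_{ij}$, this gives exactly the factor $e^{-2B}$.
\item The claim $\nabla_g\Psi(f^k,g^k)=0$ for even $k$ fails at $k=0$, and the inequality itself can fail there. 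If $(f^0,g^0)$ already has row sums $\mu$ but not column sums $\nu$, then $(f^1,g^1)=(f^0,g^0)$ with positive suboptimality, while the contraction factor is $<1$. So both your argument and the statement should be read for $k\ge1$, which is all the paper uses.
\item Placing the minimizer in $\mathcal R_B$ as a limit of iterates requires the standard convergence of Sinkhorn in the exactly scalable case. You should invoke it explicitly.
\end{enumerate}
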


Let us also recall the following result, corresponding to the first half of \cite[step II of proof of Theorem~3]{qu2025sinkhorn}.
It is intuitively consistent with the interpretation of $\LLL[A]$ as the Laplacian of the bipartite graph with edge weights $A_{ij}$, as put forward by \cite{qu2025sinkhorn}.
Indeed in this perspective, $\lambda_2(\LLL[A])$ quantifies the connectedness of the bipartite graph, and increasing the weight of an edge (its transition rate in the associated Markov jump process) only increases connectedness.

\begin{lemma} \label{prop:apx_pf_qtve_BV24:spectral_gap_lb}
    For any $A, A' \in \RR_+^{m \times n}$ such that $A_{ij} \geq A'_{ij}$ for all $i,j$,
    $\lambda_2(\LLL[A]) \geq \lambda_2(\LLL[A'])$.
    Moreover for any $c \geq 0$, $\lambda_2(\LLL[c\, A]) = c\, \lambda_2(\LLL[A])$.
    In particular, $\lambda_2(\LLL[A]) \geq [\min_{(i,j) \in \EEE} A_{ij}]\, \lambda_2(\LLL[\bmone_\EEE])$, 
    where $\EEE = \{(i,j);~ A_{ij}>0 \}$ and 
    $(\bmone_\EEE)_{ij} = 1$ if $(i,j) \in \EEE$ and $0$ otherwise.
\end{lemma}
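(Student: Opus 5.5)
The natural route is the variational (Courant–Fischer) characterization of $\lambda_2$ for the Laplacian $\LLL[A]$. First I will verify that $\LLL[A]$ is the graph Laplacian of the bipartite graph on $\{1\dots m\} \sqcup \{1 \dots n\}$ with edge weights $A_{ij}$. That is, for $x = (a, b) \in \RR^m \times \RR^n$,
\begin{equation}
    x^\top \LLL[A]\, x
    = \sum_i a_i^2 \sum_j A_{ij} + \sum_j b_j^2 \sum_i A_{ij} - 2 \sum_{ij} A_{ij} a_i b_j
    = \sum_{ij} A_{ij} (a_i - b_j)^2 .
\end{equation}
This is a direct expansion. In particular, $\LLL[A]$ is symmetric positive semidefinite, and $\LLL[A] (\bmone_m, \bmone_n) = 0$ for every $A$.

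Next I will use the fact that the all-ones vector $\bmone_{m+n}$ lies in the kernel of $\LLL[A]$ for every $A$. By Courant–Fischer this gives
\begin{equation}
    \lambda_2(\LLL[A]) = \min \left\{ x^\top \LLL[A]\, x ;~ \norm{x}_2 = 1,~ x \perp \bmone_{m+n} \right\}.
\end{equation}
To justify this, take the minimum over two-dimensional subspaces containing $\bmone_{m+n}$. This gives $\le$. For $\ge$, note that $\bmone_{m+n}$ is an eigenvector for the smallest eigenvalue $0$, so the orthogonal complement is invariant.

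The key point is that the constraint set here does not depend on $A$. Monotonicity then follows immediately: if $A_{ij} \geq A'_{ij}$ entrywise, then $\sum_{ij} A_{ij}(a_i-b_j)^2 \geq \sum_{ij} A'_{ij}(a_i-b_j)^2$ pointwise in $x$. Taking the minimum over the common constraint set gives $\lambda_2(\LLL[A]) \geq \lambda_2(\LLL[A'])$.

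Homogeneity follows from linearity, since $\LLL[cA] = c\,\LLL[A]$ and $c \geq 0$ scales all eigenvalues without reordering them.

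For the final claim, set $a = \min_{(i,j)\in\EEE} A_{ij}$. Then $A \geq a \bmone_\EEE$ entrywise: on $\EEE$ this holds by the definition of $a$, and off $\EEE$ both sides are $0$. Applying monotonicity and then homogeneity yields $\lambda_2(\LLL[A]) \geq a\,\lambda_2(\LLL[\bmone_\EEE])$.

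The only step requiring any care is the variational formula with a fixed orthogonality constraint. Its justification rests on $\bmone_{m+n}$ being a null vector for all Laplacians simultaneously, which is what makes the comparison between $A$ and $A'$ valid.
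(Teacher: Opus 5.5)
Your proof is correct and follows essentially the same route as the paper. Both arguments rest on two facts: $\bmone_{m+n}$ is a common null vector of every $\LLL[\cdot]$, and $\LLL[A]-\LLL[A']=\LLL[A-A']$ is positive semi-definite, so the two quadratic forms can be compared on the fixed subspace $\bmone_{m+n}^\perp$. The paper phrases this with the projector $\Pi = I_{m+n} - \bmone_{m+n}\bmone_{m+n}^\top/(m+n)$ and the Loewner order, whereas you use Courant--Fischer with the explicit form $\sum_{ij} A_{ij}(a_i-b_j)^2$; the homogeneity and final claims are handled identically.
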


\begin{proof}
    First note that for any $A'' \in \RR_+^{m \times n}$, $\LLL[A'']$ is diagonally dominant and so positive-semi-definite, and its smallest eigenvalue is $\lambda_1(\LLL[A'']) = 0$ with eigenvector $\bmone_{m+n}$.
    Introduce the orthogonal projector $\Pi = I_{m+n} - \bmone_{m+n} \bmone_{m+n}^\top / (m+n)$.
    Let $\Delta = \LLL[A]-\LLL[A'] = \LLL[A-A']$.
    Since $A-A'$ has all non-negative entries, then $\Delta$ is positive-semi-definite and
    $\lambda_1(\Delta)=0$ with eigenvector $\bmone_{m+n}$.
    So $\Pi \Delta$ is also positive-semi-definite and
    $\Pi \LLL[A] \succeq \Pi \LLL[A']$, and in particular $\lambda_2(\LLL[A]) \geq \lambda_2(\LLL[A'])$.
    The second part of the lemma follows immediately from the linearity of $\LLL[\cdot]$ and the non-negativity of the spectrum of $\LLL[A]$.
    The third part follows from applying successively the first part with $A' = [\min_{(i,j) \in \EEE} A_{ij}] \, \bmone_\EEE$, and the second part with $c = \min_{(i,j) \in \EEE} A_{ij}$.
\end{proof}

\begin{corollary} \label{coroll:apx_pf_qtve_BV24:QGGU25}
    Consider $\mu \in \Delta_m, \nu \in \Delta_n$, and $\EEE \subset \{1 \dots m\} \times \{1 \dots n\}$ such that the bipartite graph with edge set $\EEE$ has no isolated vertex.
    Let $\SSS, \ol\SSS$ be defined from $\mu, \nu, \EEE$ as in \autoref{thm:prelim:baradat_ventre_sets},
    and suppose that $\SSS = \ol\SSS = \EEE$.
    Denote by $v^k, w^k, U^k$ the iterates of the Sinkhorn algorithm in the formulation \eqref{eq:prelim:upd_vwU}
    (and as explained in \autoref{subsec:prelim:bg}, the corresponding cost matrix and temperature are subsumed by the initialization $U^0$).
    Suppose that $\forall (i,j) \in \EEE, -\delta \leq U^0_{ij} \leq \delta$ for some $\delta \geq 0$.
    Then
    \begin{align}
        &\forall k \geq 0,~~~
        \max_{(i,j) \in \EEE} \abs{U^k_{ij}} \leq B_7\, (1+\delta) \\
        \text{and}~~~~
        &\forall k \geq 4,~~~
        \norm{v^k}_\infty,~ \norm{w^k}_\infty
        \leq B_7\, (1+\delta) \left( 1 - e^{-R_7 (1+\delta)} \right)^k
    \end{align}
    for some constants $B_7, R_7>0$ dependent only on $\mu, \nu$, and $\EEE$.
\end{corollary}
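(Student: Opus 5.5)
The plan is to reduce to \autoref{prop:apx_pf_qtve_BV24:QGGU25} with temperature $\tau=1$ and $A_{ij} = e^{U^0_{ij}}\mu_i\nu_j$ on $\EEE$, after two preparatory steps. First, since $\SSS=\ol\SSS=\EEE$, we have $v^*_i+w^*_j=0$ on $\EEE$. Since the graph with edge set $\EEE$ has no isolated vertex, the problem also splits over the connected components of $\EEE$. On each component the Sinkhorn updates decouple, so we may apply the argument per component and then take the worst constants, of which there are finitely many since they depend only on $\mu,\nu,\EEE$. Moreover, exact scalability with marginals $(\mu,\nu)$ forces $(v^*,w^*)=(0,0)$: indeed $\mu^*=\mu$ and $\nu^*=\nu$, because $\SSS=\EEE$ means some $Q$ with full support on $\EEE$ has marginals $(\mu^*,\nu)$, while $\ol\SSS=\EEE$ gives $v^*_i=-w^*_j$ on every edge. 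I expect to need the per-component version of this, possibly after absorbing a constant shift $v^*\equiv c$, $w^*\equiv -c$ on each component, which leaves $U^k$ unchanged up to a shift by $c$ on even iterates. I would dispose of this by applying \autoref{lm:apx_pf_qtve_BV24:prop5.3_BV24} to pass to marginals $(\mu^*,\nu)$, for which $(v^*,w^*)=0$.

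Second, I would bound $\max_{\EEE}|U^k|$ uniformly in $k$, which is exactly the constant $B=\sup_k\max_{ij}|f^k_i+g^k_j|$ needed in \autoref{prop:apx_pf_qtve_BV24:QGGU25}. The upper bound $e^{U^k_{ij}}\le(\mu_{\min}\wedge\nu_{\min})^{-1}$ comes from \autoref{claim:apx_pf_qtve_BV24:claim3}. For the lower bound, I would use the Hilbert-metric and Birkhoff contraction viewpoint: the oscillation of $U^k$ along any edge path in a connected component is controlled by its initial oscillation, which is at most $2\delta$, plus constants. Concretely, $U^k_{ij}=U^0_{ij}+F_i+G_j$, where $F$ and $G$ are the cumulative increments. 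Exact scalability provides a fixed point $U^\infty=U^0+F^\infty+G^\infty$ with $|F^\infty|+|G^\infty|=O(1+\delta)$, obtained from the explicit characterization using $Q$ with $\min_\EEE Q\ge q_0(\mu,\nu,\EEE)>0$. The Sinkhorn maps are nonexpansive in the sup-norm modulo constants, so $\|F^k-F^\infty\|$ stays bounded by its initial value modulo a common constant. This yields $|U^k_{ij}|\le B_7(1+\delta)$. I expect this uniform bound to be the main obstacle, specifically getting linear rather than exponential dependence on $\delta$.

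Third, with $B=O(1+\delta)$ we have $\min_\EEE A\ge e^{-B_7(1+\delta)}\mu_{\min}\nu_{\min}$. \autoref{prop:apx_pf_qtve_BV24:spectral_gap_lb} then gives $\lambda_2(\LLL[A])\ge e^{-O(1+\delta)}\lambda_2(\LLL[\bmone_\EEE])$, and $\lambda_2(\LLL[\bmone_\EEE])>0$ by connectedness. The denominator in \autoref{prop:apx_pf_qtve_BV24:QGGU25} is $O(1)$ by \autoref{claim:apx_pf_qtve_BV24:claim3}. Hence $\Psi$-suboptimality contracts by a factor $1-e^{-R(1+\delta)}$ at each step. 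The initial gap is $O(1+\delta)$, since $\Psi$ is Lipschitz on the bounded region from the second step.

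Finally, I would convert the $\Psi$-gap into a bound on the increments. By the standard inequality used in \autoref{lm:apx_pf_qtve_BV24:slow_rate}, the gap dominates $\Hdiv{\mu}{X_\sharp\pi^k}$, and the same holds for $\nu$. Pinsker's inequality combined with local Lipschitzness of $\log$ (the marginals are bounded below via \autoref{lm:prelim:unifbound_vk_wk}) then gives $\|v^{k+1}\|_\infty\lesssim\sqrt{\text{gap}}$. Taking square roots only halves $R$, so after renaming constants this yields $\|v^k\|_\infty,\|w^k\|_\infty\le B_7(1+\delta)(1-e^{-R_7(1+\delta)})^k$ for $k\ge4$.
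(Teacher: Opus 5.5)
Your proposal is correct and follows essentially the paper's route: a uniform $O(1+\delta)$ bound on $U^k$ from a fixed point of size $O(1+\delta)$ combined with sup-norm nonexpansiveness of the Sinkhorn maps, then the contraction of \autoref{prop:apx_pf_qtve_BV24:QGGU25} with the spectral-gap monotonicity of \autoref{prop:apx_pf_qtve_BV24:spectral_gap_lb}, then Pinsker and local Lipschitzness of $\log$; where the paper cites an external bounded-minimizer result, you derive the fixed-point bound yourself from a fully supported feasible $Q$. Your hesitation about disconnected $\EEE$ is justified: with unbalanced component masses, $\SSS=\ol\SSS=\EEE$ can still hold while $v^*\neq 0$ and $v^k\to v^*$, so the claimed decay of $\norm{v^k}_\infty$ is false as written, and your reduction via \autoref{lm:apx_pf_qtve_BV24:prop5.3_BV24} correctly yields decay of $\norm{v^k-v^*}_\infty$ instead. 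That is the form actually used later (marginals $(\mu^*,\nu)$ on $\ol\SSS$, where $v^*=0$), whereas the paper simply assumes $\EEE$ connected.
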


\begin{proof}
    As explained, e.g., in \cite[Remark~1.6]{wang2026almost}, we may assume without loss of generality that $\EEE$ is connected.
    Pose $A_{ij} = e^{U^0_{ij}} \mu_i \nu_j$
    and let $\Psi(f,g) = \sum_{ij} A_{ij} e^{f_i+g_j} - 1 - \mu^\top f - \nu^\top g$.
    Consider $(f^k, g^k)_{k \geq 0}$ the iterates of the Sinkhorn algorithm in its original formulation with cost matrix $C = -U^0$, temperature $\tau=1$, and initialized at $(f^0, g^0) = (0,0)$,
    so that $U^k_{ij} = (f^k_i+g^k_j-C_{ij})/\tau$ and $v^k = f^k-f^{k-2}$, $w^k = g^k-g^{k-2}$ for all $k \geq 2$.
    Denote by $\pi^k_{ij} = e^{U^k_{ij}} \mu_i \nu_j$ for $k \geq 1$ the corresponding primal variables.
    In this proof, we will use $B, B', B'' ...$ and $R, R', R'' ...$ to denote constants dependent only on $\mu, \nu$, and $\EEE$ without introducing them each time.

    Let us estimate the quantities appearing in the bound of \autoref{prop:apx_pf_qtve_BV24:QGGU25}.
    \begin{itemize}
        \item 
        Note that
        $\max_i \sum_j A_{ij}
        $$= \max_i \sum_j e^{U^0_{ij}} \mu_i \nu_j \leq e^{\delta}$,
        $\max_j \sum_i A_{ij}
        \leq e^{\delta}$, and
        $\min_{(i,j) \in \EEE} A_{ij}
        = \min_{(i,j) \in \EEE} e^{U^0_{ij}} \mu_i \nu_j 
        \geq \mu_{\min} \nu_{\min} e^{-\delta}$.
        Thus by \autoref{prop:apx_pf_qtve_BV24:spectral_gap_lb},
        \begin{equation}
            \frac{\lambda_2(\LLL[A])}
            {\left( \max_i \sum_j A_{ij} \right) \wedge \left( \max_j \sum_i A_{ij} \right)} 
            \geq \mu_{\min} \nu_{\min}\, e^{-2\delta} \lambda_2(\LLL[\bmone_\EEE])
            \geq e^{-R (1+\delta)}.
        \end{equation}
        \item By \cite[Proposition~2.7]{wang2026almost}, there exists $(f^*, g^*) \in \argmin \Psi$ such that 
        \begin{align}
            \norm{f^*}_\infty, \norm{g^*}_\infty 
            &\leq R' \left[ -\min_{(i,j) \in \EEE} U^0_{ij}
            - \log (\mu_{\min} \vee \nu_{\min})
            + \max_{ij} U^0_{ij} \right]
            \leq R'' \, (1+\delta),
        \end{align}
        and by (a slight variation on the proof of) \cite[Lemma~1.7]{wang2026almost},
        the sequence $\big( \norm{f^k-f^*}_\infty \vee \norm{g^k-g^*}_\infty \big)_{k \geq 0}$ is non-increasing.
        So, since $(f^0, g^0) = (0,0)$,
        \begin{equation}
            \sup_{k \geq 0}
            \max_{ij} \abs{f^k_i+g^k_j}
            \leq \sup_{k \geq 0}
            \norm{(f^k,g^k)}_\infty
            \leq \norm{(f^*,g^*)}_\infty
            + \norm{(f^0-f^*,g^0-g^*)}_\infty
            \leq 4 R'' \, (1+\delta).
        \end{equation}
        This already shows the first inequality of the Corollary, as for all $k \geq 0$ and $(i,j) \in \EEE$,
        \begin{equation}
            \abs{U^k_{ij}}
            = \abs{(f^k_i+g^k_j-C_{ij})/\tau}
            = \abs{f^k_i + g^k_j + U^0_{ij}}
            \leq 4 R'' (1+\delta) + \delta.
        \end{equation}
        \item Let $\Phi(f,g) = \log \left( \sum_{ij} A_{ij} e^{f_i+g_j} \right) - \mu^\top f - \nu^\top g$.
        Then $\min \Phi = \min \Psi$ and
        $\Psi(f^1, g^1) = \Phi(f^1, g^1)$ since $\sum_{ij} \pi^k_{ij} = 1$ for all $k \geq 1$.
        One can also check that $\forall g, \argmin \Psi(\cdot, g) \subset \argmin \Phi(\cdot, g)$
        and so $\Phi(f^1, g^1) \leq \Phi(f^0, g^0) = \Phi(0,0)$.
        Thus
        \begin{equation}
            \Psi(f^1,g^1)
            \leq \Phi(0,0)
            = \log \sum_{ij} A_{ij}
            = \log \sum_{ij} e^{U^0_{ij}} \mu_i \nu_j
            \leq \max_{ij} U^0_{ij}
            \leq \delta.
        \end{equation}
        \item By duality, since the primal EOT problem has a feasible solution by the asymptotic scalability assumption $\ol\SSS = \EEE$,
        \begin{align}
            -\min \Psi
            = \max (-\Psi)
            &= \min_{\pi \in \Delta_\EEE} \sum_{ij} C_{ij} \pi_{ij} + \Hdiv{\pi}{\mu \otimes \nu} 
            ~~~~\text{subject to}~~~~
            X_\sharp \pi = \mu,~
            Y_\sharp \pi = \nu\\
            &\leq \max_{(i,j) \in \EEE} C_{ij} + \max_{\pi \in \Delta_{m \times n}} \Hdiv{\pi}{\mu \otimes \nu}
            \leq \delta - \log (\mu_{\min} \nu_{\min}).
        \end{align}
    \end{itemize}
    
    Thus by \autoref{prop:apx_pf_qtve_BV24:QGGU25},
    \begin{equation}
        \Psi(f^k, g^k) - \min \Psi
        \leq \left( 1 - e^{-R (1+\delta)} e^{-8 R'' (1+\delta)} \right)^{k-1}\!
        (2 \delta - \log(\mu_{\min} \nu_{\min}))
        \leq B (1+\delta) \left( 1 - e^{-R''' (1+\delta)} \right)^k.
    \end{equation}
    So, since
    $\Psi(f^k, g^k) \!-\! \Psi(f^{k+1}, g^{k+1}) = \Hdiv{\mu}{X_\sharp \pi^k} + \Hdiv{\nu}{Y_\sharp \pi^k}$
    for all $k \geq 1$ by \cite[Lemma~2]{altschuler2017near},
    \begin{align}
        \Hdiv{\mu}{X_\sharp \pi^k}
        \leq \abs{\Psi(f^k,g^k) - \min \Psi} + \abs{\Psi(f^{k+1},g^{k+1}) - \min \Psi}
        \leq 2 B (1+\delta) \left( 1 - e^{-R''' (1+\delta)} \right)^k.
    \end{align}
    Further, for any $k \geq 1$ even, $v^{k+1}_i = \log(\mu_i / (X_\sharp \pi^k)_i)$ and $(X_\sharp \pi^k)_i = \mu_i e^{-v^{k+1}_i} \geq \mu_i \nu_{\min} \geq \mu_{\min} \nu_{\min}$ by \autoref{lm:prelim:unifbound_vk_wk}, so by local Lipschitz-continuity of $\log$ and by Pinsker's inequality,
    \begin{align}
        \norm{v^{k+1}}_\infty
        \leq \frac{1}{\mu_{\min} \nu_{\min}} \norm{\mu-X_\sharp \pi^k}_1 
        &\leq \frac{1}{\mu_{\min} \nu_{\min}}
        \sqrt{4 B (1+\delta) \left( 1 - e^{-R''' (1+\delta)} \right)^k} \\
        &\leq B' \, (1+\delta) \left( 1 - e^{-R'''' (1+\delta)} \right)^k.
    \end{align}
    Hence the bound on the $v^k$, and the bound on the $w^k$ follows similarly.
\end{proof}

\subsection{Proof of \autoref{prop:prelim:qtve_BV24_exp}}

\begin{lemma} \label{lm:apx_pf_qtve_BV24:olvw_exp_rate}
    Suppose that $\SSS = \ol\SSS$.
    For any $k_0 \geq K_2 (1+\delta)$ even,
    let $\ol v^{(k_0) k}, \ol w^{(k_0) k}, \ol U^{(k_0) k}$ be as in \autoref{lm:apx_pf_qtve_BV24:control_E_olS}
    (where $K_2$ is defined as in \autoref{lm:apx_pf_qtve_BV24:control_E_olS}).
    Then
    \begin{equation}
        \forall k \geq k_0+4,~~
        \norm{\ol v^{(k_0) k} - v^*}_\infty,~
        \norm{\ol w^{(k_0) k} - w^*}_\infty
        \leq B_8 (1+\delta) \left( 1-e^{-R_8 (1+\delta)} \right)^{k-k_0}
    \end{equation}
    for some constants $B_8, R_8>0$ dependent only on $\mu, \nu$, and $\EEE$.
\end{lemma}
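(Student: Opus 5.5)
The plan mirrors the proof of \autoref{lm:apx_pf_qtve_BV24:asymp_scalable_logk/k}. We replace the polynomial-rate result of \cite{wang2026almost} by the exponential bound of \autoref{coroll:apx_pf_qtve_BV24:QGGU25}, which becomes applicable once $\SSS=\ol\SSS$. Abbreviate $\ol v^{(k_0)k},\ol w^{(k_0)k},\ol U^{(k_0)k}$ as $\ol v^k,\ol w^k,\ol U^k$.

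\textbf{Step 1: reduce to a scaling problem.} By \autoref{lm:apx_pf_qtve_BV24:prop5.3_BV24}, the shifted iterates $\tU^k_{ij}=\ol U^k_{ij}+v^*_i$ (for $k$ even) are the Sinkhorn iterates for marginals $(\mu^*,\nu)$ with pattern $\ol\SSS$. Moreover $\tv^{k+1}=\ol v^{k+1}-v^*$. So it suffices to bound $\norm{\tv^{k+1}}_\infty$, and symmetrically to use the $(\mu,\nu^*)$ problem for $\ol w^k-w^*$.

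\textbf{Step 2: check the corollary's hypotheses.} We apply \autoref{coroll:apx_pf_qtve_BV24:QGGU25} to marginals $(\mu^*,\nu)$ and edge set $\ol\SSS$. We need that the sets ``$\SSS,\ol\SSS$'' computed from $(\mu^*,\nu,\ol\SSS)$ both equal $\ol\SSS$, i.e.\ exact $(\mu^*,\nu)$-scalability of a matrix with pattern $\ol\SSS$. This is exactly (ii) of \autoref{lm:prelim:equiv_exact_scalability}, which follows from the assumption $\SSS=\ol\SSS$ via (i)$\Rightarrow$(ii). This pins down $v^*(\ol\SSS)=0$ for the new problem, and so $\SSS=\ol\SSS$ there. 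The constants of the corollary then depend on $\mu^*,\nu,\ol\SSS$, hence only on $\mu,\nu,\EEE$. Lower bounds such as $\mu^*_{\min}\geq\mu_{\min}\nu_{\min}$ come from \autoref{coroll:prelim:bound_v*_w*}.

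\textbf{Step 3: control the initialization scale.} The corollary needs $\max_{\ol\SSS}|\tU^{k_0}|\leq\delta'$. The naive bound from \autoref{lm:prelim:unifbound_vk_wk}, as in the proof of \autoref{lm:apx_pf_qtve_BV24:asymp_scalable_logk/k}, gives $\delta'=O(\delta+k_0)$. This is too weak: it would put $k_0$ into the exponential rate, whereas the statement allows only $1+\delta$. This is the main obstacle.

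My first attempt is to show that $\max_{(i,j)\in\ol\SSS}|U^{k}_{ij}|\leq B(1+\delta)$ for all $k$, uniformly. For this I would run the corollary-type argument on the true iterates. By \autoref{lm:apx_pf_qtve_BV24:control_E_olS}, for $k_0\geq K_2(1+\delta)$ the restricted and true iterates differ by $O(1)$ on $\ol\SSS$. So it is enough to bound the restricted run started at $k_2=K_2(1+\delta)$, where $|U^{k_2}|=O(1+\delta)$. By the first inequality of \autoref{coroll:apx_pf_qtve_BV24:QGGU25}, that run stays bounded by $B_7(1+\delta')$ forever. Transferring back through \autoref{lm:apx_pf_qtve_BV24:control_E_olS} (with $\Delta$ of order $0$) and adding the shift $v^*_i=O(1)$ gives $|\tU^{k_0}|\leq B(1+\delta)$ on $\ol\SSS$ for every admissible $k_0$. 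An alternative route is to use the non-expansiveness of $\norm{(f,g)-(f^*,g^*)}_\infty$ from \cite[Lemma~1.7]{wang2026almost} directly.

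\textbf{Step 4: conclude.} Having $\delta'=O(1+\delta)$, the second inequality of \autoref{coroll:apx_pf_qtve_BV24:QGGU25}, reindexed to start at $k_0$, gives $\norm{\tv^{k}}_\infty\leq B_7(1+\delta')(1-e^{-R_7(1+\delta')})^{k-k_0}$ for $k\geq k_0+4$. Absorbing constants yields the bound with $B_8,R_8$, and the bound for $\ol w$ follows symmetrically from the $(\mu,\nu^*)$ problem.
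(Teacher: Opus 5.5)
Your proposal is correct and follows essentially the same route as the paper. The key step is the same: bound $\max_{(i,j)\in\ol\SSS}|U^{k_0}_{ij}|$ by $O(1+\delta)$ uniformly in $k_0$, by comparing via \autoref{lm:apx_pf_qtve_BV24:control_E_olS} with the restricted run started at $k_2=\lceil K_2(1+\delta)\rceil$ and controlling that run with the first part of \autoref{coroll:apx_pf_qtve_BV24:QGGU25}; the exponential part of the corollary is then applied to the shifted $(\mu^*,\nu)$ problem as in the proof of \autoref{lm:apx_pf_qtve_BV24:asymp_scalable_logk/k}, and your Step~2 check via (i)$\Rightarrow$(ii) of \autoref{lm:prelim:equiv_exact_scalability} that this problem meets the corollary's hypothesis is a detail the paper leaves implicit.
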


\begin{proof}
    In this proof, we write $B_2, K_2$ for the constants introduced in \autoref{lm:apx_pf_qtve_BV24:control_E_olS}
    and $B_7, R_7$ for the ones introduced in \autoref{coroll:apx_pf_qtve_BV24:QGGU25},
    and we use $B, B', B'', ...$ to denote constants dependent only on $\mu, \nu, \EEE$ without introducing them each time.

    Let us show that
    $\max_{(i,j) \in \ol\SSS} \abs{U^{k_0}_{ij}}$ 
    is bounded by a constant independent of $k_0$.
    For this, let $k_2 = \ceil{K_2 (1+\delta)} \leq k_0$ and consider the iterates $\ol v^{(k_2) k}, \ol w^{(k_2) k}, \ol U^{(k_2) k}$ defined as in \autoref{lm:apx_pf_qtve_BV24:control_E_olS}.
    Then 
    \begin{equation}
        \max_{(i,j) \in \ol\SSS} \abs{\ol U^{(k_2)k_0}_{ij} - U^{k_0}_{ij}} \leq B_2
    \end{equation}
    by \autoref{lm:apx_pf_qtve_BV24:control_E_olS}.
    Now we claim that $\max_{(i,j) \in \ol\SSS} \abs{\ol U^{(k_2)k}_{ij}}$ is bounded uniformly over all $k \geq k_2$.
    Indeed, at its initialization,
    \begin{align}
        \forall (i,j) \in \ol\SSS,~
        \abs{\ol U^{(k_2) k_2)}_{ij}}
        = \abs{U^{k_2}_{ij}}
        &= \abs{ 
            U^0_{ij}
            + v^1_i + w^2_j
            + \sum_{\substack{l=2\\l~\text{even}}}^{k_2-1}
            v^{l+1}_i
            + \sum_{\substack{l=3\\l~\text{odd}}}^{k_2-1}
            w^{l+1}_i 
        } \\ 
        &\leq 4 \delta + k_2 \, [-\log(\mu_{\min} \wedge \nu_{\min})]
        \leq B (1+\delta),
    \end{align}
    and so by the first part of \autoref{coroll:apx_pf_qtve_BV24:QGGU25} applied with $\mu^*, \nu, \ol\SSS, \ol U^{(k_2)k_2}$ in place of ``$\mu, \nu, \EEE, U^0$'',
    we have $\forall k \geq k_2,~ \max_{(i,j) \in \ol\SSS} \abs{\ol U^{(k_2) k}_{ij}} \leq B_7\, (1 + B(1+\delta))$.
    This shows that, as we claimed,
    \begin{equation}
        \forall k \geq k_2,~~
        \max_{(i,j) \in \ol\SSS}
        \abs{\ol U^{(k_2) k}_{ij}}
        \leq B' (1+\delta),
    \end{equation}
    and in particular this estimate holds for $k=k_0$.
    Stringing together the above inequalities, we get as announced
    \begin{equation}
        \max_{(i,j) \in \ol\SSS} \abs{U^{k_0}_{ij}}
        \leq B_2 + \max_{(i,j) \in \ol\SSS} \abs{\ol U^{(k_2) k_0}_{ij}}
        \leq B_2 + B' (1+\delta)
        = B'' (1+\delta)
    \end{equation}
    uniformly in $k_0$.

    Thus we have
    \begin{equation}
        \forall (i,j) \in \ol\SSS,~
        -B'' (1+\delta) 
        \leq \ol U^{(k_0)k_0}_{ij} = U^{k_0}_{ij} 
        \leq B'' (1+\delta).
    \end{equation}
    The announced bound then follows by reasoning as in the proof of \autoref{lm:apx_pf_qtve_BV24:asymp_scalable_logk/k}, applying \autoref{coroll:apx_pf_qtve_BV24:QGGU25} with $\mu^*, \nu, \ol\SSS, \tU^{k_0}$ in place of ``$\mu, \nu, \EEE, U^0$'' in the corollary statement to quantify the convergence of the~$\tv^k$
    (for $\tv^k, \tw^k, \tU^k$ being defined as in \autoref{lm:apx_pf_qtve_BV24:prop5.3_BV24}).
\end{proof}

Finally, we can now present the proof of \autoref{prop:prelim:qtve_BV24_exp}.

\begin{proof}[Proof of \autoref{prop:prelim:qtve_BV24_exp}]
    Let $K_2, B_2, \gamma$ be as in \autoref{lm:apx_pf_qtve_BV24:control_E_olS} and 
    let $B_8, R_8$ be as in \autoref{lm:apx_pf_qtve_BV24:asymp_scalable_logk/k}.
    Fix any $k \geq 2 K_2 (1+\delta) + 8$.
    Consider any $\Delta > 0$ such that $k_0 = K_2 (1+\delta) + \Delta$ is even and $k_0 + 4 \leq k$,
    and let $(\ol v^{(k_0)l}, \ol w^{(k_0)l}, \ol U^{(k_0)l})_{l \geq k_0}$ be defined as in \autoref{lm:apx_pf_qtve_BV24:control_E_olS}.
    Then by \autoref{lm:apx_pf_qtve_BV24:control_E_olS} for the first term and by \autoref{lm:apx_pf_qtve_BV24:olvw_exp_rate} for the second term,
    \begin{equation}
        \norm{v^k-v^*}_\infty
        \leq \norm{v^k-\ol v^{(k_0) k}}_\infty
        + \norm{\ol v^{(k_0) k} - v^*}_\infty
        \leq B_2 e^{-\Delta \gamma/4}
        + B_8 (1+\delta) \left( 1 - e^{-R_8 (1+\delta)} \right)^{k-k_0}.
    \end{equation}
    
    Take $\Delta = k/2 - r$, where $0 \leq r \leq 2$ serves to ensure that $k_0 = K_2(1+\delta)+\Delta$ is an even integer.
    Let us check that this choice also satisfies the condition $k_0+4 \leq k$, i.e.,
    \begin{equation}
        K_2 (1+\delta) + \Delta + 4 
        = K_2 (1+\delta) + k/2 - r + 4 
        \leq k.
    \end{equation}
    This is indeed true by our assumption that $k \geq 2 K_2 (1+\delta)+8$.

    Plugging this choice of $\Delta$ into the bound on $\norm{v^k-v^*}_\infty$ above, we obtain
    \begin{align}
        \norm{v^k-v^*}_\infty
        &\leq B_2 e^{-k \gamma/8} e^{r \gamma/4}
        + B_8 (1+\delta) 
        \left( 1 - e^{-R_8 (1+\delta)} \right)^{k/2+r}
        \left( 1 - e^{-R_8 (1+\delta)} \right)^{-K_2 (1+\delta)}
        \\
        &\leq B (1+\delta) \left( 1-e^{-R(1+\delta)} \right)^k
    \end{align}
    for some constants $B, R$ dependent only on $\mu, \nu$, and $\EEE$, since
    $\sup_{\delta > 0} \left( 1 - e^{-R_8 (1+\delta)} \right)^{-K_2 (1+\delta)}$ is finite and dependent only on $R_8, K_2$.
    This proves the announced bound for the $v^k$, and the bound for the $w^k$ follows similarly.
\end{proof}

\fi

\end{document}